\title{Nakayama functor for monads on finite abelian categories}
\author[K.~Shimizu]{Kenichi Shimizu}
\email{kshimizu@shibaura-it.ac.jp}
\address{Department of Mathematical Sciences \\
  Shibaura Institute of Technology \\
  307 Fukasaku, Minuma-ku, Saitama-shi, Saitama 337-8570, Japan.}
\date{}
\numberwithin{equation}{section}
\newtheorem{counter}{}[section]
\theoremstyle{definition}
\newtheorem{definition}         [counter]{Definition}
\newtheorem{notation}           [counter]{Notation}
\theoremstyle{plain}
\newtheorem{lemma}              [counter]{Lemma}
\newtheorem{theorem}            [counter]{Theorem}
\newtheorem{corollary}          [counter]{Corollary}
\newtheorem{question}           [counter]{Question}
\newtheorem*{theorem*}          {Theorem}
\theoremstyle{remark}
\newtheorem{remark}             [counter]{Remark}
\newtheorem{example}            [counter]{Example}
\newcommand{\id}{\mathrm{id}}
\newcommand{\eval}{\mathrm{ev}}
\newcommand{\coev}{\mathrm{coev}}
\newcommand{\op}{\mathrm{op}}
\newcommand{\cop}{\mathrm{cop}}
\newcommand{\rev}{\mathrm{rev}}
\newcommand{\env}{\mathrm{env}}
\newcommand{\unitobj}{\mathbbm{1}}
\newcommand{\Nat}{\mathrm{Nat}}
\newcommand{\Hom}{\mathrm{Hom}}
\newcommand{\radj}{\mathrm{ra}}
\newcommand{\rradj}{\mathrm{rra}}
\newcommand{\rrradj}{\mathrm{rrra}}
\newcommand{\ladj}{\mathrm{la}}
\newcommand{\lladj}{\mathrm{lla}}
\newcommand{\llladj}{\mathrm{llla}}
\newcommand{\iHom}{\underline{\mathrm{Hom}}}
\newcommand{\bfk}{\Bbbk}
\newcommand{\lmod}[1]{{#1}\text{\rm -mod}}
\newcommand{\rmod}[1]{\text{\rm mod-}{#1}}
\newcommand{\lcomod}[1]{{#1}\text{\rm -comod}}
\newcommand{\bimod}[2]{{#1}\text{\rm -mod-}{#2}}
\newcommand{\copow}{\otimes}
\newcommand{\act}{\mathord{\mathrm{act}}}
\newcommand{\REX}{\mathrm{Rex}}
\newcommand{\Vect}{\mathbf{Vec}}
\newcommand{\canalg}{\mathds{A}}
\newcommand{\catactl}{\mathbin{\triangleright}}
\newcommand{\catactr}{\mathbin{\triangleleft}}
\newcommand{\catacte}{\mathbin{\tilde{\catactl}}}
\newcommand{\Nak}{\mathds{N}}  
\newcommand{\Free}{\mathds{F}} 
\newcommand{\Forg}{\mathds{U}} 
\newcommand{\EW}{\mathrm{EW}}  
\newcommand{\can}{\boldsymbol{\phi}}
\newcommand{\Radford}{\mathrm{R}}
\begin{document}

\begin{abstract}
  If $\mathcal{M}$ is a finite abelian category and $\mathbf{T}$ is a linear right exact monad on $\mathcal{M}$, then the category $\mathbf{T}\mbox{-mod}$ of $\mathbf{T}$-modules is a finite abelian category. We give an explicit formula of the Nakayama functor of $\mathbf{T}\mbox{-mod}$ under the assumption that the underlying functor of the monad $\mathbf{T}$ has a double left adjoint and a double right adjoint.
  As applications, we deduce formulas of the Nakayama functor of the center of a finite bimodule category and the dual of a finite tensor category.
  Some examples from the Hopf algebra theory are also discussed.
\end{abstract}

\maketitle

\tableofcontents

\section{Introduction}

Given a vector space $X$ over a field $\bfk$, we denote its dual space by $X^*$.
If $A$ is a finite-dimensional algebra over $\bfk$, then the dual space $A^*$ has a natural structure of an $A$-bimodule. Hence we have an endofunctor
\begin{equation}
  \label{eq:intro-Nakayama-A-mod}
  \Nak_A : \lmod{A} \to \lmod{A}, \quad M \mapsto A^* \otimes_A M
\end{equation}
on the category $\lmod{A}$ of finite-dimensional left $A$-modules. This functor, called the Nakayama functor, plays an important role in the representation theory of finite-dimensional algebras; see, {\it e.g.}, \cite{MR2894798}.

Recently, the Nakayama functor is attracted attention from a different viewpoint, that is, the theory of finite tensor categories and their modules.
We recall that a finite abelian category \cite{MR3242743} is a linear category that is equivalent to $\lmod{A}$ for some finite-dimensional algebra $A$.
Fuchs, Schaumann and Schweigert \cite{MR4042867} remarked that the Nakayama functor \eqref{eq:intro-Nakayama-A-mod} is expressed as the coend
\begin{equation*}
  \Nak_{A}(M) = \int^{X \in \lmod{A}} \Hom_A(M, X)^* \otimes_{\bfk} X
  \quad (M \in \lmod{A}).
\end{equation*}
Following this observation, one can define the Nakayama functor $\Nak_{\mathcal{M}} : \mathcal{M} \to \mathcal{M}$ for a finite abelian category $\mathcal{M}$ by
\begin{equation*}
  \Nak_{\mathcal{M}}(M) := \int^{X \in \mathcal{M}} \Hom_{\mathcal{M}}(M, X)^* \copow X
  \quad (M \in \mathcal{M}),
\end{equation*}
where $\copow$ is the copower (see \S\ref{subsec:fin-ab-cat}).

The point is that the functor $\Nak_{\mathcal{M}}$ is defined without referencing an algebra $A$ such that $\mathcal{M} \approx \lmod{A}$.
In studying finite tensor categories and their modules, we often consider finite abelian categories for which no finite-dimensional algebras realizing them are given explicitly.
The above abstractly redefined Nakayama functor is often useful in such a situation.
A key feature of the Nakayama functor is the following relation to adjoint functors:
Given a functor $F$, we denote by $F^{\ladj}$ and $F^{\radj}$ a left and a right adjoint of $F$, respectively, if it exists.
Let $G: \mathcal{M} \to \mathcal{N}$ be a linear functor between finite abelian categories such that a double right adjoint $G^{\rradj} := (G^{\radj})^{\radj}$ exists and is right exact. By the universal property of Nakayama functors as coends, we have an isomorphism
\begin{equation}
  \label{eq:intro-cano-iso}
  \can_G :  G^{\rradj} \circ \Nak_{\mathcal{M}} \to \Nak_{\mathcal{N}} \circ G
\end{equation}
of functors that is `natural' and `coherent' in a certain sense \cite{MR4042867}.

As pointed out in \cite{MR4042867}, some fundamental results on finite tensor categories and their modules are obtained by the isomorphism~\eqref{eq:intro-cano-iso}. For example, let $\mathcal{C}$ be a finite tensor category. Then the isomorphism \eqref{eq:intro-cano-iso} yields the formula
\begin{equation*}
  \Nak_{\mathcal{C}}(X)
  \cong \Nak_{\mathcal{C}}(\unitobj) \otimes X^{\vee\vee}
  \quad (X \in \mathcal{C})
\end{equation*}
of the Nakayama functor of $\mathcal{C}$, where $\otimes$ is the tensor product, $\unitobj$ is the unit object and $(-)^{\vee}$ is the left duality functor of $\mathcal{C}$. By this formula, we see that $\mathcal{C} \approx \lmod{A}$ for some symmetric Frobenius algebra $A$ if and only if $\mathcal{C}$ is unimodular ({\it i.e.}, $\Nak_{\mathcal{C}}(\unitobj) \cong \unitobj$) and the double dual functor of $\mathcal{C}$ is isomorphic to $\id_{\mathcal{C}}$ \cite{MR4042867}. The above formula and its variant also give a categorical analogue of Radford's $S^4$-formula \cite{MR2097289} and a Frobenius type property of tensor functors between finite tensor categories \cite{MR3569179}.

For the above reasons, we think of the Nakayama functor as a fundamental tool in the theory of finite tensor categories and their modules.
It is thus natural to ask how the Nakayama functor is given for a particular class of finite abelian categories.
In this paper, we provide a formula the Nakayama functor of the category of modules over a monad on a finite abelian category under a technical assumption on the underlying functor of the monad.

To describe the main result in detail, we let $\mathbf{T} = (T, \mu, \eta)$ be a monad on a finite abelian category $\mathcal{M}$ such that the underlying functor $T : \mathcal{M} \to \mathcal{M}$ is linear and right exact.
Then, as is well-known, the category $\lmod{\mathbf{T}}$ of $\mathbf{T}$-modules is a finite abelian category such that the forgetful functor to $\mathcal{M}$ preserves and reflects exact sequences.
Now we suppose that $T$ has a left adjoint. Then there is a natural transformation $b : T^{\ladj} \circ T \to T^{\ladj}$ making $T^{\ladj}$ a `right module' over $\mathbf{T}$.
Considering $T^{\ladj}$ as an analogue of the bimodule $A^{*}$ in \eqref{eq:intro-Nakayama-A-mod}, we define
\begin{equation*}
  \mathbf{T}^{\ladj} \otimes_{\mathbf{T}} \mathbf{M}
  := \mathrm{coequalizer} \Big(
  \begin{tikzcd}[column sep = 64pt]
    T^{\ladj} T(M)
    \arrow[r, shift left=3pt, "{b_M}"]
    \arrow[r, shift right=3pt, "{T^{\ladj}(a_M)}"']
    & T^{\ladj}(M)
  \end{tikzcd} \Big) \in \mathcal{M}
\end{equation*}
for $\mathbf{M} = (M, a_M : T(M) \to M) \in \lmod{\mathbf{T}}$.

The assignment $\mathbf{M} \mapsto \mathbf{T}^{\ladj} \otimes_{\mathbf{T}} \mathbf{M}$ is not an endofunctor on $\lmod{\mathbf{T}}$ since, unlike the case of \eqref{eq:intro-Nakayama-A-mod}, the functor $T^{\ladj}$ is not a `bimodule' over $T$. Such asymmetry has already been noticed in the study of algebras and modules in a monoidal category. Namely, let $A$ be an algebra in $\mathcal{C}$. Then, according to \cite[\S2.4.5]{MR4254952}, the left dual object of an $A$-bimodule in $\mathcal{C}$ is not an $A$-bimodule in $\mathcal{C}$ in general, but in fact an $A^{\vee\vee}$-$A$-bimodule in $\mathcal{C}$.
We now suppose that a double left adjoint $T^{\lladj} := (T^{\ladj})^{\ladj}$ exists. Then $T^{\lladj}$ is naturally a monad on $\mathcal{M}$, which we denote by $\mathbf{T}^{\lladj}$.
In the same way as \cite[\S2.4.5]{MR4254952}, we see that $T^{\ladj}$ is a `$\mathbf{T}^{\lladj}$-$\mathbf{T}$-bimodule.' Furthermore, the assignment $\mathbf{M} \mapsto \mathbf{T}^{\ladj} \otimes_{\mathbf{T}} \mathbf{M}$ gives rise to a linear functor
\begin{equation}
  \label{eq:intro-T-la-tensor}
  \mathbf{T}^{\ladj} \otimes_{\mathbf{T}} (-) :
  \lmod{\mathbf{T}} \to \lmod{\mathbf{T}^{\lladj}}.
\end{equation}
The Nakayama functor of $\mathcal{M}$ lifts to the functor
\begin{equation}
  \label{eq:intro-Nakayama-lifts}
  \lmod{\mathbf{T}^{\lladj}} \to \lmod{\mathbf{T}},
  \quad (M, a_M : T^{\lladj}(M) \to M) \mapsto (\Nak_{\mathcal{M}}(M), \widetilde{a}_M),
\end{equation}
where $\widetilde{a}_M$ is the composition
\begin{equation*}
  T (\Nak_{\mathcal{M}}(M))
  \xrightarrow{\quad \text{\eqref{eq:intro-cano-iso} with $G = T^{\lladj}$} \quad}
  \Nak_{\mathcal{M}} (T^{\lladj}(M))
  \xrightarrow{\quad \Nak_{\mathcal{M}}(a_M) \quad}
  \Nak_{\mathcal{M}}(M).
\end{equation*}
Now the main result of this paper is stated as follows:

\begin{theorem}[$=$ Theorem \ref{thm:Nakayama-monads}]
  \label{thm:intro-main-thm}
  Let $\mathcal{M}$ be a finite abelian category, and let $\mathbf{T}$ be a linear monad on $\mathcal{M}$ whose underlying functor admits a double left adjoint and a double right adjoint. Then the Nakayama functor of $\lmod{\mathbf{T}}$ is given by the composition of the functors \eqref{eq:intro-T-la-tensor} and \eqref{eq:intro-Nakayama-lifts}.
\end{theorem}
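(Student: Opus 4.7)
The plan is to exhibit a natural isomorphism between $\Nak_{\lmod{\mathbf{T}}}$ and the composite of \eqref{eq:intro-T-la-tensor} and \eqref{eq:intro-Nakayama-lifts}, which I denote by $\Psi$, first on free $\mathbf{T}$-modules and then extending by right-exactness. The first step is to verify that $\Psi$ is linear and right-exact: the functor $\mathbf{T}^{\ladj} \otimes_{\mathbf{T}} (-)$ is objectwise a reflexive coequalizer of the right-exact functors $T^{\ladj} \circ T$ and $T^{\ladj}$, hence right-exact, while the lift \eqref{eq:intro-Nakayama-lifts} inherits right-exactness from $\Nak_{\mathcal{M}}$ because the forgetful functor $\lmod{\mathbf{T}^{\lladj}} \to \mathcal{M}$ preserves and reflects exact sequences. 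Since $\Nak_{\lmod{\mathbf{T}}}$ is also right-exact and every $\mathbf{T}$-module $\mathbf{M} = (M, a_M)$ is a reflexive coequalizer of free modules via $\Free(a_M)$ and $\mu_M : \Free(T(M)) \rightrightarrows \Free(M)$, it suffices to construct a natural isomorphism $\Psi(\Free(X)) \cong \Nak_{\lmod{\mathbf{T}}}(\Free(X))$ depending naturally on $X \in \mathcal{M}$.

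To evaluate $\Psi$ on $\Free(X)$, I would check that the reflexive coequalizer defining $\mathbf{T}^{\ladj} \otimes_{\mathbf{T}} \Free(X)$ collapses, via the right action $b_X : T^{\ladj} T(X) \to T^{\ladj}(X)$, to $T^{\ladj}(X)$ equipped with its canonical $\mathbf{T}^{\lladj}$-module structure coming from the $\mathbf{T}^{\lladj}$-$\mathbf{T}$-bimodule structure on $T^{\ladj}$. Applying $\Nak_{\mathcal{M}}$ together with \eqref{eq:intro-cano-iso} for $G = T^{\ladj}$---note $(T^{\ladj})^{\rradj} = T^{\radj}$---yields an isomorphism of underlying objects $\Nak_{\mathcal{M}}(T^{\ladj}(X)) \cong T^{\radj}(\Nak_{\mathcal{M}}(X))$. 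On the other side, \eqref{eq:intro-cano-iso} applied to $G = \Free$, together with the identification of $\Free^{\rradj} = \Forg^{\radj}$ as the cofree $\mathbf{T}$-module functor (whose underlying object is $T^{\radj}$), gives $\Nak_{\lmod{\mathbf{T}}}(\Free(X)) \cong \Forg^{\radj}(\Nak_{\mathcal{M}}(X))$, again with underlying object $T^{\radj}(\Nak_{\mathcal{M}}(X))$. So the underlying objects on both sides agree.

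The main obstacle is verifying that the two $\mathbf{T}$-module structures on $T^{\radj}(\Nak_{\mathcal{M}}(X))$ coincide. On the $\Psi$ side, the structure is obtained by lifting the canonical $\mathbf{T}^{\lladj}$-action on $T^{\ladj}(X)$ along \eqref{eq:intro-cano-iso} with $G = T^{\lladj}$; on the $\Nak_{\lmod{\mathbf{T}}}$ side, it is the canonical cofree $\mathbf{T}$-action arising via the universal property of $\Forg^{\radj}$. Both actions are assembled from mates of the units and counits of the tower $T^{\lladj} \dashv T^{\ladj} \dashv T \dashv T^{\radj} \dashv T^{\rradj}$ together with the monad data $(\mu, \eta)$, so their equality should reduce to a diagram chase exploiting the coherence of mates under adjoint composition and the naturality of $\can$. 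Once this identification is established naturally in $X$, the same naturality carries it to every morphism $\Free(f)$ between free modules, and right-exactness of both $\Psi$ and $\Nak_{\lmod{\mathbf{T}}}$ applied to the reflexive coequalizer presentation of an arbitrary $\mathbf{T}$-module extends the isomorphism to all of $\lmod{\mathbf{T}}$.
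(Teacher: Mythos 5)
Your outline --- reduce to free modules, then extend by right-exactness over the bar-coequalizer presentation --- is the same as the paper's. The real work, however, is precisely the step you defer to ``a diagram chase exploiting the coherence of mates,'' and I think the gap is genuine, not merely a matter of detail.

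The paper avoids comparing two a priori unrelated $\mathbf{T}$-module structures on $T^{\radj}\Nak_{\mathcal{M}}(X)$ by first proving Lemma~\ref{lem:left-adjoint-of-free}: the functor $\mathbf{T}^{\ladj}\otimes_{\mathbf{T}}(-)$ is \emph{left adjoint} to $\Free$, so it \emph{is} $\Free^{\ladj}$. You never make this identification, and it is what does the heavy lifting. With it, one has a single, globally defined natural transformation $\xi_{\mathbf{M}} = \can_{\Free^{\ladj}}(\mathbf{M}) : \Forg\Nak_{\lmod{\mathbf{T}}}(\mathbf{M}) \to \Nak_{\mathcal{M}}(\mathbf{T}^{\ladj}\otimes_{\mathbf{T}}\mathbf{M})$, natural in $\mathbf{M}\in\lmod{\mathbf{T}}$ from the outset; the only remaining task is to show this particular morphism respects the $\mathbf{T}$-actions. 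Your construction, by contrast, builds an isomorphism only after first collapsing the coequalizer presenting $\mathbf{T}^{\ladj}\otimes_{\mathbf{T}}\Free(X)$ to $T^{\ladj}(X)$, which introduces an isomorphism $\psi_X$ that lives only over free modules; you must then separately argue (and do not) that your chain of identifications is natural against \emph{all} morphisms between free $\mathbf{T}$-modules, not just the morphisms $\Free(f)$ for $f$ in $\mathcal{M}$. The bar coequalizer uses the counit $\varepsilon_{\Free M}:\Free T(M)\to \Free(M)$, which is not of the form $\Free(f)$, so ``naturality carries it to every morphism $\Free(f)$ between free modules'' does not cover the maps you actually need for the extension step.

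Even granting this, the statement that the two $\mathbf{T}$-module structures coincide ``should reduce to a diagram chase'' is where the actual mathematics of the theorem lives, and your proposal does not carry it out. The paper does: it shows that $\psi_X : \mathbf{T}^{\ladj}\otimes_{\mathbf{T}}(\mathbf{T}\otimes X) \to \mathbf{T}^{\ladj}\otimes X$ is a morphism of $\mathbf{T}^{\lladj}$-modules (a nontrivial lemma), assembles the $\mathbf{T}$-module isomorphism $\zeta_X$ from $\can_{\Free}$, $\can_{T^{\radj}}$, and $\Nak_{\mathcal{M}}(\psi_X^{-1})$, and then proves $\xi_{\Free(X)}=\zeta_X$ via an explicit commutative square invoking the naturality and coherence of $\can$. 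Appealing generically to ``coherence of mates'' is not a substitute; the coherence needed is the specific compatibility \eqref{eq:Nakayama-cano-iso} is proved to satisfy in \cite{MR4042867}, and without tracking which instances of $\can$ are being composed and what the $\mathbf{T}^{\lladj}$-module structure of $\psi_X$ is, there is nothing to chase. To repair the argument, you should prove the analogue of Lemma~\ref{lem:left-adjoint-of-free} first, replace your ad hoc free-module isomorphism by $\can_{\Free^{\ladj}}$ restricted to free modules, prove the $\mathbf{T}^{\lladj}$-equivariance of $\psi_X$, and then verify the equality of module structures by the commuting diagram relating $\can_{\Free^{\ladj}}$, $\can_{\Free}$, and $\can_{T^{\ladj}}$.
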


To define the functors \eqref{eq:intro-T-la-tensor} and \eqref{eq:intro-Nakayama-lifts}, it suffices to assume that both $T^{\lladj}$ and $T^{\radj}$ exist.
Let $\Forg : \lmod{\mathbf{T}} \to \mathcal{M}$ denote the forgetful functor.
In the proof of the above theorem, we consider the isomorphism
\begin{equation}
  \label{eq:intro-proof-main-thm}
  \Nak_{\lmod{\mathbf{T}}} \circ \Forg^{\ladj} \cong \Forg^{\radj} \circ \Nak_{\mathcal{M}}
\end{equation}
obtained by letting $G = \Forg^{\ladj}$ in \eqref{eq:intro-cano-iso}.
The existence of $T^{\rradj}$ is required to ensure that a double right adjoint of $\Forg^{\ladj}$ is right exact. The assumption of the above theorem might be weaken by detouring the use of \eqref{eq:intro-proof-main-thm}.

In the theory of finite tensor categories and their modules, there are many examples of monads to which Theorem \ref{thm:intro-main-thm} can be applied.
Let $\mathcal{C}$ be a finite tensor category, let $A$ be an algebra in $\mathcal{C}$, and let $\mathcal{M}$ be a finite left $\mathcal{C}$-module category with action $\catactl : \mathcal{C} \times \mathcal{M} \to \mathcal{M}$ (see \S\ref{subsec:FTC-modules}). Then the monad $\mathbf{T} := A \catactl (-)$ on $\mathcal{M}$ induced by the algebra $A$ fulfills the assumption of Theorem~\ref{thm:intro-main-thm}. The theorem implies that the Nakayama functor of the category of modules over this monad is written as
\begin{equation}
  \label{eq:Intro-monad-for-modules}
  \mathbf{M} \mapsto \Nak_{\mathcal{M}}(A^{\vee} \otimes_A \mathbf{M})
\end{equation}
(see \S\ref{subsec:modules-module-cats} for the precise meaning).
This formula is used to, for example, compute the Nakayama functor of the category of modules over a Hopf algebra in a braided finite tensor category (see \S\ref{subsec:braid-Hopf}). The formula is also applied to compute the Nakayama functor of the center of a finite bimodule category.

\subsection{Organization of this paper}
We explain the organization of this paper as well as results yet to be described in the above.

In Section~\ref{sec:prelim}, we introduce some notations and recall basic results on finite tensor categories and their modules.

In Section \ref{sec:Nakayama-and-double-adj}, we review necessary results on the Nakayama functor.
We explain the construction of the canonical isomorphism \eqref{eq:intro-cano-iso} and discuss how it looks like in various settings.
Most of the contents of this section are found in \cite{MR4042867}.
The aim of this section is, rather, to write down those results explicitly for the use of later sections.

In Section \ref{sec:for-monads}, we state and prove the main result of this paper. For this purpose, we first introduce the notion of bimodules over monads and the tensor product over a monad. Let $\mathcal{M}$ be a finite abelian category, and let $\mathbf{T}$ be a monad on $\mathcal{M}$ satisfying the assumption of Theorem~\ref{thm:intro-main-thm}. The forgetful functor $\Forg: \lmod{\mathbf{T}} \to \mathcal{M}$ has a left adjoint, called the free $\mathbf{T}$-module functor. An easy but important observation is that the functor $\lmod{\mathbf{T}} \to \mathcal{M}$ given by $\mathbf{M} \mapsto \mathbf{T}^{\ladj} \otimes_{\mathbf{T}} \mathbf{M}$ is a left adjoint of $\Forg^{\ladj}$ (Lemma~\ref{lem:left-adjoint-of-free}). Thus, by \eqref{eq:intro-cano-iso}, we have
\begin{equation*}
  \Forg \Nak_{\lmod{\mathbf{T}}}(\mathbf{M})
  \cong \Nak_{\mathcal{M}} \Forg^{\lladj}(\mathbf{M})
  \cong \Nak_{\mathcal{M}} (\mathbf{T}^{\ladj} \otimes_{\mathbf{T}} \mathbf{M})
\end{equation*}
for $\mathbf{M} \in \lmod{\mathbf{T}}$. This means that the underlying object of $\Nak_{\lmod{\mathbf{T}}}(\mathbf{M})$ is given as stated in Theorem \ref{thm:intro-main-thm}. To verify that $\Nak_{\lmod{\mathbf{T}}}(\mathbf{M})$ is given as stated as a $\mathbf{T}$-module, we need a bit technical discussion using \eqref{eq:intro-proof-main-thm}, which we omit in Introduction.

In Section \ref{sec:algebras-w-Fb-trace}, we give applications of our results to the category of modules over an algebra in a finite tensor category.
Let $\mathcal{C}$ be a finite multi-tensor category, and let $\mathcal{M}$ be a finite left $\mathcal{C}$-module category.
We denote by ${}_A \mathcal{M}$ the category of modules over the monad induced by an algebra $A$ in $\mathcal{C}$.
By applying the main theorem, we see that the Nakayama functor of ${}_A \mathcal{M}$ is given by \eqref{eq:Intro-monad-for-modules}.

The case where $A$ is Frobenius may be of particular interest.
In Section \ref{sec:algebras-w-Fb-trace}, we actually consider a more general setting that there is an invertible object $I \in \mathcal{C}$ and $A$ is an algebra in $\mathcal{C}$ equipped with a `non-degenerate' morphism $\lambda : A \to I$ (such a morphism $\lambda$ is called an $I$-valued Frobenius trace in this paper).
In this case, the Nakayama functor of ${}_A \mathcal{M}$ is given by $\mathbf{M} \mapsto \Nak_{\mathcal{M}}(I \catactl \mathbf{M})$ on the level of objects of $\mathcal{M}$, but the action of $A$ is twisted by an analogue of the Nakayama automorphism of $A$ as in the case of ordinary Frobenius algebras (Theorem~\ref{thm:Nakayama-for-modules-2}).
This result can be applied to Hopf algebras in a braided finite multi-tensor category (\S\ref{subsec:braid-Hopf}).
Indeed, a cointegral and the `object of integrals' play the role of $\lambda$ and $I$ of the above, respectively.

In Section \ref{sec:the-center}, we discuss the Nakayama functor of the center of a finite bimodule category. The base field is assumed to be perfect in this section.
Let $\mathcal{C}$ be a finite multi-tensor category, and let $\mathcal{C}^{\env} := \mathcal{C} \boxtimes \mathcal{C}^{\rev}$. 
Given a finite $\mathcal{C}$-bimodule category $\mathcal{M}$, the center $\mathcal{Z}(\mathcal{M})$ is defined as an analogue of the center of a bimodule over a ring.
There is an algebra $\canalg \in \mathcal{C}^{\env}$ called the canonical algebra in \cite{MR3242743}.
The algebra $\canalg$ has an $(\alpha \boxtimes \unitobj)$-valued Frobenius trace, where $\alpha = \Nak_{\mathcal{C}}(\unitobj)$.
It was shown in \cite{2017arXiv170709691S} that $\mathcal{Z}(\mathcal{M})$ is isomorphic to ${}_{\canalg}\mathcal{M}$, where $\mathcal{M}$ is regarded as a finite left $\mathcal{C}^{\env}$-module category (see \S\ref{subsec:canonical-algebra}).
By applying the arguments of Section~\ref{sec:algebras-w-Fb-trace} to ${}_{\canalg}\mathcal{M}$, we obtain an explicit formula of $\Nak_{\mathcal{Z}(\mathcal{M})}$ (Theorem~\ref{thm:Nakayama-center}).

Some applications of Theorem~\ref{thm:Nakayama-center} are exhibited in \S\ref{subsec:applications}. We introduce some of them:
Let $\mathcal{C}$ be a finite-multi tensor category.
One of important examples of the center construction is the Drinfeld center of $\mathcal{C}$. Our result implies a known fact that $\mathcal{Z}(\mathcal{C})$ is unimodular \cite{MR2097289}.
Another important example is the dual of a finite multi-tensor category.
Let $\mathcal{M}$ be a finite left $\mathcal{C}$-module category. Then the category $\mathcal{E}$ of linear right exact endofunctors on $\mathcal{M}$ is a finite $\mathcal{C}$-bimodule category in a natural way. The center of $\mathcal{E}$ is nothing but the dual of $\mathcal{C}$ with respect to $\mathcal{M}$.
Hence we reproduce a formula of the Nakayama functor of the dual finite tensor category recently obtained in \cite{2022arXiv220707031F} in a different method.

In Section \ref{sec:Hopf-examples}, we explain how our results are applied to some categories appearing in the Hopf algebra theory.
Let $H$ be a finite-dimensional Hopf algebra. In this section, we work in the category $\mathcal{C}$ of finite-dimensional left $H$-comodules.
Given two algebras $A$ and $B$ in $\mathcal{C}$, the category ${}_A \mathcal{C}_B$ of $A$-$B$-bimodules in $\mathcal{C}$ is defined (an object of this category is called a relative Hopf bimodule).
Since ${}_A\mathcal{C}_B$ is the category of modules over the monad $A \otimes (-) \otimes B$, its Nakayama functor can be computed by our result.
For practical applications, we consider algebras in $\mathcal{C}$ admitting non-degenerate grouplike-cointegral in the sense of \cite{2018arXiv181007114K}. When $A$ and $B$ are such algebras, then the Nakayama functor of ${}_A \mathcal{C}_B$ takes a simple form (Theorem~\ref{thm:relative-Hopf-bimod-Nakayama}).
By slightly extending the notion of unimodularity of a finite tensor category, we say that ${}_A\mathcal{C}_A$ is unimodular if the Nakayama functor of ${}_A \mathcal{C}_A$ fixes $A \in {}_A \mathcal{C}_A$ up to isomorphism.
Theorem~\ref{thm:relative-Hopf-bimod-Nakayama} yields handy criteria for ${}_A \mathcal{C}_A$ to be unimodular (Corollaries~\ref{cor:A-bimod-unimodular-1} and \ref{cor:A-bimod-unimodular-2}).

Some concrete examples are given in \S\ref{subsec:examples-unimodularity}.
For an algebra $A \in \mathcal{C}$, the category $\lmod{A}$ is in fact a left module category over the finite tensor category $\mathcal{D} := \lmod{H}$.
According to \cite{MR2331768}, the category ${}_A \mathcal{C}_A$ is monoidally equivalent to the dual of $\mathcal{D}$ with respect to $\mathcal{M}$. Thus our computation can also be thought of as examples of determining the unimodularity of the dual tensor category.

Our criteria for unimodularity (Corollaries~\ref{cor:A-bimod-unimodular-1} and \ref{cor:A-bimod-unimodular-2}) are applicable only for algebras in $\mathcal{C}$ admitting non-degenerate grouplike-cointegrals.
The presence of such a cointegral reduces the computation drastically, however, we shall note that it is sometimes absent. We give examples of coideal subalgebras without non-zero grouplike-cointegrals in \S\ref{subsec:no-g-cointegrals}. Such cases should also be treated, but it is beyond the scope of this paper.

\subsection{Acknowledgment}

The author thanks Taiki Shibata for discussion.
The author is supported by JSPS KAKENHI Grant Number JP20K03520.

\section{Preliminaries}
\label{sec:prelim}

\subsection{Basic notations}

Our basic reference on category theory is Mac Lane \cite{MR1712872}.
Given a category $\mathcal{C}$, we denote its opposite category by $\mathcal{C}^{\op}$.
An object $X \in \mathcal{C}$ is written as $X^{\op}$ when it is viewed as an object of $\mathcal{C}^{\op}$.
A similar notation will be used for functors.

Given a functor $F$, we denote by $F^{\ladj}$ and $F^{\radj}$ a left and a right adjoint of $F$, respectively, provided that they exist.
We say that $F$ admits a double left adjoint if a left adjoint of $F$ exists and admits a left adjoint.
The meaning of a `triple' left adjoint should be clear.
A similar phrase will be used for right adjoints.

Throughout this paper, we work over a field $\bfk$ (a technical assumption will be imposed to $\bfk$ in Section \ref{sec:the-center}).
Given a vector space $X$, we denote by $X^*$ the dual space of $X$.
By an algebra, we mean an associative and unital algebra over $\bfk$. Given two algebras $A$ and $B$, we denote by $\lmod{A}$, $\rmod{B}$ and $\bimod{A}{B}$ the category of finite-dimensional left $A$-modules, right $B$-modules and $A$-$B$-modules, respectively.
We write $\lmod{\bfk}$ as $\Vect$.

\subsection{Finite abelian categories}
\label{subsec:fin-ab-cat}

A finite abelian category is a linear category that is equivalent to $\lmod{A}$ for some finite-dimensional algebra $A$. We note that a linear category $\mathcal{M}$ is a finite abelian category if and only if $\mathcal{M}^{\op}$ is, since the opposite category of $\lmod{A}$ is equivalent to $\lmod{A^{\op}}$ by the duality.

Given two finite abelian categories $\mathcal{M}$ and $\mathcal{N}$, we denote by $\REX(\mathcal{M}, \mathcal{N})$ the category of linear right exact functors from $\mathcal{M}$ to $\mathcal{N}$.
Let $A$ and $B$ be finite-dimensional algebras.
The Eilenberg-Watts theorem states that the functor
\begin{equation}
  \label{eq:EW-equiv}
  \bimod{B}{A} \to \REX(\lmod{A}, \lmod{B}),
  \quad M \mapsto M \otimes_A(-)
\end{equation}
is an equivalence of linear categories. By this equivalence, we see that $\REX(\mathcal{M}, \mathcal{N})$ is a finite abelian category.

The equivalence \eqref{eq:EW-equiv} also shows that a linear functor $F : \mathcal{M} \to \mathcal{N}$ has a right adjoint if and only if $F$ is right exact. Applying the same argument to $F^{\op}: \mathcal{M}^{\op} \to \mathcal{N}^{\op}$, we see that $F$ has a left adjoint if and only if $F$ is left exact.

Let $\mathcal{M}$ be a finite abelian category, and let $M \in \mathcal{M}$ be an object. Since the linear functor $\Hom_{\mathcal{M}}(M, -): \mathcal{M} \to \Vect$ is left exact, it has a left adjoint.
We denote it by $(-) \copow M$.
Thus, by definition, there is a natural isomorphism
\begin{equation}
  \label{eq:def-copower}
  \Hom_{\mathcal{M}}(X \copow M, N) \cong \Hom_{\bfk}(X, \Hom_{\mathcal{M}}(M, N))
\end{equation}
for $X \in \Vect$ and $N \in \mathcal{M}$. The assignment $(X, M) \mapsto X \copow M$ extends to a bilinear functor from $\Vect \times \mathcal{M}$ to $\mathcal{M}$, which we call the {\em copower}, in such a way that \eqref{eq:def-copower} is also natural in $M$.
Using the terminology to be introduced in \S\ref{subsec:FTC-modules}, the category $\mathcal{M}$ is a finite module category over $\Vect$.

\subsection{Finite tensor categories and their modules}
\label{subsec:FTC-modules}

For basics on monoidal categories, we refer the reader to \cite{MR1712872} and \cite{MR3242743}.
We assume that all monoidal categories are strict in view of Mac Lane's strictness theorem. Given a monoidal category $\mathcal{C}$, we usually denote by $\otimes$ and $\unitobj$ the monoidal product and the unit object of $\mathcal{C}$, respectively.
We denote by $\mathcal{C}^{\rev}$ the monoidal category obtained from $\mathcal{C}$ by reversing the order of the monoidal product.

We follow \cite[\S2.10]{MR3242743} for terminology for dual objects in a monoidal category.
Let $\mathcal{C}$ be a rigid monoidal category, that is, a monoidal category of which every object has a left and a right dual object.
We usually denote a left dual object of $X \in \mathcal{C}$ by
$(X^{\vee}, \eval_X : X^{\vee} \otimes X \to \unitobj, \coev_X : \unitobj \to X \otimes X^{\vee})$.
The assignment $X \mapsto X^{\vee}$ gives rise to a contravariant monoidal equivalence $\mathcal{C} \to \mathcal{C}^{\rev}$, which we call the left duality functor.
A quasi-inverse of $(-)^{\vee}$, which we denote by ${}^{\vee}(-)$, is given by taking a right dual object.
By replacing $\mathcal{C}$ with an equivalent one and choosing dual objects in an appropriate way, we may assume that $(-)^{\vee}$ and ${}^{\vee}(-)$ are mutually inverse strict monoidal functors.

\begin{definition}[\cite{MR3242743}]
  A {\em finite multi-tensor category} is a finite abelian category equipped with a structure of a rigid monoidal category whose monoidal product is bilinear.
  A {\em finite tensor category} is a finite multi-tensor category whose unit object is simple.
\end{definition}

Given a monoidal category $\mathcal{C}$, a {\em left $\mathcal{C}$-module category} \cite{MR3242743} is a category $\mathcal{M}$ endowed with a functor $\catactl: \mathcal{C} \times \mathcal{M} \to \mathcal{M}$, called the {\em action} of $\mathcal{C}$ on $\mathcal{M}$, and natural isomorphisms
\begin{equation}
  \label{eq:mod-cat-assoc}
  (X \otimes Y) \catactl M \cong X \catactl (Y \catactl M)
  \quad \text{and} \quad
  \unitobj \catactl M \cong M
  \quad (X, Y \in \mathcal{C}, M \in \mathcal{M})
\end{equation}
satisfying certain axioms similar to those for monoidal categories.
A right module category and a bimodule category are defined analogously.

Let $\mathcal{M}$ and $\mathcal{N}$ be left $\mathcal{C}$-module categories.
A {\em lax left $\mathcal{C}$-module functor} \cite{MR3934626} from $\mathcal{M}$ to $\mathcal{N}$ is a pair $(F, \xi)$ consisting of a functor $F: \mathcal{M} \to \mathcal{N}$ and a natural transformation $\xi_{X,M} : X \catactl F(M) \to F(X \catactl M)$ ($X \in \mathcal{C}$, $M \in \mathcal{M}$) obeying certain axioms similar to those for monoidal functors.
A lax left $\mathcal{C}$-module functor $(F, \xi)$ is said to be {\em strong} if the structure morphism $s$ is invertible.
When $\mathcal{C}$ is rigid, every lax left $\mathcal{C}$-module functor is strong \cite[Lemma 2.10]{MR3934626} and thus the adjective `lax' is usually omitted.

Left $\mathcal{C}$-module categories, lax left $\mathcal{C}$-module functors and their morphisms form a 2-category.
An equivalence of left $\mathcal{C}$-module categories is defined as an equivalence in this 2-category.
A left $\mathcal{C}$-module category is said to be strict if the natural isomorphisms \eqref{eq:mod-cat-assoc} are the identities.
One can prove an analogue of Mac Lane's strictness theorem for module categories \cite[Remark 7.2.4]{MR3242743}. Hence we may, and do, assume that all module categories are strict in this paper.

\begin{definition}
  Let $\mathcal{C}$ be a finite multi-tensor category.
  A {\em finite left $\mathcal{C}$-module category} is a finite abelian category $\mathcal{M}$ equipped with a structure of a left $\mathcal{C}$-module category such that the action of $\mathcal{C}$ on $\mathcal{M}$ is linear and right exact in each variables. A finite right $\mathcal{C}$-module category and a finite $\mathcal{C}$-bimodule category are defined in an analogous way.
\end{definition}

Despite that we only assume its right exactness, the action of $\mathcal{C}$ on a finite left $\mathcal{C}$-module category is {\em exact} in each variable \cite[Corollary 2.26]{MR3934626}.

\subsection{Nakayama functor}
\label{subsec:Nakayama}

Let $\mathcal{M}$ be a finite abelian category.
In \cite{MR4042867}, the Nakayama functor of $\mathcal{M}$ is defined to be the endofunctor $\Nak_{\mathcal{M}}$ on $\mathcal{M}$ given by
\begin{equation*}
  \Nak_{\mathcal{M}}(M) = \int^{X \in \mathcal{M}} \Hom_{\mathcal{M}}(M, X)^* \copow X
\end{equation*}
for $M \in \mathcal{M}$, where the integral means coends \cite{MR1712872}.
When the category $\mathcal{M}$ is clear from the context, we write $\Nak_{\mathcal{M}}$ as $\Nak$.

A finite abelian category $\mathcal{M}$ is said to be {\em Frobenius} if the class of projective objects of $\mathcal{M}$ coincides with the class of injective objects of $\mathcal{M}$.
We say that $\mathcal{M}$ is {\em symmetric Frobenius} if $\mathcal{M} \approx \lmod{A}$ for some symmetric Frobenius algebra $A$.
According to \cite[Proposition 3.24]{MR4042867}, $\mathcal{M}$ is Frobenius ({\em respectively}, symmetric Frobenius) if and only if the functor $\Nak_{\mathcal{M}}$ is an equivalence ({\it respectively}, $\Nak_{\mathcal{M}}$ is isomorphic to $\id_{\mathcal{M}}$).

One of key features of the Nakayama functor is the following canonical isomorphism:
Let $G: \mathcal{M} \to \mathcal{N}$ be a linear functor between finite abelian categories.
If $G$ admits a triple right adjoint, then there is a natural isomorphism
\begin{equation}
  \label{eq:Nakayama-cano-iso}
  \can_G(M): G^{\rradj}\Nak(M) \to \Nak G(M)
  \quad (M \in \mathcal{M})
\end{equation}
that is `natural' in $G$ and `coherent' in a certain sense \cite[Theorem 3.18]{MR4042867}. The construction of~\eqref{eq:Nakayama-cano-iso} will be recalled in Section \ref{sec:Nakayama-and-double-adj}.

\subsection{Radford isomorphism}
\label{subsec:radford-iso}

Let $\mathcal{C}$ be a finite multi-tensor category, and let $\mathcal{M}$ be a finite left $\mathcal{C}$-module category with action $\catactl : \mathcal{C} \times \mathcal{M} \to \mathcal{M}$.
Then the functor
\begin{equation*}
  \act_{\mathcal{M}}: \mathcal{C} \to \REX(\mathcal{M}) := \REX(\mathcal{M}, \mathcal{M}),
  \quad \act_{\mathcal{M}}(X)(M) = X \catactl M
\end{equation*}
is a linear strong monoidal functor, where $\REX(\mathcal{M})$ is viewed as a monoidal category by the composition of functors. Since a strong monoidal functor preserves duals, there is an adjunction $\act_{\mathcal{M}}(X) \dashv \act_{\mathcal{M}}({}^{\vee\!}X)$ for each object $X \in \mathcal{C}$.
The functor $\Nak_{\mathcal{M}}$ is a `twisted' left $\mathcal{C}$-module functor by the natural isomorphism
\begin{equation}
  \label{eq:cat-action-left-Nakayama}
  {}^{\vee\vee\!}X \catactl \Nak_{\mathcal{M}}(M)
  \cong \Nak_{\mathcal{M}}(X \catactl M)
  \quad (X \in \mathcal{C}, M \in \mathcal{M})
\end{equation}
obtained by letting $G = \act_{\mathcal{M}}(X)$ in \eqref{eq:Nakayama-cano-iso}.

An analogous result holds for finite right module categories: Let $\mathcal{C}$ be as above, and let $\mathcal{M}$ be a finite right $\mathcal{C}$-module category with action $\catactr: \mathcal{M} \times \mathcal{C} \to \mathcal{M}$. Then $\Nak_{\mathcal{M}}$ has a twisted right $\mathcal{C}$-module structure
\begin{equation}
  \label{eq:cat-action-right-Nakayama}
  \Nak_{\mathcal{M}}(M) \catactr X^{\vee\vee}
  \cong \Nak_{\mathcal{M}}(M \catactr X)
  \quad (X \in \mathcal{C}, M \in \mathcal{M}).
\end{equation}

The category $\mathcal{C}$ itself is a finite $\mathcal{C}$-bimodule category by the tensor product of $\mathcal{C}$. Hence, as noted in \cite{MR4042867}, there are natural isomorphisms
\begin{equation}
  \label{eq:FTC-Nakayama-formula}
  {}^{\vee\vee\!}X \otimes \Nak_{\mathcal{C}}(\unitobj)
  \mathop{\cong}^{\eqref{eq:cat-action-left-Nakayama}}
  \Nak_{\mathcal{C}}(X \otimes \unitobj)
  = \Nak_{\mathcal{C}}(X)
  = \Nak_{\mathcal{C}}(\unitobj \otimes X)
  \mathop{\cong}^{\eqref{eq:cat-action-right-Nakayama}}
  \Nak_{\mathcal{C}}(\unitobj) \otimes X^{\vee\vee}
\end{equation}
for $X \in \mathcal{C}$.
The object $\Nak_{\mathcal{C}}(\unitobj)$ is of particular importance.
If $\mathcal{C} = \lmod{H}$ for some finite-dimensional (quasi-)Hopf algebra $H$, then $\Nak_{\mathcal{C}}(\unitobj)$ is given by the modular function on $H$. Accordingly, we introduce the following terminology:

\begin{definition}
  Let $\mathcal{C}$ be a finite abelian category equipped with a structure of a monoidal category. We call $\alpha_{\mathcal{C}} := \Nak_{\mathcal{C}}(\unitobj)$ the {\em modular object} of $\mathcal{C}$. We say that $\mathcal{C}$ is {\em unimodular} if $\Nak_{\mathcal{C}}(\unitobj)$ is isomorphic to the unit object.
\end{definition}

Radford's $S^4$-formula for finite-dimensional Hopf algebras has been extended to finite multi-tensor category as a formula of the quadruple dual \cite{MR2097289}.
As pointed out in \cite{MR4042867}, the $S^4$-formula of \cite{MR2097289} follows from a basic property of the Nakayama functor. To give a detail, we first introduce:

\begin{definition}
  For a finite multi-tensor category $\mathcal{C}$, we define
  \begin{equation*}
    \Radford_X := \left(
      \alpha_{\mathcal{C}} \otimes X^{\vee\vee}
      \xrightarrow[\cong]{\quad \eqref{eq:cat-action-right-Nakayama} \quad}
      \Nak_{\mathcal{C}}(X)
      \xrightarrow[\cong]{\quad \eqref{eq:cat-action-left-Nakayama} \quad}
      {}^{\vee\vee\!}X \otimes \alpha_{\mathcal{C}}
    \right)
    \quad (X \in \mathcal{C})
  \end{equation*}
  and call $\Radford$ the {\em Radford isomorphism} of $\mathcal{C}$.
\end{definition}

Since a finite multi-tensor category is Frobenius \cite{MR3242743}, the functor $\Nak_{\mathcal{C}}$ is an equivalence. By the formula \eqref{eq:FTC-Nakayama-formula} of the Nakayama functor, the modular object $\alpha_{\mathcal{C}}$ is invertible. Hence the Radford isomorphism induces a natural isomorphism
\begin{equation*}
  X^{\vee\vee\vee\vee}
  \xrightarrow{\quad \eval^{-1} \otimes \id \quad}
  \alpha_{\mathcal{C}}^{\vee} \otimes \alpha_{\mathcal{C}} \otimes X^{\vee\vee\vee\vee}
  \xrightarrow{\quad \id \otimes \Radford_{X^{\vee\vee}}^{} \quad}
  \alpha_{\mathcal{C}}^{\vee} \otimes X \otimes \alpha_{\mathcal{C}}
\end{equation*}
for $X \in \mathcal{C}$. As has been explained in \cite[Section 4]{2017arXiv170709691S}, this isomorphism coincides with the Radford $S^4$-formula given in \cite{MR2097289}.

\begin{remark}
  \label{rem:Nakayama-twisted-module-structure}
  Let $\mathcal{C}$ be a finite multi-tensor category.
  In view of \eqref{eq:FTC-Nakayama-formula}, we may assume that $\Nak_{\mathcal{C}}$ is given by $\Nak_{\mathcal{C}}(X) = {}^{\vee\vee\!}X \otimes \alpha_{\mathcal{C}}$ for $X \in \mathcal{C}$. Under this choice of the Nakayama functor (and our assumption that the double dual functor of $\mathcal{C}$ is strict monoidal), the twisted left $\mathcal{C}$-module structure \eqref{eq:cat-action-left-Nakayama} of $\Nak_{\mathcal{C}}$ is the identity morphism. The twisted right $\mathcal{C}$-module structure \eqref{eq:cat-action-right-Nakayama} is given by
  \begin{equation*}
    \id_{{}^{\vee\vee\!}M} \otimes \Radford_X:
    \Nak_{\mathcal{C}}(M) \otimes X^{\vee\vee}
    \to \Nak_{\mathcal{C}}(M \otimes X)
  \end{equation*}
  for $M, X \in \mathcal{C}$.
\end{remark}

\section{Nakayama functors and double adjoints}
\label{sec:Nakayama-and-double-adj}

\subsection{Construction of the canonical isomorphism}

In this section, we collect useful formulas of Nakayama functors from \cite{MR4042867} and discuss how the canonical isomorphism \eqref{eq:Nakayama-cano-iso} looks like in various settings.
We first recall the following lemma for (co)ends:

\begin{lemma}
  \label{lem:adjoints-ends}
  Let $T: \mathcal{B}^{\op} \times \mathcal{A} \to \mathcal{V}$ and $F: \mathcal{A} \to \mathcal{B}$ be functors, where $\mathcal{A}$, $\mathcal{B}$ and $\mathcal{V}$ are categories.
  If $F$ has a left adjoint, then we have
  \begin{equation}
    \label{eq:adjoint-end-iso}
    \int_{X \in \mathcal{A}} T(F(X), X)
    \cong \int_{Y \in \mathcal{B}} T(Y, F^{\ladj}(Y)),
  \end{equation}
  meaning that the end of the left hand side exists if and only if so does the right hand side and, if they exist, they are canonically isomorphic.
  If $F$ has a right adjoint, then we have
  \begin{equation}
    \label{eq:adjoint-coend-iso}
    \int^{X \in \mathcal{A}} T(F(X), X)
    \cong \int^{Y \in \mathcal{B}} T(Y, F^{\radj}(Y))
  \end{equation}
  with a similar meaning as the case of ends.
\end{lemma}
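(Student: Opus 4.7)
The plan is to prove the end statement by constructing a natural bijection between wedges, and then derive the coend statement by dualization. Fix the end case with $F^{\ladj} \dashv F$, unit $\eta : \id_{\mathcal{B}} \to F \circ F^{\ladj}$, and counit $\varepsilon : F^{\ladj} \circ F \to \id_{\mathcal{A}}$. By the universal property of ends, it suffices to exhibit, for every $V \in \mathcal{V}$, a bijection between wedges $\alpha_X : V \to T(F(X), X)$ ($X \in \mathcal{A}$) and wedges $\beta_Y : V \to T(Y, F^{\ladj}(Y))$ ($Y \in \mathcal{B}$), natural in $V$, and to show that this bijection carries the terminal wedge on one side to the terminal wedge on the other.

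Given a wedge $\alpha$, I define
\begin{equation*}
  \beta_Y := \Bigl( V \xrightarrow{\ \alpha_{F^{\ladj}(Y)} \ } T(F F^{\ladj}(Y), F^{\ladj}(Y)) \xrightarrow{\ T(\eta_Y, \id) \ } T(Y, F^{\ladj}(Y)) \Bigr),
\end{equation*}
and given a wedge $\beta$, I define
\begin{equation*}
  \alpha_X := \Bigl( V \xrightarrow{\ \beta_{F(X)} \ } T(F(X), F^{\ladj} F(X)) \xrightarrow{\ T(\id, \varepsilon_X) \ } T(F(X), X) \Bigr).
\end{equation*}
The first step of the proof is routine but not entirely trivial: I must verify that these assignments genuinely produce wedges. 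For $\beta$ built from $\alpha$, dinaturality along $g : Y \to Y'$ follows by applying the dinaturality of $\alpha$ to $F^{\ladj}(g) : F^{\ladj}(Y) \to F^{\ladj}(Y')$ and post-composing with the naturality of $\eta$; symmetrically for $\alpha$ built from $\beta$.

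Next I check that the two assignments are mutually inverse. Starting from $\alpha$, going to $\beta$ and back produces
\begin{equation*}
  T(\id, \varepsilon_X) \circ T(\eta_{F(X)}, \id) \circ \alpha_{F^{\ladj} F(X)},
\end{equation*}
which by the dinaturality of $\alpha$ applied to $\varepsilon_X : F^{\ladj} F(X) \to X$ (which sends $F(\varepsilon_X)$ on one side to $\id$ on the other via the triangle identity $F(\varepsilon_X) \circ \eta_{F(X)} = \id_{F(X)}$) equals $\alpha_X$. The opposite composition uses the other triangle identity $\varepsilon_{F^{\ladj}(Y)} \circ F^{\ladj}(\eta_Y) = \id_{F^{\ladj}(Y)}$. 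Both conversions are manifestly natural in $V$, so Yoneda yields \eqref{eq:adjoint-end-iso} with the understanding that either side exists exactly when the other does (since representability of one wedge functor forces representability of the other).

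The coend assertion \eqref{eq:adjoint-coend-iso} follows formally by applying the end version to $T^{\op} : \mathcal{A}^{\op} \times \mathcal{B} \to \mathcal{V}^{\op}$ together with $F^{\op} : \mathcal{B}^{\op} \to \mathcal{A}^{\op}$, noting that $(F^{\op})^{\ladj} = (F^{\radj})^{\op}$. The only mildly delicate point in the whole argument is tracking dinaturality squares carefully when invoking the triangle identities; beyond that, everything is pure formalism.
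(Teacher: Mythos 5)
Your proof is correct and takes essentially the same route as the paper (which cites [BV12, Lemma 3.8] and records the same comparison maps): the comparison (co)wedges are built from the unit and counit, their dinaturality follows from naturality of $\eta$ and $\varepsilon$, mutual inversion from the triangle identities, and transfer of (co)end existence from Yoneda. The only slip is a bookkeeping one in the dualization: $F^{\op}$ goes $\mathcal{A}^{\op}\to\mathcal{B}^{\op}$, not $\mathcal{B}^{\op}\to\mathcal{A}^{\op}$, so with the variable order $\mathcal{A}^{\op}\times\mathcal{B}$ you chose for the opposite of $T$ the functor playing the role of $F$ must be $F^{\radj}:\mathcal{B}\to\mathcal{A}$ (whose left adjoint is $F$), whereas $F^{\op}$ with left adjoint $(F^{\radj})^{\op}$ matches the unswapped form $\mathcal{B}\times\mathcal{A}^{\op}\to\mathcal{V}^{\op}$; either consistent choice yields \eqref{eq:adjoint-coend-iso}.
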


The isomorphism \eqref{eq:adjoint-coend-iso} is given in \cite[Lemma 3.8]{MR2869176}, and the isomorphism \eqref{eq:adjoint-end-iso} is obtained by the dual argument.
For reader's convenience, we include how the isomorphism \eqref{eq:adjoint-coend-iso} is established. Let $C$ and $D$ be the left and the right hand side of \eqref{eq:adjoint-coend-iso}, respectively, and let
\begin{equation*}
  i_X : T(F(X),X) \to C
  \quad (X \in \mathcal{A})
  \quad \text{and} \quad
  j_Y : T(Y, F^{\radj}(Y)) \to D
  \quad (Y \in \mathcal{B})
\end{equation*}
be the universal dinatural transformations for these coends.
Then the isomorphism of the above lemma, which we denote by $\phi : C \to D$, and its inverse are characterized as unique morphisms in $\mathcal{V}$ such that the equations
\begin{equation*}
  \phi \circ i_X = j_{F(X)} \circ T(\id_{F(X)}, \eta_{X})
  \quad \text{and} \quad
  \phi^{-1} \circ j_{Y} = i_{F^{\radj}(Y)} \circ T(\varepsilon_Y, \id_{F^{\radj}(Y)})
\end{equation*}
hold for all objects $X \in \mathcal{A}$ and $Y \in \mathcal{B}$, where $\eta$ and $\varepsilon$ are the unit and the counit of the adjunction $F \dashv F^{\radj}$.

Let $\mathcal{M}$ and $\mathcal{N}$ be finite abelian categories, and let $G: \mathcal{M} \to \mathcal{N}$ be a linear functor admitting a triple right adjoint. The canonical isomorphism
\begin{equation*}
  \tag{\ref{eq:Nakayama-cano-iso}}
  \can_G(M) : G^{\rradj} \Nak(M) \to \Nak G(M)
  \quad (M \in \mathcal{M})
\end{equation*}
mentioned in \S\ref{subsec:Nakayama} is given by the composition
\begin{align*}
  G^{\rradj} \Nak(M)
  & \textstyle = G^{\rradj}(\int^{X \in \mathcal{M}} \Hom_{\mathcal{M}}(M, X)^* \copow X) \\
  & \textstyle \cong \int^{X \in \mathcal{M}} \Hom_{\mathcal{M}}(M, X)^* \copow G^{\rradj}(X) \\
  & \textstyle \cong \int^{X \in \mathcal{N}} \Hom_{\mathcal{M}}(M, G^{\radj}(X))^* \copow X \\
  & \textstyle \cong \int^{X \in \mathcal{N}} \Hom_{\mathcal{M}}(G(M), X)^* \copow X
    = \Nak G(M),
\end{align*}
where the first isomorphism follows from that $G^{\rradj}$ preserves copowers and colimits as it has a right adjoint, the second one is given by Lemma~\ref{lem:adjoints-ends} and the last one is the adjunction isomorphism.

There are natural isomorphisms
\begin{equation}
  \label{eq:Nakayama-Hom-1}
  \begin{aligned}
    \Hom_{\mathcal{M}}(\Nak(M), M')
    & \textstyle \cong \int_{X \in \mathcal{M}} \Hom_{\mathcal{M}}(\Hom_{\mathcal{M}}(X, M')^* \copow M, X) \\
    & \textstyle \cong \int_{X \in \mathcal{M}} \Hom_{\bfk}(\Hom_{\mathcal{M}}(X, M')^*, \Hom_{\mathcal{M}}(M, X)) \\
    & \textstyle \cong \int_{X \in \mathcal{M}} \Hom_{\mathcal{M}}(X, M') \otimes_{\bfk} \Hom_{\mathcal{M}}(M, X) \\
  \end{aligned}
\end{equation}
for $M, M' \in \mathcal{M}$.
By the construction of the isomorphism \eqref{eq:Nakayama-cano-iso}, it is straightforward to verify the following lemma:

\begin{lemma}
  \label{lem:Nakayama-adj-diagram}
  For $G$ as above, the diagram of Figure \ref{fig:Nakayama-diagram-1} commutes for all objects $M \in \mathcal{M}$ and $N \in \mathcal{N}$ (the Hom functor is denoted by $[-,-]$ to save spaces in the diagram).
\end{lemma}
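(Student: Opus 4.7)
The plan is to reduce the commutativity to a componentwise identity at the level of the dinatural families that present the two coends $\Nak(M)$ and $\Nak G(M)$. A morphism out of $\Nak(M)$ is uniquely determined by its precomposition with the structural dinatural maps $i_X : \Hom_{\mathcal{M}}(M,X)^{*} \copow X \to \Nak(M)$; conversely, \eqref{eq:Nakayama-Hom-1} is precisely the assertion that applying $\Hom_{\mathcal{M}}(-,M')$ to this universal family yields an end. Thus the commutativity of the diagram in Figure~\ref{fig:Nakayama-diagram-1} is equivalent, after passing to the universal dinatural wedge, to an equation between two families of maps indexed by $X \in \mathcal{M}$, one arising from $\can_G$ followed by the coend-to-end description \eqref{eq:Nakayama-Hom-1}, and the other from the Hom-adjunctions for the chain $G \dashv G^{\radj} \dashv G^{\rradj}$.

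I would then substitute the three building blocks that define $\can_G$ and check the equation on $X$-components. The first block pulls $G^{\rradj}$ inside the coend using that $G^{\rradj}$ preserves colimits (since it has its own right adjoint $G^{\rrradj}$); the second block is the reindexing isomorphism of Lemma~\ref{lem:adjoints-ends}, whose explicit formula (recalled in terms of the dinatural families right after Lemma~\ref{lem:adjoints-ends}) is determined by the unit $\eta$ of $G^{\radj} \dashv G^{\rradj}$; the third block is the Hom-adjunction for $G \dashv G^{\radj}$. Each of these can be transcribed into an operation on dinatural families, so the chase reduces to verifying that the transpose, under the adjunction $G^{\rradj} \dashv G^{\rrradj}$ and the copower adjunction \eqref{eq:def-copower}, of the universal family for $G^{\rradj}\Nak(M)$ matches the family obtained by composing the two Hom-adjunction isomorphisms with the structural family for $\Nak G(M)$.

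The main obstacle is purely bookkeeping: three adjunctions are in play, together with the copower/Hom adjunction and two separate coend manipulations, so identifying which unit or counit arises at each step demands care. Once the diagram is unwound on dinatural components, however, the identity is a direct consequence of the naturality of the relevant adjunction isomorphisms together with the explicit characterization of \eqref{eq:adjoint-coend-iso} in terms of the unit $\eta$; there is no essential content beyond a triangle identity. In particular, the verification is routine and the argument needs no additional hypothesis on $G$ beyond the existence of the triple right adjoint already assumed for \eqref{eq:Nakayama-cano-iso}.
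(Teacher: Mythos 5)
Your proposal matches the paper's approach: the paper's own proof is the single sentence that the claim is ``straightforward to verify'' from the construction of $\can_G$, and your plan of unwinding $\can_G$ into its three constituent isomorphisms, passing to natural/dinatural components via \eqref{eq:Nakayama-Hom-1} and the copower adjunction, and then matching the two routes using the explicit unit/counit description of Lemma~\ref{lem:adjoints-ends} is precisely what that verification amounts to. One small imprecision: since the diagram in Figure~\ref{fig:Nakayama-diagram-1} lives at the level of ends, the relevant instance of Lemma~\ref{lem:adjoints-ends} in the right-hand column is \eqref{eq:adjoint-end-iso} rather than \eqref{eq:adjoint-coend-iso} (the two being related by applying $[-,N]$ to the coend description), but this does not affect the substance of the argument.
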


\begin{figure}
  \centering
  \makebox[\textwidth][c]{
    \begin{tikzcd}[ampersand replacement=\&, row sep = 5pt, column sep = 48pt]
      [\Nak G(M), N]
      \arrow[r, "\text{\eqref{eq:Nakayama-Hom-1}}"]
      \arrow[dd, "{[\can_G(M), N]}"']
      \& \textstyle \int_{X \in \mathcal{N}} [G(M), X] \otimes_{\bfk} [X, N]
      \arrow[d, "\text{adjunction}"] \\[15pt]
      \& \textstyle \int_{X \in \mathcal{N}} [M, G^{\radj}(X)]
      \otimes_{\bfk} [X, N]
      \arrow[dd, "\text{Lemma~\ref{lem:adjoints-ends}}"] \\[0pt]
      [G^{\rradj}\Nak(M), N]
      \arrow[dd, "\text{adjunction}"'] \\[0pt]
      \& \textstyle \int_{X \in \mathcal{M}} [M, X] \otimes_{\bfk} [G^{\rradj}(X), N]
      \arrow[d, "\text{adjunction}"] \\[15pt]
      [\Nak(M), G^{\rrradj}(N)]
      \arrow[r, "\text{\eqref{eq:Nakayama-Hom-1}}"]
      \& \textstyle \int_{X \in \mathcal{M}} [M, X] \otimes_{\bfk} [X, G^{\rrradj}(N)]
    \end{tikzcd}}
  \caption{}
  \label{fig:Nakayama-diagram-1}
\end{figure}

\subsection{Nakayama functor of the opposite category}
\label{subsec:Nakayama-opposite}

We discuss the Nakayama functor of the opposite category of a finite abelian category.
Let $\mathcal{M}$ be a finite abelian category. There is an endofunctor
\begin{equation*}
  \overline{\Nak}_{\mathcal{M}}: \mathcal{M} \to \mathcal{M},
  \quad \overline{\Nak}_{\mathcal{M}}(M) = \int_{X \in \mathcal{M}} \Hom_{\mathcal{M}}(X, M) \copow X
\end{equation*}
called the left exact analogue of the Nakayama functor in \cite[Definition 3.14]{MR4042867}.
The functor $\overline{\Nak}_{\mathcal{M}}$ is nothing but the Nakayama functor of $\mathcal{M}^{\op}$ viewed as an endofunctor on $\mathcal{M}$. Namely, we have
\begin{equation}
  \label{eq:Nakayama-oppo}
  \Nak_{\mathcal{M}^{\op}} = (\overline{\Nak}_{\mathcal{M}})^{\op},
\end{equation}
as noted in \cite[Proposition 3.20]{MR4042867}.

Since the functor $\Hom_{\mathcal{M}}(M, -) : \mathcal{M} \to \Vect$ preserves ends and copowers, there is a natural isomorphism
\begin{equation}
  \label{eq:Nakayama-Hom-2}
  \Hom_{\mathcal{M}}(M, \overline{\Nak}_{\mathcal{M}}(M')
  \cong \int_{X \in \mathcal{M}} \Hom_{\mathcal{M}}(M, X) \otimes_{\bfk} \Hom_{\mathcal{M}}(X, M')
\end{equation}
for $M, M' \in \mathcal{M}$. Thus we obtain a natural isomorphism
\begin{equation}
  \label{eq:Nakayama-Hom-3}
  \Hom_{\mathcal{M}}(\Nak_{\mathcal{M}}(M), M')
  \cong
  \Hom_{\mathcal{M}}(M, \overline{\Nak}_{\mathcal{M}}(M'))
  \quad (M, M' \in \mathcal{M})
\end{equation}
by composing \eqref{eq:Nakayama-Hom-1} and \eqref{eq:Nakayama-Hom-2}.
This means that $\overline{\Nak}_{\mathcal{M}}$ is right adjoint to $\Nak_{\mathcal{M}}$ \cite[Lemma 3.16]{MR4042867}.
In what follows, we choose $\overline{\Nak}_{\mathcal{M}}$ as a right adjoint of $\Nak_{\mathcal{M}}$ together with the adjunction isomorphism \eqref{eq:Nakayama-Hom-3}.

Now let $F: \mathcal{N} \to \mathcal{M}$ be a linear functor between finite abelian categories $\mathcal{M}$ and $\mathcal{N}$. We assume that $F$ admits a triple left adjoint so that $F^{\op} : \mathcal{N}^{\op} \to \mathcal{M}^{\op}$ admits a triple right adjoint. Then there is a canonical isomorphism
\begin{equation*}
  \can_{F^{\op}}:
  (F^{\op})^{\rradj} \circ \Nak_{\mathcal{N}^{\op}} \to
  \Nak_{\mathcal{M}^{\op}} \circ F^{\op}
\end{equation*}
by \eqref{eq:Nakayama-cano-iso} with $G = F^{\op}$.
We note that $\can_{F^{\op}}$ is an isomorphism of functors from $\mathcal{N}^{\op}$ to $\mathcal{M}^{\op}$. Thus $\can_{F^{\op}}$ is actually a family
\begin{equation*}
  \can_{F^{\op}} = \{ \can_{F^{\op}}(N) : \Nak^{\radj} F(N) \to F^{\lladj}\Nak^{\radj}(N) \}_{N \in \mathcal{N}}
\end{equation*}
of isomorphisms in $\mathcal{M}$.

\begin{lemma}
  \label{lem:Nakayama-iso-opposite}
  Let $F$ be as above, and let $G = F^{\llladj}$ be a triple left adjoint of $F$.
  Then, as a natural transformation between functors from $\mathcal{N}$ to $\mathcal{M}$, the canonical isomorphism $\can_{F^{\op}}$ is given by the composition
  \begin{equation*}
    \can_{F^{\op}} = \left(
      \Nak^{\radj} F
      \xrightarrow{\quad \cong \quad} (G^{\rradj} \Nak)^{\radj}
      \xrightarrow{\quad (\can_{G}^{-1})^{\radj} \quad} (\Nak G)^{\radj}
      \xrightarrow{\quad \cong \quad} F^{\lladj} \Nak^{\radj}
    \right),
  \end{equation*}
  where the first and the third arrows represent the canonical isomorphism $(S \circ T)^{\radj} \cong T^{\radj} \circ S^{\radj}$ for composable functors $S$ and $T$ admitting right adjoints.
\end{lemma}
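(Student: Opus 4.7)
Both sides of the claimed identity are natural transformations $\Nak_{\mathcal{M}}^{\radj} F \to F^{\lladj} \Nak_{\mathcal{N}}^{\radj}$ between the same pair of functors $\mathcal{N} \to \mathcal{M}$. The plan is to identify each of them by means of the universal characterization supplied by Lemma~\ref{lem:Nakayama-adj-diagram}, and to observe that the two characterizations coincide after a translation via \eqref{eq:Nakayama-oppo} and \eqref{eq:Nakayama-Hom-3}. Concretely, it will suffice to show that the induced maps $\Hom_{\mathcal{M}}(M, \Nak_{\mathcal{M}}^{\radj} F(N)) \to \Hom_{\mathcal{M}}(M, F^{\lladj} \Nak_{\mathcal{N}}^{\radj}(N))$ coincide for every $M \in \mathcal{M}$ and $N \in \mathcal{N}$.

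First, I would unfold $\can_{F^{\op}}$. Applying Lemma~\ref{lem:Nakayama-adj-diagram} to $F^{\op}: \mathcal{N}^{\op} \to \mathcal{M}^{\op}$ with objects $N^{\op}$ and $M^{\op}$, and translating the resulting diagram back to $\mathcal{M}$ and $\mathcal{N}$ via \eqref{eq:Nakayama-oppo} together with the identification of copowers and coends in $\mathcal{M}^{\op}$ with powers and ends in $\mathcal{M}$, the instance of \eqref{eq:Nakayama-Hom-1} living in $\mathcal{M}^{\op}$ becomes an instance of \eqref{eq:Nakayama-Hom-2} in $\mathcal{M}$. The resulting commutative square displays $\Hom_{\mathcal{M}}(M, \can_{F^{\op}}(N))$ as the unique map fitting between the two end presentations via \eqref{eq:Nakayama-Hom-2}, the adjunctions $F^{\ladj} \dashv F$ and $F^{\lladj} \dashv F^{\ladj}$, and the end-version change of variables of Lemma~\ref{lem:adjoints-ends}.

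Second, I would unfold the composite displayed in the statement. Apply Lemma~\ref{lem:Nakayama-adj-diagram} directly to $G: \mathcal{M} \to \mathcal{N}$ with objects $M$ and $N$, obtaining a square that characterizes $\Hom_{\mathcal{N}}(\can_G(M), N)$. At each vertex, apply \eqref{eq:Nakayama-Hom-3} to replace $\Hom_{\mathcal{N}}(-, N)$ and $\Hom_{\mathcal{M}}(-, F(N))$ by $\Hom_{\mathcal{M}}(M, -)$ evaluated on right adjoints; this is the categorical operation of \emph{taking the right adjoint of a natural transformation}, and it produces a square involving $\Hom_{\mathcal{M}}(M, (\can_G^{-1})^{\radj}(N))$, decorated by the canonical isomorphisms $(G^{\rradj}\Nak_{\mathcal{M}})^{\radj} \cong \Nak_{\mathcal{M}}^{\radj} F$ and $(\Nak_{\mathcal{N}} G)^{\radj} \cong F^{\lladj} \Nak_{\mathcal{N}}^{\radj}$ appearing in the statement. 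This transformed square coincides, edge by edge, with the square produced in the first step, so the uniqueness of the map making the square commute forces equality of the two natural transformations.

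The main technical obstacle I foresee is the bookkeeping of the two translations. Specifically, one must verify that (i) the isomorphism \eqref{eq:Nakayama-Hom-1} applied in $\mathcal{M}^{\op}$ corresponds to \eqref{eq:Nakayama-Hom-2} applied in $\mathcal{M}$ under the identification \eqref{eq:Nakayama-oppo}; and (ii) the coend change of variables \eqref{eq:adjoint-coend-iso} applied to $F^{\op}$ corresponds to the end version \eqref{eq:adjoint-end-iso} applied to $F$. Both compatibilities reduce to the explicit descriptions of the universal (co)wedges recorded after Lemma~\ref{lem:adjoints-ends} and to the defining property \eqref{eq:def-copower} of the copower, but they must be unwound with care to ensure the square of the second step matches that of the first step on the nose.
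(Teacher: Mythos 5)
Your proposal is correct and takes essentially the same approach as the paper: both apply Lemma~\ref{lem:Nakayama-adj-diagram} twice — once to $G = F^{\llladj}$ and once to $F^{\op}$ — and then match the resulting Hom-level characterizations. The paper carries out the "edge-by-edge coincidence" that you flag as the main technical obstacle by explicitly gluing the two commutative rectangles along the shared column of ends into Figure~\ref{fig:Nakayama-diagram-2}, shrinking to the outer square, and chasing $\id_{M}$ with $M = F^{\lladj}\Nak^{\radj}(N)$; this is precisely the bookkeeping you describe needing to unwind with care.
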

\begin{proof}
  We fix objects $M \in \mathcal{M}$ and $N \in \mathcal{N}$, and consider the diagram of Figure \ref{fig:Nakayama-diagram-2} (where the Hom functor is denoted by $[-,-]$ to save spaces).
  The left rectangle is the commutative diagram of Figure~\ref{fig:Nakayama-diagram-1} with $G = F^{\llladj}$.
  The right rectangle is also commutative by Lemma \ref{lem:Nakayama-adj-diagram} applied to $F^{\op}$.
  Hence the diagram of Figure \ref{fig:Nakayama-diagram-2} is commutative.
  It shrinks to the following commutative diagram:
  \begin{equation*}
    \begin{tikzcd}[ampersand replacement=\&, row sep = 20pt, column sep = 48pt]
      \Hom_{\mathcal{N}}(\Nak G(M), N)
      \ar[d, "{\Hom_{\mathcal{N}}(\can_{G}(M), N)}"']
      \ar[r, "\eqref{eq:Nakayama-Hom-3}"]
      \& \Hom_{\mathcal{N}}(G(M), \Nak^{\radj} (N))
      \ar[d, "\text{adjunction}"] \\
      \Hom_{\mathcal{N}}(F^{\ladj}\Nak (M), N)
      \ar[d, "\text{adjunction}"']
      \& \Hom_{\mathcal{M}}(M, F^{\lladj}\Nak^{\radj}(N))
      \ar[d, "{\Hom_{\mathcal{N}}(M, \can_{F^{\op}}(N)^{-1})}"] \\
      \Hom_{\mathcal{M}}(M, \Nak^{\radj} F(N))
      \ar[r, "\eqref{eq:Nakayama-Hom-3}"]
      \& \Hom_{\mathcal{M}}(M, \Nak^{\radj} F(N))
    \end{tikzcd}
  \end{equation*}
  By letting $M = F^{\lladj}\Nak^{\radj}(N)$ and chasing $\id_{M} \in \Hom_{\mathcal{M}}(M, F^{\lladj}\Nak^{\radj}(N))$ in this diagram, we see that the inverse of $\can_{F^{\op}}$ is given by
  \begin{equation*}
    \can_{F^{\op}}^{-1} = \left(
      F^{\lladj} \Nak^{\radj}
      \xrightarrow{\quad \cong \quad} (\Nak G)^{\radj}
      \xrightarrow{\quad (\can_{G})^{\radj} \quad} (F^{\ladj} \Nak)^{\radj}
      \xrightarrow{\quad \cong \quad} \Nak^{\radj} F
    \right).
  \end{equation*}
  Now the desired formula is obvious. The proof is done.
\end{proof}

\begin{figure}
  \centering
  \begin{tikzcd}[ampersand replacement=\&, row sep = 10pt, column sep = 32pt]
    \bullet \arrow[r, "\text{\eqref{eq:Nakayama-Hom-1}}"]
    \arrow[dd]
    \& \int_{X \in \mathcal{M}} [F^{\llladj}(M),X]
    \otimes_{\bfk} [X, N]
    \arrow[r, "\text{\eqref{eq:Nakayama-Hom-2}}"]
    \arrow[d, "\text{adjunction}"]
    \& {}[F^{\llladj}(M), \Nak^{\radj}(N)]
    \arrow[dd, "\text{adjunction}"]
    \\[15pt]
    \& \int_{X \in \mathcal{N}} [M, F^{\lladj}(X)]
    \otimes_{\bfk} [X, N]
    \arrow[dd, "\text{Lemma~\ref{lem:adjoints-ends}}"]
    \\[0pt]
    \bullet \arrow[dd]
    \arrow[r, phantom, "\scriptsize \text{(Figure \ref{fig:Nakayama-diagram-1} with $G = F^{\llladj}$)}"]
    \& \mbox{} \& {} [M, F^{\lladj}\Nak^{\radj}(N)]
    \arrow[dd, "{[M, \can_{F^{\op}}(N)^{-1})]}"]
    \\[0pt]
    \& \int_{X \in \mathcal{M}} [M,X] \otimes_{\bfk} [F^{\ladj}(X), N]
    \arrow[d, "\text{adjunction}"]
    \\[15pt]
    \bullet \arrow[r, "\text{\eqref{eq:Nakayama-Hom-1}}"]
    \& \int_{X \in \mathcal{M}} [M,X] \otimes_{\bfk} [X, F(N)]
    \arrow[r, "\text{\eqref{eq:Nakayama-Hom-2}}"]
    \& {} [M, \Nak^{\radj} F(N)]
  \end{tikzcd}
  \caption{}
  \label{fig:Nakayama-diagram-2}
\end{figure}

\subsection{Nakayama functor of the Deligne tensor product}

Here we give technical remarks on the Nakayama functor of the Deligne tensor product.
Given finite abelian categories $\mathcal{M}$ and $\mathcal{N}$, we denote by $\mathcal{M} \boxtimes \mathcal{N}$ their Deligne tensor product \cite{MR3242743}. 
It was pointed out in \cite[Subsection 3.4]{MR4042867} that a coend over $\mathcal{M} \boxtimes \mathcal{N}$ can be computed as a double coend $\int^{X \in \mathcal{M}, Y \in \mathcal{N}}$ under suitable assumptions on the integrand. As a consequence, we see that there is an isomorphism
\begin{equation}
  \label{eq:Nakayama-Deligne-tensor}
  \theta: \Nak_{\mathcal{M}} \boxtimes \Nak_{\mathcal{N}}
  \to \Nak_{\mathcal{M} \boxtimes \mathcal{N}}
\end{equation}
given as follows: Given a finite abelian category $\mathcal{A}$, we denote by
\begin{equation*}
  i_{M,X} : \Hom_{\mathcal{A}}(M, X)^* \copow X \to \Nak_{\mathcal{A}}(M)
  \quad (M, X \in \mathcal{A})
\end{equation*}
the universal dinatural transformation for the coend $\Nak_{\mathcal{A}}(M)$.
We fix objects $M \in \mathcal{M}$ and $N \in \mathcal{N}$.
By the Fubini theorem for coends and the exactness of $\boxtimes$, the object $\Nak_{\mathcal{M}}(M) \boxtimes \Nak_{\mathcal{N}}(N)$ is a coend
\begin{equation*}
  \int^{X \in \mathcal{M}, Y \in \mathcal{N}}
  (\Hom_{\mathcal{M}}(X,M)^* \copow X)
  \boxtimes (\Hom_{\mathcal{M}}(Y,N)^* \copow Y)
\end{equation*}
with universal dinatural transformation $i_{M,X} \boxtimes i_{N,Y}$.
For a $\boxtimes$-decomposable object $T = M \boxtimes N$, the isomorphism $\theta_T$ is defined to be the unique morphism in $\mathcal{M} \boxtimes \mathcal{N}$ such that the diagram
\begin{equation*}
  \begin{tikzcd}[column sep = 48pt, row sep = 24pt]
    (\Hom_{\mathcal{M}}(M,X)^* \copow X)
    \boxtimes (\Hom_{\mathcal{M}}(N,Y)^* \copow Y)
    \arrow[r, "{i_{X,M} \boxtimes i_{Y,N}}"]
    \arrow[d, "{\cong}"']
    & \Nak_{\mathcal{M}}(M) \boxtimes \Nak_{\mathcal{N}}(N)
    \arrow[d, "\theta_T"] \\
    \Hom_{\mathcal{M}}(M \boxtimes N, X \boxtimes Y)^* \copow (X \boxtimes Y)
    \arrow[r, "{i_{X \boxtimes Y, M \boxtimes N}}"]
    & \Nak_{\mathcal{M} \boxtimes \mathcal{N}}(T)
  \end{tikzcd}
\end{equation*}
commutes for all $X \in \mathcal{M}$ and $Y \in \mathcal{N}$.
We extend $\theta_T$ to all objects $T \in \mathcal{M} \boxtimes \mathcal{N}$ by the right exactness of Nakayama functors and a resolution of $T$ by $\boxtimes$-decomposable objects.

Let $F: \mathcal{M}_1 \to \mathcal{M}_2$ and $G: \mathcal{N}_1 \to \mathcal{N}_2$ be linear functors, where $\mathcal{M}_1$, $\mathcal{M}_2$, $\mathcal{N}_1$ and $\mathcal{N}_2$ are finite abelian categories.
We assume that both $F$ and $G$ admit triple right adjoint and choose $F^{\rradj} \boxtimes G^{\rradj}$ as a double right adjoint of $F \boxtimes G$.
In view of the above discussion, we may also choose $\Nak_{\mathcal{M}_i} \boxtimes \Nak_{\mathcal{N}_i}$ ($i = 1, 2$) as the Nakayama functor of $\mathcal{M}_i \boxtimes \mathcal{N}_i$. Then, as one may expect, the canonical isomorphism \eqref{eq:Nakayama-cano-iso} for $F \boxtimes G$ is decomposed as follows:

\begin{lemma}
  \label{lem:Nakayama-iso-Deligne-tensor}
  For $F$ and $G$ as above, we have
  \begin{equation}
    \label{eq:Nakayama-iso-Deligne-tensor-0}
    \can_{F \boxtimes G}
    = \can_F \boxtimes \can_G :
    F^{\rradj} \Nak \boxtimes G^{\rradj} \Nak
    \to \Nak F \boxtimes \Nak G.
  \end{equation}
\end{lemma}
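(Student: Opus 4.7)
The plan is to verify the equality \eqref{eq:Nakayama-iso-Deligne-tensor-0} by evaluating both sides on $\boxtimes$-decomposable objects $T = M \boxtimes N$ (with $M \in \mathcal{M}_1$, $N \in \mathcal{N}_1$), and then extend to all of $\mathcal{M}_1 \boxtimes \mathcal{N}_1$ by the standard argument: both sides are morphisms between linear right exact functors, so they agree everywhere as soon as they agree on $\boxtimes$-decomposables. Under the chosen identifications $\Nak_{\mathcal{M}_i \boxtimes \mathcal{N}_i} = \Nak_{\mathcal{M}_i} \boxtimes \Nak_{\mathcal{N}_i}$ (via $\theta$) and $(F \boxtimes G)^{\rradj} = F^{\rradj} \boxtimes G^{\rradj}$, both sides of \eqref{eq:Nakayama-iso-Deligne-tensor-0} become parallel morphisms in the same category, so the reduction is meaningful.

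The natural thing is to unfold $\can_{F \boxtimes G}(M \boxtimes N)$ using the four-step construction recalled in \S\ref{sec:Nakayama-and-double-adj}: (i) push $F^{\rradj} \boxtimes G^{\rradj}$ inside the coend presenting $\Nak(M \boxtimes N)$, using that it preserves copowers and colimits; (ii) reindex along the adjunction $F \boxtimes G \dashv F^{\radj} \boxtimes G^{\radj}$ via Lemma \ref{lem:adjoints-ends}; (iii) apply the Hom adjunction inside the coend; and (iv) reassemble the result as $\Nak((F \boxtimes G)(M \boxtimes N))$. For decomposable $T$, the Fubini theorem for coends over $\mathcal{M}_i \boxtimes \mathcal{N}_i$ (already invoked to define $\theta$) together with the identity $V \copow (X \boxtimes Y) = (V \copow X) \boxtimes Y$ rewrites the coend as a double coend over $\mathcal{M}_i$ and $\mathcal{N}_i$. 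Each of (i)--(iv) then splits as the $\boxtimes$ of the corresponding step in the constructions of $\can_F(M)$ and $\can_G(N)$: step (i) because $F^{\rradj}$ and $G^{\rradj}$ individually preserve copowers and colimits; step (ii) because the unit of the Deligne adjunction is $\eta_F \boxtimes \eta_G$; and steps (iii)--(iv) because Hom on a Deligne tensor product tensors the individual Hom-spaces on decomposable objects.

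A perhaps cleaner alternative is to characterize $\can_{F \boxtimes G}$ via the commutative diagram of Figure \ref{fig:Nakayama-diagram-1} applied with $G$ replaced by $F \boxtimes G$, evaluated on a pair of $\boxtimes$-decomposable objects, and observe that this diagram is literally the $\boxtimes$-product of the analogous diagrams for $\can_F$ and $\can_G$; uniqueness of morphisms characterized by such diagrams then yields \eqref{eq:Nakayama-iso-Deligne-tensor-0} directly. I expect the main obstacle to be purely bookkeeping: one has to carefully track how the Fubini identification used to construct $\theta$, the behavior of copowers on $\mathcal{M} \boxtimes \mathcal{N}$, and the decomposition $(F \boxtimes G)^{\rradj} = F^{\rradj} \boxtimes G^{\rradj}$ interact, so that after evaluating on a decomposable object the two sides of \eqref{eq:Nakayama-iso-Deligne-tensor-0} really lie in the same Hom-set and can be compared factor by factor. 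Once this identification is pinned down, the factorization of each constituent isomorphism is essentially automatic, and the extension to general objects is by right exactness as in the construction of $\theta$.
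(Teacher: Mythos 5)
Your proposal is essentially correct and proceeds along the same lines as the paper's proof. Both approaches begin by reducing to $\boxtimes$-decomposable arguments $M \boxtimes N$ using that the two sides of \eqref{eq:Nakayama-iso-Deligne-tensor-0} are linear right exact functors, then verify the identity after precomposing with the universal dinatural transformation of the coend presenting the source, evaluated at $\boxtimes$-decomposable objects $L = X \boxtimes Y$. The only real divergence is in how one handles the fact that the coend presenting $\can_{F \boxtimes G}(M \boxtimes N)$ runs over all of $\mathcal{M}_1 \boxtimes \mathcal{N}_1$ and not only over decomposables: you invoke the Fubini-type theorem for coends over a Deligne tensor product (which the paper had already recalled when constructing $\theta$), so that the coend is immediately identified with the double coend $\int^{X, Y}$ and the comparison is only ever made on decomposables. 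The paper instead gives a hands-on argument: it names the integrand $T$, composes the putative equality with $j_L$ for arbitrary $L$, chooses an epimorphism $q : X \boxtimes Y \to L$ from a decomposable object, and uses dinaturality of $j$ together with the right exactness of $T$ in each variable (so that $T(\id_L, q)$ is epi) to bootstrap from the decomposable case. These are two packagings of the same idea; yours is a touch slicker since it leans on a stated theorem, while the paper's is more self-contained. Your alternative (b) — characterizing $\can_{F \boxtimes G}$ through the commutative diagram of Figure~\ref{fig:Nakayama-diagram-1} and matching it against the $\boxtimes$-product of the diagrams for $\can_F$ and $\can_G$ — is not what the paper does and would require an extra argument that the diagram uniquely pins down $\can$, but your primary route (a) is sound.
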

\begin{proof}
  Since the source and the target of \eqref{eq:Nakayama-iso-Deligne-tensor-0} are linear right exact functors, it suffices to show that the equation
  \begin{equation}
    \label{eq:Nakayama-iso-Deligne-tensor-2}
    \can_{F \boxtimes G}(M \boxtimes N) = \can_F(M) \boxtimes \can_G(N)
  \end{equation}
  holds for all objects $M \in \mathcal{M}_1$ and $N \in \mathcal{N}_1$.
  For simplicity of notation, we set $\mathcal{L}_i = \mathcal{M}_i \boxtimes N_i$ ($i = 1, 2$) and introduce the functor
  \begin{equation*}
    T: \mathcal{L}_1^{\op} \times \mathcal{L}_1 \to \mathcal{L}_2,
    \quad (K^{\op}, L) \mapsto (F^{\rradj} \boxtimes G^{\rradj})(\Hom_{\mathcal{L}_1}(M \boxtimes N, K)^* \copow L).
  \end{equation*}
  The source of \eqref{eq:Nakayama-iso-Deligne-tensor-2} is a coend of $T$. We denote by $j_L$ ($L \in \mathcal{L}_1$) the universal dinatural transformation of this coend. For a $\boxtimes$-decomposable object $L = X \boxtimes Y$ of $\mathcal{L}_1$, the morphism $j_{L}$ is given by the composition of the isomorphism
  \begin{equation*}
    T(L,L) \cong F^{\rradj}(\Hom_{\mathcal{M}_1}(M, X)^* \copow X)
    \boxtimes G^{\rradj}(\Hom_{\mathcal{N}_1}(N, Y)^* \copow Y)
  \end{equation*}
  and $F^{\rradj}(i_{M,X}) \boxtimes G^{\rradj}(i_{N,Y})$. By this observation and the definition of the canonical isomorphism \eqref{eq:Nakayama-cano-iso}, it is straightforward to verify that the equation
  \begin{equation}
    \label{eq:Nakayama-iso-Deligne-tensor-proof-1}
    \can_{F \boxtimes G}(M \boxtimes N) \circ j_{L} = (\can_F(M) \boxtimes \can_G(N)) \circ j_{L}
  \end{equation}
  holds for all $\boxtimes$-decomposable object $L \in \mathcal{L}_1$.

  To complete the proof, we shall show that the equation \eqref{eq:Nakayama-iso-Deligne-tensor-proof-1} holds for all objects $L \in \mathcal{L}_1$. Let $L \in \mathcal{L}_1$ be an arbitrary object, and let $q: X \boxtimes Y \to L$ be an epimorphism in $\mathcal{L}_1$ from a $\boxtimes$-decomposable object. Then we have
  \begin{gather*}
    (\can_F(M) \boxtimes \can_G(N)) \circ j_{L} \circ T(\id_L, q)
    = (\can_F(M) \boxtimes \can_G(N)) \circ j_{X \boxtimes Y} \circ T(q, \id_{X \boxtimes Y}) \\
    = \can_{F \boxtimes G}(M \boxtimes N) \circ j_{X \boxtimes Y} \circ T(q, \id_{X \boxtimes Y})
    = \can_{F \boxtimes G}(M \boxtimes N) \circ j_{L} \circ T(\id_L, q)
  \end{gather*}
  by the dinaturality of $j$.
  Since $T$ is right exact in each variable, $T(\id_L, q)$ is an epimorphism in $\mathcal{L}_2$. Thus the equation \eqref{eq:Nakayama-iso-Deligne-tensor-proof-1} follows. The proof is done.
\end{proof}

\subsection{Nakayama functor of the category of right exact functors}

For two finite abelian categories $\mathcal{M}$ and $\mathcal{N}$, there is an equivalence
\begin{equation}
  \label{eq:abstract-EW-equiv}
  \EW_{\mathcal{M}, \mathcal{N}}: \mathcal{M}^{\op} \boxtimes \mathcal{N} \to \REX(\mathcal{M}, \mathcal{N}),
  \quad M^{\op} \boxtimes N \mapsto \Hom_{\mathcal{M}}(-, M)^* \copow N
\end{equation}
of linear categories \cite[Lemma 2.3]{MR3569179}, which may be thought of as a `Morita invariant' version of \eqref{eq:EW-equiv}.
Given linear right exact functors  $F: \mathcal{M}_2 \to \mathcal{M}_1$ and $G: \mathcal{N}_1 \to \mathcal{N}_2$ between finite abelian categories $\mathcal{M}_1$, $\mathcal{M}_2$, $\mathcal{N}_1$ and $\mathcal{N}_2$, we define
\begin{equation}
  \label{eq:Rex-F-G-def}
  \REX(F, G): \REX(\mathcal{M}_1, \mathcal{N}_1) \to \REX(\mathcal{M}_2, \mathcal{N}_2),
  \quad X \mapsto G \circ X \circ F.
\end{equation}
The equivalence~\eqref{eq:abstract-EW-equiv} is `natural' in the sense that the diagram
\begin{equation*}
  \begin{tikzcd}[column sep = 64pt]
    (\mathcal{M}_1)^{\op} \boxtimes \mathcal{N}_1
    \arrow[r, "{\EW_{\mathcal{M}_1, \mathcal{N}_1}}"]
    \arrow[d, "(F^{\radj})^{\op} \boxtimes G"']
    & \REX(\mathcal{M}_1, \mathcal{N}_1)
    \arrow[d, "{\REX(F, G)}"] \\
    (\mathcal{M}_2)^{\op} \boxtimes \mathcal{N}_2
    \arrow[r, "{\EW_{\mathcal{M}_2, \mathcal{N}_2}}"]
    & \REX(\mathcal{M}_2, \mathcal{N}_2)
  \end{tikzcd}
\end{equation*}
commutes up to isomorphism.

\begin{lemma}
  \label{lem:Nakayama-REX}
  For finite abelian categories $\mathcal{M}$ and $\mathcal{N}$, we have
  \begin{equation*}
    \Nak_{\REX(\mathcal{M}, \mathcal{N})} = \REX(\Nak_{\mathcal{M}}, \Nak_{\mathcal{N}}).
  \end{equation*}
\end{lemma}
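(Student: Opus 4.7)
The plan is to transport the Nakayama functor from $\mathcal{M}^{\op} \boxtimes \mathcal{N}$ to $\REX(\mathcal{M}, \mathcal{N})$ along the Eilenberg--Watts equivalence $E := \EW_{\mathcal{M}, \mathcal{N}}$. Since $E$ is an equivalence, we may choose its right adjoint to be a quasi-inverse, so that $E^{\rradj} \cong E$, and in particular $E^{\rradj}$ is right exact. The canonical isomorphism \eqref{eq:Nakayama-cano-iso} applied to $G = E$ therefore provides a natural isomorphism
\begin{equation*}
  \can_E : E \circ \Nak_{\mathcal{M}^{\op} \boxtimes \mathcal{N}} \xrightarrow{\;\sim\;} \Nak_{\REX(\mathcal{M}, \mathcal{N})} \circ E.
\end{equation*}

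Next, I would decompose the left-hand side via the isomorphism \eqref{eq:Nakayama-Deligne-tensor}, which lets us take $\Nak_{\mathcal{M}^{\op} \boxtimes \mathcal{N}} = \Nak_{\mathcal{M}^{\op}} \boxtimes \Nak_{\mathcal{N}}$, together with the identity \eqref{eq:Nakayama-oppo} rewriting $\Nak_{\mathcal{M}^{\op}}$ as $(\overline{\Nak}_{\mathcal{M}})^{\op}$. Combining these with $\can_E$ yields
\begin{equation*}
  \Nak_{\REX(\mathcal{M}, \mathcal{N})} \circ E \cong E \circ \bigl((\overline{\Nak}_{\mathcal{M}})^{\op} \boxtimes \Nak_{\mathcal{N}}\bigr).
\end{equation*}
The remaining ingredient is the naturality square for $\EW$ displayed just after \eqref{eq:Rex-F-G-def}. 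Taking $F = \Nak_{\mathcal{M}}$ and $G = \Nak_{\mathcal{N}}$, and recalling from \eqref{eq:Nakayama-Hom-3} that $\overline{\Nak}_{\mathcal{M}}$ is right adjoint to $\Nak_{\mathcal{M}}$, that square asserts
\begin{equation*}
  \REX(\Nak_{\mathcal{M}}, \Nak_{\mathcal{N}}) \circ E \cong E \circ \bigl((\overline{\Nak}_{\mathcal{M}})^{\op} \boxtimes \Nak_{\mathcal{N}}\bigr).
\end{equation*}
Comparing the last two displays and cancelling the equivalence $E$ gives the claim.

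The argument is essentially formal once the pieces are assembled. The only point requiring care is consistency of choices: one must fix the Nakayama functor on $\mathcal{M}^{\op} \boxtimes \mathcal{N}$ to be the one transported via \eqref{eq:Nakayama-Deligne-tensor} and the adjoint $(\Nak_{\mathcal{M}})^{\radj}$ to be $\overline{\Nak}_{\mathcal{M}}$ equipped with the adjunction \eqref{eq:Nakayama-Hom-3}, so that the two isomorphisms above genuinely share the composite $E \circ ((\overline{\Nak}_{\mathcal{M}})^{\op} \boxtimes \Nak_{\mathcal{N}})$ as a middle term. I do not foresee any substantive obstacle beyond this bookkeeping.
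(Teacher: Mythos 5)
Your proof is correct and follows essentially the same route as the paper: both arguments run along the same chain of isomorphisms, namely the naturality square for $\EW$, the identifications \eqref{eq:Nakayama-oppo} and \eqref{eq:Nakayama-Deligne-tensor}, and the canonical isomorphism \eqref{eq:Nakayama-cano-iso} applied to the equivalence $\EW$; the paper simply traverses the chain in the opposite direction (starting from the naturality square and ending with $\can_{\EW}$, whereas you start from $\can_{\EW}$ and end with the naturality square).
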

\begin{proof}
  Despite that this formula has been given in \cite[Lemma 3.21]{MR4042867}, we include a proof to explain how we view the functor $\REX(\Nak_{\mathcal{M}}, \Nak_{\mathcal{N}})$ as a Nakayama functor of $\mathcal{F} := \REX(\mathcal{M}, \mathcal{N})$. By the `naturality' of the equivalence \eqref{eq:abstract-EW-equiv} mentioned in the above, we have isomorphisms
  \begin{gather*}
    \REX(\Nak_{\mathcal{M}}, \Nak_{\mathcal{N}}) \circ \EW
    \cong \EW \circ ((\Nak_{\mathcal{M}}^{\radj})^{\op} \boxtimes \Nak_{\mathcal{N}}) \\
    \mathop{=}^{\eqref{eq:Nakayama-oppo}}
    \EW \circ (\Nak_{\mathcal{M}^{\op}} \boxtimes \Nak_{\mathcal{N}})
    \mathop{\cong}^{\eqref{eq:Nakayama-Deligne-tensor}} \EW \circ \Nak_{\mathcal{M}^{\op} \boxtimes \mathcal{N}}
    \cong \Nak_{\mathcal{F}} \circ \EW,
  \end{gather*}
  where $\EW = \EW_{\mathcal{M}, \mathcal{N}}$.
  Hence $\Nak_{\mathcal{F}} \cong \REX(\Nak_{\mathcal{M}}, \Nak_{\mathcal{N}})$. The proof is done.
\end{proof}

Now let $F$ and $G$ be as in \eqref{eq:Rex-F-G-def}.
We assume that $F$ has a triple left adjoint and $G$ has a triple right adjoint.
Then the functor $T := \REX(F, G)$ has a triple right adjoint. Indeed, we have a chain of adjunctions
\begin{equation*}
  \REX(F, G)
  \dashv \REX(F^{\ladj}, G^{\radj})
  \dashv \REX(F^{\lladj}, G^{\rradj})
  \dashv \REX(F^{\llladj}, G^{\rrradj}).
\end{equation*}
Thus there is a canonical isomorphism
\begin{equation*}
  \can_T : T^{\rradj} \circ \Nak_{\mathcal{F}_1} \to \Nak_{\mathcal{F}_2} \circ T,
\end{equation*}
where $\mathcal{F}_i = \REX(\mathcal{M}_i, \mathcal{N}_i)$ ($i = 1, 2$).
We choose $\REX(\Nak_{\mathcal{M}_i}, \Nak_{\mathcal{N}_i})$ as a Nakayama functor of $\mathcal{F}_i$. We also choose $\REX(F^{\lladj}, G^{\rradj})$ as a double right adjoint of $T$. Under our choice of adjoints and Nakayama functors, $\can_T$ is a family
\begin{equation*}
  \can_T = \{ 
  \can_T(X) : G^{\rradj} \circ \Nak_{\mathcal{N}_1} \circ X \circ \Nak_{\mathcal{M}_1} \circ F^{\lladj}
  \to \Nak_{\mathcal{N}_2} \circ G \circ X \circ F \circ \Nak_{\mathcal{M}_2} \}_{X \in \mathcal{F}_1}
\end{equation*}
of morphisms in $\mathcal{F}_2$. We give an explicit formula of $\can_T$ as follows:

\begin{lemma}
  Notations are as above. For $X \in \mathcal{F}_1$, we have
  \begin{equation*}
    \can_T(X) =  \can_{G} \circ \id_X \circ (\can_{F^{\lladj}})^{-1},
  \end{equation*}
  where $\circ$ means the horizontal composition of natural transformations.
\end{lemma}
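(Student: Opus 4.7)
My strategy is to decompose $T = \REX(F, G)$ as $T = T_G \circ T_F$, where $T_F : \mathcal{F}_1 \to \mathcal{F}_3$ sends $X \mapsto X \circ F$ (with $\mathcal{F}_3 := \REX(\mathcal{M}_2, \mathcal{N}_1)$) and $T_G : \mathcal{F}_3 \to \mathcal{F}_2$ sends $Y \mapsto G \circ Y$. I first verify that each factor admits a triple right adjoint. For $T_G$ the iterated right adjoints are post-composition with $G^{\radj}, G^{\rradj}, G^{\rrradj}$. For $T_F$, the end-formula for $\Hom$ in $\REX$ together with Lemma~\ref{lem:adjoints-ends} identifies $T_F^{\radj}(Y) = Y \circ F^{\ladj}$, and iterating yields $T_F^{\rradj}(Y) = Y \circ F^{\lladj}$ and $T_F^{\rrradj}(Y) = Y \circ F^{\llladj}$, all of which exist by the hypothesis on $F$.

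Once these adjoints are in place, the coherence of $\can$ under composition (a consequence of \cite[Theorem 3.18]{MR4042867}) gives
\begin{equation*}
  \can_T(X) = \can_{T_G}(T_F(X)) \circ T_G^{\rradj}\bigl(\can_{T_F}(X)\bigr),
\end{equation*}
so it remains to compute $\can_{T_G}$ and $\can_{T_F}$ separately under the identification $\Nak_{\mathcal{F}_i}(Y) = \Nak_{\mathcal{N}_i} \circ Y \circ \Nak_{\mathcal{M}_i}$ of Lemma~\ref{lem:Nakayama-REX}. For $T_G$ this is direct: every step in the coend construction of $\can_{T_G}(Y)$ acts trivially on the $\Nak_{\mathcal{M}_2}$ and $Y$ slots, and its nontrivial step reduces to that of $\can_G$, giving $\can_{T_G}(Y) = \can_G \circ \id_Y$. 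For $T_F$ the argument is parallel but dualized: the variable change in the defining coend is now provided by \eqref{eq:adjoint-coend-iso} applied to the adjunction $F^{\ladj} \dashv F$, and one obtains $\can_{T_F}(X) = \id_X \circ \can_{F^{\lladj}}^{-1}$. Substituting these into the coherence formula and invoking the interchange law for horizontal and vertical compositions of natural transformations yields the stated formula.

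The main obstacle is verifying $\can_{T_F}(X) = \id_X \circ \can_{F^{\lladj}}^{-1}$. The appearance of an \emph{inverse}---and of the adjoint $F^{\lladj}$ in place of $F$---reflects the fact that $\can_F$ would require $F^{\rradj}$, which is not assumed; instead, $\can_{F^{\lladj}}$ goes in the opposite direction and must be inverted. Tracing this through the coend construction requires writing out the dinatural transformations from the proof of Lemma~\ref{lem:adjoints-ends}. An alternative approach is to transport the whole problem through the Eilenberg--Watts equivalence \eqref{eq:abstract-EW-equiv} and combine Lemmas~\ref{lem:Nakayama-iso-opposite} and~\ref{lem:Nakayama-iso-Deligne-tensor}; the inverse then emerges from the $(\can_\bullet^{-1})^{\radj}$ already present in Lemma~\ref{lem:Nakayama-iso-opposite}.
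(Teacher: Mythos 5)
Your primary route — factoring $T = \REX(F,G)$ as $T_G \circ T_F$ with $T_F$ pre-composition by $F$ and $T_G$ post-composition by $G$, then invoking the coherence of $\can$ under composition — is genuinely different from the paper's. The paper instead transports the entire problem through the Eilenberg--Watts equivalence \eqref{eq:abstract-EW-equiv}, where $T$ becomes $(F^{\radj})^{\op} \boxtimes G$ on $\mathcal{M}_1^{\op} \boxtimes \mathcal{N}_1$, and then reads off the formula from the already-established Lemmas~\ref{lem:Nakayama-iso-opposite} and~\ref{lem:Nakayama-iso-Deligne-tensor}. The advantage of the paper's route is that the awkward inversion and the appearance of $F^{\lladj}$ in place of $F$ are packaged once and for all in Lemma~\ref{lem:Nakayama-iso-opposite}, so the final proof is a direct translation. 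Your route is arguably more transparent about \emph{why} the pieces assemble this way, because the coherence formula $\can_{T_G T_F} = (\can_{T_G} \circ \id_{T_F}) \cdot (\id_{T_G^{\rradj}} \circ \can_{T_F})$ directly predicts the shape of the answer.

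That said, the piece you flag as the ``main obstacle'' — establishing $\can_{T_F}(X) = \id_X \circ \can_{F^{\lladj}}^{-1}$ — is exactly where the real content lies, and your sketch does not carry it through. Unwinding the coend construction of $\can_{T_F}$ requires the precise characterization of the isomorphism in Lemma~\ref{lem:adjoints-ends} by its universal dinatural transformations and a diagram chase essentially equivalent to the proof of Lemma~\ref{lem:Nakayama-iso-opposite}; skipping it leaves the proof incomplete. In other words, your decomposition approach does not actually save you from redoing the work of Lemma~\ref{lem:Nakayama-iso-opposite} — it redistributes it into the $\can_{T_F}$ computation. You do correctly note the Eilenberg--Watts alternative and identify that the inverse descends from the $(\can_\bullet^{-1})^{\radj}$ in Lemma~\ref{lem:Nakayama-iso-opposite}; if you had pursued that alternative you would have arrived at the paper's proof. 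One more point worth making explicit in a full write-up: for the coherence formula to be usable here, the chosen adjoints must be compatible, i.e.\ $T_G^{\rradj} \circ T_F^{\rradj}$ must agree with the chosen $\REX(F^{\lladj}, G^{\rradj})$ — it does, but this should be said.
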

\begin{proof}
  We choose $(\Nak_{\mathcal{M}_i}^{\radj})^{\op} \boxtimes \Nak_{\mathcal{N}_i}$ as a Nakayama functor of $\mathcal{T}_i := \mathcal{M}_i^{\op} \boxtimes \mathcal{N}_i$ as in the previous subsection. By Lemmas~\ref{lem:Nakayama-iso-opposite} and \ref{lem:Nakayama-iso-Deligne-tensor}, the canonical isomorphism
  \begin{equation*}
    \can_{(F^{\radj})^{\op} \boxtimes G}:
    \Nak_{\mathcal{T}_2} \circ ((F^{\radj})^{\op} \boxtimes G)
    \to ((F^{\radj})^{\op} \boxtimes G)^{\rradj} \circ \Nak_{\mathcal{T}_1}
  \end{equation*}
  is given by $\can_{(F^{\radj})^{\op} \boxtimes G} = (\can'_F)^{\op} \boxtimes \can_G$, where $\can'_F$ is the composition
  \begin{equation*}
    F^{\radj} \Nak_{\mathcal{M}}^{\radj}
    \xrightarrow{\quad \cong \quad}
    (\Nak_{\mathcal{M}} F)^{\radj}
    \xrightarrow{\quad (\can_{F^{\lladj}}^{-1})^{\radj} \quad}
    (F^{\lladj} \Nak_{\mathcal{N}})^{\radj}
    \xrightarrow{\quad \cong \quad}
    \Nak_{\mathcal{N}}^{\radj} (F^{\lladj})^{\radj}.
  \end{equation*}
  The proof is completed by translating this formula through the Eilenberg-Watts equivalence \eqref{eq:abstract-EW-equiv}.
\end{proof}

\section{Nakayama functor for monads}
\label{sec:for-monads}

\subsection{Tensor products over monads}

A {\em monad} \cite{MR1712872} on a category $\mathcal{M}$ is a triple $\mathbf{T} = (T, \mu, \eta)$ consisting of an endofunctor $T$ on $\mathcal{M}$ and natural transformations $\mu: TT \to T$ and $\eta: \id_{\mathcal{M}} \to T$ satisfying the associative and unit laws.
Let $\mathbf{T} = (T, \mu, \eta)$ be a monad on $\mathcal{M}$.
A {\em $\mathbf{T}$-module in $\mathcal{M}$} ($=$ a $\mathbf{T}$-algebra \cite{MR1712872}) is a pair $\mathbf{M} = (M, a)$ consisting of an object $M \in \mathcal{M}$ and a morphism $a: T(M) \to M$ in $\mathcal{M}$, called the action, satisfying the associative and the unit laws. Modules over $\mathbf{T}$ form a category, which we denote by $\lmod{\mathbf{T}}$.

If $\mathbf{T}$ is a linear right exact monad on a finite abelian category $\mathcal{M}$, then $\lmod{\mathbf{T}}$ is a finite abelian category such that the forgetful functor $\Forg: \lmod{\mathbf{T}} \to \mathcal{M}$ preserves and reflects exact sequences. The goal of this section is to prove Theorem~\ref{thm:Nakayama-monads}, which provides a formula of $\Nak_{\lmod{\mathbf{T}}}$ under the assumption that
\begin{equation}
  \label{eq:nice-monad}
  \text{$T$ admits a double left adjoint and a double right adjoint}.
\end{equation}

To state and prove the theorem, we introduce {\em the tensor product over a monad} as a slight generalization of the tensor product over a ring.
This idea comes from monads in a bicategory and related notions.
We write down necessary definitions in the form specialized to the 2-category of categories.
Given a functor $F$ and an object $M$ of the source of $F$, we often, but not always, write $F(M)$ as $F \otimes M$ in this section.
In response, the symbol $\otimes$ will also be used to express the composition of functors and the horizontal composition of natural transformations.
Let $\mathbf{S} = (S, \mu^S, \eta^S)$ and $\mathbf{T} = (T, \mu^T, \eta^T)$ be monads on categories $\mathcal{L}$ and $\mathcal{M}$, respectively.
Then a {\em left $\mathbf{S}$-module in $[\mathcal{M}, \mathcal{L}]$} is a pair $(F, a)$ consisting of a functor $F: \mathcal{M} \to \mathcal{L}$ and a natural transformation $a: S \otimes F \to F$ satisfying
\begin{gather*}
  a \circ (\id_S \otimes a) = a \circ (\mu^S \otimes \id_F)
  \quad \text{and} \quad
  a \circ (\eta^S \otimes \id_F) = \id_F.
\end{gather*}
A {\em right $\mathbf{T}$-module in $[\mathcal{M}, \mathcal{L}]$} is defined analogously. An {\em $\mathbf{S}$-$\mathbf{T}$-bimodule} in in $[\mathcal{M}, \mathcal{L}]$ is a triple $\mathbf{F} = (F, a^{\ell}, a^r)$ consisting of a functor $F: \mathcal{M} \to \mathcal{L}$ and natural transformations $a^{\ell}: S \otimes F \to F$ and $a^r: F \otimes T \to F$ such that $(F, a^{\ell})$ is a left $\mathbf{S}$-module in $[\mathcal{M}, \mathcal{L}]$, $(F, a^r)$ is a right $\mathbf{T}$-module in $[\mathcal{M}, \mathcal{L}]$, and the following equation is satisfied:
\begin{equation*}
  a^{r} \circ (a^{\ell} \otimes \id_T) = a^{\ell} \circ (\id_S \otimes a^r).
\end{equation*}

Now we suppose that the category $\mathcal{L}$ admits coequalizers.
For a right $\mathbf{T}$-module $\mathbf{F} = (F, a^r_F)$ in $[\mathcal{M}, \mathcal{L}]$ and a $\mathbf{T}$-module $\mathbf{M} = (M, a_M)$ in $\mathcal{M}$, we define
\begin{equation*}
  \mathbf{F} \otimes_{\mathbf{T}} \mathbf{M}
  = \mathrm{coequalizer} \Big(
  \begin{tikzcd}[column sep = 64pt]
    F \otimes T \otimes M
    \arrow[r, shift left=3pt, "{a^r_F \otimes \id_M}"]
    \arrow[r, shift right=3pt, "{\id_F \otimes a_M}"']
    & F \otimes M
  \end{tikzcd} \Big)
\end{equation*}
and call it the {\em tensor product} of $\mathbf{F}$ and $\mathbf{M}$ over $\mathbf{T}$.
If $\mathbf{F} = (F, a^{\ell}_F, a^r_F)$ is an $\mathbf{S}$-$\mathbf{T}$-module $[\mathcal{L}, \mathcal{M}]$ and $S$ preserves coequalizers, then $\mathbf{F} \otimes_{\mathbf{T}} \mathbf{M}$ is an $\mathbf{S}$-module by the action induced by $a^{\ell}_F \otimes \id_F \otimes \id_M$ and this construction gives rise to a functor
\begin{equation*}
  \mathbf{F} \otimes_{\mathbf{T}} (-) : \lmod{\mathbf{T}} \to \lmod{\mathbf{S}},
  \quad \mathbf{M} \mapsto \mathbf{F} \otimes_{\mathbf{T}} \mathbf{M}.
\end{equation*}
If, furthermore, $F$ preserves coequalizers, then the functor $\mathbf{F} \otimes_{\mathbf{T}} (-)$ also preserves coequalizers.

\subsection{Adjoints of (bi)modules over monads}

As the second important ingredient to state the main theorem, we discuss a natural module structure of an adjoint of a (bi)module over monads.
Throughout this subsection, $\mathbf{S} = (S, \mu^S, \eta^S)$ and $\mathbf{T} = (T, \mu^T, \eta^T)$ are monads on categories $\mathcal{L}$ and $\mathcal{M}$, respectively.
We first discuss a left adjoint of a left $\mathbf{S}$-module.
Let $\mathcal{N}$ be a category.
We note that if $F: \mathcal{M} \to \mathcal{L}$ is a functor admitting a left adjoint, then there is a canonical bijection
\begin{equation}
  \label{eq:nat-bij-adj-1}
  \Nat(X, F \otimes Y) \cong \Nat(F^{\ladj} \otimes X, Y)
\end{equation}
for functors $X: \mathcal{N} \to \mathcal{L}$ and $\mathcal{N} \to \mathcal{M}$. There is also a canonical bijection
\begin{equation}
  \label{eq:nat-bij-adj-2}
  \Nat(X \otimes F, Y) \cong \Nat(X, Y \otimes F^{\ladj})
\end{equation}
for functors $X : \mathcal{M} \to \mathcal{N}$ and $Y: \mathcal{L} \to \mathcal{N}$.

\begin{lemma}
  \label{lem:adjoint-module-1}
  Let $\mathbf{F} = (F, a^{\ell}_F)$ be a left $\mathbf{S}$-module in $[\mathcal{M}, \mathcal{L}]$ whose underlying functor $F$ admits a left adjoint. Then the left adjoint $F^{\ladj}$ is a right $\mathbf{S}$-module in $[\mathcal{L}, \mathcal{M}]$ by the natural transformation corresponding to $a_F^{\ell}$ via
  \begin{equation*}
    \Nat(S \otimes F, F)
    \xrightarrow{\quad \eqref{eq:nat-bij-adj-2} \quad}
    \Nat(S, F \otimes F^{\ladj})
    \xrightarrow{\quad \eqref{eq:nat-bij-adj-1} \quad}
    \Nat(F^{\ladj} \otimes S, F^{\ladj}).
  \end{equation*}
\end{lemma}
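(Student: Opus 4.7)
The plan is to verify directly that the natural transformation produced by the prescribed composite of bijections endows $F^{\ladj}$ with a right $\mathbf{S}$-module structure. Unwinding the definitions of \eqref{eq:nat-bij-adj-1} and \eqref{eq:nat-bij-adj-2}, the candidate action is
\begin{equation*}
  a = (\epsilon \otimes \id_{F^{\ladj}}) \circ (\id_{F^{\ladj}} \otimes a^{\ell}_F \otimes \id_{F^{\ladj}}) \circ (\id_{F^{\ladj}} \otimes \id_S \otimes \eta) : F^{\ladj} \otimes S \to F^{\ladj},
\end{equation*}
where $\eta : \id_{\mathcal{L}} \to F \otimes F^{\ladj}$ and $\epsilon : F^{\ladj} \otimes F \to \id_{\mathcal{M}}$ are the unit and counit of $F^{\ladj} \dashv F$; recording this explicit formula is the first step.

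For the unit axiom $a \circ (\id_{F^{\ladj}} \otimes \eta^S) = \id_{F^{\ladj}}$, I would substitute the formula above, use the interchange law to commute the insertion of $\eta$ past that of $\eta^S$ (they act at disjoint positions in the string), then apply the left unit axiom $a^{\ell}_F \circ (\eta^S \otimes \id_F) = \id_F$ of the module $\mathbf{F}$ to eliminate the resulting $S$-factor, and conclude with the triangle identity $(\epsilon \otimes \id_{F^{\ladj}}) \circ (\id_{F^{\ladj}} \otimes \eta) = \id_{F^{\ladj}}$.

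For the associativity axiom $a \circ (a \otimes \id_S) = a \circ (\id_{F^{\ladj}} \otimes \mu^S)$, the two sides are natural transformations $F^{\ladj} \otimes S \otimes S \to F^{\ladj}$. The right-hand side expands to a composite containing one $\eta$, one $\epsilon$, and the combination $a^{\ell}_F \circ (\mu^S \otimes \id_F)$, which the left module associativity of $\mathbf{F}$ rewrites as $a^{\ell}_F \circ (\id_S \otimes a^{\ell}_F)$. The left-hand side, upon expanding both copies of $a$, contains two $\eta$'s, two $\epsilon$'s, and two $a^{\ell}_F$'s; after using naturality (the interchange law) to pull the two $\eta$'s outward to the right and the two $\epsilon$'s outward to the left, the inner $F \otimes F^{\ladj}$ pair sandwiched between the two $a^{\ell}_F$'s is cancelled by the other triangle identity $(\id_F \otimes \epsilon) \circ (\eta \otimes \id_F) = \id_F$, after which the two sides visibly coincide.

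The calculation is entirely formal; conceptually, it is the mate correspondence of $F^{\ladj} \dashv F$ in the 2-category $\mathbf{Cat}$ applied to the 2-cell $a^{\ell}_F$, a well-known mechanism that exchanges left- and right-module structures for a monad. The main point of care is locating the correct place to invoke the triangle identity in the associativity check; organizing the verification as a string-diagram argument, or equivalently by systematic use of the interchange law, makes the diagram chase mechanical.
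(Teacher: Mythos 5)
Your proof is correct and follows the same route the paper indicates: the paper declines to spell out the verification, stating only that it is straightforward, and records the resulting action as a string diagram whose symbolic unwinding is precisely the formula
\begin{equation*}
  a = (\epsilon \otimes \id_{F^{\ladj}}) \circ (\id_{F^{\ladj}} \otimes a^{\ell}_F \otimes \id_{F^{\ladj}}) \circ (\id_{F^{\ladj}} \otimes \id_S \otimes \eta)
\end{equation*}
that you derive. Your verification of the unit axiom via the snake identity $(\epsilon \otimes \id_{F^{\ladj}}) \circ (\id_{F^{\ladj}} \otimes \eta) = \id_{F^{\ladj}}$ and of associativity via the interchange law, the left $\mathbf{S}$-module axiom of $\mathbf{F}$, and the other snake identity $(\id_F \otimes \epsilon) \circ (\eta \otimes \id_F) = \id_F$ is exactly the diagram chase the paper is alluding to, and the closing remark that this is the mate correspondence is the right conceptual framing.
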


We denote by $\mathbf{F}^{\ladj}$ the right $\mathbf{S}$-module in $[\mathcal{M}, \mathcal{L}]$ of the above lemma.
We suppose that the functor $T$ has a double left adjoint. Then the functor $T^{\lladj}$ is naturally a monad on $\mathcal{M}$, which we denote by $\mathbf{T}^{\lladj}$.
As has been observed at least for algebras and modules in a monoidal category \cite[\S2.4.5]{MR4254952}, a left adjoint of an $\mathbf{S}$-$\mathbf{T}$-module in $[\mathcal{M}, \mathcal{L}]$ has no natural structure of a $\mathbf{T}$-$\mathbf{S}$-bimodule in general, however, it becomes a $\mathbf{T}^{\lladj}$-$\mathbf{S}$-bimodule in the following way:

\begin{lemma}
  \label{lem:adjoint-module-2}
  Suppose that $T$ has a double left adjoint.
  Let $\mathbf{F} = (F, a^r_F)$ be a right $\mathbf{T}$-module in $[\mathcal{M}, \mathcal{L}]$ such that $F$ admits a left adjoint.
  Then $F^{\ladj}$ is a left $\mathbf{T}^{\lladj}$-module in $[\mathcal{L}, \mathcal{M}]$ by the natural transformation corresponding to $a^r_F$ via
  \begin{equation*}
    \Nat(F \otimes T, F)
    \xrightarrow{\quad (-)^{\ladj} \quad}
    \Nat(F^{\ladj}, T^{\ladj} \otimes F^{\ladj})
    \xrightarrow{\quad \eqref{eq:nat-bij-adj-1} \quad}
    \Nat(T^{\lladj} \otimes F^{\ladj}, F^{\ladj}).
  \end{equation*}
  If $\mathbf{T}$ is an $\mathbf{S}$-$\mathbf{T}$-bimodule, then $F^{\ladj}$ is a $\mathbf{T}^{\lladj}$-$\mathbf{S}$-bimodule by this left action and the right action given by the previous lemma.
\end{lemma}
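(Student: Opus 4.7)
The plan is to define the left $\mathbf{T}^{\lladj}$-action $\widetilde{a}^{\ell}: T^{\lladj} \otimes F^{\ladj} \to F^{\ladj}$ by the prescribed composite of mate correspondences and then verify the module axioms by transporting them back to the known axioms for $a^r_F$. Writing out $\widetilde{a}^{\ell}$ in terms of the units and counits of $F \dashv F^{\ladj}$ and of $T^{\lladj} \dashv T^{\ladj} \dashv T$, one sees that the correspondence $a^r_F \leftrightarrow \widetilde{a}^{\ell}$ is a bijection natural in the data. The key observation is that the monad structure on $T^{\lladj}$ is itself built by the same type of double-mate construction: $\eta^{T^{\lladj}}: \id \to T^{\lladj}$ and $\mu^{T^{\lladj}}: T^{\lladj} T^{\lladj} \to T^{\lladj}$ are the double mates of $\eta^T$ and $\mu^T$ over the chains $\id \dashv \id$, $T^{\lladj} \dashv T^{\ladj} \dashv T$, and $T^{\lladj} T^{\lladj} \dashv T^{\ladj} T^{\ladj} \dashv T T$, respectively (left adjoints of a composite reversing the order).

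First I would verify the unit law $\widetilde{a}^{\ell} \circ (\eta^{T^{\lladj}} \otimes \id_{F^{\ladj}}) = \id_{F^{\ladj}}$. Since the bijections $(-)^{\ladj}$ and \eqref{eq:nat-bij-adj-1} are compatible with horizontal composition (whiskering with the relevant adjoints and their units/counits), the unit axiom $a^r_F \circ (\id_F \otimes \eta^T) = \id_F$ for $a^r_F$ transports directly to the required identity for $\widetilde{a}^{\ell}$. This amounts to a short diagram chase using the triangle identities for the two adjunctions involved.

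The associativity law $\widetilde{a}^{\ell} \circ (\id_{T^{\lladj}} \otimes \widetilde{a}^{\ell}) = \widetilde{a}^{\ell} \circ (\mu^{T^{\lladj}} \otimes \id_{F^{\ladj}})$ is the main obstacle, since $\mu^{T^{\lladj}}$ is itself obtained by a compound mate over $T^{\lladj} T^{\lladj} \dashv T T$ and the two sides involve nesting the mate correspondences in different patterns. The approach is to pull both sides back through the bijections defining $\widetilde{a}^{\ell}$ and, using the compatibility of the mate calculus with composition of adjunctions in the 2-category $\mathbf{Cat}$, to check that they become the two sides of the associativity law $a^r_F \circ (\id_F \otimes \mu^T) = a^r_F \circ (a^r_F \otimes \id_T)$ of $a^r_F$, which hold by assumption. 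The bookkeeping is delicate but systematic; a clean pasting-diagram presentation in $\mathbf{Cat}$ would suffice.

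For the final bimodule claim, I would assume $\mathbf{F} = (F, a^{\ell}_F, a^r_F)$ is an $\mathbf{S}$-$\mathbf{T}$-bimodule and combine the right $\mathbf{S}$-action $\overline{a}^r$ on $F^{\ladj}$ from Lemma~\ref{lem:adjoint-module-1} with the left $\mathbf{T}^{\lladj}$-action $\widetilde{a}^{\ell}$ just constructed. The required commutation $\widetilde{a}^{\ell} \circ (\id_{T^{\lladj}} \otimes \overline{a}^r) = \overline{a}^r \circ (\widetilde{a}^{\ell} \otimes \id_S)$ follows by applying the two transports in parallel: the construction of $\overline{a}^r$ uses only the $F \dashv F^{\ladj}$ adjunction on the $\mathbf{S}$-side, while the construction of $\widetilde{a}^{\ell}$ uses the additional $T^{\lladj} \dashv T^{\ladj}$ adjunction on the $\mathbf{T}$-side, so the bimodule compatibility $a^r_F \circ (a^{\ell}_F \otimes \id_T) = a^{\ell}_F \circ (\id_S \otimes a^r_F)$ for $\mathbf{F}$ transports directly to the desired identity on $F^{\ladj}$.
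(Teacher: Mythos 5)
Your proposal is correct and is essentially the argument the paper has in mind: the paper declares Lemmas~\ref{lem:adjoint-module-1} and \ref{lem:adjoint-module-2} ``proved straightforwardly,'' declines to write out the details, and instead records the two actions on $F^{\ladj}$ as string diagrams built from the units and counits of the adjunctions $F^{\ladj} \dashv F$ and $T^{\lladj} \dashv T^{\ladj} \dashv T$ --- precisely the cap-and-cup data your mate-calculus verification manipulates. Your observation that the monad structure on $T^{\lladj}$ is itself a double mate of that on $T$, and that the module and bimodule axioms therefore transport across the (horizontal-composition-compatible) mate bijections, is exactly the ``straightforward'' bookkeeping the paper leaves implicit; there is no gap and no genuinely different route here, only an explicit spelling-out of what the paper's graphical convention encodes.
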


Lemmas \ref{lem:adjoint-module-1} and \ref{lem:adjoint-module-2} are proved straightforwardly.
Instead of giving the detailed proof of these lemmas, we provide diagrammatic expressions of the actions of monads on the functor $F^{\ladj}$. Thanks to our convention of denoting the composition of functors by $\otimes$, we may divert the standard graphical calculus in a monoidal category to express natural transformations.
We use the diagrams $\begin{array}{c} \begin{tikzpicture}[baseline=0pt, x=.75pc, y=.75pc] \draw (0,1) -- (0,-1); \draw[->] (-1,1) to [out=-90, in=180, looseness=1.5] (0,0); \end{tikzpicture} \end{array}$ and $\begin{array}{c} \begin{tikzpicture}[baseline=0pt, x=.75pc, y=.75pc] \draw (0,1) -- (0,-1); \draw[->] (1,1) to [out=-90, in=0, looseness=1.5] (0,0); \end{tikzpicture} \end{array}$ to represent a left and a right action of a monad on a functor. We also use a cap and a cup to represent the unit and the counit for an adjunction, respectively.
Then the right action of $\mathbf{S}$ and the left action of $\mathbf{T}^{\lladj}$ on $\mathbf{F}^{\ladj}$ given in the above lemmas are expressed as follows:
\begin{equation*}
  \begin{tikzpicture}[baseline = 0pt, x = 1pc, y = 1pc]
    \draw (0,2) node [above] {$F^{\ladj}$}
    -- (0,-2) node [below] {$F^{\ladj}$};
    \draw [->] (1.5, 2) node [above] {$S$}
    -- ++(0, -.5) to [out=-90, in=0, looseness=1.5] (0,0);
  \end{tikzpicture}
  :=
  \begin{tikzpicture}[baseline = 0pt, x = 1pc, y = 1pc]
    \draw (-3,2) node [above] {$F^{\ladj}$}
    -- ++(0, -2) to [out=-90, in=-90, looseness=2] ++(3,0) coordinate (P)
    -- ++(0,1)
    to [out=90, in=90, looseness=2] ++(2,0)
    -- ++(0, -3) node [below] {$F^{\ladj}$};
    \draw [->] (-1.5,2) node [above] {$S$}
    -- ++(0, -.5) to [out=-90, in=180, looseness=1.5] (P);
  \end{tikzpicture}
  \qquad
  \begin{tikzpicture}[baseline = 0pt, x = 1pc, y = 1pc]
    \draw (0,2) node [above] {$F^{\ladj}$}
    -- (0,-2) node [below] {$F^{\ladj}$};
    \draw [->] (-1.5, 2) node [above] {$T^{\lladj}$}
    -- ++(0, -.5) to [out=-90, in=180, looseness=1.5] (0,0);
  \end{tikzpicture}
  :=
  \begin{tikzpicture}[baseline = 0pt, x = 1pc, y = 1pc]
    \draw (-3,2) node [above] {$F^{\ladj}$}
    -- ++(0, -3) to [out=-90, in=-90, looseness=2] ++(1,0)
    -- ++(0, .5) coordinate (P)
    -- ++(0,1)
    to [out=90, in=90, looseness=2] ++(3,0)
    -- ++(0, -2.5) node [below] {$F^{\ladj}$};
    \draw [->] (-4.5, 2) node [above] {$T^{\lladj}$}
    -- ++(0,-2.5)
    to [out=-90, in=-90, looseness=2] ($(P)+(2,0)$)
    -- ++(0,1)
    to [out=90, in=90, looseness=2] ++(-1,0)
    to [out=-90, in=0, looseness=1.5] (P);
  \end{tikzpicture}
\end{equation*}

Similarly, if $\mathbf{F}$ is a right $\mathbf{T}$-module in $[\mathcal{M}, \mathcal{L}]$ whose underlying functor $F$ admits a right adjoint, then $F^{\radj}$ has a natural structure of a left $\mathbf{T}$-module in $[\mathcal{L}, \mathcal{M}]$.
As in the case of left adjoints, we denote this right $\mathbf{T}$-module by $\mathbf{F}^{\radj}$.
If, furthermore, $\mathbf{F}$ is an $\mathbf{S}$-$\mathbf{T}$-bimodule in $[\mathcal{M}, \mathcal{L}]$ and $S$ admits a double right adjoint, then $\mathbf{F}^{\radj}$ is a $\mathbf{T}$-$\mathbf{S}^{\rradj}$-bimodule in $[\mathcal{L}, \mathcal{M}]$.
Graphically, the actions on $\mathbf{F}^{\radj}$ are expressed by the left-right inverted ones of the above diagrams expressing the actions on $\mathbf{F}^{\ladj}$.

\subsection{Adjunctions for monads}

Let $\mathcal{M}$ be a category admitting coequalizers, and let $\mathbf{T} = (T, \mu, \eta)$ be a monad on $\mathcal{M}$.
There is the forgetful functor
\begin{equation*}
  \Forg : \lmod{\mathbf{T}} \to \mathcal{M},
  \quad (M, a_M) \mapsto M.
\end{equation*}
As is well-known, $\Forg$ has a left adjoint
\begin{equation*}
  \Free : \mathcal{M} \to \lmod{\mathbf{T}},
  \quad X \mapsto (T(X), \mu_X)
\end{equation*}
called the {\em free $\mathbf{T}$-module functor}.
Below we describe a left adjoint of $\Free$ and a right adjoint of $\Forg$ by using adjoints of bimodules.

\begin{lemma}
  \label{lem:left-adjoint-of-free}
  Suppose that $T$ admits a left adjoint.
  Then the functor
  \begin{equation*}
    \Free^{\ladj} : \lmod{\mathbf{T}} \to \mathcal{M},
    \quad \mathbf{M} \mapsto \mathbf{T}^{\ladj} \otimes_{\mathbf{T}} \mathbf{M}
  \end{equation*}
  is left adjoint to $\Free$.
\end{lemma}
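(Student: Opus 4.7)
The plan is to construct a natural bijection
\[
\Hom_{\mathcal{M}}(\Free^{\ladj}(\mathbf{M}), X) \cong \Hom_{\lmod{\mathbf{T}}}(\mathbf{M}, \Free(X))
\]
for $\mathbf{M} = (M, a_M) \in \lmod{\mathbf{T}}$ and $X \in \mathcal{M}$. Unwinding the coequalizer definition of $\mathbf{T}^{\ladj} \otimes_{\mathbf{T}} \mathbf{M}$, the left-hand side is the set of morphisms $f : T^{\ladj}(M) \to X$ satisfying $f \circ b_M = f \circ T^{\ladj}(a_M)$, where $b : T^{\ladj} T \to T^{\ladj}$ is the right $\mathbf{T}$-action supplied by Lemma~\ref{lem:adjoint-module-1}. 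The right-hand side is the set of morphisms $g : M \to T(X)$ satisfying the $\mathbf{T}$-module morphism condition $g \circ a_M = \mu_X \circ T(g)$ relative to $\Free(X) = (T(X), \mu_X)$.

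First I will invoke the hom-set bijection of the adjunction $T^{\ladj} \dashv T$, with unit $\eta^{\ast}: \id \to T T^{\ladj}$ and counit $\epsilon^{\ast} : T^{\ladj} T \to \id$, to identify $\Hom_{\mathcal{M}}(T^{\ladj}(M), X) \cong \Hom_{\mathcal{M}}(M, T(X))$ via $f \mapsto g := T(f) \circ \eta^{\ast}_M$. It then suffices to show that, under this bijection, the coequalizer condition on $f$ translates precisely to the $\mathbf{T}$-module morphism condition on $g$.

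The main calculation is the identity
\[
T(b_M) \circ \eta^{\ast}_{T(M)} = \mu_{T^{\ladj}(M)} \circ T(\eta^{\ast}_M),
\]
which I will verify by expanding $b$ using its construction in Lemma~\ref{lem:adjoint-module-1} (applied to $\mathbf{T}$ viewed as a left $\mathbf{T}$-module over itself via $\mu$), then applying naturality of $\eta^{\ast}$ and the triangle identity $T\epsilon^{\ast} \circ \eta^{\ast} T = \id_T$. Given this identity, passing $f \circ T^{\ladj}(a_M)$ through the adjunction yields $g \circ a_M$ by naturality of $\eta^{\ast}$ at $a_M$, while passing $f \circ b_M$ yields $\mu_X \circ T(g)$ by the identity above together with naturality of $\mu$ at $f$. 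Hence the two conditions correspond, and naturality of the resulting bijection in $\mathbf{M}$ and $X$ follows from naturality of $\eta^{\ast}$.

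The main obstacle is this bookkeeping identity for $b$; once established, the rest is a direct transport across the adjunction. A more conceptual alternative is to recognize $b$ as the mate of $\mu$ under the two adjunctions $T^{\ladj} \dashv T$ and derive the identity from the formalism of mates, but I expect the explicit calculation sketched above to be shorter to write down.
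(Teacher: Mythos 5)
Your proposal is correct and takes essentially the same approach as the paper: both transport the adjunction bijection $\Hom_{\mathcal{M}}(T^{\ladj}(M), X) \cong \Hom_{\mathcal{M}}(M, T(X))$ across the coequalizer/module-morphism conditions and then invoke the universal property of the coequalizer. The paper leaves the translation of the two conditions as a "one can check," whereas you isolate the precise identity $T(b_M)\circ\eta^{\ast}_{T(M)} = \mu_{T^{\ladj}(M)}\circ T(\eta^{\ast}_M)$ (which does hold, by unwinding $b$ from Lemma~\ref{lem:adjoint-module-1}, two applications of naturality of $\eta^{\ast}$, and the triangle identity) that makes that check go through.
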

\begin{proof}
  Let $\mathbf{M} = (M, a_M) \in \lmod{\mathbf{T}}$ and $X \in \mathcal{M}$ be objects. One can check that a morphism $f : M \to T \otimes X$ in $\mathcal{M}$ satisfies the equation
  \begin{equation}
    \label{eq:left-adj-of-free-eq-1}
    f \circ a_M = \mu_X \circ T(f)
  \end{equation}
  if and only if the corresponding morphism $g : T^{\ladj} \otimes M \to X$ in $\mathcal{M}$ satisfies
  \begin{equation}
    \label{eq:left-adj-of-free-eq-2}
    g \circ (a^r \otimes \id_X) = g \circ (\id_{T^{\ladj}} \otimes a_M),
  \end{equation}
  where $a^r : T^{\ladj} \otimes T \to T^{\ladj}$ is the right action of $\mathbf{T}$ on $T^{\ladj}$. Hence we have natural isomorphisms
  \begin{align*}
    \Hom_{\lmod{\mathbf{T}}}(\mathbf{M}, \Free(X))
    & = \{ f \in \Hom_{\mathcal{M}}(M, T \otimes X)
      \mid \text{$f$ satisfies \eqref{eq:left-adj-of-free-eq-1}} \} \\
    & \cong \{ g \in \Hom_{\mathcal{M}}(T^{\ladj} \otimes M, X)
      \mid \text{$g$ satisfies \eqref{eq:left-adj-of-free-eq-2}} \} \\
    & \cong \Hom_{\mathcal{M}}(\mathbf{T}^{\ladj} \otimes_{\mathbf{T}} \mathbf{M}, X),
  \end{align*}
  where the first isomorphism is the adjunction isomorphism and the second one follows from the universal property of the coequalizer defining the tensor product over $\mathbf{T}$. The proof is done.
\end{proof}

Given a left $T$-module $\mathbf{F} = (F, a_F)$ in $[\mathcal{M}, \mathcal{M}]$ and $X \in \mathcal{M}$, we define
\begin{equation*}
  \mathbf{F} \otimes X := (F \otimes X, a_F \otimes \id_X) \in \lmod{\mathbf{T}}.
\end{equation*}

\begin{lemma}
  \label{lem:right-adjoint-of-forg}
  Suppose that $T$ admits a right adjoint. Then the functor
  \begin{equation*}
    \Forg^{\radj} : \mathcal{M} \to \lmod{\mathbf{T}},
    \quad X \mapsto \mathbf{T}^{\radj} \otimes X \quad (X \in \mathcal{M})
  \end{equation*}
  is right adjoint to $\Forg$.
\end{lemma}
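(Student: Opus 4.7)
The plan is to construct a natural bijection
\begin{equation*}
  \Hom_{\mathcal{M}}(M, X) \cong \Hom_{\lmod{\mathbf{T}}}(\mathbf{M}, \mathbf{T}^{\radj} \otimes X)
\end{equation*}
for $\mathbf{M} = (M, a_M) \in \lmod{\mathbf{T}}$ and $X \in \mathcal{M}$, closely mirroring Lemma~\ref{lem:left-adjoint-of-free}. Let $\varepsilon \colon TT^{\radj} \to \id_{\mathcal{M}}$ denote the counit of $T \dashv T^{\radj}$, so that the adjunction bijection sends $f \colon M \to T^{\radj}(X)$ to $\tilde{f} := \varepsilon_X \circ T(f) \colon T(M) \to X$. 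Recall from the right-adjoint analogue of Lemma~\ref{lem:adjoint-module-1} (discussed immediately after Lemma~\ref{lem:adjoint-module-2}) that $T^{\radj}$ inherits a left $\mathbf{T}$-action $a^{\ell} \colon T \otimes T^{\radj} \to T^{\radj}$ from the right $\mathbf{T}$-action $\mu$ on $T$. A short diagram chase using the triangle identities yields the characterizing identity
\begin{equation*}
  \varepsilon_X \circ T(a^{\ell}_X) = \varepsilon_X \circ \mu_{T^{\radj}(X)} \colon T^2 T^{\radj}(X) \to X,
\end{equation*}
which is exactly the statement that the transpose of $a^{\ell}$ under $T \dashv T^{\radj}$ agrees with $\varepsilon \circ \mu_{T^{\radj}}$.

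Next, I would unfold what it means for $f \colon M \to T^{\radj}(X)$ to define a morphism in $\lmod{\mathbf{T}}$, namely the equivariance equation $a^{\ell}_X \circ T(f) = f \circ a_M$. Transposing both sides under $T \dashv T^{\radj}$ and using the characterizing identity above together with the naturality of $\mu$, the equivariance of $f$ becomes equivalent to the cofork condition
\begin{equation*}
  \tilde{f} \circ \mu_M = \tilde{f} \circ T(a_M)
\end{equation*}
on $\tilde{f}$.

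Finally, I would show that such $\tilde{f}$'s biject with morphisms $\phi \colon M \to X$ via the mutually inverse assignments $\phi \mapsto \phi \circ a_M$ and $\tilde{f} \mapsto \tilde{f} \circ \eta_M$; the forward assignment is well-defined because associativity of the $\mathbf{T}$-action, $a_M \circ \mu_M = a_M \circ T(a_M)$, implies that $\phi \circ a_M$ coequalizes $\mu_M$ and $T(a_M)$, and the two round trips collapse by the unit law $a_M \circ \eta_M = \id_M$, the naturality of $\eta$, and the monad unit axiom $\mu \circ \eta T = \id_T$. Composing the three bijections yields the desired natural isomorphism. I expect the only mildly non-routine step to be deriving the characterizing identity for $a^{\ell}$ in terms of $\mu$ and $\varepsilon$; once this is in hand, everything else is a mechanical transposition argument parallel to the coequalizer identification in Lemma~\ref{lem:left-adjoint-of-free}.
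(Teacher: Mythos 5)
Your proof is correct, but takes a genuinely different route from the paper's. The paper's argument is shorter and more conceptual: it observes that $C := T^{\radj}$ carries a natural comonad structure, that $\lmod{\mathbf{T}}$ can be identified with the category of $C$-comodules, and that the right adjoint of the forgetful functor from $C$-comodules is the cofree comodule functor $F_C : X \mapsto (C(X), \Delta_X)$; it then asserts that the $\mathbf{T}$-module $\mathbf{T}^{\radj} \otimes X$ is nothing but $F_C(X)$ under this identification. Your proof instead constructs the adjunction bijection $\Hom_{\mathcal{M}}(\Forg\mathbf{M}, X) \cong \Hom_{\lmod{\mathbf{T}}}(\mathbf{M}, \mathbf{T}^{\radj} \otimes X)$ by hand, closely paralleling the paper's own proof of Lemma~\ref{lem:left-adjoint-of-free}: transpose the $T$-equivariance condition across $T \dashv T^{\radj}$ using the characterizing identity $\varepsilon_X \circ T(a^{\ell}_X) = \varepsilon_X \circ \mu_{T^{\radj}(X)}$, reduce to the cofork condition on $\tilde{f}$, and then exploit that $T^2(M) \rightrightarrows T(M) \xrightarrow{a_M} M$ is a split coequalizer (with splittings $\eta_M$ and $\eta_{T(M)}$) to identify the coforks with morphisms out of $M$. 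Every step checks out; in particular your characterizing identity is precisely what unwinding the dual of Lemma~\ref{lem:adjoint-module-1} gives, and is essentially the content of the paper's unproved claim that $\mathbf{T}^{\radj} \otimes X$ equals $F_C(X)$ as a $\mathbf{T}$-module. The paper's route buys brevity by outsourcing to the comonad/cofree-comodule correspondence, while yours is more elementary and self-contained at the cost of length.
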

\begin{proof}
  The functor $C := T^{\radj}$ has a natural structure of a comonad on $\mathcal{M}$, and the category of $T$-modules is identified with the category of $C$-comodules. Under this identification, a right adjoint of $\Forg$ is given by the free $C$-comodule functor
  \begin{equation*}
    F_C: \mathcal{M} \to \text{(the category of $C$-comodules)},
    \quad X \mapsto (C \otimes X, \Delta \otimes \id_X),
  \end{equation*}
  where $\Delta : C \to C \otimes C$ is the comultiplication of $C$.
  The $\mathbf{T}$-module $\mathbf{T}^{\radj} \otimes X$ in concern is nothing but the $C$-comodule $F_C(X)$ viewed as a $\mathbf{T}$-module. Hence a right adjoint of $\Forg$ is given as stated.
\end{proof}

\subsection{Nakayama functor for monads}

Let $\mathbf{T} = (T, \mu^T, \eta^T)$ be a linear monad on a finite abelian category $\mathcal{M}$ satisfying \eqref{eq:nice-monad}.
As in the previous subsection, we denote by $\Free$ and $\Forg$ the free and the forgetful functor for the monad $\mathbf{T}$.
Since $T$ is right exact, the category $\lmod{\mathbf{T}}$ is a finite abelian category such that $\Forg$ preserves and reflects exact sequences.
By the coherence property of the canonical isomorphism \eqref{eq:Nakayama-cano-iso}, the Nakayama functor $\Nak_{\mathcal{M}}$ lifts to the functor
\begin{equation}
  \label{eq:Nakayama-lift}
  \lmod{\mathbf{T}^{\lladj}} \to \lmod{\mathbf{T}},
  \quad (M, a_M) \mapsto (\Nak_{\mathcal{M}}(M), \widetilde{a}_M),
\end{equation}
where $\widetilde{a}_M$ is the composition
\begin{equation*}
  T \otimes \Nak_{\mathcal{M}}(M)
  \xrightarrow{\quad \can_{T^{\lladj}}(M) \quad}
  \Nak_{\mathcal{M}}(T^{\lladj} \otimes M)
  \xrightarrow{\quad \Nak_{\mathcal{M}}(a_M) \quad}
  \Nak_{\mathcal{M}}(M).
\end{equation*}
Now the main result of this section is stated as follows:

\begin{theorem}
  \label{thm:Nakayama-monads}
  The Nakayama functor of $\lmod{\mathbf{T}}$ is given by the composition
  \begin{equation}
    \label{eq:Nakayama-monads}
    \lmod{\mathbf{T}}
    \xrightarrow{\quad \mathbf{T}^{\ladj} \otimes_{\mathbf{T}} (-) \quad}
    \lmod{\mathbf{T}^{\lladj}}
    \xrightarrow{\quad \eqref{eq:Nakayama-lift} \quad}
    \lmod{\mathbf{T}}.
  \end{equation}
\end{theorem}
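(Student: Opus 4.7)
The plan, following the sketch given in the introduction, is to construct a natural isomorphism between $\Nak_{\lmod{\mathbf{T}}}$ and the composition $\Phi$ in~\eqref{eq:Nakayama-monads} by applying the canonical isomorphism~\eqref{eq:Nakayama-cano-iso} twice, with two different choices of the functor $G$.

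Combining Lemmas~\ref{lem:left-adjoint-of-free} and~\ref{lem:right-adjoint-of-forg}, the forgetful functor fits into a chain of adjoints
\[
  \mathbf{T}^{\ladj} \otimes_{\mathbf{T}} (-)\ \dashv\ \Free\ \dashv\ \Forg\ \dashv\ \mathbf{T}^{\radj} \otimes (-).
\]
Under~\eqref{eq:nice-monad}, the rightmost functor $\Forg^{\radj} = \mathbf{T}^{\radj} \otimes (-)$ is itself right exact, since $T^{\radj}$ becomes right exact precisely when $T^{\rradj}$ exists. Hence $\Forg^{\lladj} = \mathbf{T}^{\ladj} \otimes_{\mathbf{T}} (-)$ admits a triple right adjoint, and~\eqref{eq:Nakayama-cano-iso} applied to $G = \Forg^{\lladj}$ yields a natural isomorphism
\[
  \can_{\Forg^{\lladj}} : \Forg \circ \Nak_{\lmod{\mathbf{T}}} \xrightarrow{\ \cong\ } \Nak_{\mathcal{M}} \circ \bigl(\mathbf{T}^{\ladj} \otimes_{\mathbf{T}} (-)\bigr).
\]
This already identifies the underlying $\mathcal{M}$-object of $\Nak_{\lmod{\mathbf{T}}}(\mathbf{M})$ with that of $\Phi(\mathbf{M})$ for every $\mathbf{M} \in \lmod{\mathbf{T}}$.

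To upgrade this to an isomorphism in $\lmod{\mathbf{T}}$, one has to show that the $\mathbf{T}$-action on $\Nak_{\lmod{\mathbf{T}}}(\mathbf{M})$, transported along $\can_{\Forg^{\lladj}}$, coincides with the action $\widetilde a$ from~\eqref{eq:Nakayama-lift}. The left-hand action is $\Forg(\epsilon_{\Nak_{\lmod{\mathbf{T}}}(\mathbf{M})})$, where $\epsilon : \Free \Forg \Rightarrow \id_{\lmod{\mathbf{T}}}$ is the counit of $\Free \dashv \Forg$. Invoking the second instance~\eqref{eq:intro-proof-main-thm} of~\eqref{eq:Nakayama-cano-iso}, namely
\[
  \can_{\Free} : \Forg^{\radj} \circ \Nak_{\mathcal{M}} \xrightarrow{\ \cong\ } \Nak_{\lmod{\mathbf{T}}} \circ \Free,
\]
together with the compositional coherence of $\can_G$ from \cite[Theorem~3.18]{MR4042867}, this transported action can be rewritten as the composite $\Nak_{\mathcal{M}}(a) \circ \can_{T^{\lladj}}$, where $a : T^{\lladj}(\mathbf{T}^{\ladj} \otimes_{\mathbf{T}} \mathbf{M}) \to \mathbf{T}^{\ladj} \otimes_{\mathbf{T}} \mathbf{M}$ is the left $\mathbf{T}^{\lladj}$-action supplied by Lemma~\ref{lem:adjoint-module-2}; this is exactly $\widetilde a$.

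The main obstacle is this second step. The delicate point is the coherence bookkeeping: one must express both the tautological $\mathbf{T}$-action and $\widetilde a$ in terms of $\can$ evaluated at various composable functors, and then apply the compositional coherence of $\can_G$ together with the diagrammatic description of the bimodule structures provided by Lemmas~\ref{lem:adjoint-module-1}--\ref{lem:adjoint-module-2}. Once this identification is secured, naturality of $\can_{\Forg^{\lladj}}$ in $\mathbf{M}$ immediately produces the required isomorphism $\Nak_{\lmod{\mathbf{T}}} \cong \Phi$.
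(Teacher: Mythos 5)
Your overall strategy matches the paper's: identify the underlying $\mathcal{M}$-object via $\can_{\Forg^{\lladj}}$ (i.e.\ $\can_{\Free^{\ladj}}$), then use $\can_{\Free}$ and the coherence of $\can$ to show that the resulting isomorphism respects the $\mathbf{T}$-actions. You also correctly locate where all the hypotheses of \eqref{eq:nice-monad} enter: $T^{\lladj}$ for $\Free^{\ladj}$ to exist, and $T^{\rradj}$ to make $\Forg^{\radj}$ right exact so that $\can_{\Free}$ is defined.

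However, the crucial verification — that the $\mathbf{T}$-action transported across $\can_{\Free^{\ladj}}(\mathbf{M})$ equals $\widetilde a$ — is left as unperformed ``coherence bookkeeping,'' and the proposal does not explain how $\can_{\Free}$ (which produces isomorphisms $\Nak_{\lmod{\mathbf{T}}}\Free(X) \cong \Forg^{\radj}\Nak_{\mathcal{M}}(X)$, i.e.\ only speaks about \emph{free} $\mathbf{T}$-modules) would apply to an arbitrary $\mathbf{M}$. This is exactly the gap the paper closes with a two-step argument: (i) show $\xi_{\Free(X)}$ is a $\mathbf{T}$-module map by exhibiting it, via coherence of $\can$, as the composite of three explicit $\mathbf{T}$-module isomorphisms $\can_{\Free}(X)^{-1}$, $\can_{T^{\radj}}(X)$, and $\Nak_{\mathcal{M}}(\psi_X^{-1})$ (the last requiring a separate lemma that $\psi_X : \Free^{\ladj}\Free(X) \to \mathbf{T}^{\ladj}\otimes X$ is a morphism of $\mathbf{T}^{\lladj}$-modules, which reconciles Lemma~\ref{lem:adjoint-module-2} with the adjunction-theoretic identification); and (ii) extend to a general $\mathbf{M}$ by choosing a presentation $\Free(X) \to \Free(Y) \to \mathbf{M} \to 0$ and using that both $\Nak_{\lmod{\mathbf{T}}}$ and the composite \eqref{eq:Nakayama-monads} are right exact, so that $\xi_{\mathbf{M}}$ is determined by $\xi_{\Free(X)}$ and $\xi_{\Free(Y)}$. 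Your proposal does not mention this reduction to free modules, nor the $\psi_X$ lemma, and without them it is not clear how the ``bookkeeping'' would close. So the plan is right but a genuine argument is missing at the decisive point.
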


We recall that $\Free$ has a left adjoint given by Lemma~\ref{lem:left-adjoint-of-free}.
Since $\Free$ is left adjoint to $\Forg$, there is a natural isomorphism
\begin{equation*}
  \psi_X : \Free^{\ladj} \Free(X)
  = \Free^{\ladj} \Forg^{\ladj}(X)
  \xrightarrow{\quad \cong \quad}
  (\Forg \Free)^{\ladj}(X) = T^{\ladj} \otimes X
  \quad (X \in \mathcal{M})
\end{equation*}
by the uniqueness of adjoints.
To prove the above theorem, we give the following technical remark:

\begin{lemma}
  The isomorphism $\psi_X$ is a morphism
  \begin{equation*}
    \psi_X : \mathbf{T}^{\ladj} \otimes_{\mathbf{T}} (\mathbf{T} \otimes X)
    \to \mathbf{T}^{\ladj} \otimes X
  \end{equation*}
  of $\mathbf{T}^{\lladj}$-modules in $\mathcal{M}$.
\end{lemma}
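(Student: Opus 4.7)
The plan is to describe $\psi_X$ concretely via the coequalizer presentation of its source, and then reduce $\mathbf{T}^{\lladj}$-linearity to the $\mathbf{T}^{\lladj}$-$\mathbf{T}$-bimodule compatibility of $\mathbf{T}^{\ladj}$ furnished by Lemmas~\ref{lem:adjoint-module-1} and~\ref{lem:adjoint-module-2}.

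First I would identify $\psi_X$. By definition, $\Free^{\ladj}\Free(X) = \mathbf{T}^{\ladj} \otimes_{\mathbf{T}} (T(X),\mu^T_X)$ is the coequalizer of the parallel pair $a^r_{T(X)},\, T^{\ladj}(\mu^T_X): T^{\ladj}T T(X) \rightrightarrows T^{\ladj}T(X)$, with projection $p_X$, where $a^r: T^{\ladj}T \to T^{\ladj}$ is the right $\mathbf{T}$-action on $T^{\ladj}$. The right-$\mathbf{T}$-module associativity for $a^r$ gives the equality $a^r_X \circ a^r_{T(X)} = a^r_X \circ T^{\ladj}(\mu^T_X)$, so $a^r_X: T^{\ladj}T(X) \to T^{\ladj}(X)$ coequalizes the pair. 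Hence there is a unique $\phi_X: \Free^{\ladj}\Free(X) \to T^{\ladj}(X)$ with $\phi_X \circ p_X = a^r_X$. A short chase of the unit/counit data of the adjunctions $\Free^{\ladj} \dashv \Free \dashv \Forg$ and $T^{\ladj} \dashv T$ (using that the unit of $\Free \dashv \Forg$ is essentially $\eta^T$) shows that $\phi_X$ is exactly the canonical comparison isomorphism $\psi_X$ of left adjoints of $T = \Forg \circ \Free$.

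Next I would verify $\mathbf{T}^{\lladj}$-linearity. Denote by $\alpha_X : T^{\lladj}(\Free^{\ladj}\Free(X)) \to \Free^{\ladj}\Free(X)$ the $\mathbf{T}^{\lladj}$-action induced from the left $\mathbf{T}^{\lladj}$-action $a^{\ell}: T^{\lladj}T^{\ladj} \to T^{\ladj}$ from Lemma~\ref{lem:adjoint-module-2}. Since $T^{\lladj}$ is a left adjoint, it preserves coequalizers; in particular $T^{\lladj}(p_X)$ is epi, and $\alpha_X$ is characterized by the equation $\alpha_X \circ T^{\lladj}(p_X) = p_X \circ a^{\ell}_{T(X)}$. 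One then computes
\begin{equation*}
  \phi_X \circ \alpha_X \circ T^{\lladj}(p_X) = \phi_X \circ p_X \circ a^{\ell}_{T(X)} = a^r_X \circ a^{\ell}_{T(X)},
\end{equation*}
while
\begin{equation*}
  a^{\ell}_X \circ T^{\lladj}(\phi_X) \circ T^{\lladj}(p_X) = a^{\ell}_X \circ T^{\lladj}(\phi_X \circ p_X) = a^{\ell}_X \circ T^{\lladj}(a^r_X).
\end{equation*}
The equality $a^r_X \circ a^{\ell}_{T(X)} = a^{\ell}_X \circ T^{\lladj}(a^r_X)$ is precisely the $\mathbf{T}^{\lladj}$-$\mathbf{T}$-bimodule compatibility condition for $\mathbf{T}^{\ladj}$ evaluated at $X$, which is exactly what Lemma~\ref{lem:adjoint-module-2} guarantees. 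Since $T^{\lladj}(p_X)$ is epi, this yields $\phi_X \circ \alpha_X = a^{\ell}_X \circ T^{\lladj}(\phi_X)$, i.e., $\mathbf{T}^{\lladj}$-linearity of $\psi_X = \phi_X$.

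The only mildly subtle step is the identification $\phi_X = \psi_X$; once the abstract comparison of left adjoints is unwound, it is a routine check using the units and counits of the two relevant adjunctions. Everything else is a direct application of the coequalizer universal property together with the bimodule axiom, so I do not anticipate any genuine obstacle.
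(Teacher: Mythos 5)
Your proof is correct and follows essentially the same strategy as the paper: identify $\psi_X$ by the equation $\psi_X \circ \pi_X = a^r_X$ on the coequalizer, then reduce the $\mathbf{T}^{\lladj}$-linearity to the bimodule compatibility $a^r_X \circ a^\ell_{T(X)} = a^\ell_X \circ T^{\lladj}(a^r_X)$ of $\mathbf{T}^{\ladj}$, using that $T^{\lladj}(\pi_X)$ is epi since $T^{\lladj}$ preserves coequalizers. The only place where the paper does noticeably more work is the step you call ``a short chase of the unit/counit data'': the paper derives $\psi_X \circ \pi_X = a^r_X$ explicitly by tracking $\id_{T^{\ladj}\otimes X}$ through the chain $\Hom(T^{\ladj}\otimes X, Y)\cong\Hom(X, T\otimes Y)\cong\Hom(\Free(X),\Free(Y))\cong\Hom(\Free^{\ladj}\Free(X),Y)$, whereas you leave that verification compressed; it is indeed routine but it is the computational core of the lemma.
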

\begin{proof}
  Let $a^{\ell} : T^{\lladj} \otimes T^{\ladj} \to T^{\ladj}$ and $a^r : T^{\ladj} \otimes T \to T^{\ladj}$ be the left and the right action of $\mathbf{T}^{\lladj}$ and $\mathbf{T}$ on $T^{\ladj}$, respectively. We note that the action $\overline{a}{}^{\ell}$ of $\mathbf{T}^{\lladj}$ on the source of $\psi_X$ is determined by the equation
  \begin{equation*}
    \overline{a}{}^{\ell} \circ (\id_{T^{\lladj}} \otimes \pi_X)
    = \pi_X \circ (a^{\ell} \otimes \id_X),
  \end{equation*}
  where $\pi_X : T^{\ladj} \otimes T \otimes X \to \mathbf{T}^{\ladj} \otimes_{\mathbf{T}} (\mathbf{T} \otimes X)$ is the canonical epimorphism.

  There are natural isomorphisms
  \begin{align}
    \label{eq:Nakayama-monads-psi-1}
    \Hom_{\mathcal{M}}(T^{\ladj} \otimes X, Y)
    & \cong \Hom_{\mathcal{M}}(X, T \otimes Y) \\
    \label{eq:Nakayama-monads-psi-2}
    & \cong \Hom_{\lmod{\mathbf{T}}}(\Free(X), \Free(Y)) \\
    \label{eq:Nakayama-monads-psi-3}
    & \cong \Hom_{\mathcal{M}}(\Free^{\ladj} \Free(X), Y)
  \end{align}
  for $X, Y \in \mathcal{M}$.
  The isomorphism $\psi_X$ is the image of $\id_{T^{\ladj} \otimes X}$ under the above chain of isomorphisms with $Y = T^{\ladj} \otimes X$.
  Let $\eta$ and $\varepsilon$ be the unit and the counit for the adjunction $T^{\ladj} \dashv T$. The element $\id_{T^{\ladj} \otimes X}$ is sent as follows:
  \begin{equation*}
    \id_{T^{\ladj} \otimes X}
    \xmapsto{\ \eqref{eq:Nakayama-monads-psi-1} \ }
    \eta \otimes \id_X
    \xmapsto{\ \eqref{eq:Nakayama-monads-psi-2} \ }
    (\mu^T \otimes \id_{T^{\ladj}} \otimes \id_X)
    \circ (\id_T \otimes \eta \otimes \id_X).
  \end{equation*}
  By the proof of Lemma \ref{lem:left-adjoint-of-free}, the isomorphism \eqref{eq:Nakayama-monads-psi-3} sends an element $f$ to the unique morphism $g$ in $\mathcal{M}$ such that the equation
  \begin{equation*}
    g \circ \pi_X = (\varepsilon \otimes \id_T \otimes \id_X) \circ (\id_{T^{\ladj}} \otimes f)
  \end{equation*}
  holds. In conclusion, the isomorphism $\psi_X$ is the unique morphism in $\mathcal{M}$ such that the equation $\psi_X \circ \pi = a^r \otimes \id_X$ holds. By this equation and the fact that $(T^{\ladj}, a^{\ell}, a^r)$ is a bimodule, we have the equation
  \begin{align*}
    (a^{\ell} \otimes \id_X) \circ (\id_{T^{\lladj}} \otimes \psi_X \pi_X)
    = (\overline{a}{}^{\ell} \otimes \id_X)
    \circ (\id_{T^{\lladj}} \otimes \psi_X \pi_X),
  \end{align*}
  which implies that $\psi_X$ is a morphism of left $\mathbf{T}^{\lladj}$-modules in $\mathcal{M}$.
\end{proof}

\begin{proof}[Proof of Theorem \ref{thm:Nakayama-monads}]
  By Lemma~\ref{lem:right-adjoint-of-forg}, $\Forg$ has a right adjoint.
  Since $\Forg$ is a double right adjoint of $\Free^{\ladj}$, we have a canonical isomorphism
  \begin{equation*}
    \xi_{\mathbf{M}} := \can_{\Free^{\ladj}}(\mathbf{M}) : \Forg \Nak_{\lmod{\mathbf{T}}}(\mathbf{M})
    \xrightarrow{\quad \eqref{eq:Nakayama-cano-iso} \quad}
    \Nak_{\mathcal{M}} \Free^{\ladj}(\mathbf{M})
    \quad (\mathbf{M} \in \lmod{\mathbf{T}}).
  \end{equation*}
  By Lemma \ref{lem:left-adjoint-of-free}, the functor $\Nak_{\mathcal{M}}\Free^{\ladj}$ is the composition of \eqref{eq:Nakayama-monads} and $\Forg$.
  Thus, to complete the proof, it suffices to show that $\xi_{\mathbf{M}}$ is in fact a morphism
  \begin{equation}
    \label{eq:Nakayama-monads-proof-1}
    \xi_{\mathbf{M}} : \Nak_{\lmod{\mathbf{T}}}(\mathbf{M}) \to
    \Nak_{\mathcal{M}}(\mathbf{T}^{\ladj} \otimes_{\mathbf{T}} \mathbf{M})
  \end{equation}
  of $\mathbf{T}$-modules in $\mathcal{M}$ for all $\mathbf{M} \in \lmod{\mathbf{T}}$.

  We first verify that \eqref{eq:Nakayama-monads-proof-1} is a morphism of $\mathbf{T}$-modules in the case where $\mathbf{M}$ is a free $\mathbf{T}$-module.
  By Lemma~\ref{lem:right-adjoint-of-forg} and the assumption that $T$ has a double right adjoint, $\Forg^{\radj}$ is right exact. This implies that $\Free$ has a triple right adjoint. Hence there is the following natural isomorphism of $\mathbf{T}$-modules:
  \begin{equation}
    \label{eq:Nakayama-monads-proof-2}
    \can_{\Free}(X)^{-1} :
    \Nak_{\lmod{\mathbf{T}}} \Free(X)
    \xrightarrow{\quad \eqref{eq:Nakayama-cano-iso} \quad}
    \mathbf{T}^{\radj} \otimes \Nak_{\mathcal{M}}(X)
    \quad (X \in \mathcal{M}).
  \end{equation}
  By the naturality and the coherence property, we see that
  \begin{equation}
    \label{eq:Nakayama-monads-proof-3}
    \can_{T^{\radj}}(X) : \mathbf{T}^{\radj} \otimes \Nak_{\mathcal{M}}(X)
    \xrightarrow{\quad \eqref{eq:Nakayama-cano-iso} \quad}
    \Nak_{\mathcal{M}}(\mathbf{T}^{\ladj} \otimes X)
    \quad (X \in \mathcal{M})
  \end{equation}
  is an isomorphism of $\mathbf{T}$-modules.
  By composing \eqref{eq:Nakayama-monads-proof-2},
  \eqref{eq:Nakayama-monads-proof-3} and $\Nak_{\mathcal{M}}(\psi_X^{-1})$, we obtain the following natural isomorphism of $\mathbf{T}$-modules:
  \begin{equation*}
    \zeta_X : \Nak_{\lmod{\mathbf{T}}} \Free(X)
    \to \Nak_{\mathcal{M}}(\mathbf{T}^{\ladj} \otimes_{\mathbf{T}} \Free(X))
    \quad (X \in \mathcal{M}).
  \end{equation*}
  We recall that $\psi_X$ is defined by the uniqueness of adjoints.
  By the naturality and the coherence property of the canonical isomorphism \eqref{eq:Nakayama-cano-iso}, we see that the following diagram is commutative:
  \begin{equation*}
    \begin{tikzcd}[column sep = 72pt]
      \Forg \Forg^{\radj} \Nak_{\mathcal{M}}(X)
      \arrow[r, "{\Forg(\can_{\Free}(X))}"]
      \arrow[d, equal]
      & \Forg \Nak_{\mathcal{M}} \Free(X)
      \arrow[r, "{\can_{\Free^{\ladj}}(X)}", "(=\,\xi_{\Free(X)})"']
      & \Nak_{\mathcal{M}}(\Free^{\ladj} \Free(X))
      \arrow[d, "{\Nak_{\mathcal{M}}(\psi_X)}"] \\
      T^{\radj} \otimes \Nak_{\mathcal{M}}(X)
      \arrow[rr, "{\can_{T^{\ladj}}(X)}"]
      & & \Nak_{\mathcal{M}}(T^{\ladj} \otimes X).
    \end{tikzcd}
  \end{equation*}
  Hence $\xi_{\Free(X)} = \zeta_X$ for all $X \in \mathcal{M}$.
  By the construction, $\zeta_X$ is an isomorphism of $\mathbf{T}$-modules.
  Thus $\xi_{\Free(X)}$ is also an isomorphism of $\mathbf{T}$-modules.

  Now let $\mathbf{M}$ be an arbitrary $\mathbf{T}$-module in $\mathcal{M}$. Then there is an exact sequence in $\lmod{\mathbf{T}}$ of the form $\Free(X) \to \Free(Y) \to \mathbf{M} \to 0$ for some $X, Y \in \mathcal{M}$. We consider the following diagram:
  \begin{equation*}
    \begin{tikzcd}
      \Nak_{\lmod{\mathbf{T}}}\Free(X) \arrow[r]
      \arrow[d, "{\xi_{\Free(X)}}"]
      & \Nak_{\lmod{\mathbf{T}}}\Free(Y) \arrow[r]
      \arrow[d, "{\xi_{\Free(Y)}}"]
      & \Nak_{\lmod{\mathbf{T}}}(\mathbf{M}) \arrow[r]
      \arrow[d, dashed] & 0 \\
      \Nak_{\mathcal{M}}(\mathbf{T}^{\ladj} \otimes_{\mathbf{T}} \Free(X)) \arrow[r]
      & \Nak_{\mathcal{M}}(\mathbf{T}^{\ladj} \otimes_{\mathbf{T}} \Free(Y)) \arrow[r]
      & \Nak_{\mathcal{M}}(\mathbf{T}^{\ladj} \otimes_{\mathbf{T}} \mathbf{M})
      \arrow[r] & 0
    \end{tikzcd}
  \end{equation*}
  Since $\Nak_{\lmod{\mathbf{T}}}$ and \eqref{eq:Nakayama-monads} are right exact functors, the rows are exact. Thus $\xi_{\mathbf{M}}$ is characterized as a unique morphism in $\mathcal{M}$ such that the above diagram commutes when the dashed arrow is filled with it.
  Since both $\xi_{\Free(X)}$ and $\xi_{\Free(Y)}$ are morphisms of $\mathbf{T}$-modules, so is $\xi_{\mathbf{M}}$.
  The proof is done.
\end{proof}

\section{Algebras with Frobenius traces}
\label{sec:algebras-w-Fb-trace}

\subsection{Modules in module categories}
\label{subsec:modules-module-cats}

Let $\mathcal{C}$ be a finite multi-tensor category, and let $A$ and $B$ be algebras in $\mathcal{C}$.
If $\mathcal{M}$ is a finite left $\mathcal{C}$-module category with action $\catactl: \mathcal{C} \times \mathcal{M} \to \mathcal{M}$, then the endofunctor $A \catactl (-)$ on $\mathcal{M}$ has a natural structure of a monad on $\mathcal{M}$. We define ${}_A \mathcal{M}$ to be the category of modules over this monad and call an object of ${}_A \mathcal{M}$ an left $A$-module in $\mathcal{M}$. In a similar way, the category $\mathcal{M}_B$ of right $B$-modules in $\mathcal{M}$ and the category ${}_A \mathcal{M}_B$ of $A$-$B$-bimodules in $\mathcal{M}$ are defined when $\mathcal{M}$ is a finite right $\mathcal{C}$-module category and a finite $\mathcal{C}$-bimodule category, respectively.

Since $\mathcal{C}$ itself is a finite $\mathcal{C}$-bimodule category, the category ${}_A \mathcal{C}_B$ of $A$-$B$-bimodules in $\mathcal{C}$ is defined. Let $\mathcal{M}$ be a finite left $\mathcal{C}$-module category. Given $X \in {}_A \mathcal{C}_B$ and $M \in {}_B \mathcal{M}$, their tensor product over $B$ is defined by
\begin{equation*}
  X \otimes_B M
  := \mathrm{coequalizer} \Big(
  \begin{tikzcd}[column sep = 64pt]
    X \catactl B \catactl M
    \arrow[r, shift left=3pt, "{\id_X \catactl a^{\ell}}"]
    \arrow[r, shift right=3pt, "{a^{r} \catactl \id_M}"']
    & X \catactl M
  \end{tikzcd} \Big) \in {}_A \mathcal{M},
\end{equation*}
where $a^{\ell} : B \catactl M \to M$ and $a^{r} : X \otimes B \to X$ are the left and the right action of $B$ on $M$ and $X$, respectively. This construction defines a functor $\otimes_B: {}_A \mathcal{C}_B \times {}_B \mathcal{M} \to {}_A \mathcal{M}$ that is linear and right exact in each variable. When $\mathcal{M}$ is a finite right $\mathcal{C}$-module category, there is a functor $\otimes_A: \mathcal{M}_A \times {}_A \mathcal{C}_B \to \mathcal{M}_B$ defined in a similar way (these functors are a special case of the tensor product of modules over monads introduced in the previous section).
Theorem \ref{thm:Nakayama-monads} gives a formula for the Nakayama functor of categories of the form ${}_A \mathcal{M}$, $\mathcal{M}_B$ or ${}_A \mathcal{M}_B$ as follows:

\begin{theorem}
  \label{thm:Nakayama-for-modules}
  Let $A$ and $B$ be algebras in a finite multi-tensor category $\mathcal{C}$.
  \begin{enumerate}
  \item For a finite left $\mathcal{C}$-module category $\mathcal{M}$, we have
    \begin{equation*}
      \Nak_{{}_{A}\mathcal{M}}(\mathbf{M})
      = \Nak_{\mathcal{M}}(A^{\vee} \otimes_A \mathbf{M})
      \quad (\mathbf{M} \in {}_A \mathcal{M}).
    \end{equation*}
  \item For a finite right $\mathcal{C}$-module category $\mathcal{M}$, we have
    \begin{equation*}
      \Nak_{\mathcal{M}_B}(\mathbf{M})
      = \Nak_{\mathcal{M}}(\mathbf{M} \otimes_B {}^{\vee\!}B)
      \quad (\mathbf{M} \in \mathcal{M}_B).
    \end{equation*}
  \item For a finite $\mathcal{C}$-bimodule category $\mathcal{M}$, we have
    \begin{equation*}
      \Nak_{{}_{A}\mathcal{M}_{B}}(\mathbf{M})
      = \Nak_{\mathcal{M}}(A^{\vee} \otimes_A \mathbf{M} \otimes_B {}^{\vee\!}B)
      \quad (\mathbf{M} \in {}_A \mathcal{M}_B).
    \end{equation*}
  \end{enumerate}
\end{theorem}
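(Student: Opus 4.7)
The plan is to prove part (1) by applying Theorem \ref{thm:Nakayama-monads} to the monad $\mathbf{T} = A \catactl (-)$ on $\mathcal{M}$, and then to deduce parts (2) and (3) by the change-of-category tricks $\mathcal{C} \rightsquigarrow \mathcal{C}^{\rev}$ and $\mathcal{C} \rightsquigarrow \mathcal{C} \boxtimes \mathcal{C}^{\rev}$, respectively.

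For part (1), I first check that $\mathbf{T}$ satisfies condition \eqref{eq:nice-monad}. This is immediate from the observation in \S\ref{subsec:radford-iso} that $\act_{\mathcal{M}}(A) = A \catactl (-)$ has successive left adjoints $A^{\vee} \catactl (-)$ and $A^{\vee\vee} \catactl (-)$ and successive right adjoints ${}^{\vee\!}A \catactl (-)$ and ${}^{\vee\vee\!}A \catactl (-)$. Since $(-)^{\vee\vee}$ is a strong monoidal equivalence, $A^{\vee\vee}$ is again an algebra in $\mathcal{C}$, and $\mathbf{T}^{\lladj}$ coincides with the monad $A^{\vee\vee} \catactl (-)$ associated to it. Next I identify the functor $\mathbf{T}^{\ladj} \otimes_{\mathbf{T}} (-)$: unwinding the diagrammatic description in \S\ref{sec:for-monads} of the natural transformations produced by Lemmas \ref{lem:adjoint-module-1} and \ref{lem:adjoint-module-2}, the $\mathbf{T}^{\lladj}$-$\mathbf{T}$-bimodule structure on $T^{\ladj} = A^{\vee} \catactl (-)$ is precisely the image under $\act_{\mathcal{M}}$ of the standard $A^{\vee\vee}$-$A$-bimodule structure on $A^{\vee}$ built from the evaluation and coevaluation morphisms for $A$ and $A^{\vee}$. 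Therefore $\mathbf{T}^{\ladj} \otimes_{\mathbf{T}} \mathbf{M}$ is the coequalizer of $A^{\vee} \catactl A \catactl M \rightrightarrows A^{\vee} \catactl M$, which by definition is $A^{\vee} \otimes_A \mathbf{M}$. Finally, the lifted Nakayama functor \eqref{eq:Nakayama-lift} transports this left $A^{\vee\vee}$-module structure to the $A$-module structure on $\Nak_{\mathcal{M}}(A^{\vee} \otimes_A \mathbf{M})$ dictated by the canonical isomorphism \eqref{eq:cat-action-left-Nakayama}, which is the specialization of \eqref{eq:Nakayama-cano-iso} to $G = A^{\vee\vee} \catactl (-)$. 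Theorem \ref{thm:Nakayama-monads} then yields part (1).

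For part (2), I view $\mathcal{M}$ as a finite left $\mathcal{C}^{\rev}$-module category via $X \catactl^{\rev} M := M \catactr X$; then $B$ is an algebra in $\mathcal{C}^{\rev}$, and the associated monad $B \catactl^{\rev} (-)$ coincides with $(-) \catactr B$. The left dual of $B$ in $\mathcal{C}^{\rev}$ is the right dual ${}^{\vee\!}B$ in $\mathcal{C}$, and the tensor product over $B$ computed in the $\mathcal{C}^{\rev}$-picture swaps the order of its arguments. Applying part (1) in $\mathcal{C}^{\rev}$ therefore gives $\Nak_{\mathcal{M}_B}(\mathbf{M}) \cong \Nak_{\mathcal{M}}(\mathbf{M} \otimes_B {}^{\vee\!}B)$. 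For part (3), I apply the same strategy to $\mathcal{C}^{\env} := \mathcal{C} \boxtimes \mathcal{C}^{\rev}$: a finite $\mathcal{C}$-bimodule category is a finite left $\mathcal{C}^{\env}$-module category, $A \boxtimes B$ is an algebra in $\mathcal{C}^{\env}$ whose modules in $\mathcal{M}$ are precisely objects of ${}_A \mathcal{M}_B$, and the left dual of $A \boxtimes B$ in $\mathcal{C}^{\env}$ is $A^{\vee} \boxtimes {}^{\vee\!}B$. A standard commuting-coequalizer argument, using that the $A$- and $B$-actions on $\mathbf{M}$ commute, identifies $(A^{\vee} \boxtimes {}^{\vee\!}B) \otimes_{A \boxtimes B} \mathbf{M}$ with $A^{\vee} \otimes_A \mathbf{M} \otimes_B {}^{\vee\!}B$, and then part (1) applied to $(\mathcal{C}^{\env}, A \boxtimes B, \mathcal{M})$ yields the desired formula.

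The main obstacle is the bookkeeping in part (1): recognizing that the natural $\mathbf{T}^{\lladj}$-$\mathbf{T}$-bimodule structure on the functor $A^{\vee} \catactl (-)$ produced abstractly by Lemmas \ref{lem:adjoint-module-1} and \ref{lem:adjoint-module-2} is the image under $\act_{\mathcal{M}}$ of the classical bimodule structure on the dual of an algebra. Once this identification is carried out using the graphical calculus of \S\ref{sec:for-monads}, the remainder of the proof is essentially a translation between the monadic language of Theorem \ref{thm:Nakayama-monads} and the standard formalism of modules in a module category.
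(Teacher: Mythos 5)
Your proofs of parts (1) and (2) are correct and follow the route the paper clearly intends: apply Theorem~\ref{thm:Nakayama-monads} to the monad $A\catactl(-)$ (for (1)), and deduce (2) by working in $\mathcal{C}^{\rev}$ (which is equivalent to applying Theorem~\ref{thm:Nakayama-monads} directly to $(-)\catactr B$). The identification of the abstract $\mathbf{T}^{\lladj}$--$\mathbf{T}$-bimodule structure on $A^{\vee}\catactl(-)$ with the image under $\act_{\mathcal{M}}$ of the usual $A^{\vee\vee}$--$A$-bimodule structure on $A^{\vee}$ is the right bookkeeping step and is correct.

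However, your proof of part (3) has a genuine gap. You deduce (3) by applying part (1) to the algebra $A\boxtimes B$ in $\mathcal{C}^{\env}=\mathcal{C}\boxtimes\mathcal{C}^{\rev}$, but for $\mathcal{C}^{\env}$ to qualify as a finite multi-tensor category (so that part (1) applies to it) one needs the base field $\bfk$ to be perfect; the paper invokes this hypothesis only from Section~\ref{sec:the-center} onward, and Theorem~\ref{thm:Nakayama-for-modules} is stated in Section~\ref{sec:algebras-w-Fb-trace} over an arbitrary base field. The intended argument avoids $\mathcal{C}^{\env}$ entirely: treat $\mathbf{T}:=A\catactl(-)\catactr B$ as a linear right exact monad on the finite abelian category $\mathcal{M}$, observe that it admits the double left adjoint $A^{\vee\vee}\catactl(-)\catactr{}^{\vee\vee\!}B$ and double right adjoint ${}^{\vee\vee\!}A\catactl(-)\catactr B^{\vee\vee}$ because $\act_{\mathcal{M}}(A)$ and $(-)\catactr B$ commute and each has the requisite adjoints, and apply Theorem~\ref{thm:Nakayama-monads} directly. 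The computation of $\mathbf{T}^{\ladj}\otimes_{\mathbf{T}}\mathbf{M}$ then gives $A^{\vee}\otimes_A\mathbf{M}\otimes_B{}^{\vee\!}B$ by the same commuting-coequalizer argument you sketch, but without ever needing $\mathcal{C}^{\env}$ to be rigid. This is a small fix, but as written your argument proves part (3) only over perfect fields, which is strictly weaker than the statement.
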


In this section, we further investigate the Nakayama functor of the category of modules in the case where the algebra is a kind of Frobenius algebra.

\subsection{Algebras with a Frobenius trace}

Let $\mathcal{C}$ be a rigid monoidal category.
The notion of a Frobenius algebra in $\mathcal{C}$ is defined as a generalization of a Frobenius algebra (see, {\it e.g.}, \cite{MR2500035}).
If $A$ is a Frobenius algebra in $\mathcal{C}$, then $A \cong {}^{\vee}\!A$ and $A \cong A^{\vee}$ as left and right $A$-modules in $\mathcal{C}$, respectively. Hence the Nakayama functor of the category of $A$-modules would look a simpler form than that given in Theorem \ref{thm:Nakayama-for-modules}. In view of further applications, we consider a more general setting:

\begin{definition}
  \label{def:I-Frob-trace}
  Given an invertible object $I \in \mathcal{C}$, an {\em $I$-valued Frobenius trace} on an algebra $A$ in $\mathcal{C}$ is a morphism $\lambda: A \to I$ in $\mathcal{C}$ such that the morphism
  \begin{equation*}
    \phi :=
    (\eval_I \otimes \id_{A^{\vee}})
    \circ (\id_{I^{\vee}} \otimes \lambda m \otimes \id_{A^{\vee}})
    \circ (\id_{I^{\vee}} \otimes \coev_A):
    I^{\vee} \otimes A \to A^{\vee}
  \end{equation*}
  is invertible, where $m : A \otimes A \to \unitobj$ is the multiplication of $A$.  
\end{definition}

For example, we can say that a Frobenius algebra in $\mathcal{C}$ is an algebra $A$ in $\mathcal{C}$ endowed with a $\unitobj$-valued Frobenius trace.
A non-zero cointegral on a Hopf algebra in a braided finite multi-tensor category is an $I$-valued Frobenius trace with $I$ the `object of integrals' of $H$ \cite{MR1685417,MR1759389}.

Now we fix an invertible object $I \in \mathcal{C}$ and an algebra $A$ in $\mathcal{C}$ equipped with an $I$-valued Frobenius trace $\lambda$.
We define $\phi$ as above, and set
\begin{equation*}
  \beta := \eval_I \circ (\id_{I^{\vee}} \otimes \lambda m)
  = \eval_A \circ (\phi \otimes \id_{A}).
\end{equation*}
The following graphical expressions of $\beta$ and $\phi$ may be helpful:
\begin{equation*}
  \beta =
  \begin{tikzpicture}[baseline = 0pt, x = 8pt, y = 8pt]
    \draw (0, 3) coordinate (S1) node [above] {$I^{\vee}$};
    \draw (2.5, 3) coordinate (S2) node [above] {$A$};
    \draw let \p1 = (S2), \p2 = ($(\p1)-(0, 1)$),
    in (\p1) -- (\p2)
    to [out=-90, in=-90, looseness=2] coordinate[midway] (M1) ++(2,0)
    -- ++(0, 1) node [above] {$A$};
    \draw (M1) node {$\bullet$} -- ++(0, -1)
    node (B1) [below, draw, rectangle] {$\lambda$};
    \draw let \p1 = (S1), \p2 = (B1.south)
    in (S1) -- (\x1, \y2) to [out=-90, in=-90, looseness=2] (\p2);
  \end{tikzpicture}
  \qquad \phi =
  \begin{tikzpicture}[baseline = 0pt, x = 8pt, y = 8pt]
    \draw (0, 3) coordinate (S1) node [above] {$I^{\vee}$};
    \draw (2.5, 3) coordinate (S2) node [above] {$A$};
    \draw (7, -3) coordinate (T1) node [below] {$A^{\vee}$};
    \draw let \p1 = (S2), \p2 = ($(\p1)-(0, 1)$), \p3 = (T1)
    in (\p1) -- (\p2)
    to [out=-90, in=-90, looseness=2] coordinate [midway] (M1) ++(2,0)
    to [out=+90, in=+90, looseness=2] (\x3, \y2) -- (T1);
    \draw (M1) node {$\bullet$} -- ++(0, -1)
    node (B1) [below, draw, rectangle] {$\lambda$};
    \draw let \p1 = (S1), \p2 = (B1.south)
    in (S1) -- (\x1, \y2) to [out=-90, in=-90, looseness=2] (\p2);
  \end{tikzpicture}
  \qquad
  \left( m =
    \begin{tikzpicture}[baseline = -2pt, x = 10pt, y = 10pt]
      \draw (-1,1) node [above] {$A$}
      to [out=-90, in=-90, looseness=2]
      coordinate [midway] (M1) ++(2,0)
      node [above] {$A$};
      \draw (M1) node {$\bullet$} -- (0,-1) node [below] {$A$};
    \end{tikzpicture}
  \right)
\end{equation*}
It is easy to see that $\phi$ is an isomorphism of right $A$-modules.
The map $\lambda \mapsto \phi$ gives a bijection between the set of $I$-valued Frobenius traces on $A$ and the set of isomorphisms $I^{\vee} \otimes A \to A^{\vee}$ of right $A$-modules in $\mathcal{C}$.

\begin{definition}
  The {\em Nakayama isomorphism} of $A$ associated to $\lambda$ is
  \begin{equation}
    \label{eq:I-Frob-alg-Nakayama-iso}
    \nu := (\phi^{-1} \otimes \id_{I^{\vee\vee}}) \circ \phi^{\vee} : A^{\vee\vee} \to I^{\vee} \otimes A \otimes I^{\vee\vee}.
  \end{equation}
\end{definition}

When $\mathcal{C}$ is pivotal and $I = \unitobj$, the composition of $\nu$ and the pivotal structure of $\mathcal{C}$ is equal to the Nakayama automorphism of $A$ introduced in \cite[Section 5]{MR2500035} (since $\beta$, $\phi$ and ${}^{\vee\!}\phi$ correspond to $\kappa$, $\Phi_{\kappa, \mathrm{r}}$ and  $\Phi_{\kappa, \mathrm{l}}$ of \cite{MR2500035}, respectively).
Thus we may view $\nu$ as an analogue of the Nakayama automorphism of a Frobenius algebra, although it is no more an `automorphism' of $A$.

The Nakayama automorphism of a Frobenius algebra is defined by an equation like `$\lambda(ba) = \lambda(\nu(a) b)$.' The following lemma can be thought of as an analogue of such an equation.

\begin{lemma}
  \label{lem:gFb-algebra-Nakayama}
  With notation as above, we have
  \begin{equation}
    \label{eq:gFb-algebra-Nakayama}
    \begin{tikzpicture}[baseline = 0pt, x = 8pt, y = 8pt]
      \node (B1) at (0, -2) [below, draw, rectangle] {\makebox[2em]{$\beta$}};
      \draw let \p1 = ([shift={(-1, 0)}]B1.north)
      in (\p1) -- (\x1, 3) node [above] {$I^{\vee}$};
      \draw let \p1 = ([shift={(0, 0)}]B1.north)
      in (\p1) to [looseness=1.5, out=90, in=-90] ($(\x1, 3)+(1.5,0)$)
      node [above] {$A$};
      \draw let \p1 = ([shift={(+1, 0)}]B1.north),
      \p2 = ($(\p1) + (2,0)$), \p3 = (B1.south)
      in (\p1) to [looseness=2, out=90, in=90] (\p2)
      -- ($(\x2, \y3) + (0, 1)$)
      to [looseness=1.5, out=-90, in=-90] ++(-7,0) coordinate (T);
      \draw let \p1 = (T)
      in (T) to [looseness=1.5, out=90, in=-90] ($(\x1, 3)+(0, 0)$)
      node [above] {$A^{\vee\vee}$};
    \end{tikzpicture} =
    \begin{tikzpicture}[baseline = 0pt, x = 8pt, y = 8pt]
      \node (B1) at (1.5, -2) [below, draw, rectangle] {\makebox[2em]{$\beta$}};
      \coordinate (S1) at (0, 3); \node at (S1) [above] {$A^{\vee\vee}$};
      \coordinate (S2) at (2.5, 3); \node at (S2) [above] {$I^{\vee}$};
      \coordinate (S3) at (5.0, 3); \node at (S3) [above] {$A$};
      \draw (S1) -- ++(0, -.75)
      node [below, draw, rectangle] (B2) {\makebox[1em]{$\nu$}};
      \draw let \p1 = ([shift={(.75, 0)}]B2.south), \p2=(S2)
      in (\p1) to [looseness=2, out=-90, in=-90] (\x2, \y1) -- (S2);
      \draw let \p1 = ([shift={(1, 0)}]B1.north), \p2 = (S3)
      in (\p1) to [looseness=1.5, out=90, in=-90] (\p2);
      \draw let \p1 = ([shift={(0, 0)}]B2.south),
      \p2 = ([shift={(0, 0)}]B1.north)
      in (\p1) to [looseness=1.5, out=-90, in=90] (\p2);
      \draw let \p1 = ([shift={(-.75, 0)}]B2.south),
      \p2 = ([shift={(-1, 0)}]B1.north)
      in (\p1) to [looseness=1.5, out=-90, in=90] (\p2);
    \end{tikzpicture}
  \end{equation}
\end{lemma}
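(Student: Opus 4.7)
The plan is to rewrite both sides of \eqref{eq:gFb-algebra-Nakayama} as concrete compositions in the monoidal category and then invoke the defining adjunction of the dual morphism $\phi^{\vee}$.

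For the left-hand side, the cap sitting on top of $\beta$'s third slot is $\coev_A$, and the cup below is $\eval_{A^{\vee}} \colon A^{\vee\vee} \otimes A^{\vee} \to \unitobj$ pairing the input $A^{\vee\vee}$ with the $A^{\vee}$-output of $\coev_A$. Using the zigzag identity, one has
\[
  \phi = (\beta \otimes \id_{A^{\vee}}) \circ (\id_{I^{\vee}} \otimes \id_A \otimes \coev_A) \colon I^{\vee} \otimes A \to A^{\vee},
\]
which is immediate from $\beta = \eval_A \circ (\phi \otimes \id_A)$. Consequently, the left-hand side simplifies to
\[
  \mathrm{LHS} = \eval_{A^{\vee}} \circ (\id_{A^{\vee\vee}} \otimes \phi) \colon A^{\vee\vee} \otimes I^{\vee} \otimes A \to \unitobj.
\]

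For the right-hand side, I would expand $\nu = (\phi^{-1} \otimes \id_{I^{\vee\vee}}) \circ \phi^{\vee}$ inside $\beta = \eval_A \circ (\phi \otimes \id_A)$. The outer $\phi$ and the inner $\phi^{-1}$ both act on the first two of the four intermediate tensor factors $A^{\vee} \otimes I^{\vee\vee} \otimes I^{\vee} \otimes A$, while the intervening $\id_{A^{\vee}} \otimes \eval_{I^{\vee}} \otimes \id_A$ acts only on the middle two factors; the interchange law then lets $\phi$ and $\phi^{-1}$ cancel, leaving
\[
  \mathrm{RHS} = \eval_A \circ (\id_{A^{\vee}} \otimes \eval_{I^{\vee}} \otimes \id_A) \circ (\phi^{\vee} \otimes \id_{I^{\vee}} \otimes \id_A).
\]

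To conclude, I would use the standard decomposition $\eval_{I^{\vee} \otimes A} = \eval_A \circ (\id_{A^{\vee}} \otimes \eval_{I^{\vee}} \otimes \id_A)$, which reflects the isomorphism $(I^{\vee} \otimes A)^{\vee} \cong A^{\vee} \otimes I^{\vee\vee}$. Writing $V = I^{\vee} \otimes A$, this rewrites $\mathrm{RHS}$ as $\eval_V \circ (\phi^{\vee} \otimes \id_V)$, which by the universal property defining the dual of the morphism $\phi \colon V \to A^{\vee}$ equals $\eval_{A^{\vee}} \circ (\id_{A^{\vee\vee}} \otimes \phi) = \mathrm{LHS}$. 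I do not anticipate substantial obstacles; the only bookkeeping is in tracking the tensor order when dualizing $I^{\vee} \otimes A$ and in applying the interchange law that cancels $\phi$ with $\phi^{-1}$.
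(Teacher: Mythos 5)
Your argument is correct and is precisely the expansion of the paper's one-line proof, which simply says the equation becomes obvious once $\beta$ and $\nu$ are rewritten in terms of $\phi$. You have carried out exactly that rewriting, using the interchange law to cancel $\phi$ against $\phi^{-1}$ and then the defining property of the dual morphism $\phi^{\vee}$, so the approach is the same.
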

\begin{proof}
  The equation is obvious if one rewrites $\beta$ and $\nu$ by $\phi$.
\end{proof}

For simplicity of notation, we write $A^I = I^{\vee} \otimes A \otimes I^{\vee\vee}$ and
\begin{equation*}
  \rho = (\id_{I^{\vee\vee}} \otimes m)
  \circ (\id_{I^{\vee\vee}} \otimes \id_A \otimes \eval_{I^{\vee}} \otimes \id_A)
  :A^I \otimes I^{\vee} \otimes A \to I^{\vee} \otimes A.
\end{equation*}
The object $A^I$ is an algebra in $\mathcal{C}$ with multiplication $\rho \otimes \id_{I^{\vee\vee}}$.
By using the above lemma, one can show that $\nu : A^{\vee\vee} \to A^I$ is an isomorphism of algebras in $\mathcal{C}$ in a similar way as \cite[Section 5]{MR2500035}.
The object $I^{\vee} \otimes A$ is an $A^I$-$A$-bimodule in $\mathcal{C}$ by the left action $\rho$ and the right action $\id_{I^{\vee}} \otimes m$.

\begin{lemma}
  \label{lem:phi-bimodule-morphism}
  The isomorphism $\phi : I^{\vee} \otimes A \to A^{\vee}$ in $\mathcal{C}$ is actually an isomorphism of $A^{\vee\vee}$-$A$-bimodules in $\mathcal{C}$ if we view the source of $\phi$ as a left $A^{\vee\vee}$-module through the Nakayama isomorphism $\nu : A^I \to A^{\vee\vee}$.
\end{lemma}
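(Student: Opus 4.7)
The plan is to reduce the identity to Lemma \ref{lem:gFb-algebra-Nakayama} via the non-degenerate pairing induced by $\beta$. Recall that $\phi$ is characterized by $\beta = \eval_A \circ (\phi \otimes \id_A)$; more generally, two morphisms $X \to A^\vee$ coincide if and only if their post-compositions with $\eval_A \circ (- \otimes \id_A)$ do. Since $\phi$ is already an isomorphism of right $A$-modules, it remains only to check the left $A^{\vee\vee}$-module compatibility
\begin{equation*}
  \phi \circ \rho \circ (\nu \otimes \id_{I^\vee \otimes A})
  = a^\ell_{A^\vee} \circ (\id_{A^{\vee\vee}} \otimes \phi),
\end{equation*}
where $a^\ell_{A^\vee} : A^{\vee\vee} \otimes A^\vee \to A^\vee$ is the canonical left action.

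For the right-hand side, I would use the standard identity $\eval_A \circ (a^\ell_{A^\vee} \otimes \id_A) = \eval_{A^\vee} \circ (\id_{A^{\vee\vee}} \otimes \xi)$, where $\xi : A^\vee \otimes A \to A^\vee$ is the right $A$-action on $A^\vee$ determined by $\eval_A \circ (\xi \otimes \id_A) = \eval_A \circ (\id_{A^\vee} \otimes m)$. Right $A$-linearity of $\phi$ amounts to $\xi \circ (\phi \otimes \id_A) = \phi \circ (\id_{I^\vee} \otimes m)$, so after pairing with $A$ the right-hand side reduces to $\eval_{A^\vee} \circ (\id_{A^{\vee\vee}} \otimes \phi) \circ (\id_{A^{\vee\vee} \otimes I^\vee} \otimes m)$. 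A direct inspection of the defining formula for $\phi$, combined with the zig-zag $\coev_A$, $\eval_{A^\vee}$, identifies $\eval_{A^\vee} \circ (\id_{A^{\vee\vee}} \otimes \phi) : A^{\vee\vee} \otimes I^\vee \otimes A \to \unitobj$ with the left-hand side of equation \eqref{eq:gFb-algebra-Nakayama}.

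For the left-hand side, substituting $\beta$ for $\eval_A \circ (\phi \otimes \id_A)$ turns the pairing into $\beta \circ (\rho \otimes \id_A) \circ (\nu \otimes \id_{I^\vee \otimes A \otimes A})$. Unfolding $\rho$, which has the multiplication $m$ of $A$ built into its definition, and using associativity of $m$ to combine the two resulting occurrences of $m$, this expression becomes the right-hand side of \eqref{eq:gFb-algebra-Nakayama} precomposed with $\id_{A^{\vee\vee} \otimes I^\vee} \otimes m$. Applying Lemma \ref{lem:gFb-algebra-Nakayama} to this common precomposition then yields the equality of the two pairings, hence the desired bimodule compatibility.

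The main obstacle is the diagrammatic bookkeeping: correctly handling the two copies of $m$ (one built into $\beta$, one into $\rho$) and the rigidity zig-zags converting between $A$ and $A^{\vee\vee}$, so that the resulting three-leg diagrams match precisely those of \eqref{eq:gFb-algebra-Nakayama}. No further conceptual input is required; morally, the argument is the categorical version of the classical defining relation for the Nakayama automorphism of an ordinary Frobenius algebra.
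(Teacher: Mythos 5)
Your proposal is correct, and it takes a mildly but genuinely different route from the paper. The paper's own proof verifies the required equation $\phi \circ \rho \circ (\nu \otimes \id) = m' \circ (\id_{A^{\vee\vee}} \otimes \phi)$ directly by the six-step graphical chase of Figure~\ref{fig:proof-phi}, unfolding $\nu$ into $\phi^{\vee}$ and $\phi$ and pushing the extra strands through; it does \emph{not} invoke Lemma~\ref{lem:gFb-algebra-Nakayama} (which the paper proves separately and treats as a restatement rather than as an input). You instead transpose both sides into pairings $A^{\vee\vee}\otimes I^{\vee}\otimes A\otimes A \to \unitobj$ via the adjunction $\Hom(X, A^{\vee}) \cong \Hom(X\otimes A, \unitobj)$, absorb the spare $A$-leg into a single multiplication $m$ using right $A$-linearity of $\phi$ (on one side) and associativity of $m$ (on the other), and then recognize the two resulting pairings as the two sides of \eqref{eq:gFb-algebra-Nakayama} precomposed with $\id_{A^{\vee\vee}\otimes I^{\vee}}\otimes m$, so that Lemma~\ref{lem:gFb-algebra-Nakayama} finishes. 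The gain is that the double-dual zig-zag bookkeeping is outsourced to the already-established Lemma~\ref{lem:gFb-algebra-Nakayama}; the cost is the auxiliary rigidity identity $\eval_A \circ (m' \otimes \id_A) = \eval_{A^{\vee}} \circ (\id_{A^{\vee\vee}} \otimes \xi)$ relating the left $A^{\vee\vee}$-action on $A^{\vee}$ (mate of right multiplication) to the right $A$-action $\xi$ (mate of left multiplication), which you invoke as "standard"; it is indeed a routine mate computation, but it deserves to be written out since it is exactly where the two dual actions get compared. Beyond that, the underlying ingredients (right $A$-linearity of $\phi$ and the relation among $\beta$, $\nu$, $\phi$) are the same as in the paper, so the two proofs have comparable total work; yours is essentially a repackaging that promotes Lemma~\ref{lem:gFb-algebra-Nakayama} to the role of the key lemma.
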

\begin{proof}
  Let $\rho$ be as above, and let $m' : A^{\vee\vee} \otimes A^{\vee} \to A^{\vee}$ be the left action of $A^{\vee\vee}$ on $A^{\vee}$. This lemma claims that the following equation holds:
  \begin{equation}
    \label{eq:I-Frob-alg-Nakayama-iso-2}
    \phi \circ \rho \circ (\nu \otimes \id_{I^{\vee}} \otimes \id_A)
    = m' \circ (\id_{A^{\vee\vee}} \otimes \phi).
  \end{equation}
  This is verified as in Figure~\ref{fig:proof-phi} by the graphical calculus.
  Here, the first and the fourth equality follow from the right $A$-linearity of $\phi$,
  the second and the fifth from the definition of the right action of $A^{\vee}$ on $A$,
  the third from the definition of $\phi^{\vee}$,
  and the last from the definition of the left action of $A^{\vee\vee}$ on $A^{\vee}$.
\end{proof}

\begin{remark}
  Let $m''$ be the right-hand side of \eqref{eq:I-Frob-alg-Nakayama-iso-2}. Then we have
  \begin{equation*}
    \nu' = (\phi^{-1} \circ m'' \circ (\id_{A^{\vee\vee}} \otimes \id_{I^{\vee}} \otimes u)) \otimes \id_{I^{\vee}}) \circ (\id_{A^{\vee\vee}} \otimes \coev_{I^{\vee}}) = \nu,
  \end{equation*}
  where $u : \unitobj \to A$ is the unit of $A$. This means that the above lemma can also be used as the definition of the Nakayama isomorphism of $A$.
\end{remark}

\begin{figure}
  \begin{equation*}
    \begin{tikzpicture}[baseline = 0pt, x = 10pt, y = 8pt]
      \draw (0,5) coordinate (S1) node [above] {$A^{\vee\vee}$};
      \draw (2.25,5) coordinate (S2) node [above] {$I^{\vee}$};
      \draw (4,5) coordinate (S3) node [above] {$A$};
      \draw (S1) -- ++(0, -1)
      node (B1) [below, draw, rectangle] {\makebox[1.2em]{$\nu$}};
      \draw let \p1 = ([shift={(.5, 0)}]B1.south), \p2 = (S2)
      in (\p1) to [out=-90, in=-90, looseness=2] (\x2, \y1) -- (\p2);
      \draw let \p1 = (B1.south), \p2 = (S3)
      in (\p1) to [out=-90, in=-90, looseness=2]
      coordinate [midway] (M1)
      (\x2, \y1) -- (\p2);
      \draw (M1) node {$\bullet$} -- ++ (0, -1.5) coordinate (M2);
      \node (B2) at ($(M2)-(.5,0)$) [below, draw, rectangle] {\makebox[1.2em]{$\phi$}};
      \draw let \p1 = ([shift={(-.5,0)}]B1.south), \p2 = ([shift={(-.5,0)}]B2.north)
      in (\p1) -- ++(0, -1) to [out=-90, in=90, looseness=1] (\p2);
      \draw let \p1 = (B2.south)
      in (\p1) -- (\x1, -5) node [below] {$A^{\vee}$};
    \end{tikzpicture}
    = \quad
    \begin{tikzpicture}[baseline = 0pt, x = 10pt, y = 8pt]
      \draw (0,5) coordinate (S1) node [above] {$A^{\vee\vee}$};
      \draw (2.25,5) coordinate (S2) node [above] {$I^{\vee}$};
      \draw (4,5) coordinate (S3) node [above] {$A$};
      \draw (S1) -- ++(0, -1) node (B1) [below, draw, rectangle] {\makebox[1.2em]{$\nu$}};
      \draw let \p1 = ([shift={(.5, 0)}]B1.south), \p2 = (S2)
      in (\p1) to [out=-90, in=-90, looseness=2] (\x2, \y1) -- (\p2);
      \draw let \p1 = ([shift={(-.5,0)}]B1.south)
      in (\p1) -- ++(0, -2) coordinate (M1);
      \node (B2) at ($(M1)+(.25,0)$) [below, draw, rectangle] {\makebox[1.2em]{$\phi$}};
      \draw let \p1 = (B1.south), \p2 = (B2.north) in (\p1) -- (\x1, \y2);
      \draw let \p1 = (B2.south) in (\p1) -- (\x1, -5) coordinate (T1);
      \node at (T1) [below] {$A^{\vee}$};
      \draw [->] let \p1 = (S3), \p2 = ($(T1)+(0, 2)$)
      in (\p1) -- ($(\x1, \y2) + (0, 4)$)
      to [out=-90, in=0, looseness=1] (\p2);
    \end{tikzpicture}
    =
    \begin{tikzpicture}[baseline = 0pt, x = 10pt, y = 8pt]
      \draw (0,5) coordinate (S1) node [above] {$A^{\vee\vee}$};
      \draw (2.25,5) coordinate (S2) node [above] {$I^{\vee}$};
      \draw (4,5) coordinate (S3) node [above] {$A$};
      \draw (S1) -- ++(0, -1) node (B1) [below, draw, rectangle] {\makebox[1.2em]{$\phi^{\vee}$}};
      \draw let \p1 = ([shift={(.5, 0)}]B1.south), \p2 = (S2)
      in (\p1) to [out=-90, in=-90, looseness=2] (\x2, \y1) -- (\p2);
      \draw (S3) -- ++(0, -3)
      to [out=-90, in=-90, looseness=2] coordinate [midway] (M1) ++(1.5,0)
      to [out=+90, in=+90, looseness=2] ++(1.5,0) coordinate (M2);
      \draw let \p1 = (M2) in (\p1) -- (\x1, -5) node [below] {$A^{\vee}$};
      \draw let \p1 = ([shift={(-.5, 0)}]B1.south), \p2 = (M1)
      in (\p1) -- ++(0,-2) to [out=-90, in=-90, looseness=2] ($(\x2, \y1) - (0,2)$)
      -- (\p2) node {$\bullet$};
    \end{tikzpicture}
    =
    \begin{tikzpicture}[baseline = 0pt, x = 10pt, y = 8pt]
      \draw (0,5) coordinate (S1) node [above] {$A^{\vee\vee}$};
      \draw (2.25,5) coordinate (S2) node [above] {$I^{\vee}$};
      \draw (4,5) coordinate (S3) node [above] {$A$};
      \draw (S3) -- ++(0, -3)
      to [out=-90, in=-90, looseness=2] coordinate [midway] (M1) ++(1.5,0)
      to [out=+90, in=+90, looseness=2] ++(1.5,0) coordinate (M2);
      \draw let \p1 = (M2) in (\p1) -- (\x1, -5) node [below] {$A^{\vee}$};
      \node at (M1) {$\bullet$};
      \draw (M1) -- ++(0,-1.5) coordinate (M3);
      \node (B1) at ($(M3)+(-.5, 0)$) [below, draw, rectangle] {\makebox[1.2em]{$\phi$}};
      \draw let \p1 = (S2), \p2 = ([shift={(-.5,0)}]B1.north)
      in (\p1) to [out=-90, in=90, looseness=1] (\p2);
      \draw let \p1 = (S1), \p2 = (B1.south)
      in (\p1) -- (\x1, \y2) to [out=-90, in=-90, looseness=2] (\p2);
    \end{tikzpicture}
  \end{equation*}
  \bigskip
  \begin{equation*}
    = \begin{tikzpicture}[baseline = 0pt, x = 10pt, y = 8pt]
      \draw (0,5) coordinate (S1) node [above] {$A^{\vee\vee}$};
      \draw (2.25,5) coordinate (S2) node [above] {$I^{\vee}$};
      \draw (4,5) coordinate (S3) node [above] {$A$};
      \path (S2) -- coordinate [midway] (M1) (S3);
      \node (B1) at ($(M1)-(0,3)$) [below, draw, rectangle] {\makebox[1.2em]{$\phi$}};
      \draw (S2) to [out=-90, in=90, looseness=1.5] ([shift={(-.5,0)}]B1.north);
      \draw (S3) to [out=-90, in=90, looseness=1.5] ([shift={(+.5,0)}]B1.north);
      \draw let \p1 = (B1.south), \p2 = ($(\p1)-(0,2)$), \p3=(S1)
      in (\p1) -- (\p2) coordinate (M1)
      to [out=-90, in=-90, looseness=2] (\x3, \y2) -- (\p3);
      \coordinate (T1) at (8, -5); \node at (T1) [below] {$A^{\vee}$};
      \draw [->] (T1) -- ++(0,7) to [out=90, in=90, looseness=2] ++(-2,0)
      to [out=-90, in=0, looseness=1.5] ($(M1)+(0,1)$);
    \end{tikzpicture}
    = \begin{tikzpicture}[baseline = 0pt, x = 10pt, y = 8pt]
      \draw (0,5) coordinate (S1) node [above] {$A^{\vee\vee}$};
      \draw (2.25,5) coordinate (S2) node [above] {$I^{\vee}$};
      \draw (4,5) coordinate (S3) node [above] {$A$};
      \path (S2) -- coordinate [midway] (M1) (S3);
      \node (B1) at ($(M1)-(0,2)$) [below, draw, rectangle] {\makebox[1.2em]{$\phi$}};
      \draw (S2) to [out=-90, in=90, looseness=1.5] ([shift={(-.5,0)}]B1.north);
      \draw (S3) to [out=-90, in=90, looseness=1.5] ([shift={(+.5,0)}]B1.north);
      \coordinate (T1) at (10, -5); \node at (T1) [below] {$A^{\vee}$};
      \draw (T1) -- ++(0,5) to [out=90, in=90, looseness=2] ++(-5,0)
      to [out=-90, in=-90, looseness=2] coordinate [midway] (M1) ++(1.75,0)
      to [out=+90, in=+90, looseness=2] ++(1.75,0) coordinate (M2);
      \draw let \p1 = (B1.south), \p2 = (M1)
      in (\p1) -- (\x1, \y2) to [out=-90, in=-90, looseness=2] (\p2) node {$\bullet$};
      \draw let \p1=(M2), \p2=(S1)
      in (M2) to [out=-90, in=-90, looseness=1.8] (\x2, \y1) -- (\p2);
    \end{tikzpicture}
    = \begin{tikzpicture}[baseline = 0pt, x = 10pt, y = 8pt]
      \draw (0,5) coordinate (S1) node [above] {$A^{\vee\vee}$};
      \draw (2.25,5) coordinate (S2) node [above] {$I^{\vee}$};
      \draw (4,5) coordinate (S3) node [above] {$A$};
      \path (S2) -- coordinate [midway] (M1) (S3);
      \node (B1) at ($(M1)-(0,2)$) [below, draw, rectangle] {\makebox[1.2em]{$\phi$}};
      \draw (S2) to [out=-90, in=90, looseness=1.5] ([shift={(-.5,0)}]B1.north);
      \draw (S3) to [out=-90, in=90, looseness=1.5] ([shift={(+.5,0)}]B1.north);
      \draw let \p1 = (B1.south) in (\p1) -- (\x1, -5) node [below] {$A^{\vee}$};
      \draw [->] let \p1 = (S1), \p2 = ([shift={(0, -2)}]B1.south)
      in (\p1) -- ++(0,-3) to [out=-90, in=180] (\p2);
    \end{tikzpicture}
  \end{equation*}
  \caption{}
  \label{fig:proof-phi}
\end{figure}

\subsection{A formula of the Nakayama functor}

Let $\mathcal{C}$ be a finite multi-tensor category, and let $\mathcal{M}$ be a finite left $\mathcal{C}$-module category.
Given a left $A$-module $\mathbf{M} = (M, a_M)$ in $\mathcal{M}$, one can make $I^{\vee} \catactl M$ into a left $A^{\vee\vee}$-module in $\mathcal{M}$ in a similar way as the left $A^{\vee\vee}$-module $I^{\vee} \otimes A$ in $\mathcal{C}$ appeared in Lemma \ref{lem:phi-bimodule-morphism}.
This construction gives rise to a functor
\begin{equation}
  \label{eq:I-Frob-alg-Nakayama-1}
  \begin{gathered}
    I^{\vee} \catactl (-) : {}_A\mathcal{M} \to {}_{A^{\vee\vee}}\mathcal{M},
    \quad (M, a_M) \mapsto (I^{\vee} \catactl M, \widetilde{a}_M) \\
    (\widetilde{a}_M := (\id_{I^{\vee}} \catactl a_M) \circ (\id_{I^{\vee}} \catactl \id_{A} \catactl \eval_{I^{\vee}} \catactl \id_M) \circ (\nu \catactl \id_M)).
  \end{gathered}
\end{equation}
Now we express the Nakayama functor of ${}_A \mathcal{M}$ by using this functor as follows:

\begin{theorem}
  \label{thm:Nakayama-for-modules-2}
  Let $A$ and $\mathcal{M}$ be as above.
  Then the Nakayama functor of ${}_A \mathcal{M}$ is given by the composition
  \begin{equation*}
    {}_A \mathcal{M}
    \xrightarrow{\quad \eqref{eq:I-Frob-alg-Nakayama-1} \quad} {}_{A^{\vee\vee}}\mathcal{M}
    \xrightarrow{\quad \eqref{eq:Nakayama-lift} \quad} {}_A \mathcal{M}.
  \end{equation*}
\end{theorem}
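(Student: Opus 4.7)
The plan is to produce a natural isomorphism between the functor $A^{\vee} \otimes_A (-) : {}_A\mathcal{M} \to {}_{A^{\vee\vee}}\mathcal{M}$ appearing in Theorem~\ref{thm:Nakayama-for-modules}(1) and the functor~\eqref{eq:I-Frob-alg-Nakayama-1}; once this is achieved, the theorem follows by applying $\Nak_{\mathcal{M}}$ and the Nakayama lift~\eqref{eq:Nakayama-lift}.

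Concretely, I would work in three steps. First, invoke Theorem~\ref{thm:Nakayama-for-modules}(1) to write $\Nak_{{}_A\mathcal{M}}(\mathbf{M}) \cong \Nak_{\mathcal{M}}(A^{\vee} \otimes_A \mathbf{M})$, where on the right the $A$-module structure is obtained via~\eqref{eq:Nakayama-lift} from the natural $A^{\vee\vee}$-module structure on $A^{\vee} \otimes_A \mathbf{M}$ (itself coming from the $A^{\vee\vee}$-$A$-bimodule structure of $A^{\vee}$). Second, Lemma~\ref{lem:phi-bimodule-morphism} asserts that $\phi: I^{\vee} \otimes A \to A^{\vee}$ is an isomorphism of $A^{\vee\vee}$-$A$-bimodules when the source carries the left $A^{\vee\vee}$-action transported through the Nakayama isomorphism $\nu$; tensoring over $A$ with $\mathbf{M} = (M, a_M)$ therefore yields an isomorphism
\begin{equation*}
  (I^{\vee} \otimes A) \otimes_A \mathbf{M} \xrightarrow{\ \sim\ } A^{\vee} \otimes_A \mathbf{M}
\end{equation*}
in ${}_{A^{\vee\vee}}\mathcal{M}$. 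Third, since $\catactl$ is right exact in the second variable, applying $I^{\vee} \catactl (-)$ to the canonical coequalizer presentation $A \otimes_A M \cong M$ (with coequalizing map induced by $a_M$) identifies $(I^{\vee} \otimes A) \otimes_A \mathbf{M}$ with $I^{\vee} \catactl M$ in $\mathcal{M}$.

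The main verification is that, under this last identification, the $A^{\vee\vee}$-module structure transported from the left action on $I^{\vee} \otimes A$ agrees with the twisted action $\widetilde{a}_M$ of~\eqref{eq:I-Frob-alg-Nakayama-1}. Unwinding the coequalizer, the induced action is the composition obtained by first applying $\nu \catactl \id$, then contracting $I^{\vee\vee}$ with $I^{\vee}$ via $\eval_{I^{\vee}}$, and finally acting on the remaining $A \catactl M$ by $a_M$; this matches $\widetilde{a}_M$ on the nose. This step is the heart of the argument, but after the structural work done in Lemma~\ref{lem:phi-bimodule-morphism} it reduces to an essentially formal diagram chase. Combining the three steps yields a natural isomorphism between $A^{\vee} \otimes_A (-)$ and~\eqref{eq:I-Frob-alg-Nakayama-1} as functors ${}_A\mathcal{M} \to {}_{A^{\vee\vee}}\mathcal{M}$, and composing with the Nakayama lift~\eqref{eq:Nakayama-lift} and $\Nak_{\mathcal{M}}$ completes the proof.
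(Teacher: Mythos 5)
Your proposal is correct and takes essentially the same route as the paper: the paper's proof is precisely the chain $A^{\vee} \otimes_A \mathbf{M} \cong (I^{\vee} \otimes A) \otimes_A \mathbf{M} \cong I^{\vee} \catactl \mathbf{M}$ of natural isomorphisms in ${}_{A^{\vee\vee}}\mathcal{M}$ deduced from Lemma~\ref{lem:phi-bimodule-morphism}, followed by an appeal to Theorem~\ref{thm:Nakayama-for-modules}. You have simply spelled out the second isomorphism (via right exactness of $I^{\vee}\catactl(-)$ and the canonical $A \otimes_A M \cong M$) and the module-structure check that the paper leaves implicit.
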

\begin{proof}
  By Lemma~\ref{lem:phi-bimodule-morphism}, we have natural isomorphisms
  \begin{equation*}
    A^{\vee} \otimes_A \mathbf{M}
    \cong (I^{\vee} \otimes A) \otimes_A \mathbf{M}
    \cong I^{\vee} \catactl \mathbf{M}
  \end{equation*}
  for $\mathbf{M} \in {}_A \mathcal{M}$. Now this theorem follows from Theorem \ref{thm:Nakayama-for-modules}.
\end{proof}

We note some immediate consequences of this theorem.
In the below, $\mathcal{C}$ is a finite multi-tensor category, $I$ is an invertible object of $\mathcal{C}$, and $\mathcal{M}$ is a finite left $\mathcal{C}$-module category.

\begin{corollary}
  Suppose that the module category $\mathcal{M}$ is Frobenius.
  Let $A$ be an algebra in $\mathcal{C}$.
  If $A$ admits an $I$-valued Frobenius trace, then ${}_A \mathcal{M}$ is Frobenius.
\end{corollary}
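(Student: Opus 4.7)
The plan is to apply Theorem \ref{thm:Nakayama-for-modules-2} and show that each of the two functors in the composition expressing $\Nak_{{}_A\mathcal{M}}$ is an equivalence; the Frobenius condition on ${}_A\mathcal{M}$ then follows from the characterization of Frobenius finite abelian categories recalled in \S\ref{subsec:Nakayama}, namely that $\mathcal{N}$ is Frobenius if and only if $\Nak_{\mathcal{N}}$ is an equivalence. Since $\mathcal{M}$ is Frobenius by hypothesis, $\Nak_{\mathcal{M}}$ is an equivalence to begin with.

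First I would address the functor \eqref{eq:I-Frob-alg-Nakayama-1}, i.e., $I^{\vee} \catactl (-) : {}_A \mathcal{M} \to {}_{A^{\vee\vee}}\mathcal{M}$. Since $I$ is invertible, the endofunctor $I^{\vee} \catactl (-)$ on $\mathcal{M}$ is an equivalence with quasi-inverse $I \catactl (-)$. Using that $\nu : A^{\vee\vee} \to A^{I} = I^{\vee} \otimes A \otimes I^{\vee\vee}$ is an algebra isomorphism (as noted after Lemma~\ref{lem:gFb-algebra-Nakayama}), the lifted functor is the composition of the canonical equivalence ${}_{A^{\vee\vee}}\mathcal{M} \approx {}_{A^I}\mathcal{M}$ with the functor ${}_A \mathcal{M} \to {}_{A^I}\mathcal{M}$ sending $(M, a_M)$ to $I^{\vee} \catactl M$ with the natural $A^I$-action obtained by conjugation by $I^{\vee}$. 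A quasi-inverse to the latter is provided by the analogous functor ${}_{A^I}\mathcal{M} \to {}_A\mathcal{M}$ built from $I \catactl (-)$, so \eqref{eq:I-Frob-alg-Nakayama-1} is an equivalence.

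Next I would treat the lift \eqref{eq:Nakayama-lift} of $\Nak_{\mathcal{M}}$ from ${}_{A^{\vee\vee}}\mathcal{M}$ to ${}_A\mathcal{M}$. Its underlying functor on $\mathcal{M}$ is $\Nak_{\mathcal{M}}$, which is an equivalence. Using $\overline{\Nak}_{\mathcal{M}}$ from \S\ref{subsec:Nakayama-opposite} as a quasi-inverse, one constructs an inverse lift by transporting actions along the inverse of the canonical isomorphism $\can_{T^{\lladj}}$; the coherence of this canonical isomorphism (cited from \cite{MR4042867} in \S\ref{subsec:Nakayama}) ensures that this inverse lift is well-defined and quasi-inverse to \eqref{eq:Nakayama-lift}. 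Hence \eqref{eq:Nakayama-lift} is an equivalence as well.

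Combining these two facts with Theorem~\ref{thm:Nakayama-for-modules-2}, the Nakayama functor $\Nak_{{}_A\mathcal{M}}$ is a composition of two equivalences, hence itself an equivalence. Therefore ${}_A\mathcal{M}$ is Frobenius. The only mildly delicate point in this plan is verifying that the lifted quasi-inverses intertwine the module structures correctly; this is a formal calculation using the compatibility of $\nu$ with the bimodule structure of $I^{\vee} \otimes A$ (Lemma~\ref{lem:phi-bimodule-morphism}) for Step~2 and the coherence of $\can$ for Step~3, and involves no substantive new computation.
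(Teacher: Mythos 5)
Your proof is correct and follows the paper's intended approach: the paper states this corollary without a separate argument, treating it as an immediate consequence of Theorem~\ref{thm:Nakayama-for-modules-2} together with the characterization (recalled in \S\ref{subsec:Nakayama}) that a finite abelian category is Frobenius if and only if its Nakayama functor is an equivalence. Your decomposition of $\Nak_{{}_A\mathcal{M}}$ into the two functors \eqref{eq:I-Frob-alg-Nakayama-1} and \eqref{eq:Nakayama-lift}, and the observation that each is an equivalence (the first because $I$ is invertible and $\nu$ is an algebra isomorphism, the second because it lifts the equivalence $\Nak_{\mathcal{M}}$ along the monadic adjunction via the coherent isomorphism $\can$), fills in exactly the details the paper leaves implicit.
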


The converse of this corollary does not hold:
For an algebra $A$ in $\mathcal{C} = \Vect$, the category ${}_A \mathcal{C}$ ($= \lmod{A}$) is Frobenius if and only if $A$ is self-injective, however, there are self-injective algebras which are not Frobenius \cite{MR2894798}.

\begin{corollary}
  Suppose that $\mathcal{C}$ is pivotal.
  Let $A$ be a Frobenius algebra in $\mathcal{C}$, and let $\mho : A \to A$ be the Nakayama automorphism of $A$ defined in \cite{MR2500035}. Then the Nakayama functor of ${}_A \mathcal{C}$ is isomorphic to the functor
  \begin{equation*}
    {}_A \mathcal{C} \to {}_A \mathcal{C},
    \quad (M, a_M) \mapsto (M \otimes \alpha_{\mathcal{C}}, a_M \mho \otimes \id_{\alpha}),
  \end{equation*}
  where $\alpha_{\mathcal{C}} = \Nak_{\mathcal{C}}(\unitobj)$.
  Thus, when $\mathcal{C}$ is unimodular, then the Nakayama functor of ${}_A\mathcal{C}$ is given by twisting the action of $A$ by the Nakayama automorphism $\mho$, as in the case of ordinary Frobenius algebras.
\end{corollary}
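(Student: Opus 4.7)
The plan is to specialize Theorem~\ref{thm:Nakayama-for-modules-2} to $\mathcal{M} = \mathcal{C}$ and $I = \unitobj$, and then simplify the resulting composite by invoking Remark~\ref{rem:Nakayama-twisted-module-structure} together with the pivotal structure of $\mathcal{C}$. A Frobenius algebra in $\mathcal{C}$ is by definition an algebra equipped with a $\unitobj$-valued Frobenius trace, so Theorem~\ref{thm:Nakayama-for-modules-2} applies directly. With $I = \unitobj$, the object $A^{I} = I^{\vee} \otimes A \otimes I^{\vee\vee}$ reduces to $A$, and the Nakayama isomorphism $\nu$ from Definition~\ref{def:I-Frob-trace} becomes an algebra isomorphism $A^{\vee\vee} \to A$.

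First I would compute the functor \eqref{eq:I-Frob-alg-Nakayama-1} under this specialization: since $I^{\vee} \catactl (-)$ is the identity, it sends $\mathbf{M} = (M, a_M)$ to the pair $(M, a_M \circ (\nu \otimes \id_M))$ regarded as an $A^{\vee\vee}$-module in $\mathcal{C}$. For the lifting \eqref{eq:Nakayama-lift}, I would then choose the Nakayama functor of $\mathcal{C}$ as in Remark~\ref{rem:Nakayama-twisted-module-structure}, namely $\Nak_{\mathcal{C}}(X) = {}^{\vee\vee}X \otimes \alpha_{\mathcal{C}}$. Since $\mathcal{C}$ is pivotal, one may further assume that ${}^{\vee\vee}(-)$ is the identity as a strict monoidal endofunctor. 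With this convention $\Nak_{\mathcal{C}}(M) = M \otimes \alpha_{\mathcal{C}}$, and the twisted left $\mathcal{C}$-module structure \eqref{eq:cat-action-left-Nakayama} on $\Nak_{\mathcal{C}}$ becomes the identity. Applied to the monad $T = A \otimes (-)$, whose double left adjoint is $T^{\lladj} = A^{\vee\vee} \otimes (-) = A \otimes (-)$, this says precisely that the canonical isomorphism $\can_{T^{\lladj}}$ appearing in \eqref{eq:Nakayama-lift} is the identity, and consequently the lifted $A$-action on $\Nak_{\mathcal{C}}(M) = M \otimes \alpha_{\mathcal{C}}$ is simply $\Nak_{\mathcal{C}}(a_M \circ (\nu \otimes \id_M)) = (a_M \circ (\nu \otimes \id_M)) \otimes \id_{\alpha_{\mathcal{C}}}$.

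Finally, the remark following Definition~\ref{def:I-Frob-trace} identifies the Nakayama automorphism $\mho$ of \cite{MR2500035} with the composite of $\nu$ and the pivotal structure; under the strict convention ${}^{\vee\vee}(-) = \id_{\mathcal{C}}$ this collapses to $\mho = \nu$, so the action above rewrites as $(a_M \circ (\mho \otimes \id_M)) \otimes \id_{\alpha_{\mathcal{C}}}$, matching the claim. The most delicate point is verifying that all these identifications assemble coherently: specifically that the choice of Nakayama functor in Remark~\ref{rem:Nakayama-twisted-module-structure}, the strictness of the double dual, and the relation $\mho = \nu$ together yield a genuine natural isomorphism of functors on ${}_A\mathcal{C}$, not merely an objectwise coincidence. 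This coherence should follow from the naturality and coherence properties of \eqref{eq:Nakayama-cano-iso} developed in Section~\ref{sec:Nakayama-and-double-adj}, but the bookkeeping with the various structure morphisms must be tracked carefully.
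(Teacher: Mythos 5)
Your proposal is correct and follows essentially the same route as the paper: specialize Theorem~\ref{thm:Nakayama-for-modules-2} with $I = \unitobj$, use the pivotal identification to make ${}^{\vee\vee}(-) = \id$, choose $\Nak_{\mathcal{C}}(X) = X \otimes \alpha_{\mathcal{C}}$ via \eqref{eq:FTC-Nakayama-formula} so that the twisted left module structure (hence $\can_{T^{\lladj}}$) is the identity, and identify $\mho$ with $\nu$. The paper's proof is just a terse version of your computation, and your closing worry about coherence is already handled by the standing convention in \S\ref{subsec:FTC-modules} that the double dual functor is strict monoidal.
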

\begin{proof}
  We identify an object $X \in \mathcal{C}$ with $X^{\vee\vee}$ by the pivotal structure of $\mathcal{C}$.
  Then, as noted in the above, $\mho$ is identified with the isomorphism $\nu$ defined by \eqref{eq:I-Frob-alg-Nakayama-iso}.
  By the formula \eqref{eq:FTC-Nakayama-formula} of the Nakayama functor of $\mathcal{C}$, we have $\Nak_{\mathcal{C}}(X) = X \otimes \alpha_{\mathcal{C}}$ for $X \in \mathcal{C}$. Now the result follows from Theorem \ref{thm:Nakayama-for-modules-2}.
\end{proof}

If $\mathcal{C}$ is pivotal and $A$ is a symmetric Frobenius algebra in the sense of \cite{MR2500035}, then we may assume that the Nakayama automorphism of $A$ is the identity. Hence, by the above corollary, we have:

\begin{corollary}
  Suppose that $\mathcal{C}$ is pivotal and unimodular. Let $A$ be a symmetric Frobenius algebra in $\mathcal{C}$. Then ${}_A \mathcal{C}$ is symmetric Frobenius.
\end{corollary}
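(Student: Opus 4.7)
The plan is to combine the preceding corollary with the two hypotheses to reduce $\Nak_{{}_A\mathcal{C}}$ to the identity functor on ${}_A\mathcal{C}$, since by the Frobenius criterion recalled in Subsection~\ref{subsec:Nakayama} the existence of a natural isomorphism $\Nak_{{}_A\mathcal{C}} \cong \id_{{}_A\mathcal{C}}$ is exactly what is required for ${}_A\mathcal{C}$ to be symmetric Frobenius.

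The previous corollary identifies $\Nak_{{}_A\mathcal{C}}$ with the functor sending $(M, a_M)$ to $(M \otimes \alpha_{\mathcal{C}}, a_M \mho \otimes \id_{\alpha_{\mathcal{C}}})$. Under our two hypotheses both twists dissolve: unimodularity of $\mathcal{C}$ furnishes an isomorphism $r : \alpha_{\mathcal{C}} \to \unitobj$ in $\mathcal{C}$, and the assumption that $A$ is a symmetric Frobenius algebra in the sense of \cite{MR2500035} lets us select the Frobenius trace on $A$ so that the Nakayama automorphism $\mho$ equals $\id_A$, as explicitly noted in the paragraph preceding the statement.

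Concretely, I will take the candidate natural isomorphism to be $\id_M \otimes r : M \otimes \alpha_{\mathcal{C}} \to M$ (read in ${}_A\mathcal{C}$). Its naturality in $M$ is automatic from the bifunctoriality of $\otimes$, and the $A$-linearity reduces, after the choice $\mho = \id_A$, to the identity
\begin{equation*}
  a_M \circ (\id_A \otimes (\id_M \otimes r))
  = (\id_M \otimes r) \circ ((a_M \otimes \id_{\alpha_{\mathcal{C}}})),
\end{equation*}
which holds immediately by the bifunctoriality of $\otimes$. I do not foresee any real obstacle: the previous corollary has already absorbed the work of computing $\Nak_{{}_A\mathcal{C}}$ via Theorem~\ref{thm:Nakayama-for-modules-2}, and the present statement is a purely formal consequence of that formula together with the two hypotheses.
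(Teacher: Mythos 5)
Your proof is correct and takes essentially the same route as the paper. The paper's entire argument for this corollary is the remark just before the statement: for a symmetric Frobenius algebra in the sense of \cite{MR2500035} one may choose the Frobenius trace so that $\mho = \id_A$, and then the formula of the preceding corollary, together with unimodularity of $\mathcal{C}$, immediately yields $\Nak_{{}_A\mathcal{C}} \cong \id_{{}_A\mathcal{C}}$, which by \cite[Proposition 3.24]{MR4042867} (recalled in \S\ref{subsec:Nakayama}) is precisely the condition for ${}_A\mathcal{C}$ to be symmetric Frobenius. Your only addition is to make explicit the natural isomorphism $\id_M \otimes r$ and the $A$-linearity check, which the paper leaves implicit.
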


The unimodularity assumption cannot be dropped from the above corollary. Indeed, the trivial algebra $A := \unitobj$ is a symmetric Frobenius algebra in $\mathcal{C}$, while ${}_A \mathcal{C}$ ($\cong \mathcal{C}$) is symmetric Frobenius only if $\mathcal{C}$ is unimodular.

\subsection{Braided Hopf algebras}
\label{subsec:braid-Hopf}

Let $\mathcal{B}$ be a braided finite multi-tensor category with braiding $\sigma$, and let $H$ be a Hopf algebra in $\mathcal{B}$ with comultiplication $\Delta$, counit $\varepsilon$ and antipode $S$. Then the category ${}_H \mathcal{B}$ is a finite multi-tensor category. Here we determine the modular object of ${}_H \mathcal{B}$ by using the results of this section.

We recall from \cite{MR1685417,MR1759389} basic notions and results on (co)integrals.
There is an invertible object $I := \mathrm{Int}(H) \in \mathcal{B}$ called the {\em object of integrals} in \cite{MR1759389}.
A {\em right integral} in $H$ is a morphism $\Lambda : H \to I$ in $\mathcal{B}$ satisfying $m \circ (\Lambda \otimes \id_H) = \Lambda \otimes \varepsilon$, where $m$ is the multiplication of $H$.
A {\em right cointegral} on $H$ is a morphism $\lambda : H \to I$ in $\mathcal{B}$ satisfying $(\lambda \otimes \id_H) \circ \Delta = \lambda \otimes u$, where $u$ is the unit of $H$.
It is known that a non-zero right (co)integral exists.
Furthermore, a non-zero right cointegral on $H$ is, in our terminology, an $I$-valued Frobenius trace on the algebra $H$.

We fix a non-zero right cointegral $\lambda: H \to I$ on $H$.
Below we compute the Nakayama isomorphism $\nu$ associated to $\lambda$.
For this purpose, we introduce some notations:
For $X \in \mathcal{B}$, we define the morphism $\psi_X$ in $\mathcal{B}$ by
\begin{equation*}
  \psi_X = (\eval_{X} \otimes \id_{X^{\vee\vee}}) \circ (\sigma_{X^{\vee}, X}^{-1} \otimes \id_{X^{\vee\vee}}) \circ (\id_X \otimes \coev_{X^{\vee}}) : X \to X^{\vee\vee}.
\end{equation*}
This morphism and its inverse are graphically expressed as follows:
\begin{equation*}
  \tikzset{cross/.style={preaction={-,draw=white,line width=6pt}}}%
  \psi_X =
  \begin{tikzpicture}[x = 8pt, y = 8pt, baseline = 0pt]
    \coordinate (S1) at (0,2); \node at (S1) [above] {$X$};
    \path let \p1 = (S1) in (\p1) -- ++(0,-.5)
    to [out=-90, in=90, looseness=1] ++(2,-2.5)
    to [out=-90, in=-90, looseness=2] ++(-2,0) coordinate (T1);
    \draw (T1) to [out=90, in=-90, looseness=1] ++(2,2.5) coordinate (T2);
    \draw let \p2 = ($(T2)+(2,0)$)
    in (T2) to [out=90, in=90, looseness=2] (\p2)
    -- (\x2, -2) node [below] {$X^{\vee\vee}$};
    \draw [cross] let \p1 = (S1) in (\p1) -- ++(0,-.5)
    to [out=-90, in=90, looseness=1] ++(2,-2.5)
    to [out=-90, in=-90, looseness=2] ++(-2,0) coordinate (T1);
  \end{tikzpicture}
  \!\!\!\!\!, \quad \psi_{X}^{-1} = \!\!\!
  \begin{tikzpicture}[x = 8pt, y = 8pt, baseline = 0pt]
    \coordinate (S1) at (0,-2); \node at (S1) [below] {$X$};
    \draw let \p1 = (S1) in (\p1) -- ++(0,.5)
    to [out=90, in=-90, looseness=1] ++(-2,2.5)
    to [out=90, in=90, looseness=2] ++(+2,0) coordinate (T1);
    \draw [cross] (T1)
    to [out=-90, in=90, looseness=1] ++(-2,-2.5) coordinate (T2);
    \draw let \p2 = ($(T2)-(2,0)$)
    in (T2) to [out=-90, in=-90, looseness=2] (\p2)
    -- (\x2, 2) node [above] {$X^{\vee\vee}$};
  \end{tikzpicture}
  \quad \left(\sigma_{X,Y} = \!\!\!\!\!\!
    \begin{array}{c}
      \begin{tikzpicture}[x = 8pt, y = 12pt, baseline = 0pt]
        \draw (0,1) to [out = -90, in = 90] (2,-1);
        \draw [cross] (2,1) to [out = -90, in = 90] (0,-1);
        \node at (0, 1) [above] {$X$};
        \node at (2, 1) [above] {$Y$};
        \node at (0,-1) [below] {$Y$};
        \node at (2,-1) [below] {$X$};
      \end{tikzpicture}
    \end{array}\!\!\!\!\!\!, \ \sigma_{X,Y}^{-1} = \!\!\!\!\!\!
    \begin{array}{c}
      \begin{tikzpicture}[x = 8pt, y = 12pt, baseline = 0pt]
        \draw (2,1) to [out = -90, in = 90] (0,-1);
        \draw [cross] (0,1) to [out = -90, in = 90] (2,-1);
        \node at (0, 1) [above] {$Y$};
        \node at (2, 1) [above] {$X$};
        \node at (0,-1) [below] {$X$};
        \node at (2,-1) [below] {$Y$};
      \end{tikzpicture}
    \end{array}
  \right).
\end{equation*}
The notion of a left integral in $H$ is defined in a similar way as a right one.
We fix a non-zero left integral $\Lambda^{\ell} : I \to H$ in $H$.
The left modular function is the morphism $\alpha_H : H \to \unitobj$ in $\mathcal{C}$ determined by
\begin{equation*}
  \Lambda^{\ell} \otimes \alpha_H = m \circ (\Lambda^{\ell} \otimes \id_H).
\end{equation*}
Given a morphism $f : H \to \unitobj$ in $\mathcal{C}$, we define
\begin{equation*}
  \accentset{\leftharpoonup}{f} := (f \otimes \id_H) \circ \Delta : H \to H.
\end{equation*}

\begin{lemma}
  \label{lem:braided-Nakayama}
  The isomorphism $\nu : H^{\vee\vee} \to H^I$ is given by
  \begin{equation*}
    \nu = (\sigma_{H, I^{\vee}} \otimes \id_{I^{\vee\vee}}) \circ ((S^{2} \circ \accentset{\leftharpoonup}{\alpha}_H \circ \psi_H^{-1}) \otimes \coev_{I^{\vee}}).
  \end{equation*}
\end{lemma}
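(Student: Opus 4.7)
The plan is to avoid computing $\phi^{-1}$ directly and instead verify the equivalent identity
\[
(\phi \otimes \id_{I^{\vee\vee}}) \circ \widetilde{\nu} = \phi^{\vee},
\]
where $\widetilde{\nu}$ denotes the morphism on the right-hand side of the claimed formula. This identity is equivalent to the definition $\nu = (\phi^{-1} \otimes \id_{I^{\vee\vee}}) \circ \phi^{\vee}$ given in \eqref{eq:I-Frob-alg-Nakayama-iso} and sidesteps any explicit description of $\phi^{-1}$ in terms of integrals.

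First I would draw both sides in the graphical calculus of $\mathcal{B}$ used throughout this section. The right-hand side $\phi^{\vee} \colon H^{\vee\vee} \to H^{\vee} \otimes I^{\vee\vee}$ is obtained from $\phi$ by the standard rules for taking duals: the building blocks $\lambda$, $m$, $\eval_I$, $\coev_H$ remain, but the diagram is reflected so that the input becomes $H^{\vee\vee}$ and the outputs become $H^{\vee}$ and $I^{\vee\vee}$. The left-hand side assembles $\phi$, the braiding $\sigma_{H,I^{\vee}}$, $\coev_{I^{\vee}}$, $S^{2}$, $\alpha_H$, $\Delta$ (via $\accentset{\leftharpoonup}{\alpha}_H$), and $\psi_H^{-1}$. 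Sliding $\coev_{I^{\vee}}$ past the $H$-strand by naturality of the braiding, and expanding $\accentset{\leftharpoonup}{\alpha}_H = (\alpha_H \otimes \id_H) \circ \Delta$, one brings both diagrams to a common form in which only the interior differs.

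The decisive step will be a braided analogue of the classical Hopf algebra identity
\[
\lambda(xy) = \lambda\!\bigl(S^{2}(x_{(1)})\,y\bigr)\,\alpha_H(x_{(2)}),
\]
which is precisely the formula that expresses the Nakayama automorphism of the Frobenius form $\lambda$ in terms of $S^{2}$ and the modular function. In the graphical calculus for Hopf algebras in a braided category this identity involves the braiding $\sigma$ and the canonical isomorphism $\psi_H$ in exactly the manner needed to convert the left-hand diagram into $\phi^{\vee}$; the pertinent braided versions are developed in \cite{MR1685417,MR1759389}. Applying this identity absorbs the factor $S^{2} \circ \accentset{\leftharpoonup}{\alpha}_H$ of $\widetilde{\nu}$ into $\lambda \circ m$ and converts the internal braiding on $H \otimes H$ into the prescribed braiding $\sigma_{H, I^{\vee}}$.

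After this rewriting, the remaining simplification is routine: $\psi_H^{-1}$ cancels against the coherence morphism implicit in identifying $H$ with $H^{\vee\vee}$, and snake identities for $\eval$ and $\coev$ collapse the spectator loops, leaving the two diagrams equal. The main obstacle I anticipate is the careful bookkeeping of the $I$-strand: because $I$ is invertible but nontrivial, it is easy to introduce or lose a stray factor of $I$ or $I^{\vee\vee}$ when transporting morphisms through $\psi_H^{-1}$, the braiding, and the several dualities. Once the braided Radford identity is recognized in the correct orientation and the $I$-thread is threaded consistently, both sides of the target equality reduce to the same normal form and the lemma follows.
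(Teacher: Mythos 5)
Your proposal is correct and takes essentially the same route as the paper. The paper likewise avoids computing $\phi^{-1}$ by using the non-degeneracy of $\beta$ (equivalently, of $\phi$) via Lemma~\ref{lem:gFb-algebra-Nakayama} as the defining equation for $\nu$, then invokes the braided analogue of the classical Radford/Nakayama identity for the cointegral — phrased there as the map $\mathcal{N}$ with $\lambda m \sigma_{H,H} = \lambda m (\id_H \otimes \mathcal{N})$, whose explicit form $\mathcal{N}^{-1} = \Omega(I)^{-1}_H S^2 \accentset{\leftharpoonup}{\alpha}_H$ is cited from \cite[Lemma~5.7]{MR3569179} rather than from \cite{MR1685417,MR1759389} — and finishes by graphical manipulation, using the monodromy identities \eqref{eq:braided-Hopf-monodromy} to convert $\Omega(I)$ into the braiding $\sigma_{H,I^\vee}$ and absorbing $\psi_H^{-1}$ from the bending of the $H^{\vee\vee}$ strand; your description of $\psi_H^{-1}$ as ``cancelling against a coherence morphism'' is slightly off (it arises from the topology of the diagram, not from a cancellation), but this does not affect the viability of the argument.
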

\begin{proof}
  The monodromy \cite{MR1759389} around an invertible object $K \in \mathcal{B}$ is the natural isomorphism $\Omega(K) : \id_{\mathcal{B}} \to \id_{\mathcal{B}}$ uniquely determined by the equation
  \begin{equation*}
    \id_K \otimes \Omega(K)_X = \sigma_{X,K} \circ \sigma_{K,X}
    \quad (X \in \mathcal{B}).
  \end{equation*}
  For an invertible object $K \in \mathcal{B}$, one has
  \begin{equation}
    \label{eq:braided-Hopf-monodromy}
    \Omega(K^{\vee})_X = \Omega(K)^{-1}_X,
    \quad \Omega(K)_X \otimes \id_K = \sigma_{K,X} \circ \sigma_{X,K}
    \quad (X \in \mathcal{B}).
  \end{equation}
  There is an isomorphism $\mathcal{N} : H \to H$ in $\mathcal{B}$ determined by
  \begin{equation}
    \label{eq:braided-Hopf-Nakayama}
    \lambda \circ m \circ \sigma_{H,H} = \lambda \circ m \circ (\id_H \otimes \mathcal{N}).
  \end{equation}
  An explicit description of $\mathcal{N}$ is given in \cite[Lemma 5.7]{MR3569179} in terms of the antipode, the monodromy and the right modular function of $H$.
  We note that, in \cite[Lemma 5.7]{MR3569179}, the symbol $\alpha_H$ is used to express the right modular function of $H$, which is the inverse of the left one with respect to the convolution product. Thus, in our notation, the inverse of $\mathcal{N}$ is given by
  \begin{equation*}
    \mathcal{N}^{-1} = \Omega(I)_H^{-1} \circ S^{2} \circ \accentset{\leftharpoonup}{\alpha}_H.
  \end{equation*}
  By~\eqref{eq:braided-Hopf-Nakayama}, we have $\lambda \circ m = \lambda \circ m \circ \sigma_{H,H} \circ (\id_H \otimes \mathcal{N}^{-1})$.
  Thus the left hand side of \eqref{eq:gFb-algebra-Nakayama} is computed as follows:
  \begin{equation*}
    \tikzset{cross/.style={preaction={-,draw=white,line width=6pt}}}%
    \begin{tikzpicture}[baseline = 0pt, x = 7pt, y = 8pt]
      \node (B1) at (0, -2) [below, draw, rectangle] {\makebox[2em]{$\beta$}};
      \draw let \p1 = ([shift={(-1, 0)}]B1.north)
      in (\p1) -- (\x1, 3) node [above] {$I^{\vee}$};
      \draw let \p1 = ([shift={(0, 0)}]B1.north)
      in (\p1) to [looseness=1.5, out=90, in=-90] ($(\x1, 3)+(1.5,0)$)
      node [above] {$H$};
      \draw let \p1 = ([shift={(+1, 0)}]B1.north),
      \p2 = ($(\p1) + (2,0)$), \p3 = (B1.south)
      in (\p1) to [looseness=2, out=90, in=90] (\p2)
      -- ($(\x2, \y3) + (0, 1)$)
      to [looseness=1.5, out=-90, in=-90] ++(-7,0) coordinate (T);
      \draw let \p1 = (T)
      in (T) to [looseness=1.5, out=90, in=-90] ($(\x1, 3)+(0, 0)$)
      node [above] {$H^{\vee\vee}$};
    \end{tikzpicture} =
    \begin{tikzpicture}[baseline = 0pt, x = 10pt, y = 8pt]
      \coordinate (S1) at (0, 3); \node at (S1) [above] {$H^{\vee\vee}$};
      \coordinate (S2) at (2, 3); \node at (S2) [above] {$I^{\vee}$};
      \coordinate (S3) at (4, 3); \node at (S3) [above] {$H$};
      \draw (S3) -- ++(0,-1.25)
      to [out=-90, in=90, looseness=1] ++(1.5, -2)
      to [out=-90, in=-90, looseness=2]
      coordinate [midway] (M1) ++(-1.5, 0)
      coordinate (TMP);
      \draw [cross] (TMP) to [out=90, in=-90, looseness=1] ++(1.5, 2)
      coordinate (TMP);
      \node (B1) at (TMP) [above, draw, circle, fill = gray] {};
      \draw let \p1=(B1.north), \p2 = ($(\p1)+(1, .25)$), \p3=($(S1)-(0,5)$)
      in (\p1) -- (\x1, \y2)
      to [out=90, in=90, looseness=2] (\p2)
      -- (\x2, \y3) to [out=-90, in=-90, looseness=2] (\p3) -- (S1);
      \node at (M1) {$\bullet$};
      \draw (M1) -- ++(0, -1) node (B2) [below, draw, rectangle] {\makebox[1em]{$\lambda$}};
      \draw let \p1 = (S2), \p2 = (B2.south)
      in (\p1) -- (\x1, \y2) to [out=-90, in=-90, looseness=1.5] (\p2);
    \end{tikzpicture}
    \ = \!\!
    \begin{tikzpicture}[baseline = 0pt, x = 10pt, y = 8pt]
      \coordinate (S1) at (-3, 3); \node at (S1) [above] {$H^{\vee\vee}$};
      \coordinate (S2) at (2, 3); \node at (S2) [above] {$I^{\vee}$};
      \coordinate (S3) at (4, 3); \node at (S3) [above] {$H$};
      \node (B1) at (0,0) [draw, circle, fill = gray] {};
      \draw let \p1 = (B1.north), \p2 = ($(\p1)+(-1.5,2)$)
      in (\p1) to [out=90, in=-90, looseness=1] (\p2)
      to [out=90, in=90, looseness=2] (\x1, \y2) coordinate (TMP);
      \draw [cross] (TMP) to [out=-90, in=90] ++(-1.5, -2) coordinate (TMP);
      \draw let \p1 = (TMP), \p2 = (S1)
      in (\p1) to [out=-90, in=-90, looseness=2] (\x2, \y1) -- (\p2);
      \draw (S2) to [out=-90, in=90] ++(-2, -8)
      to [out=-90, in=-90, looseness=2] ++(2,0)
      node (B2) [above, draw, rectangle] {\makebox[1em]{$\lambda$}};
      \coordinate (M1) at ($(B2.north)+(0,1)$);
      \draw [cross] (B1.south) to [out=-90, in=180] (M1);
      \draw (B2.north) -- ++(0,1) coordinate (M1);
      \node at (M1) {$\bullet$};
      \draw (M1) to [out=0, in=-90] (S3);
    \end{tikzpicture}
    \quad
    \left(
      \mathcal{N}^{-1} =
      \begin{array}{c}
        \begin{tikzpicture}[baseline = 0pt, x = 8pt, y = 8pt]
          \node (B1) at (0,0) [above, draw, circle, fill = gray] {};
          \draw (B1.north) -- ++(0,+.5);
          \draw (B1.south) -- ++(0,-.5);
        \end{tikzpicture}
      \end{array}
    \right)
  \end{equation*}
  Equation \eqref{eq:gFb-algebra-Nakayama} can actually be used as the definition of $\nu$. By comparing the above result with the right hand side \eqref{eq:gFb-algebra-Nakayama}, we obtain
  \begin{equation*}
    \nu = (\sigma_{I^{\vee}, H}^{-1} \otimes \id_{I^{\vee\vee}}) \circ (\mathcal{N}^{-1} \psi_H^{-1} \otimes \coev_{I^{\vee}}).
  \end{equation*}
  The desired formula is obtained by rewriting the right hand side by \eqref{eq:braided-Hopf-monodromy}.
\end{proof}

By the above lemma, we have
\begin{equation*}
  (\id_{I^{\vee}} \otimes \varepsilon \otimes \id_{I^{\vee\vee}}) \circ \nu
  = \coev_{I^{\vee}} \circ \alpha_H^{\vee\vee}.
\end{equation*}
Thus, by Theorem~\ref{thm:Nakayama-for-modules-2}, we immediately have:

\begin{theorem}
  The modular object of $\mathcal{C} := {}_H \mathcal{B}$ is given by
  \begin{equation*}
    \alpha_{\mathcal{C}} = (K,
    \ \alpha_H \otimes \id_K : H \otimes K \to K)
    \quad (K := I^{\vee} \otimes \alpha_{\mathcal{B}}).
  \end{equation*}
\end{theorem}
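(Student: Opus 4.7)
The plan is to apply Theorem~\ref{thm:Nakayama-for-modules-2} to the module category $\mathcal{B}$ (viewed over itself), the algebra $A=H$, and the tensor unit $\unitobj_{\mathcal{C}} := (\unitobj, \varepsilon)$ of $\mathcal{C} = {}_H\mathcal{B}$. Since by definition $\alpha_{\mathcal{C}} = \Nak_{\mathcal{C}}(\unitobj_{\mathcal{C}})$, we need only evaluate the composition of \eqref{eq:I-Frob-alg-Nakayama-1} and \eqref{eq:Nakayama-lift} on $\unitobj_{\mathcal{C}}$. Note that the cointegral $\lambda$ serves as the $I$-valued Frobenius trace required to trigger Theorem~\ref{thm:Nakayama-for-modules-2}, and $\nu$ is the Nakayama isomorphism computed in Lemma~\ref{lem:braided-Nakayama}.

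First I would unpack the intermediate $H^{\vee\vee}$-module $I^{\vee}\catactl\unitobj_{\mathcal{C}}$. Its underlying object is $I^{\vee}$, and by \eqref{eq:I-Frob-alg-Nakayama-1} its action is the composition
\begin{equation*}
  H^{\vee\vee} \otimes I^{\vee}
  \xrightarrow{\nu \otimes \id_{I^{\vee}}} I^{\vee} \otimes H \otimes I^{\vee\vee} \otimes I^{\vee}
  \xrightarrow{\id_{I^{\vee}} \otimes \varepsilon \otimes \eval_{I^{\vee}}} I^{\vee}.
\end{equation*}
Slotting in the identity $(\id_{I^{\vee}} \otimes \varepsilon \otimes \id_{I^{\vee\vee}})\circ\nu = \coev_{I^{\vee}} \circ \alpha_H^{\vee\vee}$ recorded just before the statement, the composition becomes $(\id_{I^{\vee}} \otimes \eval_{I^{\vee}}) \circ (\coev_{I^{\vee}} \otimes \id_{I^{\vee}}) \circ (\alpha_H^{\vee\vee} \otimes \id_{I^{\vee}})$, which by the triangle identity for the dual pair $(I^{\vee}, I^{\vee\vee})$ collapses to $\widetilde{\varepsilon} = \alpha_H^{\vee\vee} \otimes \id_{I^{\vee}}$.

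Next I would apply the lift \eqref{eq:Nakayama-lift}, adopting the normalized choice $\Nak_{\mathcal{B}}(X) = {}^{\vee\vee\!}X \otimes \alpha_{\mathcal{B}}$ of Remark~\ref{rem:Nakayama-twisted-module-structure}. Because $I$ is invertible, its left and right duals coincide with its $\otimes$-inverse, so ${}^{\vee\vee}(I^{\vee}) = I^{\vee}$ on the nose under the strictness conventions already in force; hence $\Nak_{\mathcal{B}}(I^{\vee}) = K$. The same Remark tells us that the canonical isomorphism $\can_{H^{\vee\vee}\catactl(-)}(I^{\vee}) : H \otimes K \to \Nak_{\mathcal{B}}(H^{\vee\vee} \otimes I^{\vee})$ is the identity in this normalization, and applying $\Nak_{\mathcal{B}}$ to $\widetilde{\varepsilon} = \alpha_H^{\vee\vee} \otimes \id_{I^{\vee}}$ yields ${}^{\vee\vee}(\alpha_H^{\vee\vee}) \otimes \id_K = \alpha_H \otimes \id_K$, where the identification ${}^{\vee\vee}(\alpha_H^{\vee\vee}) = \alpha_H$ uses that $(-)^{\vee}$ and ${}^{\vee}(-)$ are mutually inverse. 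Composing, the resulting $H$-action on $K$ is exactly $\alpha_H \otimes \id_K$.

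The main obstacle is coherently tracking the two distinct double-dual functors $(-)^{\vee\vee}$ and ${}^{\vee\vee}(-)$ through the normalized Nakayama functor and the canonical isomorphism. The computation should nonetheless be short, because the invertibility of $I$ collapses ${}^{\vee\vee}(I^{\vee})$ to $I^{\vee}$ and strictness of the dualities renders ${}^{\vee\vee}(\alpha_H^{\vee\vee})$ equal to $\alpha_H$, so no Drinfeld-type corrections survive.
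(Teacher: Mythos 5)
Your argument is correct and follows the paper's route exactly: you evaluate the composite of \eqref{eq:I-Frob-alg-Nakayama-1} and \eqref{eq:Nakayama-lift} (Theorem~\ref{thm:Nakayama-for-modules-2}) at $\unitobj_\mathcal{C}$, using the identity $(\id_{I^{\vee}} \otimes \varepsilon \otimes \id_{I^{\vee\vee}})\circ\nu = \coev_{I^{\vee}} \circ \alpha_H^{\vee\vee}$ extracted from Lemma~\ref{lem:braided-Nakayama}, which is precisely what the paper records before asserting the theorem. The only imprecision is the claim that ${}^{\vee\vee}(I^{\vee})=I^{\vee}$ holds on the nose: with the strictness convention that $(-)^{\vee}$ and ${}^{\vee}(-)$ are mutually inverse, ${}^{\vee\vee}(I^{\vee})={}^{\vee}I$, which is only canonically isomorphic to $I^{\vee}$ for invertible $I$ — but this is the same silent identification the paper itself makes in writing $K:=I^{\vee}\otimes\alpha_{\mathcal{B}}$.
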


This formula has been obtained in \cite{MR3569179} in a different method (though, strictly speaking, the category of right $H$-modules is considered in \cite{MR3569179}). Since ${}_H \mathcal{B}$ is a finite multi-tensor category, the above formula can also be read as a formula of the Nakayama functor of ${}_H \mathcal{B}$.

\section{The center of a finite bimodule category}
\label{sec:the-center}

\subsection{The canonical algebra and its variants}
\label{subsec:canonical-algebra}

In this section, we assume that the base field $\bfk$ is perfect. Thanks to this, the class of finite multi-tensor categories over $\bfk$ is closed under the Deligne tensor product \cite{MR1106898,MR3242743}.

Let $\mathcal{C}$ be a finite multi-tensor category.
A finite $\mathcal{C}$-bimodule category $\mathcal{M}$ can be regarded as a finite left module category over $\mathcal{C}^{\env} := \mathcal{C} \boxtimes \mathcal{C}^{\rev}$ by the action $\catacte$ determined by $(X \boxtimes Y) \catacte M = X \catactl M \catactr Y$ for $X, Y \in \mathcal{C}$ and $M \in \mathcal{M}$.
Let $\iHom : \mathcal{C}^{\op} \times \mathcal{C} \to \mathcal{C}^{\env}$ be the internal Hom functor for the left $\mathcal{C}^{\env}$-module category $\mathcal{C}$ \cite{MR3242743}.
Then $\canalg := \iHom(\unitobj, \unitobj)$ is an algebra in $\mathcal{C}^{\env}$ and the functor
\begin{equation}
  \label{eq:fund-thm-Hopf-bimod}
  \mathds{K}: \mathcal{C} \to (\mathcal{C}^{\env})_{\canalg},
  \quad V \mapsto (V \boxtimes \unitobj) \otimes \canalg
\end{equation}
is an equivalence of left $\mathcal{C}^{\env}$-module categories \cite{MR2097289}.

The algebra $\canalg$ is called the {\em canonical algebra} \cite{MR3242743}. It is known that $\canalg$ is the coend $\canalg = \int^{X \in \mathcal{C}} X \boxtimes {}^{\vee\!}X$ as an object of $\mathcal{C}^{\env}$.
In terms of the universal dinatural transformation
$i_X : X \boxtimes {}^{\vee\!}X \to \canalg$ ($X \in \mathcal{C}$), the multiplication $m : \canalg \otimes \canalg \to \canalg$ and the unit $u : \unitobj \boxtimes \unitobj \to \canalg$ of $\canalg$ are given by $m \circ (i_{X} \otimes i_Y) = i_{X \otimes Y}$ and $u = i_{\unitobj}$ for $X, Y \in \mathcal{C}$, respectively \cite{MR3632104}.

By Lemma~\ref{lem:adjoints-ends} and the succeeding remark, there is a unique isomorphism
\begin{equation*}
  \tau_Y : (\unitobj \boxtimes Y) \otimes \canalg \to (Y \boxtimes \unitobj) \otimes \canalg
\end{equation*}
in $\mathcal{C}^{\env}$ such that the diagram
\begin{equation*}
  \begin{tikzcd}[column sep = 64pt]
    (\unitobj \boxtimes Y) \otimes \canalg
    \arrow[d, "{\tau_Y}"']
    & (\unitobj \boxtimes Y) \otimes (X \boxtimes {}^{\vee\!}X)
    \arrow[d, "{\id_{\unitobj \boxtimes Y} \otimes ((\coev_Y \otimes \id_X) \boxtimes \id_{{}^{\vee\!}X})}"]
    \arrow[l, "{\id \otimes i_X}"'] \\
    (Y \boxtimes \unitobj) \otimes \canalg
    & (Y \boxtimes \unitobj) \otimes ((Y^{\vee} \otimes X) \boxtimes {}^{\vee}(Y^{\vee} \otimes X))
    \arrow[l, "{\id \otimes i_{Y^{\vee} \otimes X}}"']
  \end{tikzcd}
\end{equation*}
commutes for all objects $X \in \mathcal{C}$. According to \cite[Appendix A]{2017arXiv170709691S}, the structure morphism of the $\mathcal{C}^{\env}$-module functor $\mathds{K}$, which we denote by
\begin{equation*}
  \widetilde{\tau}_{M, V} : M \otimes \mathds{K}(V)
  \to \mathds{K}(M \catacte V)
  \quad (M \in \mathcal{C}^{\env}, V \in \mathcal{C}),
\end{equation*}
is given by
\begin{align*}
  (X \boxtimes Y) \otimes \mathds{K}(V) = \mbox{}
  & ((X \otimes V) \boxtimes \unitobj) \otimes (\unitobj \boxtimes Y) \otimes \canalg \\
  \xrightarrow{\quad \id \otimes \tau_Y \quad} \mbox{}
  & ((X \otimes V) \boxtimes \unitobj) \otimes (Y \boxtimes \unitobj) \otimes \canalg
    = \mathds{K}((X \boxtimes Y) \catacte V)
\end{align*}
if $M = X \boxtimes Y$ for some $X, Y \in \mathcal{C}$.
For later use, we note:

\begin{lemma}
  \label{lem:m-and-tau}
  The following equation holds:
  \begin{equation*}
    m \circ (i_{Y^{\vee}} \otimes \id_{\canalg})
    = ((\eval_{Y} \boxtimes \unitobj) \otimes \id_{\canalg})
    \circ (\id_{Y^{\vee} \boxtimes \unitobj} \otimes \tau_{Y})
    \quad (Y \in \mathcal{C}).
  \end{equation*}
\end{lemma}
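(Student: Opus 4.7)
The plan is to invoke the universal property of $\canalg$ as a coend in its $\canalg$-slot. Since $\mathcal{C}^{\env}$ is rigid, the monoidal product preserves colimits in each variable, so the object $(Y^{\vee} \boxtimes Y) \otimes \canalg$ is a coend $\int^{X \in \mathcal{C}} (Y^{\vee} \boxtimes Y) \otimes (X \boxtimes {}^{\vee\!}X)$ with universal dinatural family $\id_{Y^{\vee} \boxtimes Y} \otimes i_X$. Accordingly, it suffices to check that the two sides of the claimed equation agree after precomposition with $\id_{Y^{\vee} \boxtimes Y} \otimes i_X$ for each $X \in \mathcal{C}$.

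For the left-hand side, rewriting $\id_{Y^{\vee} \boxtimes Y} = \id_{Y^{\vee} \boxtimes \unitobj} \otimes \id_{\unitobj \boxtimes Y}$, applying the interchange law, and then using the formula $m \circ (i_{A} \otimes i_{B}) = i_{A \otimes B}$ for the multiplication of $\canalg$ yields
\begin{equation*}
  m \circ (i_{Y^{\vee}} \otimes \id_{\canalg}) \circ (\id_{Y^{\vee} \boxtimes Y} \otimes i_X)
  = m \circ (i_{Y^{\vee}} \otimes i_X)
  = i_{Y^{\vee} \otimes X}.
\end{equation*}

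For the right-hand side, I will first invoke the defining diagram of $\tau_Y$ (for the given $X$) to replace $\tau_Y \circ (\id_{\unitobj \boxtimes Y} \otimes i_X)$ with the composition $(\id_{Y \boxtimes \unitobj} \otimes i_{Y^{\vee} \otimes X}) \circ (\id_{\unitobj \boxtimes Y} \otimes ((\coev_Y \otimes \id_X) \boxtimes \id_{{}^{\vee\!}X}))$. Substituting this expression and applying the interchange law in $\mathcal{C}^{\env}$ brings the precomposed right-hand side into the form $((\eval_Y \boxtimes \unitobj) \otimes i_{Y^{\vee} \otimes X}) \circ (\id_{Y^{\vee} \boxtimes Y} \otimes ((\coev_Y \otimes \id_X) \boxtimes \id_{{}^{\vee\!}X}))$. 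Unraveling componentwise via $(A_1 \boxtimes A_2) \otimes_{\env} (B_1 \boxtimes B_2) = (A_1 \otimes B_1) \boxtimes (B_2 \otimes A_2)$, the action on the first $\mathcal{C}$-factor reduces to $(\eval_Y \otimes \id_{Y^{\vee} \otimes X}) \circ (\id_{Y^{\vee}} \otimes \coev_Y \otimes \id_X)$, which collapses to $\id_{Y^{\vee} \otimes X}$ by the zig-zag identity for the duality $(Y, Y^{\vee})$; the second-factor action is the identity throughout. The composite therefore reduces to $i_{Y^{\vee} \otimes X}$, matching the left-hand side.

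The only real obstacle is notational bookkeeping with the reversal of the second factor in $\mathcal{C}^{\env} = \mathcal{C} \boxtimes \mathcal{C}^{\rev}$ when distributing $\id_{Y^{\vee} \boxtimes \unitobj} \otimes \tau_Y$ across the composition defining $\tau_Y \circ (\id \otimes i_X)$, and verifying that under the identification ${}^{\vee\!}(Y^{\vee} \otimes X) = {}^{\vee\!}X \otimes Y$ all occurrences of the second-factor morphism are indeed identities. The substantive content is a single zig-zag cancellation, which is what makes the equation hold.
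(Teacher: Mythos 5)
Your proof is correct and follows essentially the same route as the paper: precompose both sides with the universal dinatural morphism $\id_{Y^{\vee}\boxtimes Y}\otimes i_X$, reduce each side to $i_{Y^{\vee}\otimes X}$ (the left via $m\circ(i_{Y^{\vee}}\otimes i_X)=i_{Y^{\vee}\otimes X}$, the right via the defining square for $\tau_Y$ and a zig-zag cancellation), and conclude by the universal property of the coend. The paper compresses the computation into one display, whereas you spell out the bookkeeping in $\mathcal{C}\boxtimes\mathcal{C}^{\rev}$; the content is the same.
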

\begin{proof}
  By the definition of $m$ and $\tau_Y$, we have 
  \begin{gather*}
    ((\eval_{Y} \boxtimes \unitobj) \otimes \id_{\canalg}) \circ (\id_{Y^{\vee} \boxtimes \unitobj} \otimes \tau_{Y}) \circ (\id_{Y^{\vee} \boxtimes Y} \otimes i_X) \\
    = i_{Y^{\vee} \otimes X}
    = m \circ (i_{Y^{\vee}} \otimes \id_{\canalg}) \circ (\id_{Y^{\vee} \boxtimes Y} \otimes i_X)
  \end{gather*}
  for $X, Y \in \mathcal{C}$. The claim now follows from the universal property.
\end{proof}

In this section, the following variants of the canonical algebra are useful:

\begin{definition}
  \label{def:cano-alg-twisted}
  For integers $p$ and $q$, we define
  \begin{equation*}
    \canalg_{2p,2q} := \int^{X \in \mathcal{C}} S^{2p}(X) \boxtimes S^{2q-1}(X),
  \end{equation*}
  where $S: \mathcal{C} \to \mathcal{C}$ is the left duality functor.
\end{definition}

The coend $\canalg_{2p,2q}$ is an algebra in $\mathcal{C}^{\env}$ as the image of the canonical algebra $\canalg$ under the tensor autoequivalence $S^{2p} \boxtimes S^{2q}$ of $\mathcal{C}^{\env}$.
Since $S^{2} \boxtimes S^{-2}$ is the double left dual functor of $\mathcal{C}^{\env}$, we have $\canalg^{\!\vee\vee} = \canalg_{2,-2}$ as algebras in $\mathcal{C}^{\env}$.

\subsection{The canonical algebra and the twisted center}

Let $\mathcal{C}$ be a finite multi-tensor category, and let $\mathcal{M}$ be a finite $\mathcal{C}$-bimodule category. The {\em center} of $\mathcal{M}$, denoted by $\mathcal{Z}(\mathcal{M})$, is the category defined as follows: An object of this category is a pair $(M, \sigma)$ consisting of an object $M \in \mathcal{M}$ and a natural transformation $\sigma(X) : X \catactl M \to M \catactr X$ ($X \in \mathcal{C}$) satisfying
\begin{equation*}
  \sigma(\unitobj) = \id_M \quad \text{and} \quad
  \sigma(X \otimes Y) = (\sigma(X) \catactr \id_Y) \circ (\id_X \catactl \sigma(Y))
\end{equation*}
for all objects $X, Y \in \mathcal{C}$. Given two objects $\mathbf{M} = (M, \sigma)$ and $\mathbf{N} = (N, \tau)$ of $\mathcal{Z}(\mathcal{M})$, a morphism from $\mathbf{M}$ to $\mathbf{N}$ is a morphism $f: M \to N$ in $\mathcal{M}$ satisfying
\begin{equation*}
  \tau(X) \circ (\id_X \catactl f) = (f \catactr \id_X) \circ \sigma(X)
\end{equation*}
for all objects $X \in \mathcal{C}$.
Lemma~\ref{lem:twisted-center-lemma-1} below is well-known for $\mathcal{M} = \mathcal{C}$.
The general case can be proved in the same way.

\begin{lemma}
  \label{lem:twisted-center-lemma-1}
  Let $(M, \sigma)$ be an object of $\mathcal{Z}(\mathcal{M})$.
  Then, for all objects $X \in \mathcal{C}$, the morphism $\sigma(X)$ is an isomorphism with the inverse
  \begin{equation*}
    \sigma(X)^{-1} = (\id_X \catactl \id_M \catactr \eval_X) \circ (\id_X \catactl \sigma(X^{\vee}) \catactr \id_X) \circ (\coev_X \catactl \id_{M} \catactr \id_X).
  \end{equation*}
\end{lemma}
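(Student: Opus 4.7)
The plan is to denote the claimed inverse by $\tau \colon M \catactr X \to X \catactl M$ and verify both composites $\tau \circ \sigma(X) = \id_{X \catactl M}$ and $\sigma(X) \circ \tau = \id_{M \catactr X}$ directly. This is the familiar argument for invertibility of half-braidings in Drinfeld centers, adapted to the bimodule setting; I would follow it step by step using only the two axioms defining an object of $\mathcal{Z}(\mathcal{M})$ together with bifunctoriality of the actions and the snake identities for the left duality in $\mathcal{C}$.

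For $\tau \circ \sigma(X) = \id_{X \catactl M}$, the calculation proceeds as follows. Bifunctoriality of $\catactl$ lets me slide $\coev_X \catactl \id$ past $\sigma(X)$, turning $(\coev_X \catactl \id_M \catactr \id_X) \circ \sigma(X)$ into $(\id_{X \otimes X^\vee} \catactl \sigma(X)) \circ (\coev_X \catactl \id_{X \catactl M})$. Then $\tau \circ \sigma(X)$ factors as $\id_X \catactl \Phi$ applied after $\coev_X \catactl \id_{X \catactl M}$, where
\[
  \Phi \;=\; (\id_M \catactr \eval_X) \circ (\sigma(X^\vee) \catactr \id_X) \circ (\id_{X^\vee} \catactl \sigma(X)).
\]
The hexagon axiom for $\sigma$ collapses the last two factors of $\Phi$ into $\sigma(X^\vee \otimes X)$, and then naturality of $\sigma$ applied to $\eval_X \colon X^\vee \otimes X \to \unitobj$, combined with the unit axiom $\sigma(\unitobj) = \id_M$, further reduces $\Phi$ to $\eval_X \catactl \id_M$. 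What remains is $(\id_X \catactl \eval_X \catactl \id_M) \circ (\coev_X \catactl \id_{X \catactl M}) = \bigl((\id_X \otimes \eval_X) \circ (\coev_X \otimes \id_X)\bigr) \catactl \id_M$, which collapses to $\id_{X \catactl M}$ by the snake identity for the left dual $X^\vee$.

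The identity $\sigma(X) \circ \tau = \id_{M \catactr X}$ follows by the symmetric computation. Here I slide $\id_M \catactr \eval_X$ past $\sigma(X)$ using bifunctoriality of $\catactr$, combine $(\sigma(X) \catactr \id_{X^\vee}) \circ (\id_X \catactl \sigma(X^\vee))$ into $\sigma(X \otimes X^\vee)$ via the hexagon axiom, apply naturality of $\sigma$ to $\coev_X \colon \unitobj \to X \otimes X^\vee$ together with $\sigma(\unitobj) = \id_M$, and close with the same snake identity, now acting on the right of $M$.

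I do not anticipate any genuine obstacle; the reasoning is essentially a diagrammatic snake move. The only care required is in the bookkeeping of the left and right actions of $\mathcal{C}$ on $\mathcal{M}$, which commute strictly thanks to the strict bimodule category assumption recalled in \S\ref{subsec:FTC-modules}, and in recognising that the various interchange rules invoked (e.g.\ moving $\sigma(X)$ past $\id_M \catactr \eval_X$) are nothing more than bifunctoriality of $\catactl$ and $\catactr$.
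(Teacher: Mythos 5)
Your proof is correct and is precisely the standard argument the paper alludes to: the paper itself gives no details, only noting that the lemma is well-known for $\mathcal{M}=\mathcal{C}$ and that the general case is proved the same way. Your verification of both composites by sliding the (co)evaluation past $\sigma(X)$ via bifunctoriality, collapsing via the hexagon axiom, applying naturality of $\sigma$ together with $\sigma(\unitobj)=\id_M$, and finishing with the zigzag identity is exactly the intended proof.
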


Given a tensor autoequivalence $F$ of $\mathcal{C}$ and a left $\mathcal{C}$-module category $\mathcal{N}$, we denote by ${}_{\langle F \rangle} \mathcal{N}$ the left $\mathcal{C}$-module category obtained from $\mathcal{N}$ by twisting the action of $\mathcal{C}$ by $F$. A similar notation will be used for right module categories and bimodule categories. For two integers $p$ and $q$, we define the {\em twisted center} of $\mathcal{M}$ by
\begin{equation*}
  \mathcal{Z}_{2p, 2q}(\mathcal{M}) := \mathcal{Z}({}_{\langle S^{2p} \rangle}\mathcal{M}_{\langle S^{2q} \rangle}),
\end{equation*}
where $S : \mathcal{C} \to \mathcal{C}$ is the left duality functor $X \mapsto X^{\vee}$.
Now we fix two integers $p$ and $q$ and let $\canalg_{2p,2q} \in \mathcal{C}^{\env}$ be the algebra of Definition~\ref{def:cano-alg-twisted}.
According to \cite[Section 3]{2017arXiv170709691S},

\begin{lemma}
  \label{lem:twisted-center-as-modules}
  The category of $\canalg_{2p,2q}$-modules in $\mathcal{M}$ is isomorphic to $\mathcal{Z}_{2p,2q}(\mathcal{M})$.
\end{lemma}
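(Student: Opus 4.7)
The plan is to establish a bijective correspondence between $\canalg_{2p,2q}$-module structures on an object $M \in \mathcal{M}$ and half-braidings realizing $M$ as an object of $\mathcal{Z}_{2p,2q}(\mathcal{M})$, and to check that this correspondence extends to morphisms on the nose.

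First, I would unpack the universal property. A $\canalg_{2p,2q}$-module structure on $M \in \mathcal{M}$ is a morphism $a : \canalg_{2p,2q} \catacte M \to M$ in $\mathcal{M}$. Since the $\mathcal{C}^{\env}$-action on $\mathcal{M}$ is right exact in each variable and hence commutes with the coend $\canalg_{2p,2q} = \int^{X} S^{2p}(X) \boxtimes S^{2q-1}(X)$ in its first argument, such an $a$ corresponds bijectively to a family
\begin{equation*}
  a_X : S^{2p}(X) \catactl M \catactr S^{2q-1}(X) \to M \quad (X \in \mathcal{C})
\end{equation*}
dinatural in $X$. Using the duality adjunction $(-) \catactr S^{2q-1}(X) \dashv (-) \catactr S^{2q}(X)$ (valid since $S^{2q-1}(X) = {}^{\vee\!}S^{2q}(X)$), each $a_X$ corresponds to a morphism
\begin{equation*}
  \sigma(X) : S^{2p}(X) \catactl M \to M \catactr S^{2q}(X),
\end{equation*}
and dinaturality in $X$ on the level of $a_X$ translates to ordinary naturality of $\sigma$ as a natural transformation between two functors $\mathcal{C} \to \mathcal{M}$.

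Next, I would verify that the $\canalg_{2p,2q}$-module axioms match the half-braiding axioms in ${}_{\langle S^{2p}\rangle}\mathcal{M}_{\langle S^{2q}\rangle}$. The unit axiom $a \circ (u \catacte \id_M) = \id_M$, combined with $u = i_{\unitobj}$, yields $\sigma(\unitobj) = \id_M$. For associativity, I would use the analogue of the formula $m \circ (i_X \otimes i_Y) = i_{X \otimes Y}$ for $\canalg_{2p,2q}$ (transported from $\canalg$ via the tensor autoequivalence $S^{2p} \boxtimes S^{2q}$ of $\mathcal{C}^{\env}$); this rewrites the dinatural component of the composite $a \circ (m \catacte \id_M)$ along the $(X,Y)$-piece as the dinatural component of $a$ indexed by $X \otimes Y$. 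Transposing through the adjunction and applying Lemma~\ref{lem:m-and-tau} (suitably twisted) to the right $S^{2q-1}(Y)$-slot delivers exactly the hexagon identity $\sigma(X \otimes Y) = (\sigma(X) \catactr \id_{S^{2q}(Y)}) \circ (\id_{S^{2p}(X)} \catactl \sigma(Y))$, which is the half-braiding axiom in the twisted bimodule category. Conversely, given a half-braiding, one recovers $a$ uniquely from the dinatural family $(a_X)$, so the assignment is bijective on objects.

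Finally, the compatibility with morphisms is automatic: a morphism $f: M \to N$ in $\mathcal{M}$ intertwines the $\canalg_{2p,2q}$-actions if and only if $f \circ a_X = a'_X \circ (\id_{S^{2p}(X)} \catactl f \catactr \id_{S^{2q-1}(X)})$ for all $X$ (by the universal property of the coend), which under the adjunction is equivalent to $(f \catactr \id_{S^{2q}(X)}) \circ \sigma_M(X) = \sigma_N(X) \circ (\id_{S^{2p}(X)} \catactl f)$, i.e.\ to $f$ being a morphism in $\mathcal{Z}_{2p,2q}(\mathcal{M})$. This gives the claimed isomorphism of categories. The main obstacle is the bookkeeping in the associativity step: keeping track of the $S^{2p}$-twist on the left and the $S^{2q}$-twist on the right while passing from the coend presentation of $\canalg_{2p,2q}$ to the hexagon axiom, where the right-$S^{2q}$ twist interacts non-trivially with the duality used to define $\sigma$.
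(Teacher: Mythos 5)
Your approach is the same as the paper's: express a $\canalg_{2p,2q}$-module structure as a dinatural family $a_X : S^{2p}(X) \catactl M \catactr S^{2q-1}(X) \to M$ (using that the $\mathcal{C}^{\env}$-action preserves coends by right exactness), transpose through the duality adjunction $(-) \catactr S^{2q-1}(X) \dashv (-) \catactr S^{2q}(X)$ to obtain $\sigma(X) : S^{2p}(X) \catactl M \to M \catactr S^{2q}(X)$, and match the unit/associativity axioms of the monad action with the half-braiding axioms. The paper only records the chain of $\Hom$-isomorphisms and refers to \cite{2017arXiv170709691S} for the axiom-matching, so your write-up supplies detail the paper omits.

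One inaccuracy worth flagging: Lemma~\ref{lem:m-and-tau} is not the right tool for the hexagon verification. That lemma relates the multiplication $m$ of $\canalg$, the isomorphism $\tau_Y$, and evaluation; it is tailored to computations with the $\mathcal{C}^{\env}$-module functor structure of $\mathds{K}$ and is invoked in the proof of Lemma~\ref{lem:cano-alg-Nakayama}, not here. For associativity all you need is the coend formula $m \circ (i_X \otimes i_Y) = i_{X \otimes Y}$ (quoted at the start of \S\ref{subsec:canonical-algebra} and transported to $\canalg_{2p,2q}$ by $S^{2p} \boxtimes S^{2q}$), together with the standard compatibility of the duality transposition with tensor products of the dualizing objects (the snake identities and the identification ${}^{\vee}(Y \otimes Z) \cong {}^{\vee}Z \otimes {}^{\vee}Y$). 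The $\tau_Y$ of Lemma~\ref{lem:m-and-tau} never enters; mentioning it muddies the bookkeeping you rightly identify as the delicate part.
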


For reader's convenience, we include how the category isomorphism of this lemma is established. We first note that the action of $\mathcal{C}^{\env}$ on $\mathcal{M}$ preserves coends as it is right exact. Thus we have isomorphisms
\begin{align*}
  \Hom_{\mathcal{C}^{\env}}(\canalg_{2p,2q} \catacte M, M)
  & \textstyle \cong \Hom_{\mathcal{C}^{\env}}(\int^{X \in \mathcal{C}} S^{2p}(X) \catactl M \catactr S^{2q-1}(X), M) \\
  & \textstyle \cong \int_{X \in \mathcal{C}} \Hom_{\mathcal{C}^{\env}}(S^{2p}(X) \catactl M \catactr S^{2q-1}(X), M) \\
  & \textstyle \cong \int_{X \in \mathcal{C}} \Hom_{\mathcal{C}^{\env}}(S^{2p}(X) \catactl M, M \catactr S^{2q}(X)) \\
  & \cong \Nat(S^{2p}(-) \catactl M, M \catactr S^{2q}(-))
\end{align*}
for $M \in \mathcal{M}$. The category isomorphism is obtained by showing that a morphism $\canalg_{2p,2q} \catacte M \to M$ in $\mathcal{M}$ makes $M$ an $\canalg_{2p,2q}$-module in $\mathcal{M}$ if and only if the corresponding natural transformation makes $M$ an object of $\mathcal{Z}_{2p,2q}(\mathcal{M})$.

\subsection{The canonical algebra and the Radford isomorphism}
\label{subsec:cano-alg-Radford-iso}

Let $\mathcal{C}$ be a finite multi-tensor category, and let $\alpha := \Nak_{\mathcal{C}}(\unitobj)$.
Since the equivalence $\mathds{K} : \mathcal{C} \to (\mathcal{C}^{\env})_{\canalg}$ of \eqref{eq:fund-thm-Hopf-bimod} is a left $\mathcal{C}^{\env}$-module functor, it induces an equivalence
\begin{equation}
  \label{eq:fund-thm-Hopf-bimod-2}
  \mathds{K} : {}_{\canalg^{\!\vee\vee}}\mathcal{C} \to {}_{\canalg^{\!\vee\vee}}(\mathcal{C}^{\env})_{\canalg}
\end{equation}
between the categories of left $\canalg^{\!\vee\vee}$-modules.
The source of \eqref{eq:fund-thm-Hopf-bimod-2} can be identified with $\mathcal{Z}_{2,-2}(\mathcal{C})$ by Lemma~\ref{lem:twisted-center-as-modules} and the succeeding remark.
We note that $\canalg^{\!\vee}$ belongs to the target of \eqref{eq:fund-thm-Hopf-bimod-2}.

\begin{definition}
  \label{def:modular-obj-ENO}
  The object $\mathbf{D}_{\mathcal{C}} \in \mathcal{Z}_{2,-2}(\mathcal{C})$ is defined as an object corresponding to the $\canalg^{\!\vee\vee}$-$\canalg$-bimodule $\canalg^{\!\vee}$ via the equivalence \eqref{eq:fund-thm-Hopf-bimod-2}.
\end{definition}

We write $\mathbf{D}_{\mathcal{C}} = (D, \Radford')$.
The {\em distinguished invertible object} of $\mathcal{C}$ \cite{MR2097289} is defined as an object of $\mathcal{C}$ corresponding to the right $\canalg$-module $\canalg^{\!\vee}$ via \eqref{eq:fund-thm-Hopf-bimod}.
Thus $D$ in the above is the distinguished invertible object of $\mathcal{C}$.
It is explained in \cite[Section 4]{2017arXiv170709691S} that $D = \overline{\Nak}_{\mathcal{C}}(\unitobj)$, where $\overline{\Nak}_{\mathcal{C}}$ is the right adjoint of $\Nak_{\mathcal{C}}$ given in \S\ref{lem:Nakayama-adj-diagram}. Furthermore, the natural isomorphism $\Radford'$ is given by
\begin{align*}
  \Radford'_X = ( X^{\vee\vee} \otimes D
  & \xrightarrow{\quad \cong \quad}
    \textstyle \int_{Y \in \mathcal{C}} \Hom_{\mathcal{C}}(Y, \unitobj) \copow (X^{\vee\vee} \otimes Y) \\
  & \xrightarrow{\quad \cong \quad}
    \textstyle \int_{Y \in \mathcal{C}} \Hom_{\mathcal{C}}(X^{\vee} \otimes Y, \unitobj) \copow Y \\
  & \xrightarrow{\quad \cong \quad}
    \textstyle \int_{Y \in \mathcal{C}} \Hom_{\mathcal{C}}(Y, X) \copow Y \\
  & \xrightarrow{\quad \cong \quad}
    \textstyle \int_{Y \in \mathcal{C}} \Hom_{\mathcal{C}}(Y \otimes {}^{\vee\!}X, \unitobj) \copow Y \\
  & \xrightarrow{\quad \cong \quad}
    \textstyle \int_{Y \in \mathcal{C}} \Hom_{\mathcal{C}}(Y, \unitobj) \copow (Y \otimes {}^{\vee\vee\!}X)
    \xrightarrow{\quad \cong \quad} D \otimes {}^{\vee\vee\!}X)
\end{align*}
for $X \in \mathcal{C}$, where Lemma \ref{lem:adjoints-ends} is used at the second and the fourth isomorphisms.

We have introduced the Radford isomorphism in \S\ref{subsec:radford-iso} in terms of the Nakayama functor. There is the following relation between the isomorphisms $\Radford$ and $\Radford'$.
The left dual of $\alpha$ is also an end of the same form as $\overline{\Nak}_{\mathcal{C}}(\unitobj)$. Hence we can identify $\alpha^{\vee}$ with $D$ by the universal property. Under this identification, it is straightforward to see that the equation
\begin{equation}
  \label{eq:Radford!-iso}
  \Radford'_X = (\Radford_{{}^{\vee\!}X}^{})^{\vee} : X^{\vee\vee} \otimes \alpha^{\vee} \to \alpha^{\vee} \otimes {}^{\vee\vee\!}X
  \quad (X \in \mathcal{C})
\end{equation}
holds by the construction of the Radford isomorphism.

\subsection{Nakayama isomorphism of the canonical algebra}

Let $\canalg \in \mathcal{C}^{\env}$ be the canonical algebra.
By the definition of $\mathbf{D}_{\mathcal{C}} \in \mathcal{Z}_{2,-2}(\mathcal{C})$, there is an isomorphism $\phi : \mathds{K}(\mathbf{D}_{\mathcal{C}}) \to \canalg^{\!\vee}$ of $\canalg^{\!\vee\vee}$-$\canalg$-modules in $\mathcal{C}^{\env}$.
As a right $\canalg$-module, the source of $\phi$ is $(\alpha \boxtimes \unitobj)^{\vee} \otimes \canalg$. Thus there is an $(\alpha \boxtimes \unitobj)$-valued Frobenius trace $\lambda$ on $\canalg$ inducing the isomorphism $\phi$.
The Nakayama isomorphism
\begin{equation*}
  \nu = (\phi^{-1} \otimes (\alpha^{\vee\vee} \boxtimes \unitobj)) \circ \phi^{\vee}:
  \canalg^{\!\vee\vee} \to \canalg^{\!\alpha} := (\alpha^{\vee} \boxtimes \unitobj) \otimes \canalg \otimes (\alpha^{\vee\vee} \boxtimes \unitobj)
\end{equation*}
associated to $\lambda$ is given as follows:

\begin{lemma}
  \label{lem:cano-alg-Nakayama}
  For all objects $X \in \mathcal{C}$, we have
  \begin{equation*}
    \begin{aligned}
      \nu \circ i_X^{\vee\vee}
      & = (\id_{\alpha^{\vee} \boxtimes \unitobj} \otimes i_{{}^{\vee\vee\!}X} \otimes \id_{\alpha^{\vee\vee} \boxtimes \unitobj}) \\
      & \qquad \qquad \circ ((\Radford'_X \otimes \id_{\alpha^{\vee\vee}})
      \circ (\id_{X^{\vee\vee}} \otimes \coev_{\alpha^{\vee}}))
      \boxtimes \id_{{}^{\vee\vee\vee\!}X}),
    \end{aligned}
  \end{equation*}
  where $\Radford'_X$ is given by \eqref{eq:Radford!-iso}.
\end{lemma}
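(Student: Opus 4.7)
The plan is to invoke Lemma~\ref{lem:phi-bimodule-morphism} in the form of a uniqueness statement. Applied to $\phi \colon \mathds{K}(\mathbf{D}_{\mathcal{C}}) \to \canalg^{\vee}$, it says that $\nu$ is the unique morphism $\canalg^{\vee\vee} \to \canalg^{\alpha}$ in $\mathcal{C}^{\env}$ making $\phi$ into a morphism of left $\canalg^{\vee\vee}$-modules when the source $(\alpha^{\vee} \boxtimes \unitobj) \otimes \canalg$ of $\phi$ is endowed with the action $\rho \circ (\nu \otimes \id)$. Since by construction this source carries the left $\canalg^{\vee\vee}$-action transported from $\mathbf{D}_{\mathcal{C}} \in {}_{\canalg^{\vee\vee}}\mathcal{C}$ through the left $\mathcal{C}^{\env}$-module functor $\mathds{K}$, this characterizes $\nu$ by the equation
\[
\rho \circ (\nu \otimes \id_{\mathds{K}(\alpha^{\vee})}) = \mathds{K}(a_{\mathbf{D}_{\mathcal{C}}}) \circ \widetilde{\tau}_{\canalg^{\vee\vee}, \alpha^{\vee}},
\]
where $a_{\mathbf{D}_{\mathcal{C}}} \colon \canalg^{\vee\vee} \catacte \alpha^{\vee} \to \alpha^{\vee}$ is the left action corresponding to the half-braiding $\Radford'$ through Lemma~\ref{lem:twisted-center-as-modules}. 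By the universal property of the coend $\canalg^{\vee\vee}$, it suffices to verify this identity after pre-composing with $i_X^{\vee\vee} \otimes \id$ for every $X \in \mathcal{C}$.

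For the right-hand side, naturality of $\widetilde{\tau}$, the explicit formula $\widetilde{\tau}_{X^{\vee\vee} \boxtimes {}^{\vee\vee\vee\!}X, \alpha^{\vee}} = \id \otimes \tau_{{}^{\vee\vee\vee\!}X}$, and the identity
\[
a_{\mathbf{D}_{\mathcal{C}}} \circ (i_X^{\vee\vee} \catacte \id_{\alpha^{\vee}}) = (\id_{\alpha^{\vee}} \otimes \eval_{{}^{\vee\vee\vee\!}X}) \circ (\Radford'_X \otimes \id_{{}^{\vee\vee\vee\!}X})
\]
(obtained by unwinding the chain of isomorphisms used to prove Lemma~\ref{lem:twisted-center-as-modules}) together express the right-hand side as an explicit composition of $\tau_{{}^{\vee\vee\vee\!}X}$, $\Radford'_X$ and $\eval_{{}^{\vee\vee\vee\!}X}$. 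For the left-hand side, substituting the proposed formula for $\nu \circ i_X^{\vee\vee}$ into $\rho \circ (\nu \otimes \id) \circ (i_X^{\vee\vee} \otimes \id)$ produces a composition containing $m \circ (i_{{}^{\vee\vee\!}X} \otimes \id_{\canalg})$, which Lemma~\ref{lem:m-and-tau} rewrites as $((\eval_{{}^{\vee\vee\vee\!}X} \boxtimes \unitobj) \otimes \id_{\canalg}) \circ (\id_{{}^{\vee\vee\!}X \boxtimes \unitobj} \otimes \tau_{{}^{\vee\vee\vee\!}X})$. This reproduces the $\tau_{{}^{\vee\vee\vee\!}X}$ of $\widetilde{\tau}$ and the $\eval_{{}^{\vee\vee\vee\!}X}$ coming from the adjunction used to encode $a_{\mathbf{D}_{\mathcal{C}}}$. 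What remains is a pair consisting of the $\coev_{\alpha^{\vee}}$ from the proposed $\nu$-formula and the factor $\eval_{\alpha^{\vee}} \boxtimes \id_{\unitobj}$ hidden inside $\rho$ (the summand $\eval_{I^{\vee}}$ with $I = \alpha \boxtimes \unitobj$); these cancel by the snake identity for $\alpha^{\vee}$, giving the claimed equality.

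The main obstacle is not algebraic depth but the bookkeeping inside $\mathcal{C}^{\env}$: the object $\canalg^{\alpha}$ is a six-fold tensor product, and the factors $\alpha^{\vee}$, $\alpha^{\vee\vee}$, $X^{\vee\vee}$, ${}^{\vee\vee\!}X$, ${}^{\vee\vee\vee\!}X$ propagate through several positions under the reversed tensor convention of $\mathcal{C}^{\rev}$. Once those positions are tracked carefully, Lemma~\ref{lem:m-and-tau} and the snake identity for $\alpha^{\vee}$ convert the left-hand side into the same explicit form obtained for the right-hand side, completing the verification.
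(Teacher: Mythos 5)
Your proof follows essentially the same route as the paper: you reduce the claim, via the uniqueness remark after Lemma~\ref{lem:phi-bimodule-morphism}, to verifying that the proposed formula for $\nu$ transports the $\canalg^{\!\vee\vee}$-action on $\mathds{K}(\alpha^{\vee})$ correctly, you check this equality after precomposing with $i_X^{\vee\vee} \otimes \id$, and you use Lemma~\ref{lem:m-and-tau} plus the snake identity for $\alpha^{\vee}$ to match the two sides — this is exactly the paper's computation with its auxiliary maps $\rho_1$, $\rho_2$, $\nu'_X$ relabelled. (Incidentally your types for the evaluation morphisms, $\eval_{{}^{\vee\vee\vee\!}X}$, appear to be the correct ones where the paper's proof writes $\eval_{{}^{\vee\vee\!}X}$.)
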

\begin{proof}
  We first introduce some morphisms.
  Let $\rho : \canalg^{\!\vee\vee} \catacte \alpha^{\vee} \to \alpha^{\vee}$ denote the left action of $\canalg^{\!\vee\vee}$ on $\alpha^{\vee}$ corresponding to $\Radford'$. Thus we have
  \begin{equation*}
    \rho \circ (i_X^{\vee\vee} \catacte \alpha^{\vee})
    = (\id_{\alpha^{\vee}} \otimes \eval_{{}^{\vee\vee\!}X}) \circ (\Radford'_X \otimes \id_{{}^{\vee\vee\vee\!}X})
    \quad (X \in \mathcal{C}).
  \end{equation*}
  The left action of $\canalg^{\!\vee\vee}$ on $\mathds{K}(\alpha^{\vee})$ is $\rho_1 := \mathds{K}(\rho) \circ \tilde{\tau}_{\canalg, \alpha^{\vee}}$, where $\tilde{\tau}$ is the left $\mathcal{C}^{\env}$-module structure of $\mathds{K}$.
  The object $\mathds{K}(\alpha^{\vee})$ is also a left $\canalg^{\alpha}$-module by the action
  \begin{equation*}
    \rho_2 := \id_{\alpha^{\vee} \boxtimes \unitobj} \otimes
    (m \circ (\id_{\canalg} \otimes \eval_{\alpha^{\vee\vee} \boxtimes \unitobj} \otimes \id_{\canalg})) : \canalg^{\alpha} \otimes \mathds{K}(\alpha^{\vee}) \to \mathds{K}(\alpha^{\vee}).
  \end{equation*}
  For $X \in \mathcal{C}$, we set
  $\nu_X' = (\Radford'_X \otimes \id_{\alpha^{\vee\vee}})
  \circ (\id_{X^{\vee\vee}} \otimes \coev_{\alpha^{\vee}})$.
  By the universal property, there is a unique morphism $\nu' : \canalg^{\!\vee\vee} \to \canalg^{\alpha}$ such that the equation
  \begin{equation*}
    \nu' \circ i_X^{\vee\vee} = (\id_{\alpha^{\vee} \boxtimes \unitobj} \otimes i_{{}^{\vee\vee\!}X} \otimes \id_{\alpha^{\vee\vee} \boxtimes \unitobj}) \circ (\nu_X' \boxtimes \id_{{}^{\vee\vee\vee\!}X}).
  \end{equation*}
  This lemma claims that the equation $\nu = \nu'$ holds. Thus, in view of Lemma~\ref{lem:phi-bimodule-morphism} and the succeeding remark, it suffices to show that the following equation holds:
  \begin{equation*}
    \rho_1 = \rho_2 \circ (\nu' \otimes \id_{\mathds{K}(\alpha^{\vee})})
  \end{equation*}
  For $X \in \mathcal{C}$, we compute $\rho_1 \circ (i_X^{\vee\vee} \otimes \id_{\mathds{K}(\alpha^{\vee})})$
  \begin{align*}
    & = \mathds{K}(\rho) \circ \tilde{\tau}_{\canalg, \alpha^{\vee}}
      \circ (i_X \otimes \id_{\mathds{K}(\alpha^{\vee})}) \\
    & = \mathds{K}(\rho \circ (i_X \catacte \alpha^{\vee})) \circ
      \tilde{\tau}_{X^{\vee\vee} \boxtimes {}^{\vee\vee\vee\!}X, \alpha^{\vee}} \\
    & = \mathds{K}((\id_{\alpha^{\vee}} \otimes \eval_{{}^{\vee\vee\!}X}) \circ (\Radford'_X \otimes \id_{{}^{\vee\vee\vee\!}X}))
      \circ \tilde{\tau}_{X^{\vee\vee} \boxtimes {}^{\vee\vee\vee\!}X, \alpha^{\vee}} \\
    & = \mathds{K}((\id_{\alpha^{\vee}} \otimes \eval_{{}^{\vee\vee\!}X}) \circ (\Radford'_X \otimes \id_{{}^{\vee\vee\vee\!}X}))
      \circ (\id_{(X^{\vee\vee} \otimes \alpha^{\vee}) \boxtimes \unitobj} \otimes \tau_{{}^{\vee\vee\vee\!}X}),
  \end{align*}
  where the second equality follows from the naturality of $\tilde{\tau}$.
  We also have
  \begin{align*}
    & \rho_2 \circ
    (\nu' \otimes \id_{\mathds{K}(\alpha^{\vee})})
      \circ (i_{X}^{\vee\vee} \otimes \id_{\mathds{K}(\alpha^{\vee})}) \\
    & = \rho_2 \circ
      (\id_{\alpha^{\vee} \boxtimes \unitobj} \otimes i_{{}^{\vee\vee\!}X} \otimes \id_{\alpha^{\vee\vee} \boxtimes \unitobj} \otimes \id_{\mathds{K}(\alpha^{\vee})})
      \circ ((\nu_X' \boxtimes \id_{{}^{\vee\vee\vee\!}X}) \otimes \id_{\mathds{K}(\alpha^{\vee})}) \\
    & = (\id_{\alpha^{\vee} \boxtimes \unitobj} \otimes
      (m \circ (i_{{}^{\vee\vee\!}X} \otimes \eval_{\alpha^{\vee\vee} \boxtimes \unitobj} \otimes \id_{\canalg}))
      \circ ((\nu_X' \boxtimes \id_{{}^{\vee\vee\vee\!}X}) \otimes \id_{\mathds{K}(\alpha^{\vee})}) \\
    & = (\id_{\alpha^{\vee} \boxtimes \unitobj} \otimes (m \circ (i_{{}^{\vee\vee\!}X} \otimes \id_{\canalg}))
      \circ ((\Radford_X' \boxtimes \id_{{}^{\vee\vee\vee\!}X}) \otimes \id_{\canalg}) \\
    & = \begin{aligned}[t]
      & ((\id_{\alpha^{\vee}} \otimes \eval_{{}^{\vee\vee\!}X}) \boxtimes \unitobj) \otimes \id_{\canalg})
      \circ (\id_{(\alpha^{\vee} \otimes {}^{\vee\vee\!}X) \boxtimes \unitobj} \otimes \tau_{{}^{\vee\vee\vee\!}X}) \\
      & \qquad \qquad \circ ((\Radford_X' \boxtimes \id_{{}^{\vee\vee\vee\!}X}) \otimes \id_{\canalg})
      \qquad \text{(by Lemma \ref{lem:m-and-tau})}
    \end{aligned} \\
    & = \mathds{K}((\id_{\alpha^{\vee}} \otimes \eval_{{}^{\vee\vee\!}X}) \circ (\Radford_X' \otimes \id_{{}^{\vee\vee\vee\!}X}))
      \circ (\id_{(X^{\vee\vee} \otimes \alpha^{\vee}) \boxtimes \unitobj} \otimes \tau_{{}^{\vee\vee\vee\!}X}).
  \end{align*}
  The proof is done.
\end{proof}

\subsection{Nakayama functor of the center}

Let $\mathcal{C}$ be a finite multi-tensor category, and let $\mathcal{M}$ be a finite $\mathcal{C}$-bimodule category.
We fix integers $p$ and $q$.
Given objects $\mathbf{V} = (V, \sigma_V) \in \mathcal{Z}_{2p,2q}(\mathcal{C})$ and $\mathbf{M} = (M, \sigma_M) \in \mathcal{Z}(\mathcal{M})$, we define
\begin{equation*}
  \mathbf{V} \catactl \mathbf{M} := (V \catactl M, \sigma_{V \catactl M}) \in \mathcal{Z}_{2p,2q}(\mathcal{M}),
\end{equation*}
where $\sigma_{V \catactl M}(X)$ for $X \in \mathcal{C}$ is given by the composition
\begin{equation*}
  \newcommand{\xarr}[1]{\xrightarrow{\makebox[9em]{\scriptsize $#1$}}}
  \begin{aligned}
    S^{2p}(X) \catactl (V \catactl M)
    & \xarr{\sigma_V(X) \catactl \id_M}
    V \catactl S^{2q}(X) \catactl M \\
    & \xarr{\id_V \catactl \sigma_M(S^{2q}(X))}
    V \catactl M \catactr S^{2q}(X).
  \end{aligned}
\end{equation*}
This construction gives rise to the functor
\begin{equation*}
  \catactl: \mathcal{Z}_{2p,2q}(\mathcal{C}) \times \mathcal{Z}(\mathcal{M}) \to \mathcal{Z}_{2p,2q}(\mathcal{M}).
\end{equation*}
In particular, the object $\mathbf{D}_{\mathcal{C}}$ of Definition~\ref{def:modular-obj-ENO} yields the functor
\begin{equation}
  \label{eq:tw-center-functor-D}
  \mathbf{D}_{\mathcal{C}} \catactl (-) : \mathcal{Z}(\mathcal{M}) \to \mathcal{Z}_{2,-2}(\mathcal{M}).
\end{equation}

The Nakayama functor $\Nak_{\mathcal{M}} : \mathcal{M} \to \mathcal{M}$ becomes a $\mathcal{C}$-bimodule functor from $\mathcal{M}$ to ${}_{\langle S^{-2} \rangle}\mathcal{M}_{\langle S^2 \rangle}$ together with the natural isomorphisms~\eqref{eq:cat-action-left-Nakayama} and \eqref{eq:cat-action-right-Nakayama} (as noted in \cite{MR4042867} as the bimodule Radford $S^4$-formula). Hence we have a functor
\begin{equation}
  \label{eq:tw-center-induced-by-Nakayama}
  \mathcal{Z}_{2p,2q}(\mathcal{M}) \to \mathcal{Z}_{2p-2,2q+2}(\mathcal{M}),
  \quad (M, \sigma) \mapsto (\Nak_{\mathcal{M}}(M), \widetilde{\sigma})
\end{equation}
for integers $p$ and $q$, where $\widetilde{\sigma}$ is given by
\begin{equation*}
  \newcommand{\xarr}[1]{\xrightarrow{\makebox[5em]{\scriptsize $#1$}}}
  \begin{aligned}
    \widetilde{\sigma}(X) = (S^{2p-2}(X) \catactl \Nak_{\mathcal{M}}(M)
    & \xarr{\eqref{eq:cat-action-left-Nakayama}}
    \Nak_{\mathcal{M}}(S^{2p}(X) \catactl M) \\
    & \xarr{\Nak_{\mathcal{M}}(\sigma(X))}
    \Nak_{\mathcal{M}}(M \catactr S^{2q}(X)) \\
    & \xarr{\eqref{eq:cat-action-right-Nakayama}}
    \Nak_{\mathcal{M}}(M) \catactr S^{2q + 2}(X))
  \end{aligned}
\end{equation*}
for $X \in \mathcal{C}$.
Now we state and prove the following main result of this section:

\begin{theorem}
  \label{thm:Nakayama-center}
  With notation as above, the Nakayama functor of $\mathcal{Z}(\mathcal{M})$ is given by the composition
  \begin{equation*}
    \mathcal{Z}(\mathcal{M})
    \xrightarrow[\eqref{eq:tw-center-functor-D}]{\quad \mathbf{D}_{\mathcal{C}} \catactl (-) \quad}
    \mathcal{Z}_{2,-2}(\mathcal{M})
    \xrightarrow[\quad \eqref{eq:tw-center-induced-by-Nakayama} \quad]{}
    \mathcal{Z}(\mathcal{M}).
  \end{equation*}
\end{theorem}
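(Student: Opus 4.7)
The plan is to deduce the theorem from Theorem~\ref{thm:Nakayama-for-modules-2} via the identifications provided by Lemma~\ref{lem:twisted-center-as-modules}. Regarding $\mathcal{M}$ as a finite left $\mathcal{C}^{\env}$-module category, that lemma gives $\mathcal{Z}(\mathcal{M}) \cong {}_{\canalg}\mathcal{M}$ (with $p = q = 0$) and $\mathcal{Z}_{2,-2}(\mathcal{M}) \cong {}_{\canalg^{\!\vee\vee}}\mathcal{M}$ (with $p = 1$, $q = -1$, using $\canalg^{\!\vee\vee} = \canalg_{2,-2}$). The $\canalg^{\!\vee\vee}$-$\canalg$-bimodule isomorphism $\mathds{K}(\mathbf{D}_{\mathcal{C}}) \cong \canalg^{\!\vee}$ of \S\ref{subsec:cano-alg-Radford-iso} exhibits an $(\alpha \boxtimes \unitobj)$-valued Frobenius trace $\lambda$ on $\canalg$, and Lemma~\ref{lem:cano-alg-Nakayama} describes the associated Nakayama isomorphism $\nu$ explicitly. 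Theorem~\ref{thm:Nakayama-for-modules-2} therefore presents $\Nak_{{}_{\canalg}\mathcal{M}}$ as the composition of the functors \eqref{eq:I-Frob-alg-Nakayama-1} and \eqref{eq:Nakayama-lift}, and it remains to match the former with $\mathbf{D}_{\mathcal{C}} \catactl (-)$ and the latter with \eqref{eq:tw-center-induced-by-Nakayama} under the above identifications.

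For the first match, the underlying object of \eqref{eq:I-Frob-alg-Nakayama-1} applied to $(M, a_M) \in {}_{\canalg}\mathcal{M}$ is $(\alpha \boxtimes \unitobj)^{\vee} \catacte M = \alpha^{\vee} \catactl M$, which coincides with that of $\mathbf{D}_{\mathcal{C}} \catactl (M, \sigma_M)$ since $D = \alpha^{\vee}$ by Definition~\ref{def:modular-obj-ENO}. To compare the two $\canalg^{\!\vee\vee}$-actions, one probes by the universal dinatural $i_X^{\vee\vee}$ of the coend $\canalg^{\!\vee\vee}$: by Lemma~\ref{lem:cano-alg-Nakayama}, the twisted action $\widetilde{a}_M$ of \eqref{eq:I-Frob-alg-Nakayama-1} is, on this component, obtained by applying the half-braiding $\Radford'_X$ of $\mathbf{D}_{\mathcal{C}}$ in the $\alpha^{\vee}$-slot and then the structure morphism of $(M, \sigma_M)$ in the $M$-slot. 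By the definition of $\catactl : \mathcal{Z}_{2,-2}(\mathcal{C}) \times \mathcal{Z}(\mathcal{M}) \to \mathcal{Z}_{2,-2}(\mathcal{M})$ given before the theorem and the transport through Lemma~\ref{lem:twisted-center-as-modules}, the same composition computes the $X$-component of the half-braiding of $\mathbf{D}_{\mathcal{C}} \catactl (M, \sigma_M)$, and the two actions agree.

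For the second match, the lift \eqref{eq:Nakayama-lift} for the monad $T = \canalg^{\!\vee\vee} \catacte (-)$ is governed by the canonical isomorphism $\can_{T^{\lladj}}$ of \eqref{eq:Nakayama-cano-iso}. Since $T^{\lladj} \cong \canalg \catacte (-)$ is the coend of the functors $M \mapsto \act_{\mathcal{M}}(X)(M) \catactr {}^{\vee\!}X$ over $X \in \mathcal{C}$, the naturality of $\can_{(-)}$ in its functor argument together with its compatibility with composition and with right exact colimits decomposes $\can_{T^{\lladj}}$ into the canonical isomorphisms for the left-action functors $\act_{\mathcal{M}}(X)$ and the right-action functors $(-) \catactr {}^{\vee\!}X$, which by \S\ref{subsec:radford-iso} are \eqref{eq:cat-action-left-Nakayama} and \eqref{eq:cat-action-right-Nakayama}. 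Transporting through the isomorphism of Lemma~\ref{lem:twisted-center-as-modules} then produces the half-braiding $\widetilde{\sigma}$ of \eqref{eq:tw-center-induced-by-Nakayama}. The principal technical obstacle is this last step: one must verify that the canonical isomorphism attached to a coend of functors equals the coend of the canonical isomorphisms of its integrands, which requires carefully combining the coherence properties of $\can_{(-)}$ listed above before reinterpreting the resulting morphism through the universal property of $\canalg^{\!\vee\vee}$ employed in Lemma~\ref{lem:twisted-center-as-modules}.
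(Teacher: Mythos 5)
Your proposal follows essentially the same route as the paper: reduce via Lemma~\ref{lem:twisted-center-as-modules} to the category ${}_{\canalg}\mathcal{M}$, apply Theorem~\ref{thm:Nakayama-for-modules-2} with the $(\alpha \boxtimes \unitobj)$-valued Frobenius trace on $\canalg$, and then match the two factors of the resulting composition with $\mathbf{D}_{\mathcal{C}} \catactl (-)$ (by Lemma~\ref{lem:cano-alg-Nakayama}) and with \eqref{eq:tw-center-induced-by-Nakayama}. You are right that the second match --- identifying the lift \eqref{eq:Nakayama-lift} with \eqref{eq:tw-center-induced-by-Nakayama} via the coend presentation of $\canalg \catacte (-)$ and the coherence properties of $\can_{(-)}$ --- is where the nontrivial content lies; the paper's own proof asserts this identification without spelling it out, so your proposal is at the same level of rigour as the paper and in fact slightly more explicit about where naturality of $\can$ in the functor argument and compatibility with right-exact coends are invoked.
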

\begin{proof}
  Let $\nu$ be the Nakayama isomorphism of $\canalg$ associated to the $(\alpha \boxtimes \unitobj)$-valued Frobenius trace mentioned in the previous subsection.
  If we identify $\mathcal{Z}_{2p,2q}(\mathcal{M})$ with the category of $\canalg_{2p,2q}$-modules in $\mathcal{M}$ by Lemma \ref{lem:twisted-center-as-modules}, then the Nakayama functor of $\mathcal{Z}(\mathcal{M})$ is given by the composition of \eqref{eq:tw-center-induced-by-Nakayama} and the functor
  \begin{equation*}
    \begin{gathered}
      {}_{\canalg}\mathcal{M} \to {}_{\canalg^{\vee\vee}}\mathcal{M},
      \quad (M, a_M) \mapsto (\alpha^{\vee} \catactl M, \widetilde{a}_M) \\
      (\widetilde{a}_M := (\id_{\alpha^{\vee} \boxtimes \unitobj} \catacte a_M) \circ (\id_{\alpha^{\vee} \boxtimes \unitobj} \catacte \id_{A} \catacte \eval_{\alpha^{\vee} \boxtimes \unitobj} \catacte \id_M) \circ (\nu \catacte \id_M)).
    \end{gathered}
  \end{equation*}
  By Lemma~\ref{lem:cano-alg-Nakayama}, this functor is identical to \eqref{eq:tw-center-functor-D}. Thus the Nakayama functor of $\mathcal{Z}(\mathcal{M})$ is given as stated. The proof is done.
\end{proof}

\subsection{Applications}
\label{subsec:applications}

Here we exhibit some corollaries of Theorem \ref{thm:Nakayama-center} and give some related remarks.
The theorem has some applications as many basic constructions in the theory of finite tensor categories are viewed as the center of a particular bimodule category.

\subsubsection{Frobenius property of the center}

Let $\mathcal{C}$ be a finite multi-tensor category, and let $\mathcal{M}$ be a finite bimodule category over $\mathcal{C}$. By the description of $\Nak_{\mathcal{Z}(\mathcal{M})}$ given by Theorem \ref{thm:Nakayama-center}, we have:

\begin{corollary}
  \label{cor:center-Frobenius}
  If $\mathcal{M}$ is Frobenius, then so is its center $\mathcal{Z}(\mathcal{M})$.
\end{corollary}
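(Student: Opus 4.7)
The plan is to invoke the characterization from \cite[Proposition~3.24]{MR4042867}: a finite abelian category is Frobenius if and only if its Nakayama functor is an equivalence. Since $\mathcal{M}$ is Frobenius by hypothesis, $\Nak_{\mathcal{M}}$ is an equivalence, and I will show that both factors of the decomposition of $\Nak_{\mathcal{Z}(\mathcal{M})}$ given in Theorem~\ref{thm:Nakayama-center} are equivalences; the composition is then an equivalence, and the conclusion follows.

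For the first factor $\mathbf{D}_{\mathcal{C}} \catactl (-) : \mathcal{Z}(\mathcal{M}) \to \mathcal{Z}_{2,-2}(\mathcal{M})$, I would observe that $\mathbf{D}_{\mathcal{C}}$ has underlying object $D = \alpha_{\mathcal{C}}^{\vee}$, which is invertible in $\mathcal{C}$. Via the isomorphism of Lemma~\ref{lem:twisted-center-as-modules}, this functor corresponds to tensoring by the invertible $\canalg^{\vee\vee}$-$\canalg$-bimodule $\canalg^{\vee}$, and such a tensoring is manifestly an equivalence. A quasi-inverse can be produced by acting with an invertible object in $\mathcal{Z}_{-2,2}(\mathcal{C})$ whose underlying object is $D^{-1}$, equipped with the inverse half-braiding.

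For the second factor \eqref{eq:tw-center-induced-by-Nakayama} (applied with $p=1$, $q=-1$), I would regard it as the functor on centers induced by $\Nak_{\mathcal{M}}$, viewed as a $\mathcal{C}$-bimodule functor from ${}_{\langle S^{2}\rangle}\mathcal{M}_{\langle S^{-2}\rangle}$ to $\mathcal{M}$ through the bimodule Radford $S^4$-formula \eqref{eq:cat-action-left-Nakayama}, \eqref{eq:cat-action-right-Nakayama}. Since $\Nak_{\mathcal{M}}$ is an equivalence on the underlying level, this is a $\mathcal{C}$-bimodule equivalence between the appropriately twisted bimodule categories, and the center construction---being 2-functorial in bimodule morphisms---sends bimodule equivalences to equivalences of centers.

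Combining the two equivalences, $\Nak_{\mathcal{Z}(\mathcal{M})}$ is an equivalence, whence $\mathcal{Z}(\mathcal{M})$ is Frobenius. The main (but very mild) subtlety is verifying that the twisted $\mathcal{C}$-bimodule structure on $\Nak_{\mathcal{M}}$ does realize it as a bimodule equivalence between the relevant twisted categories; this, however, is essentially the content of the bimodule Radford $S^4$-formula referred to just before \eqref{eq:tw-center-induced-by-Nakayama}, and admits a quasi-inverse induced by $\overline{\Nak}_{\mathcal{M}}$ equipped with the analogous twisted structure.
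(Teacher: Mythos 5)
Your proof is correct and follows exactly the argument the paper leaves implicit: $\mathcal{M}$ Frobenius means $\Nak_{\mathcal{M}}$ is an equivalence, both factors of the decomposition in Theorem~\ref{thm:Nakayama-center} are then equivalences (the first because $\mathbf{D}_{\mathcal{C}}$ is invertible, the second because it is induced by the bimodule equivalence $\Nak_{\mathcal{M}}$), and hence $\Nak_{\mathcal{Z}(\mathcal{M})}$ is an equivalence. The paper gives no written proof, simply asserting the corollary follows from Theorem~\ref{thm:Nakayama-center}; your write-up supplies precisely the intended details.
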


We note that the center of a bimodule category is not always Frobenius: Take a finite-dimensional algebra $A$ such that $\mathcal{M} := \lmod{A}$ is not Frobenius and regard $\mathcal{M}$ as a finite bimodule category over $\mathcal{C} := \Vect$. Then $\mathcal{Z}(\mathcal{M})$ is identified with $\mathcal{M}$, which is not Frobenius.

\subsubsection{The centralizer of a tensor functor}

Let $\mathcal{C}$ and $\mathcal{D}$ be finite multi-tensor categories, and let $F: \mathcal{C} \to \mathcal{D}$ be a tensor functor.
Then $\mathcal{D}$ is a finite $\mathcal{C}$-bimodule category by the action given by $X \catactl M \catactr Y = F(X) \otimes M \otimes F(Y)$ for $X, Y \in \mathcal{C}$ and $M \in \mathcal{D}$. The centralizer of $F$, which we denote by $\mathcal{Z}(F)$, is defined as the center of the finite $\mathcal{C}$-bimodule category $\mathcal{D}$.
The category $\mathcal{Z}(F)$ is a finite multi-tensor category by the monoidal structure inherited from $\mathcal{D}$.

There is a natural isomorphism $\xi_X : F(X^{\vee}) \to F(X)^{\vee}$ ($X \in \mathcal{C}$) defined by the uniqueness of a left dual object (the {\em duality transformation} \cite[Section 1]{MR2381536}).
For $X \in \mathcal{C}$, we set $\zeta_X = (\xi_X^{\vee})^{-1} \circ \xi_{X^{\vee}}$.
By the construction of the isomorphism \eqref{eq:cat-action-right-Nakayama}, we see that the diagram
\begin{equation*}
  \begin{tikzcd}[column sep = 20pt]
    \Nak_{\mathcal{D}}(M \catactr X)
    \arrow[d, equal]
    \arrow[r, "\eqref{eq:cat-action-right-Nakayama}"]
    & \Nak_{\mathcal{D}}(M) \catactr X^{\vee\vee}
    \arrow[r, equal]
    & \Nak_{\mathcal{D}}(M) \otimes F(X^{\vee\vee})
    \arrow[d, "{\id \otimes \zeta_X}"] \\
    \Nak_{\mathcal{D}}(M \otimes F(X))
    \arrow[rr, "\eqref{eq:cat-action-right-Nakayama}"]
    & & \Nak_{\mathcal{D}}(M) \otimes F(X)^{\vee\vee}
  \end{tikzcd}
\end{equation*}
commutes for all $M \in \mathcal{D}$ and $X \in \mathcal{C}$.
There is an analogous commutative diagram for the left action of $\mathcal{C}$ on $\mathcal{D}$. Theorem \ref{thm:Nakayama-center} now yields the following corollary:

\begin{corollary}
  For $F$ as above, the modular object of $\mathcal{Z}(F)$ is given by
  \begin{equation*}
    \alpha_{\mathcal{Z}(F)} = (F(\alpha_{\mathcal{C}}^{\vee}) \otimes \alpha_{\mathcal{D}}, \gamma),
  \end{equation*}
  where $\gamma$ is the natural isomorphism given by the composition
  \begin{align*}
    & F(X) \otimes F(\alpha_{\mathcal{C}}^{\vee}) \otimes \alpha_{\mathcal{D}} \\
    & \cong F(\alpha_{\mathcal{C}}^{\vee}) \otimes F(X^{\vee\vee\vee\vee}) \otimes \alpha_{\mathcal{D}}
    & & \text{\rm (by the Radford isomorphism in $\mathcal{C}$)} \\
    & \cong F(\alpha_{\mathcal{C}}^{\vee}) \otimes F(X)^{\vee\vee\vee\vee} \otimes \alpha_{\mathcal{D}}
    & & \text{\rm (by the duality transformation)} \\
    & \cong F(\alpha_{\mathcal{C}}^{\vee}) \otimes \alpha_{\mathcal{D}} \otimes F(X)
    & & \text{\rm (by the Radford isomorphism in $\mathcal{D}$)}
  \end{align*}
  for $X \in \mathcal{C}$.
\end{corollary}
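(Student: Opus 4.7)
The plan is to apply Theorem~\ref{thm:Nakayama-center} to $\mathcal{M} = \mathcal{D}$, viewed as a finite $\mathcal{C}$-bimodule category via $F$, and evaluate at the unit object $\mathbf{1} = (\unitobj_{\mathcal{D}}, \id)$ of $\mathcal{Z}(\mathcal{D}) = \mathcal{Z}(F)$, whose half-braiding is the identity. The formula for $\alpha_{\mathcal{Z}(F)}$ will then read off by unpacking the two functors appearing in the theorem.

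First I would compute $\mathbf{D}_{\mathcal{C}} \catactl \mathbf{1}$ in $\mathcal{Z}_{2,-2}(\mathcal{D})$. Recall from \S\ref{subsec:cano-alg-Radford-iso} that $\mathbf{D}_{\mathcal{C}} = (\alpha_{\mathcal{C}}^{\vee}, \Radford')$. By the definition of the external action $\mathcal{Z}_{2,-2}(\mathcal{C}) \times \mathcal{Z}(\mathcal{D}) \to \mathcal{Z}_{2,-2}(\mathcal{D})$ and the triviality of the half-braiding of $\mathbf{1}$, this yields the twisted central object with underlying object $F(\alpha_{\mathcal{C}}^{\vee})$ and twisted half-braiding $F(\Radford'_X) : F(X^{\vee\vee}) \otimes F(\alpha_{\mathcal{C}}^{\vee}) \to F(\alpha_{\mathcal{C}}^{\vee}) \otimes F({}^{\vee\vee\!}X)$. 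This is precisely the middle (``Radford iso in $\mathcal{C}$'') step in the claimed composition defining $\gamma$, pushed through $F$.

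Next I would apply the Nakayama lift~\eqref{eq:tw-center-induced-by-Nakayama} with $p=1$, $q=-1$. On underlying objects this gives $\Nak_{\mathcal{D}}(F(\alpha_{\mathcal{C}}^{\vee}))$, which by~\eqref{eq:FTC-Nakayama-formula} in $\mathcal{D}$ and the duality transformation $\zeta$ is identified with $F(\alpha_{\mathcal{C}}^{\vee}) \otimes \alpha_{\mathcal{D}}$ after moving double duals inside $F$. By the definition of~\eqref{eq:tw-center-induced-by-Nakayama}, the new half-braiding is the three-fold composition of \eqref{eq:cat-action-left-Nakayama}, $\Nak_{\mathcal{D}}(F(\Radford'_X))$, and \eqref{eq:cat-action-right-Nakayama}, where the outer two arrows are the left and right twisted $\mathcal{C}$-module structures of $\Nak_{\mathcal{D}}$ coming from the bimodule structure of $\mathcal{D}$ via $F$. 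The commutative diagram with $\zeta_X = (\xi_X^{\vee})^{-1} \circ \xi_{X^{\vee}}$ stated immediately before the corollary shows that each of these outer maps factors as the ordinary Radford iso in $\mathcal{D}$ (applied to $F(X)$ and its double duals) composed with $\zeta$. Assembling the three arrows yields exactly the composition defining $\gamma$.

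I expect the main obstacle to be the careful bookkeeping of double and quadruple duals and their interaction with $F$ via $\zeta$: one has to consistently match $F(X)^{\vee\vee}$ with $F(X^{\vee\vee})$ on both sides of the tensor product, and verify that the three Radford/$\zeta$ contributions assemble into the stated composition in the correct order. Once the representative of $\Nak_{\mathcal{D}}$ from Remark~\ref{rem:Nakayama-twisted-module-structure} is fixed and the commutative diagram of the corollary's preamble is invoked on both sides, this becomes a lengthy but routine diagram chase.
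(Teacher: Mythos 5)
Your approach---applying Theorem~\ref{thm:Nakayama-center} to $\mathcal{M}=\mathcal{D}$ viewed as a $\mathcal{C}$-bimodule category via $F$, evaluating at the unit object, and then rewriting the twisted $\mathcal{C}$-module structures of $\Nak_{\mathcal{D}}$ (coming from the action via $F$) through the commutative diagram involving $\zeta$ stated just before the corollary---is exactly what the paper does; its entire proof of the corollary consists of that commutative diagram together with the phrase ``Theorem~\ref{thm:Nakayama-center} now yields the following corollary.'' Your identification of the middle arrow as $\Nak_{\mathcal{D}}(F(\Radford'_X))$ and of the outer two arrows as factoring (via $\zeta$) into the intrinsic Radford structure of $\Nak_{\mathcal{D}}$ plus the duality transformation is correct, and the remaining unwritten step is, as you note, routine naturality bookkeeping that merges the two $\zeta$ contributions into the single ``duality transformation'' arrow in $\gamma$.
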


Since the Drinfeld center of $\mathcal{C}$ is the centralizer of $\id_{\mathcal{C}}$, we obtain:

\begin{corollary}[{\it cf}. \cite{MR2097289}]
  The Drinfeld center of a finite multi-tensor category is unimodular.
\end{corollary}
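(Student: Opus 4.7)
The approach is to specialize the preceding corollary to the identity tensor functor $F = \id_{\mathcal{C}}$, for which $\mathcal{D} = \mathcal{C}$ and $\mathcal{Z}(F)$ is exactly the Drinfeld center $\mathcal{Z}(\mathcal{C})$. The corollary then produces an explicit description of the modular object
\[
  \alpha_{\mathcal{Z}(\mathcal{C})} = (\alpha_{\mathcal{C}}^{\vee} \otimes \alpha_{\mathcal{C}},\, \gamma),
\]
and the task reduces to identifying this object with the unit of $\mathcal{Z}(\mathcal{C})$.

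First I would use that $\alpha_{\mathcal{C}}$ is invertible, so that the evaluation $\eval_{\alpha_{\mathcal{C}}} : \alpha_{\mathcal{C}}^{\vee} \otimes \alpha_{\mathcal{C}} \to \unitobj$ is a canonical isomorphism of underlying objects. It remains to verify that $\eval_{\alpha_{\mathcal{C}}}$ intertwines $\gamma$ with the trivial half-braiding on $\unitobj$, i.e.\ that for every $X \in \mathcal{C}$,
\[
  (\eval_{\alpha_{\mathcal{C}}} \otimes \id_X) \circ \gamma(X)
  = (\id_X \otimes \eval_{\alpha_{\mathcal{C}}})
\]
modulo the canonical isomorphism $X \otimes \unitobj \cong \unitobj \otimes X$.

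Because the duality transformation for $F = \id_{\mathcal{C}}$ is the identity, $\gamma(X)$ collapses to the composition of two Radford isomorphisms in $\mathcal{C}$: the first, $X \otimes \alpha_{\mathcal{C}}^{\vee} \cong \alpha_{\mathcal{C}}^{\vee} \otimes X^{\vee\vee\vee\vee}$, moves $X$ past $\alpha_{\mathcal{C}}^{\vee}$ at the cost of a quadruple-dual shift, and the second, $X^{\vee\vee\vee\vee} \otimes \alpha_{\mathcal{C}} \cong \alpha_{\mathcal{C}} \otimes X$, moves $X^{\vee\vee\vee\vee}$ past $\alpha_{\mathcal{C}}$ and undoes that shift. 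Both arise from the single natural transformation $\Radford$ built out of the twisted left and right $\mathcal{C}$-module structures \eqref{eq:cat-action-left-Nakayama} and \eqref{eq:cat-action-right-Nakayama} of $\Nak_{\mathcal{C}}$. A diagram chase using these defining formulas together with the compatibility \eqref{eq:FTC-Nakayama-formula} shows that, after composition with $\eval_{\alpha_{\mathcal{C}}}$, the two quadruple-dual shifts cancel and the composite reduces to the canonical associator-unit isomorphism $X \otimes \unitobj \cong \unitobj \otimes X$.

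This gives $\alpha_{\mathcal{Z}(\mathcal{C})} \cong \unitobj_{\mathcal{Z}(\mathcal{C})}$, proving unimodularity. The main obstacle is the coherence verification in the previous paragraph: one must unwind both Radford steps through their construction in \S\ref{subsec:radford-iso} and check the cancellation under the evaluation. Equivalently, this is the content of the $S^4$-formula, interpreted as the commutative diagram of $\Radford$-isomorphisms whose two routes agree after the identification $\alpha_{\mathcal{C}}^{\vee} \otimes \alpha_{\mathcal{C}} \cong \unitobj$.
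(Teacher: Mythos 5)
Your proof takes essentially the same approach as the paper: specialize the preceding corollary on $\mathcal{Z}(F)$ to $F = \id_{\mathcal{C}}$, obtain the modular object $(\alpha_{\mathcal{C}}^{\vee}\otimes\alpha_{\mathcal{C}},\gamma)$, and identify it with the tensor unit of $\mathcal{Z}(\mathcal{C})$. The paper states this specialization as immediate, while you correctly flag the remaining point---that $\eval_{\alpha_{\mathcal{C}}}$ must intertwine $\gamma$ with the trivial half-braiding---and reduce it to the coherence of the Radford isomorphisms, which is an appropriate elaboration of the paper's one-line derivation.
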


\subsubsection{The category of module functors}

Let $\mathcal{C}$ be a finite multi-tensor category, and let $\mathcal{M}$ and $\mathcal{N}$ be finite left $\mathcal{C}$-module categories.

\begin{definition}
  We denote by $\REX_{\mathcal{C}}(\mathcal{M}, \mathcal{N})$ the category of linear right exact left $\mathcal{C}$-module functors from $\mathcal{M}$ to $\mathcal{N}$.
\end{definition}

Let $\act_{\mathcal{M}} : \mathcal{C} \to \REX(\mathcal{M})$ and $\act_{\mathcal{N}} : \mathcal{C} \to \REX(\mathcal{N})$ be the functors induced by the actions of $\mathcal{C}$ on $\mathcal{M}$ and $\mathcal{N}$, respectively.
The category $\mathcal{E} := \REX(\mathcal{M}, \mathcal{N})$ is a finite $\mathcal{C}$-bimodule category by the action given by
\begin{equation*}
  X \catactl F \catactr Y = \act_{\mathcal{N}}(X) \circ F \circ \act_{\mathcal{M}}(Y)
  \quad (X, Y \in \mathcal{C}, F \in \mathcal{E}),
\end{equation*}
and the category $\mathcal{F} := \REX_{\mathcal{C}}(\mathcal{M}, \mathcal{N})$ is precisely the center of $\mathcal{E}$.
Thus, by Lemma~\ref{lem:Nakayama-REX} and Theorem \ref{thm:Nakayama-center}, the Nakayama functor of $\mathcal{F}$ is given as follows:

\begin{corollary}
  \label{cor:Nakayama-rex-module-functors}
  For an object $\mathbf{F} = (F, \xi)$ of the above $\mathcal{F}$, we have
  \begin{equation*}
    \Nak_{\mathcal{F}}(\mathbf{F}) = (\Nak_{\mathcal{N}} \circ \act_{\mathcal{N}}(\alpha_{\mathcal{C}}^{\vee}) \circ F \circ \Nak_{\mathcal{M}}, \widetilde{\xi}),
  \end{equation*}
  where the natural isomorphism $\widetilde{\xi}$ is given by the composition
  \begin{align*}
    & \act_{\mathcal{N}}(X) \circ \Nak_{\mathcal{N}} \circ \act_{\mathcal{N}}(\alpha_{\mathcal{C}}^{\vee}) \circ F \circ \Nak_{\mathcal{M}} \\
    \cong \mbox{}
    & \Nak_{\mathcal{N}} \circ \act_{\mathcal{N}}(X^{\vee\vee}) \circ \act_{\mathcal{N}}(\alpha_{\mathcal{C}}^{\vee}) \circ F \circ \Nak_{\mathcal{M}}
    & & \text{\rm (use \eqref{eq:cat-action-left-Nakayama})} \\
    \cong \mbox{} & \Nak_{\mathcal{N}} \circ \act_{\mathcal{N}}(\alpha_{\mathcal{C}}^{\vee}) \circ \act_{\mathcal{N}}({}^{\vee\vee\!}X) \circ F \circ \Nak_{\mathcal{M}}
    & & \text{\rm (use \eqref{eq:Radford!-iso})} \\
    \cong \mbox{} & \Nak_{\mathcal{N}} \circ \act_{\mathcal{N}}(\alpha_{\mathcal{C}}^{\vee}) \circ F \circ \act_{\mathcal{M}}({}^{\vee\vee\!}X) \circ \Nak_{\mathcal{M}}
    & & \text{\rm (use the structure morphism $\xi$)} \\
    \cong \mbox{} & \Nak_{\mathcal{N}} \circ \act_{\mathcal{N}}(\alpha_{\mathcal{C}}^{\vee}) \circ F \circ \Nak_{\mathcal{M}} \circ \act_{\mathcal{M}}(X)
    & & \text{\rm (use \eqref{eq:cat-action-left-Nakayama})}.
  \end{align*}
  for $X \in \mathcal{C}$.
\end{corollary}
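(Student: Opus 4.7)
The plan is to identify $\mathcal{F} := \REX_{\mathcal{C}}(\mathcal{M},\mathcal{N})$ with the center $\mathcal{Z}(\mathcal{E})$ of the finite $\mathcal{C}$-bimodule category $\mathcal{E} := \REX(\mathcal{M},\mathcal{N})$ (as noted in the paragraph preceding the statement) and then to apply Theorem~\ref{thm:Nakayama-center} together with Lemma~\ref{lem:Nakayama-REX}. By Theorem~\ref{thm:Nakayama-center}, the functor $\Nak_{\mathcal{F}}$ is the composition of $\mathbf{D}_{\mathcal{C}}\catactl(-): \mathcal{Z}(\mathcal{E})\to \mathcal{Z}_{2,-2}(\mathcal{E})$ with the functor \eqref{eq:tw-center-induced-by-Nakayama} for $\mathcal{M}$ replaced by $\mathcal{E}$. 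So I only need to trace these two functors on the object $\mathbf{F}=(F,\xi)$.

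First I would compute the underlying object. Since the distinguished invertible object of $\mathcal{C}$ is $D=\alpha_{\mathcal{C}}^{\vee}$, and the left action of $\mathcal{C}$ on $\mathcal{E}$ is $X\catactl G=\act_{\mathcal{N}}(X)\circ G$, the underlying functor of $\mathbf{D}_{\mathcal{C}}\catactl\mathbf{F}$ is $\act_{\mathcal{N}}(\alpha_{\mathcal{C}}^{\vee})\circ F$. Applying \eqref{eq:tw-center-induced-by-Nakayama} takes the Nakayama functor of $\mathcal{E}$, and Lemma~\ref{lem:Nakayama-REX} turns $\Nak_{\mathcal{E}}$ into $G\mapsto \Nak_{\mathcal{N}}\circ G\circ\Nak_{\mathcal{M}}$. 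This yields the underlying functor $\Nak_{\mathcal{N}}\circ\act_{\mathcal{N}}(\alpha_{\mathcal{C}}^{\vee})\circ F\circ\Nak_{\mathcal{M}}$ as stated.

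Next I would trace the half-braiding. By the definition given before \eqref{eq:tw-center-functor-D}, the half-braiding of $\mathbf{D}_{\mathcal{C}}\catactl\mathbf{F}\in\mathcal{Z}_{2,-2}(\mathcal{E})$ is
\begin{equation*}
  S^{2}(X)\catactl(D\catactl F)
  \xrightarrow{\Radford'_X\catactl\id_F}
  D\catactl(S^{-2}(X)\catactl F)
  \xrightarrow{\id_D\catactl\,\xi(S^{-2}(X))}
  (D\catactl F)\catactr S^{-2}(X),
\end{equation*}
where $\Radford'$ is the half-braiding of $\mathbf{D}_{\mathcal{C}}$ recalled in \S\ref{subsec:cano-alg-Radford-iso}. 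Feeding this into the formula for $\widetilde{\sigma}$ in \eqref{eq:tw-center-induced-by-Nakayama} with $p=1$, $q=-1$, I obtain a four-step composition: the first step is $\eqref{eq:cat-action-left-Nakayama}$ on $\mathcal{N}$ moving $X$ past $\Nak_{\mathcal{N}}$ (producing $X^{\vee\vee}$); the second step is $\Radford'_X$ swapping $X^{\vee\vee}\otimes\alpha_{\mathcal{C}}^{\vee}$ with $\alpha_{\mathcal{C}}^{\vee}\otimes{}^{\vee\vee\!}X$; the third step is the module structure $\xi({}^{\vee\vee\!}X)$ pushing ${}^{\vee\vee\!}X$ through $F$; and the fourth step is $\eqref{eq:cat-action-left-Nakayama}$ on $\mathcal{M}$ in the reverse direction, rewriting $\act_{\mathcal{M}}({}^{\vee\vee\!}X)\circ\Nak_{\mathcal{M}}\cong\Nak_{\mathcal{M}}\circ\act_{\mathcal{M}}(X)$. (Note that the final step uses $\eqref{eq:cat-action-left-Nakayama}$, not $\eqref{eq:cat-action-right-Nakayama}$, since the right action of $\mathcal{C}$ on $\mathcal{E}$ is precomposition with $\act_{\mathcal{M}}$.) This reproduces the four arrows describing $\widetilde{\xi}$ in the statement.

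The only genuinely delicate point will be to verify that the second arrow of $\widetilde{\sigma}$ produced by the bimodule $\mathbf{D}_{\mathcal{C}}\catactl\mathbf{F}$ coincides with the arrow labeled ``use \eqref{eq:Radford!-iso}'' in the statement, i.e., that the half-braiding of $\mathbf{D}_{\mathcal{C}}$ is precisely $\Radford'$. This was already established in Definition~\ref{def:modular-obj-ENO} and the formula for $\Radford'$ recalled afterwards, so the check reduces to matching conventions; no further calculation is required. With that identification in hand, the rest is bookkeeping on the horizontal compositions of the four natural isomorphisms.
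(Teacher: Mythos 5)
Your proposal is correct and follows essentially the same route as the paper, which obtains the corollary directly by viewing $\REX_{\mathcal{C}}(\mathcal{M},\mathcal{N})$ as the center of the bimodule category $\REX(\mathcal{M},\mathcal{N})$ and combining Lemma~\ref{lem:Nakayama-REX} with Theorem~\ref{thm:Nakayama-center}. Your bookkeeping of the four arrows (including the observation that the last step is \eqref{eq:cat-action-left-Nakayama} for $\mathcal{M}$) matches the paper's statement, the rigorous justification for identifying the twisted bimodule structure of $\Nak_{\mathcal{E}}$ with the structure isomorphisms of $\Nak_{\mathcal{M}}$ and $\Nak_{\mathcal{N}}$ being exactly the decomposition of $\can_{\REX(F,G)}$ proved in the last lemma of Section~\ref{sec:Nakayama-and-double-adj}.
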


Given a finite left $\mathcal{C}$-module category $\mathcal{M}$, we set $\mathcal{C}_{\mathcal{M}}^* := \REX_{\mathcal{C}}(\mathcal{M}, \mathcal{M})$ and view it as a linear monoidal category by the composition of module functors (the order of the tensor product is reversed in \cite{MR3242743}, but this will not matter since we only mention its unimodularity). By the above corollary, we have:

\begin{corollary}
  $\alpha_{\mathcal{C}_{\mathcal{M}}^*}^{} = \Nak_{\mathcal{M}} \circ \act_{\mathcal{M}}(\alpha_{\mathcal{C}}^{\vee}) \circ \Nak_{\mathcal{M}}$.
\end{corollary}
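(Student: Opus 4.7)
The plan is to obtain this as a direct specialization of Corollary~\ref{cor:Nakayama-rex-module-functors} to the case $\mathcal{N} = \mathcal{M}$ and $\mathbf{F} = \unitobj_{\mathcal{C}_{\mathcal{M}}^*}$, the unit object of the monoidal category $\mathcal{C}_{\mathcal{M}}^* = \REX_{\mathcal{C}}(\mathcal{M}, \mathcal{M})$.

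First I would identify the unit object of $\mathcal{C}_{\mathcal{M}}^*$. Since the monoidal structure on $\mathcal{C}_{\mathcal{M}}^*$ is given by composition of $\mathcal{C}$-module functors, the unit is the identity functor $\id_{\mathcal{M}}$ equipped with the identity $\mathcal{C}$-module structure morphism $\xi = \id$. By definition, the modular object $\alpha_{\mathcal{C}_{\mathcal{M}}^*}$ is $\Nak_{\mathcal{C}_{\mathcal{M}}^*}(\unitobj_{\mathcal{C}_{\mathcal{M}}^*})$, so it suffices to evaluate the formula of Corollary~\ref{cor:Nakayama-rex-module-functors} at $\mathbf{F} = (\id_{\mathcal{M}}, \id)$.

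Substituting $F = \id_{\mathcal{M}}$ into the expression $\Nak_{\mathcal{N}} \circ \act_{\mathcal{N}}(\alpha_{\mathcal{C}}^{\vee}) \circ F \circ \Nak_{\mathcal{M}}$ immediately produces the claimed formula for the underlying functor. The natural isomorphism $\widetilde{\xi}$ furnishing the $\mathcal{C}$-module structure on this composition is built from the structure morphism $\xi$ of $\mathbf{F}$, the canonical isomorphisms \eqref{eq:cat-action-left-Nakayama}, and the Radford isomorphism \eqref{eq:Radford!-iso}; with $\xi = \id$ and all other data intrinsic to $\mathcal{C}$ and $\mathcal{M}$, this $\widetilde{\xi}$ is determined, but its explicit form is not needed for the statement, which only records the underlying functor.

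There is no significant obstacle: the content of the corollary is precisely what is required, and the only verification is the triviality that the unit of $\mathcal{C}_{\mathcal{M}}^*$ is $(\id_{\mathcal{M}}, \id)$. Thus the result is an immediate corollary of Corollary~\ref{cor:Nakayama-rex-module-functors}.
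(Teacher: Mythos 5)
Your proof is correct and matches the paper's approach exactly: the paper derives this corollary by specializing Corollary~\ref{cor:Nakayama-rex-module-functors} to $\mathcal{N} = \mathcal{M}$ and evaluating at the unit object $(\id_{\mathcal{M}}, \id)$ of $\mathcal{C}_{\mathcal{M}}^*$, which is precisely what you do.
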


Suppose that $\mathcal{M}$ is indecomposable and exact in the sense of \cite{MR3242743}. Then the linear monoidal category $\mathcal{C}_{\mathcal{M}}^*$ is in fact a finite tensor category called the {\em dual} of $\mathcal{C}$ with respect to $\mathcal{M}$.
The (dual of) the modular object of $\mathcal{C}_{\mathcal{M}}^*$ has been obtained in \cite{2022arXiv220707031F} with the use of relative Serre functors.
The formula of the above corollary coincides with that of \cite[Proposition 4.13]{2022arXiv220707031F} if we rewrite $\Nak_{\mathcal{M}}$ by the relative Serre functor of $\mathcal{M}$.

\section{Examples from Hopf algebra theory}
\label{sec:Hopf-examples}

\subsection{Radford isomorphism}

In this section, we explain how our results are applied to some categories appearing in the Hopf algebra theory.
Unless otherwise noted, the base field $\bfk$ is arbitrary in this section.
The unadorned tensor symbol $\otimes$ means the tensor product over $\bfk$.
Given a Hopf algebra $H$, we denote by $\Delta$, $\varepsilon$ and $S$ the comultiplication, the counit and the antipode of $H$, respectively.
We use the Sweedler notation, such as $\Delta(h) = h_{(1)} \otimes h_{(2)}$, to express the comultiplication.
Accordingly, the left coaction of $H$ is written as $m \mapsto m_{(-1)} \otimes m_{(0)}$ for an element $m$ of a left $H$-comodule.

Let $H$ be a finite-dimensional Hopf algebra, and let $\mathcal{C} := \lcomod{H}$ be the category of finite-dimensional left $H$-comodules.
As is well-known, $\mathcal{C}$ is a finite tensor category.
Given an object $X \in \mathcal{C}$ and a coalgebra automorphism $f$ of $H$, we denote by $X^{(f)}$ the left $H$-comodule obtained from $X$ by twisting the coaction by $f$.
We note that a left dual object of $X \in \mathcal{C}$ is the vector space $X^*$ equipped with the left coaction $f \mapsto f_{(-1)} \otimes f_{(0)}$ determined by
\begin{equation*}
  \langle f_{(-1)}, x \rangle f_{(0)} = \langle f, x_{(0)} \rangle S^{-1}(x_{(1)})
  \quad (x \in X, f \in X^*),
\end{equation*}
and thus $X^{\vee\vee}$ is identified with $X^{(S^{-2})}$ via the canonical isomorphism of vector spaces. Similarly, the double right dual ${}^{\vee\vee\!}X$ is identified with $X^{(S^2)}$.

We give a description of the Radford isomorphism of $\mathcal{C}$ (see \cite[Section 6]{2021arXiv211008739S} for the case of right comodules).
We denote by $G(H)$ the set of grouplike elements of $H$.
We fix a non-zero right cointegral $\lambda_H : H \to \bfk$. Then there is a unique element $g_H \in G(H)$, called the distinguished grouplike element, such that
\begin{equation}
  \label{eq:Hopf-Nakayama-def-comodulus}
  h_{(1)} \lambda_H(h_{(2)}) = \lambda_H(h) g_H^{}
  \quad (h \in H).
\end{equation}

Since $\lambda_H$ is non-degenerate, one can define $\nu_H : H \to H$ to be the unique linear map such that $\lambda_H(b a) = \lambda_H(\nu_H(a) b)$ for all $a, b \in H$ (namely, $\nu_H$ is the Nakayama automorphism of $H$ with respect to the Frobenius trace $\lambda_H$).
We fix a non-zero left integral $\Lambda^{\ell}$ in $H$ and define the left modular function $\alpha_H : H \to \bfk$ in the same way as \S\ref{subsec:braid-Hopf}. By definition, it is characterized by the equation
\begin{equation*}
  \Lambda^{\ell} \cdot h = \alpha_H(h) \Lambda^{\ell}
  \quad (h \in H).
\end{equation*}
Lemma~\ref{lem:braided-Nakayama} reduces to the well-known formula
\begin{equation}
  \label{eq:Hopf-Nakayama-auto}
  \nu_H(h) = S^{2}(h \leftharpoonup \alpha_H) \quad (h \in H),
\end{equation}
where $h \leftharpoonup f = \langle f, h_{(1)} \rangle h_{(2)}$ for $f \in H^*$ and $h \in H$. In particular, we have
\begin{equation}
  \label{eq:Hopf-Nakayama-and-modular}
  \alpha_H = \varepsilon \circ \nu_H.
\end{equation}

A right cointegral on $H$ is the same thing as a left cointegral on the Hopf algebra $H^{\cop}$ obtained from $H$ by reversing the order of the comultiplication. By equations \eqref{eq:Hopf-Nakayama-def-comodulus}--\eqref{eq:Hopf-Nakayama-and-modular}, we find that our $\alpha_H$ and $g_H$ are $\alpha_{H^{\cop}}$ and $g_{H^{\cop}}$, respectively, in the notation of \cite[Section 6]{2021arXiv211008739S}. By the discussion of \cite[Section 6]{2021arXiv211008739S} applied to $H^{\cop}$, we see that the modular object of $\mathcal{C}$ is $\bfk g_H$ and, under an appropriate identification of double duals, the Radford isomorphism of $\mathcal{C}$ is viewed as the following map:
\begin{equation}
  \label{eq:Radford-iso-H-comod}
  \begin{aligned}
    \Radford_X : \bfk g_H \otimes X^{(S^{-2})}
    & \to X^{(S^2)} \otimes \bfk g_H, \\
    \quad g_H \otimes x
    & \mapsto (x \leftharpoonup \alpha_H) \otimes g_H
    \quad (x \in X).
  \end{aligned}
\end{equation}

\begin{notation}
  The symbols $H$, $\mathcal{C}$, $g_H$ and $\alpha_H$ introduced in the above will be used till the end of \S\ref{subsec:relative-hopf}.
  We set $\overline{\alpha}_H = \alpha_H \circ S$ (this is the inverse of $\alpha_H$ with respect to the convolution product).
\end{notation}

\subsection{Cointegrals on comodule algebras}

An algebra in $\mathcal{C}$ is the same thing as a finite-dimensional left $H$-comodule algebra. According to \cite{2018arXiv181007114K}, we introduce the following terminology:

\begin{definition}
  \label{def:g-cointegral}
  Let $A$ be an algebra in $\mathcal{C}$, and let $g \in G(H)$. A {\em $g$-cointegral} on $A$ is a linear map $\lambda : A \to \bfk$ satisfying the equation $a_{(-1)} \langle \lambda, a_{(0)} \rangle = \lambda(a) g$ for all $a \in A$.
  We say that a $g$-cointegral on $A$ is {\em non-degenerate} if it is a Frobenius trace on $A$.
  A (non-degenerate) {\em grouplike-cointegral} is a linear map $A \to \bfk$ being a (non-degenerate) $g$-cointegral for some $g \in G(H)$.
\end{definition}

Some existence criteria for (non-degenerate) grouplike-cointegrals are given in \cite{2018arXiv181007114K} and \cite[\S4.10]{2019arXiv190400376S}.
Given an element $g \in G(H)$, we denote by $\bfk_g \in \mathcal{C}$ the vector space $\bfk$ equipped with the left $H$-coaction given by $1_{\bfk} \mapsto g \otimes 1_{\bfk}$.
With this notation, a $g$-cointegral is the same thing as a morphism $A \to \bfk_g$ in $\mathcal{C}$.
Thus a non-degenerate $g$-cointegral is nothing but a $\bfk_g$-valued Frobenius trace on $A$ in the sense of Definition~\ref{def:I-Frob-trace}.

Now let $A$ be an algebra in $\mathcal{C}$ and suppose that there are an element $g \in G(H)$ and a non-degenerate $g$-cointegral $\lambda : A \to \bfk$. We define the morphism
\begin{equation*}
  \nu : A^{\vee\vee} \to (\bfk_g)^{\vee} \otimes A \otimes (\bfk_g)^{\vee\vee}
\end{equation*}
in $\mathcal{C}$ by \eqref{eq:I-Frob-alg-Nakayama-iso}. Given an object $X \in \mathcal{C}$, we denote by $X^g$ the vector space $X$ equipped with the left $H$-coaction $x \mapsto g^{-1} x_{(-1)} g \otimes x_{(0)}$ ($x \in X$).
The source and the target of $\nu$ are identified with $A^{(S^{-2})}$ and $A^{g}$, respectively, and hence $\nu$ induces a morphism from $A^{(S^{-2})}$ and $A^{g}$, which, by abuse of notation, we denote by the same symbol $\nu : A^{(S^{-2})} \to A^{g}$.

Lemma \ref{lem:gFb-algebra-Nakayama} implies the equation $\lambda(b a) = \lambda(\nu(a) b)$ ($a, b \in A$). Namely, $\nu$ is the Nakayama automorphism of $A$ with respect to the Frobenius trace $\lambda$. A trivial but important point is that $\nu$ is a morphism in $\mathcal{C}$.

\begin{lemma}
  The Nakayama automorphism $\nu$ of $A$ with respect to the non-degenerate $g$-cointegral $\lambda$ satisfies the equations
  \begin{gather}
    \label{eq:H-comod-alg-Nakayama}
    \nu(a)_{(-1)} \otimes \nu(a)_{(0)}
    = g S^{-2}(a_{(-1)}) g^{-1} \otimes \nu(a_{(0)}), \\
    \label{eq:H-comod-alg-Nakayama-inverse}
    \nu^{-1}(a)_{(-1)} \otimes \nu^{-1}(a)_{(0)} =  g^{-1} S^2(a_{(-1)}) g \otimes a_{(0)}
  \end{gather}
  for all $a \in A$.
\end{lemma}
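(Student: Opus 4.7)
The plan is to leverage the observation, already stated in the paragraph immediately preceding the lemma, that the Nakayama automorphism $\nu$ is a morphism $A^{(S^{-2})} \to A^g$ in the category $\mathcal{C} = \lcomod{H}$. Both displayed equations will follow by unpacking what it means for $\nu$ and its inverse to intertwine the left $H$-coactions on these twisted comodules. In other words, the lemma is essentially a translation of a categorical fact into Sweedler notation.

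For \eqref{eq:H-comod-alg-Nakayama}, I would first unwind the two coactions: on $A^{(S^{-2})}$ the coaction sends $a$ to $S^{-2}(a_{(-1)}) \otimes a_{(0)}$, while on $A^g$ it sends $a$ to $g^{-1} a_{(-1)} g \otimes a_{(0)}$, where $a_{(-1)} \otimes a_{(0)}$ denotes the original $H$-coaction inherited from $A$. The statement that $\nu$ is a morphism of comodules then reads
\begin{equation*}
g^{-1} \nu(a)_{(-1)} g \otimes \nu(a)_{(0)} = S^{-2}(a_{(-1)}) \otimes \nu(a_{(0)}).
\end{equation*}
Multiplying the first tensor factor on the left by $g$ and on the right by $g^{-1}$ yields \eqref{eq:H-comod-alg-Nakayama} directly.

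For \eqref{eq:H-comod-alg-Nakayama-inverse}, I would apply the same reasoning to the inverse isomorphism $\nu^{-1}: A^g \to A^{(S^{-2})}$, which is also a morphism in $\mathcal{C}$ since $\nu$ is invertible there. Unwinding gives
\begin{equation*}
S^{-2}(\nu^{-1}(a)_{(-1)}) \otimes \nu^{-1}(a)_{(0)} = g^{-1} a_{(-1)} g \otimes \nu^{-1}(a_{(0)}),
\end{equation*}
and applying $S^2$ to the first tensor factor (using that $S^2$ is an algebra automorphism of $H$ and that $S(g) = g^{-1}$, so $S^2(g^{\pm 1}) = g^{\pm 1}$) produces the desired equation after a variable change relating $\nu^{-1}(a_{(0)})$ and $a_{(0)}$ via \eqref{eq:H-comod-alg-Nakayama}.

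There is no real obstacle: the entire content of the lemma is the translation of a single categorical statement into Sweedler notation. The only care required is bookkeeping, namely keeping track of which tensor slot each instance of $\nu$ or $\nu^{-1}$ is applied to when unpacking the Sweedler sums, and remembering that $S^2$ is an algebra homomorphism that fixes grouplike elements. The construction of $\nu$ from $\phi$ in the preceding general subsection (Definition of the Nakayama isomorphism via \eqref{eq:I-Frob-alg-Nakayama-iso}) is already done at the level of the category $\mathcal{C}$, so the $H$-colinearity of $\nu$ comes for free.
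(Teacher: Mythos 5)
Your proposal is correct and follows essentially the same route as the paper: both identities are read off from the $H$-colinearity of $\nu \colon A^{(S^{-2})} \to A^{g}$. Your unwinding of the first equation coincides with the paper's argument. For the second, the paper substitutes $a \mapsto \nu^{-1}(a)$ into the first identity instead of invoking colinearity of $\nu^{-1}$ afresh, but the two routes are interchangeable.

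One caution. What you in fact derive for the second identity, after applying $S^{2}$ to the first tensor factor, is
\begin{equation*}
\nu^{-1}(a)_{(-1)} \otimes \nu^{-1}(a)_{(0)} = g^{-1} S^{2}(a_{(-1)}) g \otimes \nu^{-1}(a_{(0)}),
\end{equation*}
with $\nu^{-1}(a_{(0)})$ rather than $a_{(0)}$ in the last slot. No ``variable change'' removes that $\nu^{-1}$; the printed right-hand side of \eqref{eq:H-comod-alg-Nakayama-inverse} appears to have dropped it by a typo, and the form you derived is the one actually invoked later in the proof of Theorem~\ref{thm:A-bimod-modular} (the step producing $g_A^{-2} S^4(x_{(-1)}) g_H \otimes \xi^{-1}(\nu_A^{-1}(x_{(0)}))$). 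So your argument is sound; rather than gesture at a fictitious variable change, simply record the identity you obtained and note that it is what the lemma is meant to assert.
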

\begin{proof}
  Since $\nu : A^{(S^{-2})} \to A^g$ is a morphism in $\mathcal{C}$, we have
  \begin{equation*}
    \nu(a)_{(-1)} \otimes g^{-1} \nu(a)_{(0)} g
    = S^{-2}(a_{(-1)}) \otimes \nu(a_{(0)})
  \end{equation*}
  for all $a \in A$. The first equation follows from this. The second one is obtained by replacing $a$ with $\nu^{-1}(a)$ in the first one.
\end{proof}

A left coideal subalgebra of $H$ is a subalgebra of $H$ that is also a left coideal of $H$, that is, $\Delta(A) \subset H \otimes A$ holds. A left coideal subalgebra of $H$ is an algebra in $\mathcal{C}$ with respect to the restriction of the comultiplication of $H$.
The above proposition yields the following formula:

\begin{theorem}
  Let $A$ be a left coideal subalgebra of $H$ and suppose that there is an element $g \in G(H)$ and a non-zero $g$-cointegral $\lambda : A \to \bfk$. Then $\lambda$ is non-degenerate, and thus the Nakayama automorphism $\nu_A$ of $A$ with respect to $\lambda$ is defined. For every integer $k$, we have
  \begin{equation}
    \label{eq:coideal-sub-Nakayama}
    \nu_A^k(a) = g^k S^{-2k}(\varepsilon \nu_A^k \rightharpoonup a) g^{-k}
    \quad (a \in A),
  \end{equation}
  where $f \rightharpoonup a = a_{(-1)} \langle f, a_{(0)} \rangle$ for $f \in A^*$ and $a \in A$. The order of $\nu_A$ is finite.
\end{theorem}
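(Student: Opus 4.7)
The plan is, first, to invoke the results of \cite{2018arXiv181007114K} to conclude that any non-zero grouplike-cointegral on a left coideal subalgebra of a finite-dimensional Hopf algebra is automatically non-degenerate, so that the Nakayama automorphism $\nu_A$ associated to $\lambda$ is well-defined. Since $A \subset H$ is a left coideal subalgebra, the left $H$-coaction on $A$ is simply the restriction of $\Delta$, so that $a_{(-1)} \otimes a_{(0)} = a_{(1)} \otimes a_{(2)}$ with $a_{(1)} \in H$ and $a_{(2)} \in A$. The $k = 1$ case of \eqref{eq:coideal-sub-Nakayama} will follow at once by applying $\id \otimes \varepsilon$ to both sides of \eqref{eq:H-comod-alg-Nakayama} and unpacking the definition of $\rightharpoonup$, using that $S^{-2}$ is an algebra endomorphism of $H$.

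For general $k \geq 0$, I would first establish by induction on $k$ the iterated identity
\[
\nu_A^k(a)_{(1)} \otimes \nu_A^k(a)_{(2)} = g^k S^{-2k}(a_{(1)}) g^{-k} \otimes \nu_A^k(a_{(2)}),
\]
the inductive step coming from applying \eqref{eq:H-comod-alg-Nakayama} to $\nu_A^{k-1}(a)$ and using that $S^{-2}$ is an algebra homomorphism fixing the grouplike element $g$. Applying $\id \otimes \varepsilon$ then yields \eqref{eq:coideal-sub-Nakayama}. The case $k < 0$ is handled by the symmetric argument starting from \eqref{eq:H-comod-alg-Nakayama-inverse}.

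The main obstacle is proving that $\nu_A$ has finite order. My plan is to analyze the sequence of linear functionals $\chi_k := \varepsilon \nu_A^k \in A^*$. Since $\nu_A$ is an algebra automorphism of $A$ and $\varepsilon|_A$ is a character, each $\chi_k$ is again a character of $A$. Now the set of characters of the finite-dimensional algebra $A$ is finite, since every character factors through the semisimple quotient $A/\mathrm{rad}(A)$ and corresponds to a one-dimensional simple module. The assignment $f \mapsto f \circ \nu_A$ restricts to a permutation of this finite set, so the $\nu_A$-orbit of $\varepsilon$ is periodic: there exists $m \geq 1$ with $\chi_m = \varepsilon$. Finally, choosing $K$ to be a common multiple of $m$, of the order of $S^2$ acting on $H$, and of the order of $g$ in the finite group $G(H)$ (both of the latter being finite as $H$ is finite-dimensional), the right-hand side of \eqref{eq:coideal-sub-Nakayama} at $k = K$ collapses to $\varepsilon \rightharpoonup a = a_{(1)} \varepsilon(a_{(2)}) = a$ by the counit axiom. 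Hence $\nu_A^K = \id_A$, and $\nu_A$ has finite order dividing $K$.
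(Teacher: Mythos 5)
Your proof is correct and follows essentially the same route as the paper's: both arguments deduce non-degeneracy from \cite{2018arXiv181007114K}, obtain the $k$-fold formula by induction from \eqref{eq:H-comod-alg-Nakayama} (using that $S^{-2}$ is an algebra endomorphism fixing grouplikes), and derive finiteness of the order of $\nu_A$ by combining the finiteness of the character set of $A$ with the finiteness of the orders of $g$ and $S^2$. Your version merely repackages the paper's inductive identity $\beta\rightharpoonup\nu_A(a)=g\,S^{-2}(\beta\nu_A\rightharpoonup a)\,g^{-1}$ as the equivalent iterated coaction identity, and spells out the pigeonhole/permutation step that the paper leaves implicit.
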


The element $f \rightharpoonup a$ in the above is well-defined because $\Delta(A) \subset H \otimes A$, but we should be careful that it may be no more an element of $A$.

\begin{proof}
  The non-degeneracy of $\lambda$ follows from \cite[Theorem 4.11]{2018arXiv181007114K} (see also \cite[Proposition 4.22]{2019arXiv190400376S}).
  Equation \eqref{eq:coideal-sub-Nakayama} is trivial when $k = 0$.
  By \eqref{eq:H-comod-alg-Nakayama}, we have
  \begin{equation*}
    \beta \rightharpoonup \nu_A(a)
    = g_A^{} S^{-2}(a_{(-1)}) g_A^{-1} \langle \beta, \nu_A(a_{(0)}) \rangle
    = g_A^{} S^{-2}(\beta \nu_A \rightharpoonup a) g_A^{-1}
  \end{equation*}
  for all $a \in A$ and $\beta \in A^*$. By this equation and induction on $k$, we show that \eqref{eq:coideal-sub-Nakayama} holds for all positive integer $k$.
  For $k < 0$, use \eqref{eq:H-comod-alg-Nakayama-inverse} instead of \eqref{eq:H-comod-alg-Nakayama}.

  The finiteness of the order of $\nu_A$ is proved as follows:
  Since the set of algebra maps $A \to \bfk$ is at most finite, there is a positive integer $k$ such that $\varepsilon \nu_A^k = \varepsilon$. For this $k$ and a positive integer $\ell$, we have $\nu_A^{k\ell}(a) = g^{k\ell} S^{-2k\ell}(a) g^{-k\ell}$. Since both $g$ and $S$ are of finite order, we conclude that so is $\nu_A$.
\end{proof}

\begin{remark}
  Let $A$ be as above.
  As in the case of Hopf algebras, a left integral in $A$ is defined to be an element $\Lambda \in A$ such that $a \Lambda = \varepsilon(a)\Lambda$ for all $a \in A$.
  It is known that a coideal subalgebra of a finite-dimensional Hopf algebra is Frobenius \cite{MR2434166}.
  By the general argument for Frobenius algebras noted in \cite[Section 4]{1999math4164H}, the space of left integrals in $A$ is one-dimensional. We fix a non-zero left integral $\Lambda$ in $A$. Then the algebra map $\alpha_A := \varepsilon \circ \nu_A$ is characterized by the property that the equation $\Lambda a = \alpha_A(a) \Lambda$ holds for all $a \in A$.
\end{remark}

\begin{remark}
  The Hopf algebra $H$ itself is a left coideal subalgebra of $H$ and a right cointegral $\lambda : H \to \bfk$ is a $g_H$-cointegral in the sense of Definition~\ref{def:g-cointegral}. Thus we have
  \begin{equation}
    \label{eq:Hopf-Nakayama-auto-2}
    \nu_H(h) = g_H S^{-2}(\alpha_H \rightharpoonup h) g_H^{-1}
    \quad (h \in H)
  \end{equation}
  by the above proposition. Radford's $S^4$-formula
  \begin{equation}
    \label{eq:Radford-S4}
    S^4(h) = g_H^{}(\alpha_H \rightharpoonup h \leftharpoonup \overline{\alpha}_H) g_H^{-1}
    \quad (h \in H)
  \end{equation}
  is obtained by comparing two descriptions \eqref{eq:Hopf-Nakayama-auto} and \eqref{eq:Hopf-Nakayama-auto-2} of the Nakayama automorphism of $H$.
  This formula implies that $g_H^{}$ is central in the group $G(H)$.
\end{remark}

\subsection{Relative Hopf bimodules}
\label{subsec:relative-hopf}

Let $A$ and $B$ be algebras in $\mathcal{C}$.
An object of the category ${}_A \mathcal{C}_B$ is often called a {\em relative Hopf bimodule}.
The Nakayama functor of ${}_A \mathcal{C}_B$ is obtained by Theorem \ref{thm:Nakayama-for-modules}. Here we examine the case where both $A$ and $B$ admit non-degenerate grouplike-cointegrals.

\begin{remark}
  \label{rem:relative-Hopf-bimodules}
  Before we go into the detail, we note an interpretation of the category ${}_A \mathcal{C}_B$ from the viewpoint of the theory of finite tensor categories.
  According to \cite{MR2331768}, the category $\lmod{R}$ for an algebra $R \in \mathcal{C}$ has a natural structure of a finite left module category over $\mathcal{D} := \lmod{H}$. There is an equivalence
  \begin{equation*}
    {}_A\mathcal{C}_B \to \REX_{\mathcal{D}}(\lmod{B}, \lmod{A}),
    \quad M \mapsto M \otimes_B (-)
  \end{equation*}
  of categories. Furthermore, when $A = B$, this equivalence is monoidal (where the monoidal product of ${}_A \mathcal{C}_A$ is given by the tensor product over $A$). Thus ${}_A \mathcal{C}_A$ is equivalent to the dual $\mathcal{D}_{\lmod{A}}^*$ as a linear monoidal category.
  Below we compute the Nakayama functor of ${}_A \mathcal{C}_B$ and the modular object of ${}_A \mathcal{C}_A$ by viewing ${}_A \mathcal{C}_B$ as the category of modules over the monad $A \otimes (-) \otimes B$ on $\mathcal{C}$. The same results can of course be obtained from Corollary \ref{cor:Nakayama-rex-module-functors}.
\end{remark}

Now we assume that for each $X \in \{ A, B \}$, there are an element $g_X \in G(H)$ and a non-degenerate $g_X$-cointegral $\lambda_X$ on $X$.
For $X \in \{ A, B \}$, we denote by $\nu_{X}$ the Nakayama automorphism of $X$ associated to $\lambda_X$.
We introduce the endofunctor $\Nak$ on ${}_A \mathcal{C}_B$ as follows:
As a vector space, $\Nak(M) = M$ for $M \in {}_A \mathcal{C}_B$.
The left coaction of $H$ on $\Nak(M)$ is defined by
\begin{equation}
  m \mapsto g_A^{-1} S^2(m_{(-1)}) g_B^{-1} g_H^{} \otimes m_{(0)},
\end{equation}
where $m \mapsto m_{(-1)} \otimes m_{(0)}$ is the original coaction of $H$ on $M$.
The left action $\triangleright$ of $A$ and the right action $\triangleleft$ of $B$ on $\Nak(M)$ are defined by
\begin{equation}
  a \triangleright m \triangleleft b = \nu_A(a) \cdot m \cdot \nu_B^{-1}(b \leftharpoonup \alpha_H)
  \quad (a \in A, b \in B, m \in M),
\end{equation}
where `$\cdot$' means the original actions of $A$ and $B$ on $M$.

One can verify that $\Nak(M)$ is an object of ${}_A \mathcal{C}_B$ with the help of equations \eqref{eq:H-comod-alg-Nakayama}, \eqref{eq:Hopf-Nakayama-auto-2} and \eqref{eq:Radford-S4}. However, we omit the detail, since the reason why $\Nak(M)$ belongs to ${}_A \mathcal{C}_B$ will be revealed in the proof of the following theorem:

\begin{theorem}
  \label{thm:relative-Hopf-bimod-Nakayama}
  The functor $\Nak$ is the Nakayama functor of ${}_A \mathcal{C}_B$.
\end{theorem}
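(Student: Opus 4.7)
The plan is to combine Theorem~\ref{thm:Nakayama-for-modules-2} with its right-handed analog, applied to the algebras $A$ and $B$ respectively, so as to reduce $\Nak_{{}_A\mathcal{C}_B}(\mathbf{M})$ to $\Nak_{\mathcal{C}}$ evaluated on a one-dimensional twist of $M$; I then make the resulting Nakayama functor on $\mathcal{C}$ fully explicit using Remark~\ref{rem:Nakayama-twisted-module-structure} and the formula \eqref{eq:Radford-iso-H-comod} for the Radford isomorphism. Concretely, viewing ${}_A\mathcal{C}_B$ as left $A$-modules in the finite left $\mathcal{C}$-module category $\mathcal{C}_B$, Theorem~\ref{thm:Nakayama-for-modules-2} together with the non-degenerate $g_A$-cointegral $\lambda_A$ identifies $\Nak_{{}_A\mathcal{C}_B}$ with the composition of the functor $\bfk_{g_A}^{\vee} \catactl (-)$, whose $A$-action is $\nu_A$-twisted according to \eqref{eq:I-Frob-alg-Nakayama-1}, with the Nakayama lift \eqref{eq:Nakayama-lift} of $\Nak_{\mathcal{C}_B}$. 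Viewing the intermediate object as a right $B$-module in ${}_A\mathcal{C}$ and invoking the right-module analog of the same theorem with the non-degenerate $g_B$-cointegral $\lambda_B$, I further rewrite $\Nak_{\mathcal{C}_B}(-)$ in terms of $\Nak_{\mathcal{C}}((-) \catactr {}^{\vee}\bfk_{g_B})$ with the right $B$-action twisted by $\nu_B^{-1}$.

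At this point the problem is reduced to applying $\Nak_{\mathcal{C}}$ to $\bfk_{g_A}^{\vee} \otimes M \otimes {}^{\vee}\bfk_{g_B}$. I use the identifications $\bfk_{g_A}^{\vee} \cong \bfk_{g_A^{-1}}$ and ${}^{\vee}\bfk_{g_B} \cong \bfk_{g_B^{-1}}$ and the normalization $\Nak_{\mathcal{C}}(X) = {}^{\vee\vee}X \otimes \bfk_{g_H}$ from Remark~\ref{rem:Nakayama-twisted-module-structure}. Since the double left dual on $\lcomod{H}$ twists the coaction by $S^2$, and $S^2$ fixes every element of $G(H)$, the underlying comodule of $\Nak_{{}_A\mathcal{C}_B}(\mathbf{M})$ comes out to be $M$ with coaction $m \mapsto g_A^{-1} S^2(m_{(-1)}) g_B^{-1} g_H \otimes m_{(0)}$, matching the prescribed formula. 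Because under this normalization the twisted left $\mathcal{C}$-module structure of $\Nak_{\mathcal{C}}$ is the identity, the canonical isomorphism driving the Nakayama lift on the $A$-side simply transports the $\nu_A$-twisted $A^{\vee\vee}$-action to the left $A$-action $a \triangleright m = \nu_A(a)\cdot m$, as required.

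The principal obstacle is recovering the $\alpha_H$-twist in the right $B$-action $m \triangleleft b = m \cdot \nu_B^{-1}(b \leftharpoonup \alpha_H)$. By Remark~\ref{rem:Nakayama-twisted-module-structure}, the twisted right $\mathcal{C}$-module structure of $\Nak_{\mathcal{C}}$ is not the identity but is governed by the Radford isomorphism, which on $\lcomod{H}$ is precisely the map \eqref{eq:Radford-iso-H-comod} introducing a $\leftharpoonup \alpha_H$ on the comodule argument. When the Nakayama lift on the $B$-side converts the $\nu_B^{-1}$-twisted ${}^{\vee\vee}B$-action back to an honest $B$-action on $\Nak_{\mathcal{C}}(\cdots)$, this Radford factor combines with $\nu_B^{-1}$ to produce exactly the stated formula. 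The remaining bookkeeping consists of verifying that the resulting left $A$-action, right $B$-action, and coaction are mutually compatible and make $\Nak(M)$ into an object of ${}_A\mathcal{C}_B$; this follows from the commutation rules \eqref{eq:H-comod-alg-Nakayama}--\eqref{eq:H-comod-alg-Nakayama-inverse} satisfied by $\nu_A$ and $\nu_B$, Radford's $S^4$-formula \eqref{eq:Radford-S4}, and the centrality of $g_H$ in $G(H)$.
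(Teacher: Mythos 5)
Your proposal is correct and mirrors the paper's argument: the paper also reduces via Theorem~\ref{thm:Nakayama-for-modules} (for which Theorem~\ref{thm:Nakayama-for-modules-2} is the Frobenius-trace specialization you invoke), identifies $A^{\vee}$ and ${}^{\vee\!}B$ with one-dimensional twists of $A$ and $B$, computes the coaction from $\Nak_{\mathcal{C}}(X) = X^{(S^2)} \otimes \bfk g_H$, and obtains the right $B$-action by composing the Radford isomorphism~\eqref{eq:Radford-iso-H-comod} (giving $\leftharpoonup \alpha_H$) with the Nakayama automorphism $\nu_B^{-1}$ of $B^{\op} \in \mathcal{C}^{\rev}$. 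The only cosmetic difference is that you iterate the one-sided Theorem~\ref{thm:Nakayama-for-modules-2} and its right-handed analog, whereas the paper applies the bimodule case of Theorem~\ref{thm:Nakayama-for-modules} in one step.
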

\begin{proof}
  We identify $X^{\vee\vee}$ and ${}^{\vee\vee\!}X$ with $X^{(S^{-2})}$ and $X^{(S^{2})}$, respectively, if $X$ is an object of $\mathcal{C}$.
  In view of \eqref{eq:FTC-Nakayama-formula}, we choose the functor
  \begin{equation*}
    \Nak_{\mathcal{C}}: \mathcal{C} \to \mathcal{C},
    \quad X \mapsto X^{(S^2)} \otimes \bfk g_H^{}
    \quad (X \in \mathcal{C})
  \end{equation*}
  as the Nakayama functor of $\mathcal{C}$.
  As discussed in Subsection~\ref{subsec:modules-module-cats}, there is an isomorphism $A^{\vee} \cong \bfk g_A^{-1} \otimes A$ of right $A$-modules in $\mathcal{C}$. The same argument applied to $B^{\op} \in \mathcal{C}^{\rev}$ yields an isomorphism ${}^{\vee\!}B \cong B \otimes \bfk g_B^{-1}$ of left $B$-modules in $\mathcal{C}$. Hence, by Theorem \ref{thm:Nakayama-for-modules}, we have natural isomorphisms
  \begin{gather*}
    \Nak_{{}_A\mathcal{C}_B}(M)
    \cong \Nak_{\mathcal{C}}(A^{\vee} \otimes_A M \otimes_B {}^{\vee}B)
    \cong \Nak_{\mathcal{C}}(\bfk g_A^{-1} \otimes M \otimes \bfk g_B^{-1}) \\
    \cong \bfk g_A^{-1} \otimes M^{(S^{2})} \otimes \bfk g_B^{-1} \otimes \bfk g_H^{} \cong \Nak(M) \quad (M \in {}_A \mathcal{C}_B)
  \end{gather*}
  of right $H$-comodules.

  Since the twisted left $\mathcal{C}$-module structure of $\Nak_{\mathcal{C}}$ is the identity map, it is easy to see that the isomorphism $\Nak_{{}_A\mathcal{C}_B}(M) \cong \Nak(M)$ obtained in the above preserves left action of $A$.
  The right action of $B$ on $\Nak_{{}_A\mathcal{C}_B}(M)$ is defined via the twisted $\mathcal{C}$-module structure of $\Nak_{\mathcal{C}}$ ({\it cf}. Remark~\ref{rem:Nakayama-twisted-module-structure}).
  Thus, when an element $b \in B$ acts on $\Nak(M)$, it is first affected by the Radford isomorphism~\eqref{eq:Radford-iso-H-comod}, and then acts on $M$ through the Nakayama automorphism of $B^{\op} \in \mathcal{C}^{\rev}$, that is, $\nu_B^{-1}$. Therefore the action of $B$ on $\Nak(M)$ is given as stated. The proof is done. 
\end{proof}

Now we consider the case where $A = B$.

\begin{theorem}
  \label{thm:A-bimod-modular}
  Let $A$ be an algebra in $\mathcal{C}$ and suppose that there is an element $g_A \in G(H)$ and a non-degenerate $g_A$-cointegral $\lambda_A : A \to \bfk$. Then the modular object of ${}_A \mathcal{C}_A$ is the vector space $\boldsymbol{\alpha} := A$ equipped with the following structure morphisms: The left $H$-coaction on $\boldsymbol{\alpha}$ is given by
  \begin{equation*}
    x \mapsto g_A^{-2} g_H^{} x_{(-1)} \otimes x_{(0)}
    \quad (x \in \boldsymbol{\alpha}),
  \end{equation*}
  where $a \mapsto a_{(-1)} \otimes a_{(0)}$ is the original left coaction of $H$ on $A$.
  The left and the right actions of $A$ on $\boldsymbol{\alpha}$ are given by
  \begin{equation*}
    a \triangleright x \triangleleft b
    = \nu_A^2(a \leftharpoonup \overline{\alpha}_H) \cdot x \cdot b
    \quad (a, b \in A, x \in \boldsymbol{\alpha}),
  \end{equation*}
  where $\nu_A$ is the Nakayama automorphism of $A$ with respect to $\lambda_A$.
\end{theorem}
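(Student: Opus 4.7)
The unit object of the monoidal category ${}_A\mathcal{C}_A$ (with tensor product $\otimes_A$) is the regular bimodule $A$, so the modular object in question is $\Nak_{{}_A\mathcal{C}_A}(A)$. The plan is to first invoke Theorem~\ref{thm:relative-Hopf-bimod-Nakayama} with $B = A$ to obtain an explicit description of this object, and then to exhibit an isomorphism in ${}_A\mathcal{C}_A$ from that description to the object $\boldsymbol{\alpha}$ in the statement.

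Setting $M = A$ in Theorem~\ref{thm:relative-Hopf-bimod-Nakayama} produces an $A$-bimodule $\Nak(A)$ whose underlying vector space is $A$, with coaction $x \mapsto g_A^{-1} S^2(x_{(-1)}) g_A^{-1} g_H \otimes x_{(0)}$ and actions $a \triangleright x = \nu_A(a) x$, $x \triangleleft b = x \cdot \nu_A^{-1}(b \leftharpoonup \alpha_H)$. I would then propose the map
\[
  \phi : \Nak(A) \to \boldsymbol{\alpha}, \qquad \phi(x) = \nu_A(x \leftharpoonup \overline{\alpha}_H),
\]
which is a linear bijection with inverse $y \mapsto \nu_A^{-1}(y) \leftharpoonup \alpha_H$ (since $\overline{\alpha}_H * \alpha_H = \varepsilon$), and aim to show that $\phi$ is a morphism of $A$-bimodules in $\mathcal{C}$.

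The verification of the two module structures is largely formal and rests on three standard facts: $\nu_A$ is an algebra automorphism of $A$ (as the Nakayama automorphism of a Frobenius algebra in $\mathcal{C}$); the map $a \mapsto a \leftharpoonup \overline{\alpha}_H$ is an algebra endomorphism of $A$ (because $\overline{\alpha}_H$ is a character and $A$ is an $H$-comodule algebra); and these two operations commute, $\nu_A(a \leftharpoonup \overline{\alpha}_H) = \nu_A(a) \leftharpoonup \overline{\alpha}_H$, which follows from~\eqref{eq:H-comod-alg-Nakayama} together with the general identity $\overline{\alpha}_H \circ S^{-2} = \overline{\alpha}_H$ for characters. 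These, combined with $\alpha_H * \overline{\alpha}_H = \varepsilon$, give $\phi(a \triangleright x) = \nu_A^2(a \leftharpoonup \overline{\alpha}_H) \cdot \phi(x)$ and $\phi(x \triangleleft b) = \phi(x) \cdot b$, matching the stated bimodule structure on $\boldsymbol{\alpha}$.

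The main obstacle I expect is the coaction compatibility, since the coaction on $\boldsymbol{\alpha}$ involves $x_{(-1)}$ whereas that on $\Nak(A)$ involves $S^{2}(x_{(-1)})$. Computing $(\id \otimes \phi) \circ \rho_{\Nak(A)}(x)$ and $\rho_{\boldsymbol{\alpha}}(\phi(x))$ explicitly using~\eqref{eq:H-comod-alg-Nakayama} and the centrality of $g_A, g_H \in G(H)$ (a consequence of Radford's $S^4$-formula), the desired equality reduces to the identity
\[
  g_H \, S^{-2}(h \leftharpoonup \overline{\alpha}_H) \;=\; S^{2}(\overline{\alpha}_H \rightharpoonup h) \, g_H \qquad (h \in H).
\]
This I would prove by applying $S^{2}$ to both sides, using $S^{2}(g_H) = g_H$, and invoking~\eqref{eq:Radford-S4} to rewrite $S^{4}(\overline{\alpha}_H \rightharpoonup h) = g_H(\alpha_H \rightharpoonup \overline{\alpha}_H \rightharpoonup h \leftharpoonup \overline{\alpha}_H) g_H^{-1}$. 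After cancellation, the claim collapses to $(\alpha_H * \overline{\alpha}_H) \rightharpoonup h = h$, which is just $\varepsilon \rightharpoonup h = h$. This closes the verification that $\phi$ is an isomorphism in ${}_A\mathcal{C}_A$, and thus identifies $\boldsymbol{\alpha}$ with the modular object.
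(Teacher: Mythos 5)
Your proof is correct and takes essentially the same route as the paper: you invoke Theorem~\ref{thm:relative-Hopf-bimod-Nakayama} with $B=A$, then exhibit an isomorphism $\Nak(A)\cong\boldsymbol{\alpha}$ — your map $\phi(x)=\nu_A(x\leftharpoonup\overline{\alpha}_H)$ is precisely the inverse of the paper's $\xi(x)=\nu_A^{-1}(x\leftharpoonup\alpha_H)$ — using the commutation $\nu_A(a\leftharpoonup\beta)=\nu_A(a)\leftharpoonup\beta$ (the paper's~\eqref{eq:A-bimod-modular}) for the module actions and Radford's $S^4$-formula together with the centrality of $g_H$ for the coaction. Isolating the identity $g_H\,S^{-2}(h\leftharpoonup\overline{\alpha}_H)=S^2(\overline{\alpha}_H\rightharpoonup h)\,g_H$ is a clean packaging of the same coaction computation the paper does inline.
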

\begin{proof}
  Let, in general, $\beta : H \to \bfk$ be an algebra map.
  By \eqref{eq:H-comod-alg-Nakayama}, we have
  \begin{equation}
    \label{eq:A-bimod-modular}
    \nu_A(a) \leftharpoonup \beta
    = \langle \beta, g_A S^{-2}(a_{(-1)}) g_A^{-1} \rangle \nu_A(a_{(0)})
    = \nu_A(a \leftharpoonup \beta)
  \end{equation}
  for all $a \in A$.
  Since the unit object of ${}_A \mathcal{C}_A$ is $A$, the modular object of ${}_A \mathcal{C}_A$ is $\Nak(A)$, where $\Nak : {}_A \mathcal{C}_A \to {}_A \mathcal{C}_A$ is the functor introduced in the above. By \eqref{eq:A-bimod-modular}, it is easy to see that the bijective linear map
  \begin{equation*}
    \xi: \boldsymbol{\alpha} \to \Nak(A), \quad
    x \mapsto \nu_A^{-1}(x \leftharpoonup \alpha_H)
    \quad (x \in \boldsymbol{\alpha})
  \end{equation*}
  preserves the left and the right actions of $A$. Let $\delta_{\Nak(A)}$ denote the left coaction of $H$ on $\Nak(A)$. The map $\xi$ also preserves the left coaction of $H$. Indeed, for $x \in \boldsymbol{\alpha}$, we have
  \begin{align*}
    (\id_{H} \otimes \xi^{-1})
    & \delta_{\Nak(A)} \xi(x)
      = \langle \alpha_H, x_{(-1)} \rangle
      (\id_{H} \otimes \xi^{-1})\delta_{\Nak(A)}(\nu_A^{-1}(x_{(0)})) \\
    & = \langle \alpha_H, x_{(-1)} \rangle
      g_A^{-1}S^2(\nu_A^{-1}(x_{(0)})_{(-1)}) g_A^{-1} g_H \otimes \xi^{-1}(\nu_A^{-1}(x_{(0)})_{(0)}) \\
    & = \langle \alpha_H, x_{(-2)} \rangle
      g_A^{-2} S^4(x_{(-1)}) g_H \otimes \xi^{-1}(\nu_A^{-1}(x_{(0)})) \\    
    & = g_A^{-2} S^4(x_{(-1)} \leftharpoonup \alpha_H) g_H^{} \otimes (x_{(0)} \leftharpoonup \overline{\alpha}_H) \\
    & = g_A^{-2} g_H^{} (\alpha_H \rightharpoonup x_{(-1)})  \otimes (x_{(0)} \leftharpoonup \overline{\alpha}_H) \\
    & = g_A^{-2} g_H^{} x_{(-1)} \otimes x_{(0)}
  \end{align*}
  by \eqref{eq:H-comod-alg-Nakayama-inverse} and \eqref{eq:Radford-S4}.
  Thus $\boldsymbol{\alpha}$ is an object of ${}_A \mathcal{C}_A$ that is isomorphic to $\Nak(A)$. The proof is done.
\end{proof}

\begin{corollary}
  \label{cor:A-bimod-unimodular-1}
  Notations are as above. Then ${}_A \mathcal{C}_A$ is unimodular if and only if there is an invertible element $g \in A$ such that the equations
  \begin{gather}
    \label{eq:unimo-condition-1}
    g a = \nu_A^2(a \leftharpoonup \overline{\alpha}_H) g, \\
    \label{eq:unimo-condition-2}
    \delta_{A}(g)
    = g_H^{-1} g_A^{2} \otimes g
  \end{gather}
  hold for all elements $a \in A$.
\end{corollary}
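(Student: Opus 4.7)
The plan is to invoke Theorem~\ref{thm:A-bimod-modular}, which gives an explicit description of the modular object $\boldsymbol{\alpha} \in {}_A \mathcal{C}_A$ as $A$ itself equipped with twisted actions and coaction, and then parameterize all isomorphisms $\phi : A \to \boldsymbol{\alpha}$ in ${}_A \mathcal{C}_A$ by elements $g \in A$. By the definition of unimodularity adopted in the paper, ${}_A \mathcal{C}_A$ is unimodular if and only if such an isomorphism exists.

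The key observation is that the right $A$-action on $\boldsymbol{\alpha}$ coincides with the ordinary right multiplication on $A$. Since $A$ is cyclic as a right $A$-module generated by $1$, every right $A$-linear map $\phi : A \to \boldsymbol{\alpha}$ has the form $\phi(a) = g a$ for a unique $g = \phi(1) \in A$; and such a map is bijective if and only if left multiplication by $g$ is a linear automorphism of $A$, which by finite-dimensionality is equivalent to invertibility of $g$ in $A$. Imposing left $A$-linearity $\phi(a b) = a \triangleright \phi(b)$ with the left action $a \triangleright x = \nu_A^2(a \leftharpoonup \overline{\alpha}_H) \cdot x$ from Theorem~\ref{thm:A-bimod-modular}, and specializing $b = 1$, gives precisely equation~\eqref{eq:unimo-condition-1}; conversely, \eqref{eq:unimo-condition-1} implies the general identity after right multiplication by $b$.

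It then remains to convert the $H$-colinearity of $\phi$ into~\eqref{eq:unimo-condition-2}. Using that $A$ is an $H$-comodule algebra, I would compute
\begin{equation*}
\delta_{\boldsymbol{\alpha}}(g a) = g_A^{-2} g_H^{} g_{(-1)} a_{(-1)} \otimes g_{(0)} a_{(0)}
\end{equation*}
from the coaction formula of Theorem~\ref{thm:A-bimod-modular}, and compare with $(\id \otimes \phi)\delta_A(a) = a_{(-1)} \otimes g a_{(0)}$. Setting $a = 1$ forces~\eqref{eq:unimo-condition-2}, and conversely~\eqref{eq:unimo-condition-2} makes the two sides agree for all $a$. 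Combining these three equivalences finishes the proof. The only mild obstacle is unpacking the twisted structure of $\boldsymbol{\alpha}$ from Theorem~\ref{thm:A-bimod-modular} carefully, in particular confirming that the right action is genuinely untwisted, so that the cyclicity argument on $A$ applies and $g = \phi(1)$ determines $\phi$ completely.
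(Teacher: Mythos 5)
Your proposal is correct and follows essentially the same route as the paper: starting from Theorem~\ref{thm:A-bimod-modular}, parameterizing right $A$-linear maps $A \to \boldsymbol{\alpha}$ by $g = \phi(1)$, and then translating invertibility, left $A$-linearity, and $H$-colinearity into the stated conditions. The only difference is that you spell out the colinearity computation that the paper dismisses with ``it is easy to see,'' which is a welcome addition but not a different argument.
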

\begin{proof}
  Let $\boldsymbol{\alpha}$ be the modular object of ${}_A \mathcal{C}_A$ given by the previous theorem.
  We shall discuss when $\boldsymbol{\alpha}$ is isomorphic to $A$ as objects of ${}_A \mathcal{C}_A$. Given an element $g \in A$, we define $\phi_g : A \to \boldsymbol{\alpha}$ by $\phi_g(a) = g a$ ($a \in A$). The map $g \mapsto \phi_g$ gives a bijection between the set $A$ and the set of right $A$-linear maps $A \to \boldsymbol{\alpha}$. It is easy to see that $\phi_g$ is invertible if and only if $g$ is, $\phi_g$ preserves left $A$-action if and only if \eqref{eq:unimo-condition-1} holds, and $\phi_g$ preserves the left $H$-coaction if and only if \eqref{eq:unimo-condition-2} holds. The claim is proved by summarizing the discussion so far.
\end{proof}

\begin{corollary}
  \label{cor:A-bimod-unimodular-2}
  Let $A$ be a left coideal subalgebra of $H$.
  Suppose that there are an element $g_A \in G(H)$ and a non-zero $g_A$-cointegral on $A$. Then ${}_A \mathcal{C}_A$ is unimodular if and only if $g_H^{-1} g_A^2 \in A$ and $\varepsilon \circ \nu_A^2 = \alpha_H|_A$.
\end{corollary}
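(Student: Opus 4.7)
The plan is to invoke Corollary~\ref{cor:A-bimod-unimodular-1}, which reduces unimodularity of ${}_A \mathcal{C}_A$ to the existence of an invertible $g \in A$ satisfying \eqref{eq:unimo-condition-1} and \eqref{eq:unimo-condition-2}. I will show that such a $g$ exists if and only if the two stated conditions hold, and that $g := g_H^{-1} g_A^2$ is the canonical choice in the ``if'' direction.

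For the ``only if'' direction, I would apply the counit to each equation. Applying $\id \otimes \varepsilon$ to \eqref{eq:unimo-condition-2} gives $\varepsilon(g) \cdot g_H^{-1} g_A^2 = g$; invertibility of $g$ forces $\varepsilon(g) \neq 0$, so $g_H^{-1} g_A^2 = g/\varepsilon(g) \in A$. Applying $\varepsilon$ to \eqref{eq:unimo-condition-1} and cancelling $\varepsilon(g)$ yields $\sum \overline{\alpha}_H(a_{(1)}) \beta(a_{(2)}) = \varepsilon(a)$ for all $a \in A$, where $\beta := \varepsilon \nu_A^2 \in A^*$. Since $\overline{\alpha}_H = \alpha_H \circ S$ is the convolution inverse of $\alpha_H$ in $H^*$, a short coassociativity computation then gives
\begin{equation*}
\alpha_H(a) = \sum \alpha_H(a_{(1)}) \varepsilon(a_{(2)}) = \sum \alpha_H(a_{(1)}) \overline{\alpha}_H(a_{(2)}) \beta(a_{(3)}) = \sum \varepsilon(a_{(1)}) \beta(a_{(2)}) = \beta(a),
\end{equation*}
so $\varepsilon \nu_A^2 = \alpha_H|_A$.

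For the ``if'' direction, I would set $g := g_H^{-1} g_A^2$, which lies in $A$ by hypothesis. As a grouplike in $H$ it is not a zero divisor in $A$, so by finite dimensionality it is invertible in $A$. Condition \eqref{eq:unimo-condition-2} is immediate from $\Delta(g) = g \otimes g$. For \eqref{eq:unimo-condition-1}, I would use \eqref{eq:coideal-sub-Nakayama} with $k = 2$ together with $\varepsilon \nu_A^2 = \alpha_H|_A$ to rewrite
\begin{equation*}
\nu_A^2(a \leftharpoonup \overline{\alpha}_H) = g_A^2 \, S^{-4}(\alpha_H \rightharpoonup a \leftharpoonup \overline{\alpha}_H) \, g_A^{-2},
\end{equation*}
and then invert Radford's $S^4$-formula \eqref{eq:Radford-S4} to obtain $S^{-4}(h) = g_H^{-1}(\overline{\alpha}_H \rightharpoonup h \leftharpoonup \alpha_H) g_H$. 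Substituting and collapsing the nested convolutions via $\alpha_H * \overline{\alpha}_H = \varepsilon = \overline{\alpha}_H * \alpha_H$ reduces the right-hand side to $g_A^2 g_H^{-1} a g_H g_A^{-2}$. The decisive last step is to invoke centrality of $g_H$ in $G(H)$, noted just after \eqref{eq:Radford-S4}, which lets me commute $g_H$ past $g_A$ to rewrite this as $g_H^{-1} g_A^2 \cdot a \cdot g_A^{-2} g_H = g a g^{-1}$, giving \eqref{eq:unimo-condition-1}. This centrality argument is the main obstacle in the sense that without it the two natural orderings of $g_H^{\pm 1}$ and $g_A^{\pm 2}$ would differ by a nontrivial inner automorphism of $H$, and the ``if'' direction would collapse.
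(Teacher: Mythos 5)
Your proof is correct and reaches the same conclusion as the paper. The ``if'' direction follows the paper's suggestion (the paper simply declares it ``straightforward'') and fills in the details: verifying that $g := g_H^{-1}g_A^2$ is invertible in $A$, deriving \eqref{eq:unimo-condition-1} from \eqref{eq:coideal-sub-Nakayama} with $k=2$ combined with the hypothesis $\varepsilon\nu_A^2 = \alpha_H|_A$ and the inverted Radford formula, and correctly flagging the centrality of $g_H$ in $G(H)$ as the step that reconciles $g_A^2 g_H^{-1} a g_H g_A^{-2}$ with $gag^{-1}$. Your ``only if'' direction, however, is a genuinely different and cleaner route than the paper's. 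The paper establishes $\varepsilon\nu_A^2 = \alpha_H|_A$ by a chain that applies \eqref{eq:coideal-sub-Nakayama}, substitutes $\nu_A^2(a) = g(a\leftharpoonup\alpha_H)g^{-1}$ from \eqref{eq:unimo-condition-1}, uses the already-obtained identification $g = g_H^{-1}g_A^2$ together with centrality of $g_H$, and then invokes Radford's $S^4$-formula to land on $\alpha_H \rightharpoonup a$. You instead simply apply $\varepsilon$ to \eqref{eq:unimo-condition-1} to get $\overline{\alpha}_H(a_{(1)})\beta(a_{(2)}) = \varepsilon(a)$ with $\beta = \varepsilon\nu_A^2$, and then a two-line convolution-algebra manipulation using $\alpha_H * \overline{\alpha}_H = \varepsilon$ and coassociativity yields $\beta = \alpha_H|_A$ directly. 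This bypasses both \eqref{eq:coideal-sub-Nakayama} and \eqref{eq:Radford-S4} in this direction, and does not even need to know what $g$ is; it is more elementary and the two stated conditions emerge independently from the two respective equations of Corollary~\ref{cor:A-bimod-unimodular-1}. Both arguments are valid; yours is the slicker one for the ``only if'' half.
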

\begin{proof}

  Suppose that there is an element $g \in A^{\times}$ satisfying \eqref{eq:unimo-condition-1} and \eqref{eq:unimo-condition-2}.
  We may assume $\varepsilon(g) = 1$ by renormalizing $g$.
  By applying $\id_{H} \otimes \varepsilon$ to the both sides of \eqref{eq:unimo-condition-2}, we obtain $g_H^{-1} g_A^2 = g \in A$.
  We also have
  \begin{gather*}
    \varepsilon \nu_A^2 \rightharpoonup a
    \mathop{=}^{\eqref{eq:coideal-sub-Nakayama}}
    S^{-4}(g_A^{-2} \nu_A^2(a)g_A^2)
    \mathop{=}^{\eqref{eq:unimo-condition-1}}
    S^{-4}(g_A^{-2} g (a \leftharpoonup \alpha_H) g^{-1} g_A^2) \\
    = S^{-4}(g_H^{-1} (a \leftharpoonup \alpha_H) g_H)
    \mathop{=}^{\eqref{eq:Radford-S4}}
    \alpha_H \rightharpoonup a
  \end{gather*}
  for all $a \in A$, and thus $\varepsilon \nu_A^2 = \alpha_H|_A$.
  The `only if' part is proved.
  The `if' part is proved by showing that $g = g_H^{-1} g_A^2$ satisfies \eqref{eq:unimo-condition-1} and \eqref{eq:unimo-condition-2} straightforwardly.
\end{proof}

\subsection{Examples of determining the unimodularity}
\label{subsec:examples-unimodularity}

From now on till the end of this paper, we assume that $\bfk$ is an algebraically closed field of characteristic zero. We fix an odd integer $N > 1$ and a primitive $N$-th root $q \in \bfk$ of unity, and consider the small quantum group $u_q(\mathfrak{sl}_2)$.
We recall that, as an algebra, it is generated by $E$, $F$ and $K$ subject to the relations
$K^N = 1$, $E^N = F^N = 0$, $K E = q^2 F K$, $K F = q^{-2} F K$ and
$E F - F E = (K - K^{-1})/(q-q^{-1})$.
The Hopf algebra structure of $u_q(\mathfrak{sl}_2)$ is determined by
\begin{equation*}
  \Delta(K) = K \otimes K,
  \quad \Delta(E) = E \otimes K + 1 \otimes E,
  \quad \Delta(F) = F \otimes 1 + K^{-1} \otimes F.
\end{equation*}

The set $\{ E^r F^s K^t \mid r, s, t = 0, 1, \cdots, N - 1 \}$ is a basis of $U := u_q(\mathfrak{sl}_2)$.
A formula of the comultiplication of each of elements of this basis is written explicitly in, {\it e.g.}, \cite[Chapter VII]{MR1321145}. One can check that the linear map
\begin{equation*}
  \lambda_U : U \to \bfk,
  \quad
  E^r F^s K^t \mapsto \delta_{r, N-1} \delta_{s, N-1} \delta_{t, 1}
  \quad (r, s, t = 0, 1, \cdots, N - 1)
\end{equation*}
is a right cointegral on $U$. Furthermore, we have $g_{U}^{} = K^2$.
It is also known that $U$ is unimodular, {\it i.e.}, $\alpha_U = \varepsilon$.

Below, for some algebras $A$ in $\mathcal{C} := \lcomod{U}$, we determine whether ${}_A \mathcal{C}_A$ is unimodular.
In view of Remark~\ref{rem:relative-Hopf-bimodules}, we may say that we will discuss whether the dual of $\lmod{U}$ with respect to $\lmod{A}$ is unimodular.

\begin{example}
  We fix $\xi \in \bfk$ and a divisor $d$ of $N$.
  We consider the algebra $A$ generated by $G$ and $Y$ subject to the relations $G^d = 1$, $G Y = q^{-2m} Y G$ and $Y^N = \xi 1_A$, where $m = N/d$. The algebra $A$ is a left $U$-comodule algebra by the left $U$-coaction determined by $G \mapsto K^m$ and $Y \mapsto F \otimes 1 + K^{-1} \otimes Y$. In a similar way as \cite[Section 5]{2019arXiv190400376S}, we see that the linear map
  \begin{equation*}
    \lambda_A : A \to \bfk,
    \quad Y^r G^s \mapsto \delta_{r,N-1} \delta_{s,0}
    \quad (r = 0, \cdots, N-1; s = 0, \cdots, d - 1)
  \end{equation*}
  is a non-degenerate $g_A$-cointegral on $A$ with $g_A = K$. The Nakayama automorphism $\nu_A$ of $A$ with respect to $\lambda_A$ is given by $\nu_A(G) = q^{2m} G$ and $\nu_A(Y) = Y$.

  Corollary \ref{cor:A-bimod-unimodular-1} is effective to determine whether ${}_A \mathcal{C}_A$ is unimodular. Indeed, if $g$ is an invertible element of $A$ satisfying the condition \eqref{eq:unimo-condition-2} of the corollary, then $g$ must be $1_A$ since $g_U^{-1} g_A^2 = 1_U$. Thus ${}_A \mathcal{C}_A$ is unimodular if and only if $\nu_A^2 = \id_A$, or, equivalently, $d = 1$ (because the order of $q$ is odd).
\end{example}

\begin{example}
  We choose a parameter $\xi \in \bfk$ and consider the subalgebra $A$ of $U$ generated by $Y := F + \xi K^{-1}$. Since $\Delta(Y) = F \otimes 1 + K^{-1} \otimes Y$, the algebra $A$ is in fact a left coideal subalgebra of $U$.
  The linear map
  \begin{equation*}
    \lambda_A : A \to \bfk, \quad Y^r \mapsto \delta_{r, N - 1}
    \quad (r = 0, 1, \cdots, N - 1)
  \end{equation*}
  is a non-degenerate $g_A$-cointegral on $A$ with $g_A = K$.
  By Corollary \ref{cor:A-bimod-unimodular-2}, we see that ${}_A \mathcal{C}_A$ is always unimodular (this is actually the case where $d = 1$ in the previous example).
\end{example}

\begin{example}
  We choose a divisor $m$ of $N$ and consider the subalgebra $A$ of $U$ generated by $G := K^m$. The subalgebra $A$ is a left coideal subalgebra of $U$ (which is, in fact, a Hopf subalgebra of $U$). The linear map
  \begin{equation*}
    \lambda_A : A \to \bfk, \quad G^r \mapsto \delta_{r, 0}
    \quad (r = 0, 1, \cdots, N/m - 1)
  \end{equation*}
  is a non-degenerate $g_A$-cointegral on $A$ with $g_A = 1_A$.
  By Corollary \ref{cor:A-bimod-unimodular-2}, ${}_A \mathcal{C}_A$ is unimodular if and only if $m = 1$.
\end{example}

\subsection{Coideal subalgebras without grouplike-cointegrals}
\label{subsec:no-g-cointegrals}

Corollaries~\ref{cor:A-bimod-unimodular-1} and \ref{cor:A-bimod-unimodular-2} are not applicable to comodule algebras without non-degenerate grouplike-cointegrals.
We close this paper by giving an example of coideal subalgebras without non-zero grouplike-cointegrals.

We fix an odd integer $N > 1$ and a primitive $N$-th root $q \in \bfk$ of unity, and define $H$ to be the algebra generated by $a$, $b$, $c$ and $d$ subject to the following relations:
\begin{gather*}
  b a = q a b, \quad
  c a = q a c, \quad
  d b = q b d, \quad
  d c = q c d, \quad
  b c = c b, \\
  a d - q^{-1} b c = d a - q b c = 1, \quad
  a^N = d^N = 1, \quad b^N = c^N = 0.
\end{gather*}
The algebra $H$ is a Hopf algebra with the comultiplication defined so that $a$, $b$, $c$ and $d$ form a matrix coalgebra.
It is known that $H$ is dual to $U := u_q(\mathfrak{sl}_2)$.
A detailed account is found in, {\it e.g.}, \cite[VII.4]{MR1321145} and \cite[Appendix A]{MR3926231}.
Specifically, there is an isomorphism $\phi : H \to U^*$ of Hopf algebras such that
\begin{equation*}
  \begin{pmatrix}
    \phi(a)(u) & \phi(b)(u) \\
    \phi(c)(u) & \phi(d)(u)
  \end{pmatrix}
  = \rho(u) \quad(u \in U),
\end{equation*}
where $\rho : U \to \mathrm{Mat}_{2 \times 2}(\mathbb{C})$ is the algebra map determined by
\begin{equation*}
  E \mapsto \begin{pmatrix} 0 & 1 \\ 0 & 0 \end{pmatrix},
  \quad F \mapsto \begin{pmatrix} 0 & 0 \\ 1 & 0 \end{pmatrix},
  \quad K \mapsto \begin{pmatrix} q & 0 \\ 0 & q^{-1} \end{pmatrix}.
\end{equation*}
A right $H$-comodule is identified with a left $U$-comodule through the Hopf algebra isomorphism $\phi$. In particular, the action of $U$ on $H$ is computed by
\begin{gather*}
  E \cdot 1 = 0, \quad
  E \cdot a = 0, \quad
  E \cdot b = a, \quad
  E \cdot c = 0, \quad
  E \cdot d = c, \\
  F \cdot 1 = 0, \quad
  F \cdot a = b, \quad
  F \cdot b = 0, \quad
  F \cdot c = d, \quad
  F \cdot d = 0, \\
  K \cdot 1 = 1, \quad
  K \cdot a = q a, \quad
  K \cdot b = q^{-1} b, \quad
  K \cdot c = q c, \quad
  K \cdot d = q^{-1} d
\end{gather*}
and the rule $u \cdot (x y) = (u_{(1)} \cdot x) (u_{(2)} \cdot y)$ for $u \in U$ and $x, y \in H$.

For two integers $r$ and $s$, we put $v_{r s} = a^{r} b^{r - s}$ if $0 \le s \le r \le N$ and $v_{r s} = 0$ otherwise. For every $r = 0, \cdots, N$, the subspace $V_r = \mathrm{span}_{\bfk} \{ v_{r s} \mid s = 0, \cdots, r \}$ of `homogeneous polynomials' in $a$ and $b$ is an $(r+1)$-dimensional $U$-submodule of $H$ ({\it cf}. \cite[IV.7]{MR1321145}). The actions of the generators of $U$ on $V_r$ are given by
\begin{equation*}
  E \cdot v_{rs} = [s] v_{r, s - 1}, \quad
  F \cdot v_{rs} = [r - s] v_{r, s + 1}
  \quad \text{and} \quad
  K \cdot v_{rs} = q^{r - 2s} v_{rs},
\end{equation*}
where $[m] = (q^m - q^{-m}) / (q-q^{-1})$.

The $U$-submodule $A := V_{N}$ of $H$ is closed under the multiplication. Thus it is in fact a right coideal subalgebra of $H$.
The subspace of $A$ spanned by $1_A$ ($= v_{N, 0}$) is a unique non-trivial $U$-submodule of $A$.
In particular, $A$ has no quotient right $H$-comodule of dimension one.
Thus $A$, viewed as a left $H^{\cop}$-comodule algebra, admits no non-zero grouplike-cointegral.

\begin{question}
  Let $H$ be a finite-dimensional Hopf algebra, and let $A$ be an algebra in $\mathcal{C} := \lcomod{H}$, which may not admit a non-degenerate grouplike-cointegral.
  Is there an easy criterion for ${}_A \mathcal{C}_A$ to be unimodular?
\end{question}

\def\cprime{$'$}

\end{document}